\documentclass[12pt]{article}

\usepackage[utf8]{inputenc}

\usepackage{graphicx}

\usepackage{appendix}

\newcommand*{\Cdot}{\raisebox{-0.25ex}{\scalebox{1.2}{$\cdot$}}}

\usepackage[margin=2cm]{geometry}
\usepackage{mathrsfs}

\usepackage{mathtools}
\usepackage{amsfonts}
\usepackage{amssymb}
\usepackage{bm}
\usepackage{amsthm}
\usepackage{bbm}
\usepackage{enumitem}
\usepackage{accents}
     
\usepackage[hyphens]{url}
\usepackage{authblk}

\newtheorem{theo}{Theorem}
\newtheorem{lemma}[theo]{Lemma}
\newtheorem{cor}[theo]{Corollary}
\newtheorem{prop}[theo]{Proposition}

\newtheorem{conjecture}[theo]{Conjecture}
\newtheorem*{claim*}{Claim}
\newtheorem{question}[theo]{Question}
\theoremstyle{definition}\newtheorem*{defn*}{Definition}
\theoremstyle{definition}
\theoremstyle{remark}\newtheorem{remark}[theo]{Remark}

\numberwithin{theo}{section}
\numberwithin{equation}{section}

\newcommand{\R}{\mathbb{R}}
\newcommand{\N}{\mathbb{N}}
\newcommand{\C}{\mathbb{C}}
\newcommand{\Z}{\mathbb{Z}}

\newcommand{\BP}{\mathbb{P}}

\newcommand{\MCC}{\mathcal{C}}

\newcommand{\MCI}{\mathcal{I}}

\newcommand{\MCM}{\mathcal{M}}
\newcommand{\MCP}{\mathcal{P}}
\newcommand{\MCS}{\mathcal{S}}

\newcommand{\diff}{\mathop{}\!\mathrm{d}}
\newcommand{\RP}{\R \BP}

\newcommand{\GL}{\mathrm{GL}}
\newcommand{\SL}{\mathrm{SL}}

\newcommand{\innerproduct}[2]{\langle #1, #2 \rangle}
\DeclareMathOperator{\interior}{int}
\DeclareMathOperator{\supp}{supp}

\title{Frostman dimension of Furstenberg measure for \(\SL(2,\R)\) random matrix products}
\author{Tom Rush}
\date{}

\begin{document}
\maketitle

\begin{abstract}
For compactly supported \(\mu \in \MCP(\SL(2,\R))\) satisfying strong irreducibility and proximality, we obtain a formula for the Frostman dimension of the associated Furstenberg measure. We also describe the left neighbourhood of 0 for which the classical transfer operators defined by Le Page have a spectral gap on H\"older spaces in this setting.
\end{abstract}

\section{Introduction}
Let \(\mu\) be a compactly supported Borel probability measure on \(\GL(d,\R)\) and let \(S_{\mu} \subseteq \GL(d,\R)\) be the closed semigroup generated by \(\supp \mu\). We say \(\mu\) is \textit{strongly irreducible} if there is no finite collection of proper subspaces of \(\R^d\) which are \(S_{\mu}\) invariant and \textit{proximal} if \(S_{\mu}\) contains an element whose leading eigenvalue is simple. We say \(\mu\) satisfies \textit{SIP} if it is strongly irreducible and proximal.

For \(x=\R u \in \R \mathbb{P}^{d-1}\) and \(g \in \GL(d,\R)\), we denote \(gx \in  \R \mathbb{P}^{d-1}\) to be the element \(\R g u\) and we will write \(|gx|:=\frac{|gu|}{|u|}\), where \(|\cdot|\) denotes the Euclidean norm. In the seminal paper \cite{Pag82}, Le Page defined the transfer operators
\[P_t \psi(x)=\int |gx|^t \psi(g x) \diff \mu(g),\]
and proved a spectral gap for \(t=0\) on the space of \(\zeta\)-H\"older continuous functions \(C^{\zeta}(\RP^{d-1})\) for all \(\zeta\) sufficiently small. By perturbation theory this implies \(P_t\) has a spectral gap for all \(t\) in a neighbourhood of 0. This was used to prove various limit theorems (see \cite{Pag82} or \cite[\S 1.5]{BQ16}) and has since found applications across probability theory, dynamical systems, and mathematical physics (see e.g. \cite{BL85,CKN86,CPV93,Via14,BQ16,AEV23}).

Under the above assumptions there is a unique measure \(\nu_F \in \MCP(\RP^{d-1})\), called the \textit{Furstenberg measure}, satisfying \(\mu * \nu_F =\nu_F\), where \(*\) denotes convolution. Equivalently, it is the leading eigenmeasure corresponding to \(P_0\). The following definition will be important in this paper: we define the \textit{Frostman dimension} of a probability measure \(\nu \in \MCP(\RP^{d-1})\) to be the quantity 
\[\dim_F \nu:= \sup \{s \geq 0: \exists C>0 \mathrm{\ s.t. \ }\nu(B(x,r)) \leq C r^s, \forall r>0, \forall x \in \RP^{d-1}\},\] 
where \(B(x,r)\) is the closed ball with respect to the natural metric \(d_{\RP^{d-1}}(\R u,\R v):= \frac{|u \wedge v|}{|u| |v|} \).\footnote{This has also been referred to as the \textit{H\"older regularity} of a measure. In this paper, the Frostman dimension of measures is intimately related to the H\"older exponent of eigenfunctions (of different exponent), so to avoid confusion we prefer the terminology of Frostman dimension from the fractal geometry community.} Using large deviations obtained by perturbation theory of the above operators, it was shown by Guivarc'h \cite{Gui90} that \(\dim_F \nu_F>0\) when \(\mu\) satisfies SIP and has finite exponential moment. Aoun and Sert attained an explicit (not sharp) lower bound for the Frostman dimension in a related Gromov hyperbolic setting \cite[Proposition 4.10]{AS22}, which includes the \(\SL(2,\R)\), SIP setting.

Le Page's work has been significantly generalised in work by Goldsheid, Guivarc'h, Le Page \cite{GG96,GP04,Gui08,GP16}, Benoist, Quint \cite{BQ16}, and Hautec{\oe}ur \cite{Hau25}. Notably, it is also known that \(P_t\) has a spectral gap on H\"older functions for \(t \in [0,\infty)\) when \(\mu\) is compactly supported \cite{GP04,GP16}. Despite the wide interest in and applicability of these results, several fundamental questions remain unanswered. For example, it is characteristic of Le Page's result and its generalisations that the following quantities have not been determined:
\begin{enumerate}[label=(\roman*)]
    \item the value \(\zeta\) for which \(P_t\) has a spectral gap on \(C^{\zeta}(\RP^{d-1})\);
    \item the left neighbourhood of 0 for which \(P_t\) has a spectral gap;
    \item the Frostman dimension of \(\nu_F\).
\end{enumerate}

In this paper we describe the quantities (i)-(iii) in the \(\SL(2,\R)\), SIP setting where \(\mu\) is compactly supported. In this case the Frostman dimension of the Furstenberg measure is precisely the supremum of the \(\zeta\) for which \(P_0\) has a spectral gap on \(C^{\zeta}(\RP^1)\). In fact, we can say a lot more. Before we present the results, let us first discuss two main motivations for this work. 

\subsubsection*{Motivation 1: Large deviation theory}
Large deviation theory for random matrix products has been used in the study of one-dimensional Ising models \cite{CPV93}, Schr\"odinger operators \cite{BL85,Bou12b}, affine stochastic recursions \cite{Kes73,Gol91,BDM16}, Mandelbrot percolations \cite{OS24}, and the ARCH-GARCH models in financial mathematics \cite{BDM16}. It also plays a central role in renewal theorems \cite{Kes73,Gol91,GP16,Li18,Li22}, equidistribution results \cite{BFLM11}, and in establishing Fourier decay and Frostman dimension estimates for the Furstenberg measure \cite{Gui90,Bou12b,Li18,Li22}.

The first motivation for the present paper is an open question in the probability community concerning large deviations. Besides its intrinsic interest, the transfer operator machinery needed to address this question can be used to attain precise large deviation estimates; see \cite{BM16,XGL20,XGL22,XGL23}. 
 
Let \(\mu^n\) denote the \(n\)-th convolution power of \(\mu\), i.e. \(\mu^n=\mu * \ldots * \mu\). For \(\alpha \in \R\), \(\epsilon>0\), and \(n \in \N\), we define the sets
\[E(\alpha,\epsilon,n):=\left\{g \in \GL(d,\R) :\left|\frac{1}{n}\log |g|-\alpha \right| \leq \epsilon \right\},\]
where \(|g|:=\sup_{x \in \BP} |gx|\) is the spectral norm. We also define the rate function
\[I(\alpha) := \lim_{\epsilon \rightarrow 0} \varlimsup_{n \rightarrow \infty} \frac{1}{n} \log \mu^n (E(\alpha,\epsilon,n)).\]
Note the rate function is usually defined to be the negative of this. In this paper \(I(\alpha) \leq 0\).

We say \(\mu\) is \textit{irreducible} if there does not exist any proper subspace of \(\R^d\) invariant under \(S_\mu\). When \(\mu\) is irreducible and compactly supported (more generally, when it has strong exponential moment), it was shown in \cite{Ser19} that \(I(\alpha)\) is equal to the negative of the Legendre transform of the limit
\[k(t):= \lim_{n \rightarrow \infty} \frac{1}{n} \log \int |g|^t \diff \mu^n(g).\]
That is, 
\begin{equation}\label{eqn:Ialpha}
    I(\alpha)=\inf_{t \in \R}\{k(t)-\alpha t\}. 
\end{equation}
We let 
\(\mathrm{A}:= \{\alpha \in \R: I(\alpha)>-\infty\}\). Then, by the above result, \(\mathrm{A}\) is an interval.

We let \(L(\mu)\) be the \textit{Lyapunov exponent} of \(\mu\) defined by 
\[L(\mu):= \lim_{n \rightarrow \infty} \frac{1}{n} \int \log |g| \diff \mu^n(g).\]
The limit exists by subadditivity and is equal to \(\inf_{n \in \N}  \frac{1}{n} \int \log |g| \diff \mu^n(g)\). It is known by the results of Le Page and the extensions that \(I(\alpha)\) has a unique 0 at \(\alpha=L(\mu)\) when \(\mu\) is irreducible. We are interested in the following natural question (cf. \cite[Question 1.10]{Ser19}):

\begin{question}\label{question2}
    When is \(\alpha \mapsto I(\alpha)\) analytic on \(\interior(\mathrm{A})\)?
\end{question}

There is an analogous open question in the fractal geometry community regarding analyticity of level sets defined in terms of Lyapunov exponent \cite{Fen09,DGR19}, and there has been a particular emphasis on studying associated equilibrium states and invariant measures \cite{FK11,BR18,Pir20,DGR22,DGR23,Rus24,Hau25}. 

It is known that \(I(\alpha)\) can fail to be analytic when \(\mu\) is reducible, e.g. when the matrices are diagonal (cf. \cite[\S 8.1]{Hau25}). We are specifically interested in Question \ref{question2} when \(\mu\) is irreducible.

Hautec{\oe}ur showed in \cite{Hau25} that \(k(t)\) is analytic on \((0,\infty)\) when \(\mu \in \MCP(\GL(d,\R))\) is proximal and irreducible. In the irreducible but \textit{not} strongly irreducible \(\SL(2,\R)\) setting, it is a classical result that \(L(\mu)=0\) (see \cite{GR85} and \cite[Theorem 6.1]{BL85}). This implies that \(k(t)=0\) for all \(t<0\). By a standard argument, we further have from the results of \cite{Hau25} that the right derivative of \(k(t)\) exists and is equal to 0.\footnote{\textit{Sketch}: With the notation in \cite[\S 7.2]{Hau25}, \( \mathbb{Q} \mapsto h_{\mu}(\mathbb{Q})\) is upper semi-continuous in the weak* topology, so, with \(\mathbb{Q}_t\) as given by \cite[Theorem 7.8]{Hau25}, any weak* limit of the measures \(\mathbb{Q}_t\) as \(t \downarrow 0\) must be equal to \(\mu\). By upper semi-continuity of the Lyapunov exponent, we have \(\varlimsup_{t \rightarrow 0^+} k'(t)=\varlimsup_{t \rightarrow 0^+} L(\mathbb{Q}_t) \leq L(\mu)=0 \).} Thus, by (\ref{eqn:Ialpha}), \(\alpha \mapsto I(\alpha)\) is analytic on \(\interior(\mathrm{A})\) in the irreducible but not strongly irreducible, proximal \(\SL(2,\R)\) setting.

The more difficult case for \(\mu \in \MCP(\SL(2,\R))\) is where \(\mu\) is strongly irreducible and proximal, especially outside of the uniformly hyperbolic setting which is better understood \cite[Theorem 2.9]{BKM20}. By the results of Le Page and Guivarc'h \cite{GP04,GP16}, \(k(t)\) is analytic on \((-\epsilon,\infty)\) which, by the result of Sert (\ref{eqn:Ialpha}), implies that \( I(\alpha)\) is analytic on \((L(\mu)-\epsilon',\sup \mathrm{A})\), where \(\epsilon, \epsilon'>0\) are completely non-explicit. To answer Question \ref{question2} in this setting requires studying the transfer operators \(P_t\) in the subtle \(t<0\) case.

In this paper we describe the value \( t_c<0\) such that \(P_t\) has a spectral gap on some space of \(\zeta\)-H\"older functions \(C^{\zeta}(\BP)\) for \(t \in (t_c, \infty)\), making progress on answering Question \ref{question2} in the \(\SL(2,\R)\), SIP setting. However, for reasons we now discuss, our results fall short of answering the question.

We define
\[ t_0:=\inf\{ t \in \R \cup \{-\infty\}:k(t)> I(0)\}=\sup\{ t \in \R:k(t) = I(0)\} .\]
It is known that \(t_0>-\infty\) when \(\mu\) is finitely supported and one of the matrices in \(\supp \mu\) is an irrational rotation \cite{DGR19} (see \cite[\S 10]{Rus24} for a discussion). This has also been shown for a specific example which has an invariant multicone (which prohibits an irrational rotation being in \(\supp \mu\)) in \cite{MQ25}, where they also give a complete description of the associated equilibrium states for all \(t \in \R\). For the definition of an invariant multicone, we refer the reader to \cite{BKM20}. We note that analyticity of \(\alpha \mapsto I(\alpha)\) holds if and only if \(k(t)\) is analytic on \((t_0,\infty)\) and is differentiable at \(t_0\), i.e. with derivative 0. If \(k(t)\) is not differentiable at \(t_0\), then \(I(\alpha)\) would have a transition point \(\alpha_c\) for which \(\alpha \mapsto I(\alpha)\) is linear on \([0,\alpha_c]\), with derivative in this region equal to \(-t_0\).

We will show that \(k(t)\) is analytic on \((t_c,\infty)\), but in general it is possible that \(t_0<t_c\). However, we emphasise that \(P_t\) failing to have a spectral gap on H\"older spaces for \(t<t_c\) does not imply that \(k(t)\) fails to be analytic at \(t_c\). For instance, in the case where \(S_{\mu}\) has an invariant multicone \(\MCC\), it may be that \(P_t\) has a spectral gap when taken to be acting on some space of H\"older functions on \(\MCC\), since one would expect contraction on average on \(\MCC\) for \(t>t_0\). Even outside the case where \(S_\mu\) has an invariant multicone, \(P_t\) may have a spectral gap for \(t_0<t<t_c\) when taken to be acting on a different space, e.g. other Besov spaces. We conjecture:

\begin{conjecture}
For compactly supported \(\mu\in \mathcal{P}(\SL(2,\R))\) satisfying SIP, \(t \mapsto k(t)\) is analytic everywhere except \(t_0\).
\end{conjecture}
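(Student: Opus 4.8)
The plan is to prove the conjecture in three stages: that \(k\) is constant (hence real analytic) on \((-\infty,t_0)\), that \(k\) is real analytic on \((t_0,\infty)\), and that \(k\) fails to be analytic at \(t_0\), the last following formally from the first two. For the flat region, I would show that \(k(t)=I(0)\) for every \(t\le t_0\). By \eqref{eqn:Ialpha} and the Fenchel--Moreau biconjugation of the convex function \(k\) one has \(k(t)=\sup_{\alpha\in\mathrm{A}}\{\alpha t+I(\alpha)\}\), where \(I\) is concave with \(I\le 0\) and \(\mathrm{A}\subseteq[0,\infty)\) because \(|g|\ge1\) on \(\SL(2,\R)\). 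Writing \(I'(0^+)\in[0,+\infty]\) for the one-sided derivative, concavity gives \(\alpha t+I(\alpha)-I(0)\le\alpha(t+I'(0^+))\), so the supremum is attained at \(\alpha=0\) as soon as \(t\le -I'(0^+)\), whence \(k(t)=I(0)\); a matching argument shows the supremum is \emph{not} attained at \(\alpha=0\) when \(t>-I'(0^+)\), so in fact \(t_0=-I'(0^+)\) and \(k\equiv I(0)\) precisely on \((-\infty,t_0]\). When \(\mu\) is uniformly hyperbolic one has \(\inf\mathrm{A}>0\), so \(I(0)=-\infty\), \(t_0=-\infty\), the flat region is empty, and analyticity of \(k\) on all of \(\R\) is already available from the thermodynamic formalism of \cite{BKM20}.

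The heart of the matter is analyticity on \((t_0,\infty)\). Our results above give a spectral gap for \(P_t\) on a Hölder space \(C^\zeta(\RP^1)\), with \(\zeta=\zeta(t)\), for \(t\in(t_c,\infty)\), hence analyticity of \(k\) there, and \cite{GP04,GP16} gives a gap on Hölder spaces for \(t\in[0,\infty)\); so it remains to treat \(t\in(t_0,t_c]\). The obstruction to a Hölder spectral gap for such \(t\) is a single ``defect'': in the presence of an invariant multicone \(\MCC\) it is the extremal periodic configuration on \(\partial\MCC\), and when \(\supp\mu\) contains an irrational rotation it is the rotational part; in either case its contribution to the iterated operator overtakes the naive Hölder norm exactly at \(t_c\). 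The plan is to replace \(C^{\zeta}(\RP^1)\) by a Banach space adapted to this geometry --- Hölder functions supported near \(\MCC\) when \(\MCC\) exists, and more generally an anisotropic or Besov-type space, or a symbolic model obtained by coding the projective action and inducing on the ``hyperbolic'' return times --- on which the transfer operator stays quasi-compact with a simple leading eigenvalue \(e^{k(t)}\) for every \(t>t_0\); analytic perturbation theory then yields analyticity of \(k\) on \((t_0,\infty)\). Equivalently, one can run the argument through thermodynamic formalism: exhibit a unique equilibrium state \(\mathbb{Q}_t\) for each \(t>t_0\), depending analytically on \(t\) and degenerating onto the elliptic/parabolic locus (or onto \(\partial\MCC\)) as \(t\downarrow t_0\), in the spirit of \cite{Rus24,Hau25,MQ25}.

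Granting the first two stages, failure of analyticity at \(t_0\) is automatic: \(k\equiv I(0)\) on \((-\infty,t_0]\) while \(k(t)>I(0)\) for all \(t>t_0\) by the very definition of \(t_0\), so \(k\) is not constant on any neighbourhood of \(t_0\) and hence not real analytic there. One could additionally determine whether the transition is first order, \(\lim_{t\downarrow t_0}k'(t)>0\) --- which by \eqref{eqn:Ialpha} produces a linear piece of \(I\) on \([0,\alpha_c]\) with slope \(-t_0\) --- or second order, by analysing the degeneration of \(\mathbb{Q}_t\). The main obstacle is the second stage, and more precisely ruling out any \emph{further} phase transition in \((t_0,t_c]\): the transfer operators genuinely lose their spectral gap on standard Hölder spaces there, so one must build a new, geometry-dependent functional-analytic framework and show that the defect responsible for the loss at \(t_c\) is subcritical throughout \((t_0,t_c]\) --- i.e. contributes strictly less than \(e^{nk(t)}\) to the \(n\)-th iterate --- which is why the multicone and rotational cases, though superficially different, must be handled in parallel to reach the same conclusion.
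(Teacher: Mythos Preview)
This statement is a \emph{conjecture}; the paper does not prove it and explicitly leaves open the interval \((t_0,t_c]\). So there is no ``paper's own proof'' to compare against, and the relevant question is whether your proposal actually resolves the open part.

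Your stages 1 and 3 are essentially correct and standard. That \(k\) is nondecreasing and convex, together with \(k(t)=\sup_{\alpha\in\mathrm A}\{I(\alpha)+t\alpha\}\) (Proposition~\ref{prop:PtopissupMCE}), gives \(k\equiv I(0)\) on \((-\infty,t_0]\) when \(t_0>-\infty\); the identity theorem then forces non-analyticity at \(t_0\) once stage 2 is in hand. These parts are not where the difficulty lies.

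The genuine gap is stage 2. You correctly locate the problem on \((t_0,t_c]\), but what follows is not a proof: you \emph{assert} the existence of a Banach space (Besov, anisotropic, H\"older on a multicone, or an induced symbolic model) on which \(P_t\) retains a spectral gap with leading eigenvalue \(e^{k(t)}\), and likewise \emph{assert} the existence and uniqueness of equilibrium states \(\mathbb Q_t\) depending analytically on \(t\). Neither assertion is justified. The paper already makes precisely the same observation --- that a gap may persist on other spaces such as Besov spaces, or on H\"older spaces restricted to an invariant multicone --- and explicitly declines to claim it, because constructing such a space and proving quasi-compactness there is the entire content of the conjecture. Your description of the obstruction (``a single defect'' which is ``subcritical throughout \((t_0,t_c]\)'') is a reformulation of what needs to be shown, not a mechanism for showing it. In particular, in the non-multicone, non-rotational regime you give no candidate space at all, and even in the multicone case \cite{MQ25} treats only a specific example. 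As written, stage 2 is a research programme, not a proof.
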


For \(d>2\) we expect that further phase transitions in \(k(t)\) should be possible. For example, it is likely that there can exist \(t_0<t_{1,2}<0\) such that for \(t<t_{1,2}\),
\[k(t)= \lim_{n \rightarrow \infty} \frac{1}{n} \log \int |g^{\wedge 2}|^{t/2} \diff \mu^n(g),\]
where \(g^{\wedge 2}\) is the map corresponding to \(g\) acting on \(\wedge^2 \R^d\) in the natural way. Note this phenomenon has already been observed in the \(d=2\) case (in which case \(t_0 \equiv t_{1,2}\)). This is also inspired by thermodynamic formalism considerations: for \(t>t_{1,2}\) equilibrium states should have distinct top Lyapunov exponent, but not for \(t<t_{1,2}\). Outside of these phase changes, we expect \(k(t)\) to depend analytically when \(\mu\) is \textit{fully strongly irreducible} and \textit{fully proximal}, that is, when the induced action on \(\wedge^i \R^d\) is strongly irreducible and proximal for all \(i \in \{1,\ldots,d-1\}\); see \cite{BT22}. We further conjecture:

\begin{conjecture}
For compactly supported \(\mu\in \mathcal{P}(\GL(d,\R))\), \(d \geq 2\), which is fully strongly irreducible and fully proximal, \(t \mapsto k(t)\) is analytic everywhere except for at most \(d-1\) points.
\end{conjecture}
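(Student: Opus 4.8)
The natural route to this conjecture is to run the $d=2$ analysis of this paper simultaneously on all exterior powers. For $1 \le i \le d$ write $p_i(s) := \lim_{n\to\infty}\tfrac1n\log\int |g^{\wedge i}|^{s}\diff\mu^n(g)$ for the pressure of the induced action on $\wedge^i\R^d$, so that $p_1 = k$ and, since $\det$ is multiplicative, $p_d(s) = \log\int|\det g|^{s}\diff\mu(g)$ depends real-analytically on $s$. Because $\mu$ is fully strongly irreducible and fully proximal, each $\wedge^i$-action satisfies SIP on $\RP(\wedge^i\R^d)$, so the associated Le Page operators $P^{(i)}_s\psi(x) = \int |g^{\wedge i}x|^{s}\psi(g^{\wedge i}x)\diff\mu(g)$ lie within the scope of the methods developed here. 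The first step is to prove the analogue of our main theorem for each of these: identify the critical parameter $s_i^{*}$ below which $P^{(i)}_s$ loses its spectral gap on Hölder spaces, and deduce that $p_i$ is real-analytic on $(s_i^{*},\infty)$.

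The second step is an envelope formula. From $\sigma_1(g) \ge \big(\sigma_1(g)\cdots\sigma_i(g)\big)^{1/i} \ge \sigma_i(g)$ and monotonicity of $x\mapsto x^{t}$ for $t\le 0$, one gets $\int\sigma_1^{t}\diff\mu^n \le \int|g^{\wedge i}|^{t/i}\diff\mu^n$, hence $k(t)\le \min_{1\le i\le d} p_i(t/i)$ for $t\le 0$; for $t\ge 0$ one trivially has $k(t)=p_1(t)$. The matching lower bound should come from the large deviation principle for the singular value vector (Sert, Benoist--Quint) together with Varadhan's lemma: if $\beta^{*}$ is the Lyapunov vector optimising $\sup_\beta\{t\beta_1 - J(\beta)\}=k(t)$ and its top coordinate has multiplicity $i^{*}$, then $t\beta^{*}_1 = \tfrac{t}{i^{*}}(\beta^{*}_1+\cdots+\beta^{*}_{i^{*}})$ forces $k(t)=p_{i^{*}}(t/i^{*})$, whence $k(t)=\min_i p_i(t/i)$. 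The remaining content is combinatorial: one must show the active index $i(t)$ (the multiplicity of the top Lyapunov exponent of the equilibrium measure at parameter $t$) is non-decreasing as $t$ decreases — here convexity of $k$ and a monotonicity argument for equilibrium states should enter — so that $\R$ partitions into at most $d$ intervals, on the $i$-th of which $k(t)=p_i(t/i)$.

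The conjecture then reduces to checking that $p_i$ is analytic on the interior of the $i$-th interval, equivalently that the boundary $t_{i,i+1}$ between the $i$-th and $(i+1)$-th intervals satisfies $t_{i,i+1}\ge i\cdot s_i^{*}$. The transition in $k$ from "top exponent of multiplicity $i$" to "multiplicity $i+1$" is exactly the transition, inside the $\wedge^i$-world, from a simple to a doubly degenerate top exponent — the $\wedge^i$-analogue of the $d=2$ phenomenon — which is precisely $s_i^{*}$. Granting this, $k$ is a concatenation of at most $d$ real-analytic pieces joined at at most $d-1$ points, which is the assertion (and the preceding conjecture is the case $d=2$, with the single junction at $t_0$, noting $p_2\equiv 0$ for $\SL(2,\R)$).

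The main obstacle is Step 1: pushing the spectral gap for $P^{(i)}_s$ all the way down to the critical parameter in dimension $>2$. The $d=2$ argument here exploits the one-dimensionality of $\RP^1$; for $\wedge^i\R^d$ the relevant contraction-on-average for $s<0$ takes place on a flag variety rather than a projective line, and controlling the transfer operator near criticality there — where it is genuinely non-hyperbolic and the regularity of the Furstenberg measure degenerates — is delicate. A secondary difficulty is the envelope formula's lower bound together with the monotonicity of the active index, which may demand sharper large deviation input than is currently on record; since the operators of Step 1 are themselves the natural tool for producing such estimates, the two steps are somewhat entangled.
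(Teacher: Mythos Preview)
The statement you are addressing is a \emph{conjecture} in the paper, not a theorem: the paper offers no proof, only the heuristic discussion of the transition points $t_{i,i+1}$ in the introduction. There is therefore no ``paper's own proof'' to compare against; your proposal is a strategy for an open problem, and you rightly frame it as such.

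As a strategy it is coherent and matches the paper's own intuition. The envelope picture $k(t)=\min_{1\le i\le d} p_i(t/i)$ is exactly what the paper's informal discussion of $t_{i,i+1}$ suggests, and your reduction to (i) analyticity of each $p_i$ down to a critical parameter, (ii) the envelope identity, and (iii) monotonicity of the active index is the natural decomposition.

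That said, there is a structural gap beyond the obstacles you already flag. Your Step~1, even if it succeeds, would establish analyticity of $p_i$ only on $(s_i^{*},\infty)$ where $s_i^{*}$ is the $\wedge^i$-analogue of the paper's $t_c$, i.e.\ the threshold below which the transfer operator loses its H\"older spectral gap. But the relevant transition point $t_{i,i+1}/i$ is the $\wedge^i$-analogue of $t_0$, not of $t_c$, and the paper is explicit that $t_0<t_c$ is possible (see the discussion following the definition of $t_0$, and the remark that ``$P_t$ failing to have a spectral gap on H\"older spaces for $t<t_c$ does not imply that $k(t)$ fails to be analytic at $t_c$''). Your identification ``which is precisely $s_i^{*}$'' conflates these two thresholds. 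In other words, even in the base case $d=2$ the paper does \emph{not} prove the first conjecture (analyticity except at $t_0$); it proves analyticity only on $(t_c,\infty)$. Your scheme therefore reduces the second conjecture to a higher-dimensional version of the first conjecture together with the envelope formula, and neither ingredient is currently available.

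In summary: a reasonable roadmap, consistent with the paper's heuristics, but not a proof, and the gap between $t_c$-type and $t_0$-type thresholds means Step~1 as stated would not suffice even if it could be carried out.
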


We define \(t_{i,i+1}\) analogously to \(t_{1,2}\) above. It is not clear whether \(k(t)\) should be differentiable at these transition points \(t_{i,i+1}\). Already in the \(\SL(2,\R)\) case this is delicate: for \(t<0\) \(\mu \mapsto tL(\mu)\) is \textit{lower} semi-continuous, so the argument used, for example, in the irreducible case above to show differentiability at 0 breaks down completely, even if we did have the relevant equilibrium states and spectral gaps for \(t\in (t_0,\infty)\). We ask:

\begin{question}
For compactly supported \(\mu\in \mathcal{P}(\GL(d,\R))\) which is fully strongly irreducible and fully proximal, is \(t \mapsto k(t)\) differentiable at the transition points \(t_{i,i+1}\) for \(1 \leq i \leq d-1\)?
\end{question}

\subsubsection*{Motivation 2: Dimension theory of Furstenberg measure} 
Recall in the SIP setting the Furstenberg measure is the unique measure on the projective space satisfying \(\mu * \nu_F =\nu_F\). The study of the dimension theory and regularity of Furstenberg measure, and stationary measures more generally, has received considerable attention in recent years, e.g. \cite{BPS12, Bou12b,Hoc14, HS17,Shm19a,Var19, Rap21,ST21, Fen23, LL23, Kit25,CJ26,Jur26}. The dimension theory of Furstenberg measure is central to the study of self-affine measures and sets \cite{BK17,BHR19,BJKR21,HR22,Rap24}. 

Strengthening a result of Ledrappier \cite{Led83}, Hochman and Solomyak showed in \cite{HS17} that \(\nu_F\) is \textit{exact dimensional} when \(\mu \in \MCP(\SL(2,\R))\) satisfies SIP and has \(\int \log|g| \diff \mu<\infty\). This means there exists \(D>0\) such that 
\[\lim_{r \rightarrow 0} \frac{- \log \nu_F(B(x,r))}{- \log r} \rightarrow D\]
for \(\nu_F\)-a.e. \(x\). Furthermore, the constant \(D\) is known to be equal to
\[D:= \frac{h_F(\mu)}{2L(\mu)},\]
where
\[h_F(\mu):=\int \int \log \frac{\diff g \nu_F}{\diff \nu_F}(x) \diff g\nu_F(x) \diff \mu(g)\]
is the \textit{Furstenberg entropy}. This result has been generalised to higher dimensions when \(\mu\) is finite/discrete in the papers \cite{Rap21,LL23}. Under the Diophantine property, Hochman and Solomyak improved the above result by showing that \(h_F(\mu)\) is equal to the random walk entropy \(h_{\mathrm{RW}}(\mu)\) (see \cite[Theorem 1.1]{HS17}), which is easier to work with in practice. This result has recently been extended to \(\SL(2,\C)\) matrices by Rapaport and Ren in \cite{RR25}.

There has also been significant work towards understanding when \(\nu_F\) is absolutely continuous and has smooth density. It was conjectured in \cite{KP11} that \(\nu_F\) is singular whenever \(\mu\) is finitely supported, but this was disproved in \cite{BPS12}, though a version of this conjecture is still open for discrete groups. Bourgain gave examples of \(\mu \in \MCP(\SL(2,\R))\) with finite support such that the Furstenberg measure is arbitrarily smooth in \cite{Bou12a}, and this was extended to higher dimensions in \cite{BQ18}. In \cite{Kit25}, Kittle gave a sufficient condition for \(\nu_F\) to be absolutely continuous and used this to construct a broad class of examples for which this holds. 

In recent work \cite{Usu25}, Usuki studied \(L^q\) dimension of \(\nu_F\) in the \(\SL(2,\R)\) setting assuming a strong Diophantine condition and uniform hyperbolicity, showing that previously unseen behaviour is possible as \(q \rightarrow \infty\). In particular, it can occur that for \(q\) sufficiently large the \(L^q\)-dimension is equal to the Frostman dimension. Usuki further proved a detailed description of the \(L^q\)-dimensions for \(q>1\) below this transition point and, when the transition does not occur, for all \(q>1\). Phase transitions in \(L^q\)-dimension have also been exhibited in the self-affine literature (via different mechanisms); see \cite{Fal99,FLMY21}. For a survey on and applications of \(L^q\) spectrum in related settings, we refer the reader to \cite{Shm19a,Shm19b,JR21}.

As mentioned above, it is a classical result of Guivarc'h \cite{Gui90} that \(\dim_F \nu_F>0\), and this was used alongside large deviation theory to show Fourier decay of \(\nu_F\) in \cite{Li18,Li22}. However, as we have already discussed, these results are not explicit, and the strengthening of the result of Guivarc'h in a related setting in \cite{AS22} is not sharp. Obtaining an explicit and refined dimension theory (e.g. multifractal formalism results) for \(\nu_F\) is a notoriously difficult problem, and much remains unknown.

The second motivation for this paper is to introduce a method to obtain dimension-theoretic results for Furstenberg measure which hold generally, that is without needing additional technical assumptions beyond the natural SIP condition (consequently, giving less strong results than when stronger conditions are assumed). In particular, in this paper we prove a formula for the Frostman dimension of \(\nu_F\) in the \(\SL(2,\R)\), SIP setting via transfer operators and Fourier analysis. We remark that this result is new even under the additional assumptions of the strong Diophantine condition and uniform hyperbolicity when \(\dim_F \nu_F<1\).

Since the Fourier decay of \(\nu_F\) is intimately related to the large deviation theory previously discussed (see \cite[Remark 1.8]{Li22}), it is likely that our results can be used to obtain more explicit Fourier decay estimates as well. The author further expects that the methods in this paper can be extended to study the \(L^q\)-dimensions of \(\nu_F\) and plans to revisit this in a subsequent paper.

\subsection{Main results}
For now and hereafter, let \(\mu\) be a compactly supported Borel probability measure on \(\SL(2,\R)\) satisfying SIP. To ease the notation we let \(\BP:=\RP^1\). For \(x=\R u, y=\R v \in \BP\), we will write
\begin{equation}\label{eqn:innerproduct}
    |\innerproduct{x}{y}|:=\frac{|\innerproduct{u}{v}|}{|u| |v|},
\end{equation}
 and recall we defined the metric on \(\BP\) by 
 \[ d_{\BP}(x,y):= \frac{| u \wedge v|}{|u| |v|} \equiv \sqrt{ 1-|\innerproduct{x}{y}|^2}.\]
 
Recall we also defined the sets
\[E(\alpha,\epsilon,n):=\left\{g \in \SL(2,\R) :\left|\frac{1}{n}\log |g|-\alpha \right| \leq \epsilon \right\}.\]
We will denote by \(\omega_-(g)\) the element in \(\BP\) corresponding to the smallest singular value in the preimage of \(g\) (see \S \ref{subsec:tc'zetat'}). For \(\alpha \geq 0\), \(\epsilon>0\), \(n \in \N\),  \(\gamma \in [0,2]\), and \(x \in \BP\), we define
\[ E_{\gamma}(\alpha,\epsilon,n,x):=\left\{g \in E(\alpha,\epsilon,n):d_{\BP}(x,\omega_-(g)) \leq e^{-n(\gamma-\epsilon) \alpha} \right\}. \]
We further define
\[I_{\gamma}(\alpha,\epsilon,n):= \frac{1}{n} \log  \left(  \sup_{x \in \BP} \mu^n(E_{\gamma}(\alpha,\epsilon,n,x)) \right) ,\]
\[I_{\gamma}(\alpha,\epsilon):= \varlimsup_{n \rightarrow \infty} I_{\gamma}(\alpha,\epsilon,n) ,\]
\[I_{\gamma}(\alpha):= \lim_{\epsilon \rightarrow 0} I_{\gamma}(\alpha,\epsilon) .\]
Note that \(I_0(\alpha) \equiv I(\alpha)\) defined in the introduction and that \(I_\gamma(0) \equiv I_0(0)\) for any \(\gamma \in [0,2]\). 

Let us give a brief justification of why these are natural in the study of the spectral properties of \(P_t\). Following \cite[\S 4]{Mau14} we define 
\[k^+(t):= \varlimsup_{n \rightarrow \infty} \frac{1}{n}\log \sup_{x \in \BP} P_t^n 1(x), \qquad k^-(t):=\varliminf_{n \rightarrow \infty} \frac{1}{n}\log \inf_{x \in \BP} P_t^n 1(x).\] 
Using only elementary arguments, one can show that 
\[k^+(t)=\sup_{\alpha \in \mathrm{A}} \sup_{\gamma \in [0,2]} \{I_{\gamma}(\alpha)+t(1-\gamma)\alpha\}, \quad \forall t \in  \R,\] 
and \(k^-(t) =k(t)\) for all \(t \in (-1,0]\) (see \S\ref{subsec:ktk+tk-t}). Moreover, the crucial proposition \cite[Proposition 2.3]{BL85} used to prove the spectral gap at \(t=0\) is equivalent to the statement that \(\sup_{\alpha \in \mathrm{A}} \sup_{\gamma \in [1,2]}I_\gamma(\alpha)<0\) (see Proposition \ref{prop:Igamma<0forgammain12}). Already implicit in the work of Le Page is the following phenomenon: estimates of \(I_\gamma(\alpha)\) for larger \(\gamma\) implies non-trivial estimates of \(I_{\gamma}(\alpha)\) for smaller \(\gamma\).

We define
\[t_c:=  \inf\{t \in \R: k(t)> \sup_{\alpha \in \mathrm{A}} \{I_{2}(\alpha)-t \alpha\} \} \]
and note that \(-1 \leq t_c<0\). For \(t \in (t_c,0]\) we define \(\zeta_t\) to be the unique solution of 
\[ \sup_{\alpha \in \mathrm{A}}  \{ I_{2}(\alpha)-t \alpha+2\zeta_t \alpha\}=k(t).\]  
This is equivalently equal to
\begin{equation*}
    \zeta_t:=\sup \{\zeta \in \R:\sup_{\alpha \in \mathrm{A}} \{I_2(\alpha)-t\alpha+2\zeta \alpha \}<k(t)\}.
\end{equation*}
Necessarily, \(\zeta_t \in (0,1+t]\). 

The following theorem describes the region in which \(P_t\) has a spectral gap on a H\"older space \(C^{\zeta}(\BP)\).

\begin{theo}\label{theo:spectralgap}
The map \(t \mapsto k(t)\) is analytic on \((t_c,0)\). Moreover, for all \(t_c<t \leq 0\), there exists a unique probability measure \(\nu_t \in \MCP(\BP)\) such that 
        \[(P_t)_{\Cdot} \nu_t=e^{k(t)} \nu_t, \]
        and a unique H\"older continuous function \(h_t:\BP \rightarrow \R\) satisfying \(\nu_t(h_t)=1\) and 
        \[P_t h_t=e^{k(t)} h_t.\]
        For all \(0<\zeta<\zeta_t\), \(P_t:C^{\zeta}(\BP) \rightarrow C^{\zeta}(\BP)\) has a spectral gap:
        \[P_t=e^{k(t)}(\nu_t \otimes  h_t+S_t)\]
        where \((\nu_t \otimes  h_t) f:=\nu_t(f) h_t\) and \(S_t:C^{\zeta}(\BP) \rightarrow C^{\zeta}(\BP)\) satisfies \(\rho(S_t)<1\).
\end{theo}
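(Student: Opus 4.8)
The plan is to realize $P_t$ as a family of operators on the Hölder spaces $C^\zeta(\BP)$ and to extract the spectral gap from the quasi-compactness criterion of Ionescu–Tulcea–Marinescu / Hennion, whose hypotheses I verify via careful estimates on the iterated kernels. The backbone is a Lasota–Yorke (doubly-normed) inequality: for $0<\zeta<\zeta_t$ there should be $n_0$, $\lambda<e^{k(t)}$, and $C>0$ with
\[
\|P_t^{n_0} f\|_{C^\zeta} \le \lambda \,[f]_\zeta + C\|f\|_\infty ,
\]
together with the bound $\|P_t^n f\|_\infty \le C e^{nk(t)}\|f\|_\infty$, which, combined with compactness of the inclusion $C^\zeta \hookrightarrow C^{\zeta'}$ for $\zeta'<\zeta$, gives quasi-compactness of $e^{-k(t)}P_t$ on $C^{\zeta'}$. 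The Hölder seminorm of $P_t^n f$ at scale $d_\BP(x,y)$ is controlled by two competing contributions: the factor $|gx|^t$ itself varies in $x$ (producing a term governed by $\sup_\alpha\{I_2(\alpha) + t(1-\gamma)\alpha\}$-type quantities with $\gamma$ close to $2$), and the displacement $d_\BP(gx,gy)$ can be as large as $d_\BP(x,y)\,|gx|^{-2}$ near $\omega_-(g)$, which is exactly why the sets $E_\gamma(\alpha,\epsilon,n,x)$ and the quantities $I_\gamma(\alpha)$ enter. Summing the contributions of $g$ according to which $E_\gamma(\alpha,\epsilon,n,x)$ it lies in, and using the Le Page-type fact (quoted as Proposition \ref{prop:Igamma<0forgammain12}) that $\sup_{\alpha}\sup_{\gamma\in[1,2]} I_\gamma(\alpha)<0$, the $[f]_\zeta$-coefficient after $n$ steps is of order $\exp\bigl(n\sup_{\alpha}\{I_2(\alpha)-t\alpha+2\zeta\alpha\}\bigr)$, which is $<e^{nk(t)}$ precisely when $\zeta<\zeta_t$ by the definition of $\zeta_t$; choosing $n_0$ large then yields the inequality above.

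Once quasi-compactness is in hand, I identify the peripheral spectrum. The leading eigenvalue of $e^{-k(t)}P_t$ on $C^{\zeta'}$ is $1$: one direction is $k^+(t)=k(t)$ on the relevant range (established in \S\ref{subsec:ktk+tk-t}), giving the spectral radius bound; for the matching lower bound and for simplicity of the eigenvalue I use a positivity/Doeblin-type argument — $P_t$ is a positive operator, $P_t\mathbf 1>0$, and strong irreducibility plus proximality give, after finitely many steps, a uniform minorization $P_t^{m}\mathbf 1(x)\ge c\cdot(\text{average of }f\text{ against some measure})$ forcing aperiodicity and a one-dimensional top eigenspace. This produces the eigenfunction $h_t\in C^{\zeta'}(\BP)$, $h_t>0$, with $P_t h_t = e^{k(t)}h_t$; the dual operator $(P_t)_\Cdot$ acting on measures has a corresponding eigenmeasure $\nu_t$, normalized by $\nu_t(h_t)=1$, and one checks $\nu_t$ has no atoms (again via SIP) so it lives genuinely on $\BP$. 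The rank-one projection onto this eigenspace is $\nu_t\otimes h_t$ and $S_t := e^{-k(t)}P_t - \nu_t\otimes h_t$ has spectral radius $<1$ on $C^{\zeta'}$; since $\zeta'<\zeta_t$ was arbitrary, the statement holds on $C^\zeta$ for every $0<\zeta<\zeta_t$ (a small bootstrap argument upgrades $h_t$ from $C^{\zeta'}$ to $C^\zeta$ for all $\zeta<\zeta_t$ by applying the Lasota–Yorke inequality to the fixed function $h_t$).

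Finally, analyticity of $t\mapsto k(t)$ on $(t_c,0)$ follows from analytic perturbation theory: $t\mapsto P_t$ is an analytic family of bounded operators on a fixed space $C^{\zeta'}(\BP)$ — valid as long as $\zeta'<\zeta_t$, and since $t\mapsto\zeta_t$ is lower semicontinuous (indeed continuous, from the definition as a solution of the variational equation) one can fix $\zeta'$ on any compact subinterval of $(t_c,0)$ — so the simple isolated eigenvalue $e^{k(t)}$ depends analytically on $t$ by Kato–Rellich, whence so does $k(t)$. The main obstacle I anticipate is the Lasota–Yorke estimate itself, specifically the bookkeeping that converts the geometric blow-up of $d_\BP(gx,gy)$ near $\omega_-(g)$ into the contribution indexed by $I_2(\alpha)$: one must handle the full range $\gamma\in[0,2]$ of "how close $x$ is to the contracted direction" simultaneously, and show that the worst case is genuinely $\gamma=2$ after weighting by $|gx|^t$ with $t<0$ — this is where the precise definition of $t_c$ and the hypothesis $t>t_c$ are used, and where any naive bound loses the sharp exponent $\zeta_t$. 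A secondary technical point is ensuring the constants in all estimates are uniform in $x\in\BP$ and survive the $\epsilon\to 0$, $n\to\infty$ limits defining $I_\gamma(\alpha)$, which requires a compactness argument over $\alpha\in\mathrm A$ together with upper semicontinuity of the maps $\alpha\mapsto I_\gamma(\alpha)$.
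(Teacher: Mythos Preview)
Your overall architecture (Lasota--Yorke $\Rightarrow$ Hennion quasi-compactness $\Rightarrow$ simple top eigenvalue $\Rightarrow$ analytic perturbation) matches the paper's, but there is a genuine gap at exactly the point you flag as ``the main obstacle''. The direct Lasota--Yorke estimate does \emph{not} give a $[f]_\zeta$-coefficient of order $\exp\bigl(n\sup_\alpha\{I_2(\alpha)-t\alpha+2\zeta\alpha\}\bigr)$. What Proposition~\ref{prop:zetabougerol} actually yields is
\[
\exp\Bigl(n\sup_{\alpha}\sup_{\gamma\in[0,2]}\{I_\gamma(\alpha)+(t-2\zeta)(1-\gamma)\alpha\}\Bigr),
\]
so the threshold one obtains directly is $\zeta_t'$ (defined via the supremum over $\gamma\in[1,2]$), not $\zeta_t$ (defined via $\gamma=2$ alone). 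You correctly identify that one must ``show that the worst case is genuinely $\gamma=2$'', but you have no mechanism for this, and it is the heart of the paper.

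The paper's route to $\zeta_t'=\zeta_t$ is indirect and constitutes most of the work: first prove the spectral gap only for $\zeta<\zeta_t'$; use it to build $h_t$ and then show (Lemma~\ref{lem:lebesgueaeequality}) that $h_t$ is, $\lambda$-a.e., the Riesz-type potential $\int|\langle x,w\rangle|^t\,d{}^*\!\nu_t(w)$ of an eigenmeasure ${}^*\!\nu_t$ of the adjoint operator ${}^*\!P_t$. A Fourier-analytic converse (Proposition~\ref{prop:Holdergivesdimensionbound}, relying on the fractional Laplacian on $\T$) then transfers H\"older regularity of $h_t$ into a Frostman lower bound $\dim_F{}^*\!\nu_t\ge -t+\zeta_t'$. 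Feeding this back through the eigenmeasure equation (Lemma~\ref{lem:boundsforQgamma}) gives $I_\gamma(\alpha)+t(1-\gamma)\alpha\le k(t)-\zeta_t'\gamma\alpha$ for \emph{all} $\gamma$, and a short contradiction argument (Lemma~\ref{lem:zetat'zetat}) then forces the maximiser in the definition of $\zeta_t'$ to sit at $\gamma=2$, whence $\zeta_t'=\zeta_t$. None of this is visible in your proposal; without it you prove the theorem only with $\zeta_t'$ in place of $\zeta_t$. A smaller point: you invoke $k^+(t)=k(t)$ as established in \S\ref{subsec:ktk+tk-t}, but that section only gives $k^-(t)=k(t)$; the equality $k^+(t)=k(t)$ is itself a consequence of the existence of a strictly positive eigenfunction (Lemma~\ref{lem:ktk+}), so the logic there is circular as written.
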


We also define the transfer operators \({}^*\! P_t\) by
\[{}^*\! P_t \psi(w):=\int |g^* w|^t \psi(g^* w) \diff \mu(g).\]
where \(g^*\) is the adjoint (i.e. transpose) of \(g\). For a probability measure \(\nu \in \MCP(\BP)\) we write \(({}^*\! P_t)_{\Cdot} \nu\) to be the measure defined by 
\[({}^* \! P_t)_{\Cdot} \nu (f)= \nu ({}^* \! P_t f) \]
for all measurable \(f\). As in the \(t>0\) case \cite[Theorem 2.6]{GP16} and \((-\epsilon,0)\) case \cite{GQX22}, we can describe the eigenfunctions \(h_t\) in terms of eigenmeasures of \({}^*\! P_t\). A new development in this work is that we further relate the H\"older regularity of \(h_t\) with the Frostman dimension of the eigenmeasures, and both of these with our \(\zeta_t\).

\begin{theo}\label{theo:measure}
    For all \(t \in (t_c,0]\) there exists a unique probability measure \({}^{*} \! \nu_t \in \MCP(\BP)\) satisfying \(({}^{*} \!  P_t )_{\Cdot} {}^{*} \! \nu_t=e^{k(t)} {}^{*} \! \nu_t\) and \(\dim_F{}^{*} \! \nu_t>-t\). This satisfies \(\dim_F{}^{*} \! \nu_t=-t+\zeta_t\). Moreover, the eigenfunction \(h_t(x)\) in Theorem \ref{theo:spectralgap} is equal to a scalar multiple of \begin{equation}\label{eqn:eigenfunction}
            x \mapsto \int |\innerproduct{x}{w}|^t \diff {}^{*} \! \nu_t(w).
        \end{equation}
        For \(t \not=0\) we further have \(\sup\{\zeta>0: h_t \in C^{\zeta}(\BP) \}=\zeta_t\).
\end{theo}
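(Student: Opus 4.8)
\emph{Proof plan.} The operator \({}^{*}\! P_t\) is the Le Page operator of the push-forward \(\check\mu\) of \(\mu\) under \(g\mapsto g^{*}\); since \(|g^{*}|=|g|\), \(\check\mu^{n}\) has the same law as the transpose of \(\mu^{n}\), and \(g^{*}=Jg^{-1}J^{-1}\) with \(J\) the rotation by \(\tfrac{\pi}{2}\) (acting on \(\BP\) as \(x\mapsto x^{\perp}\)), one finds that \(\check\mu\) again satisfies SIP with \(k^{\check\mu}=k^{\mu}=:k\) and (after a time-reversal argument, carried out in \S\ref{subsec:tc'zetat'}) \(t_c^{\check\mu}=t_c\), \(\zeta_t^{\check\mu}=\zeta_t\). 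Applying Theorem \ref{theo:spectralgap} to \(\check\mu\) gives, for each \(t\in(t_c,0]\), a probability eigenmeasure \({}^{*}\!\nu_t\) of \({}^{*}\! P_t\). It remains to (a) show \(\dim_F{}^{*}\!\nu_t=-t+\zeta_t\) and that \({}^{*}\!\nu_t\) is the unique eigenmeasure with \(\dim_F>-t\); (b) prove \eqref{eqn:eigenfunction}; (c) compute \(\sup\{\zeta:h_t\in C^{\zeta}(\BP)\}\).

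For (a) we iterate the eigenmeasure equation: for every \(n\),
\[
{}^{*}\!\nu_t\bigl(B(w_0,r)\bigr)=e^{-nk(t)}\iint|g^{*}w|^{t}\,\mathbbm{1}\!\bigl[g^{*}w\in B(w_0,r)\bigr]\diff{}^{*}\!\nu_t(w)\diff\mu^{n}(g),
\]
decompose \(\mu^{n}\) over the scale \(\alpha=\tfrac1n\log|g|\) and split the \(w\)-integral by the position of \(w\) relative to \(\omega_-(g^{*})\), using that \(g^{*}\) maps \(\BP\setminus B(\omega_-(g^{*}),\delta)\) into a ball of radius \(\lesssim|g|^{-2}\delta^{-1}\) about its attracting direction \((\omega_-(g))^{\perp}\), with \(|g^{*}w|\asymp|g|\,d_{\BP}(w,\omega_-(g^{*}))\) there, and \(|g^{*}w|\asymp|g|^{-1}\) on \(B(\omega_-(g^{*}),\delta)\). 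Matching \(|g|^{-2}\) to \(r\) (most cleanly by iterating up to the stopping time \(N=\min\{n:|g_1\cdots g_n|^{2}\ge r^{-1}\}\)), the event \(g^{*}w\in B(w_0,r)\) either forces \((\omega_-(g))^{\perp}\in B(w_0,O(r))\), i.e. \(g\in E_\gamma(\alpha,\epsilon,n,w_0^{\perp})\) with \(\gamma=\log(1/r)/(n\alpha)\), of \(\mu^{n}\)-mass \(\lesssim e^{n(I_\gamma(\alpha)+\epsilon)}\le e^{n(I_2(\alpha)+(2-\gamma)\alpha+\epsilon)}\), or confines \(w\) to a ball of radius \(\lesssim|g|^{-2}r\) about \(\omega_-(g^{*})\), of \({}^{*}\!\nu_t\)-mass \(\lesssim(|g|^{-2}r)^{s}\) under a running Frostman bound \(\dim_F{}^{*}\!\nu_t\ge s\). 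Assembling the weight estimates, summing over \(\alpha\), and using \(\sup_\alpha\{I_2(\alpha)-t\alpha+2\zeta_t\alpha\}=k(t)\), one gets \({}^{*}\!\nu_t(B(w_0,r))\lesssim_\epsilon r^{-t+\zeta_t-\epsilon}\); a self-improving iteration removes the a priori bound and yields \(\dim_F{}^{*}\!\nu_t\ge-t+\zeta_t>-t\). For the reverse inequality let \(\alpha^{*}\) attain the supremum above; along a subsequence of \(n\), by definition of \(I_2\), there is \(w_0=w_0(n)\) with \(\mu^{n}(E_2(\alpha^{*},\epsilon,n,w_0^{\perp}))\gtrsim e^{n(I_2(\alpha^{*})-\epsilon)}\), and for such \(g\) the map \(g^{*}\) sends a set of \({}^{*}\!\nu_t\)-mass \(\ge\tfrac12\) into \(B(w_0,r_n)\), \(r_n\asymp e^{-2n\alpha^{*}}\), with weight \(\asymp e^{n\alpha^{*}t}\); the eigenmeasure equation then gives \({}^{*}\!\nu_t(B(w_0,r_n))\gtrsim e^{n(I_2(\alpha^{*})-k(t)+\alpha^{*}t)}=r_n^{-t+\zeta_t+o(1)}\), so \(\dim_F{}^{*}\!\nu_t\le-t+\zeta_t\).

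For (b), a computation with unit representatives gives \(|gx|^{t}|\innerproduct{gx}{w}|^{t}=|g^{*}w|^{t}|\innerproduct{x}{g^{*}w}|^{t}\). Since \(\dim_F{}^{*}\!\nu_t>-t\), the potential \(\tilde h_t(x):=\int|\innerproduct{x}{w}|^{t}\diff{}^{*}\!\nu_t(w)\) is finite, lies in \(\bigcap_{\zeta<\zeta_t}C^{\zeta}(\BP)\) by a dyadic Riesz-potential estimate, and is strictly positive; Fubini together with the displayed identity and \(({}^{*}\! P_t)_{\Cdot}{}^{*}\!\nu_t=e^{k(t)}{}^{*}\!\nu_t\) — extended from bounded test functions to \(|\innerproduct{x}{\cdot}|^{t}\) by monotone approximation — shows \(P_t\tilde h_t=e^{k(t)}\tilde h_t\). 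By uniqueness of Hölder eigenfunctions in Theorem \ref{theo:spectralgap}, \(\tilde h_t=\nu_t(\tilde h_t)\,h_t\), which is \eqref{eqn:eigenfunction}. For uniqueness in (a): in the coordinate \(\BP\cong\R/\pi\Z\) the transform \(\nu\mapsto\tilde h_\nu\) is convolution with \(K(\theta)=|\cos\theta|^{t}\), whose Fourier coefficients are, for \(t<0\), ratios of \(\Gamma\)-functions that never vanish; hence \(\nu\mapsto\tilde h_\nu\) is injective, so a competing probability eigenmeasure \(\nu'\) with \(\dim_F\nu'>-t\) has \(\tilde h_{\nu'}\) proportional to \(\tilde h_{{}^{*}\!\nu_t}\), forcing \(\nu'={}^{*}\!\nu_t\). (For \(t=0\), \({}^{*}\!\nu_0\) is the \(\check\mu\)-Furstenberg measure, unique by strong irreducibility.)

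For (c), \(\sup\{\zeta:h_t\in C^{\zeta}\}\ge\zeta_t\) is Theorem \ref{theo:spectralgap}, so it suffices to show \(h_t\notin C^{\zeta}\) for \(\zeta>\zeta_t\) (when \(t\ne0\)). Iterating \(P_th_t=e^{k(t)}h_t\), for the radii \(r_n\) and points \(x_n\) of (a) we produce \(x_n'\) with \(d_{\BP}(x_n,x_n')=r_n\) and \(|h_t(x_n)-h_t(x_n')|\gtrsim r_n^{\zeta_t+\epsilon}\): in \(e^{-nk(t)}\int\bigl(|gx_n|^{t}h_t(gx_n)-|gx_n'|^{t}h_t(gx_n')\bigr)\diff\mu^{n}(g)\) the dominant contribution comes from the \(g\) whose contracting direction \(\omega_-(g)\) separates \(x_n\) from \(x_n'\) — an \(E_2(\alpha^{*},\epsilon,n,\cdot)\)-event of the frequency built into \(\alpha^{*}\) — for which \(gx_n,gx_n'\) are macroscopically apart, and one checks these contributions do not cancel; this is the oscillation estimate dual to the Frostman lower bound of (a). The crux throughout is exactly this large-deviation bookkeeping in (a) and (c): turning the iterated eigenmeasure and eigenfunction identities into the \emph{sharp} exponents \(-t+\zeta_t\) and \(\zeta_t\) requires the stopping-time coupling of \(n,\alpha,r\), the self-improving control of the residual mass near \(\omega_-(g^{*})\), the passage from \(I_\gamma\) to \(I_2\), and — for the lower bounds — genuinely two-sided asymptotics of \(I_2(\alpha),k(t)\) together with the non-cancellation argument; the transpose symmetry, the algebraic identity, the Riesz-potential estimates and the \(\Gamma\)-function computation are comparatively soft.
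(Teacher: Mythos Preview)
Your overall architecture differs from the paper's, and the difference is not cosmetic: the paper does \emph{not} obtain the Frostman lower bound \(\dim_F{}^{*}\!\nu_t\ge -t+\zeta_t\) by iterating the eigenmeasure identity as you propose in (a). Instead it runs the logic in the opposite direction: the Lasota--Yorke estimate already gives \(h_t\in C^{\zeta}(\BP)\) for all \(\zeta<\zeta_t'\); then, having shown that a scalar multiple of \(h_t\) agrees \(\lambda\)-a.e.\ with the Riesz potential \(\int|\innerproduct{x}{w}|^t\diff{}^{*}\!\nu_t(w)\), it invokes the Fourier-analytic converse (Proposition~\ref{prop:Holdergivesdimensionbound}) to deduce \(\dim_F{}^{*}\!\nu_t\ge -t+\zeta_t'\). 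Only \emph{after} this does the eigenmeasure iteration enter (Lemma~\ref{lem:boundsforQgamma}), and it is used in the reverse direction --- a Frostman bound on \({}^{*}\!\nu_t\) yields \(I_\gamma(\alpha)+t(1-\gamma)\alpha\le k(t)-\zeta_t'\gamma\alpha\), which forces \(\zeta_t'=\zeta_t\) and \(\dim_F{}^{*}\!\nu_t\le -t+\zeta_t\).

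Your route in (a) has two concrete gaps. First, the stopping-time coupling \(N=\min\{n:|g_1\cdots g_n|^2\ge r^{-1}\}\) does not interact with the eigenmeasure identity: \(({}^{*}\!P_t)^n_\Cdot{}^{*}\!\nu_t=e^{nk(t)}{}^{*}\!\nu_t\) holds for fixed \(n\), and there is no optional-stopping theorem for the \(|g^{*}w|^t\)-weighted dynamics without first normalising by an eigenfunction of \({}^{*}\!P_t\) --- which would require the spectral gap for \({}^{*}\!P_t\), i.e.\ precisely the unjustified claim \(t_c^{\check\mu}=t_c\). (The paper explicitly allows \({}^{*}\!t_c\neq t_c\); see the remark preceding the construction of \({}^{*}\!\nu_t\) as a weak\(^*\) limit in \S\ref{subsec:eigenfunction}, which is designed to avoid this issue.) Second, with fixed \(n\) your decomposition does not reach the sharp exponent: from \(I_\gamma(\alpha)\le I_2(\alpha)+(2-\gamma)\alpha\) and the definition of \(\zeta_t\) one gets \(I_\gamma(\alpha)+t(1-\gamma)\alpha\le k(t)+(2-\gamma)\alpha(1+t)-2\zeta_t\alpha\), and comparing with the target \(k(t)-\zeta_t\gamma\alpha\) needed for \(r^{-t+\zeta_t}\) reduces to \((1+t)\le\zeta_t\), which fails in general since \(\zeta_t\le 1+t\). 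The ``self-improving'' residual bound cannot rescue this: when the residual ball near \(\omega_-(g^{*})\) has radius \(\asymp|g|^{-2}r\), its mass under a running Frostman exponent \(s<-t+\zeta_t\) contributes a factor \(r^s\), not \(r^{-t+\zeta_t}\), so the iteration stalls. The paper's bootstrap closes precisely because the H\"older input \(\zeta_t'\) is obtained independently of any Frostman information on \({}^{*}\!\nu_t\).

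For (c), the paper again avoids your oscillation/non-cancellation argument: once \(\dim_F{}^{*}\!\nu_t=-t+\zeta_t\) is established, Proposition~\ref{prop:Holdergivesdimensionbound} (applied contrapositively) gives \(h_t\notin C^{\zeta}\) for \(\zeta>\zeta_t\) immediately, since \(h_t\in C^{\zeta}\) would force \(\dim_F{}^{*}\!\nu_t\ge -t+\zeta>-t+\zeta_t\). Your parts (b) and the uniqueness argument via non-vanishing Fourier coefficients of \(|\cos\theta|^t\) are essentially the paper's (Lemma~\ref{lem:Ptdelta*Pt} and Proposition~\ref{prop:Inu1=Inu2impliesnu1=nu2}).
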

Observe that for \(t \in (t_c,0)\) \(P_t\) has a spectral gap on \(C^{\zeta}(\BP)\) for all \(\zeta>0\) for which the natural eigenfunction (\ref{eqn:eigenfunction}) is in \(C^{\zeta}(\BP)\). We also observe that either \(t_c=t_0\) or \(\sup\{\zeta>0: h_t \in C^{\zeta}(\BP) \} \rightarrow 0\) as \(t \downarrow t_c\) (or both). That is, \(t_c\) is indeed the point at which \(P_t\) ceases to have a spectral gap on a space of H\"older functions on \(\BP\).

It is noteworthy that the Frostman dimension of the eigenmeasures is controlled by the regularity at the very smallest scales, that is, the quantities \(I_{\gamma}(\alpha)\) for \(\gamma=2\). The next theorem gives quantitative bounds for the regularity (the \(I_{\gamma}(\alpha)\)) at larger scales (\(\gamma<2\)) from the quantities \(I_2(\alpha)\). Again it says that in some sense irregularity cannot be gained at larger scales.

\begin{theo}\label{theo:Igammaestimates}
    Let \(t \in (t_c,0]\). For all \(\alpha \in \mathrm{A} \) and \(\gamma \in [0,2]\),
        \[I_{\gamma}(\alpha)+t(1-\gamma) \alpha \leq k(t)-\zeta_t \gamma \alpha.\]
\end{theo}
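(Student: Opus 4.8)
The plan is to deduce Theorem~\ref{theo:Igammaestimates} from a single concatenation (submultiplicativity) estimate for the exponents $I_\gamma(\alpha)$, combined with the defining properties of $I(\cdot)$ and of $\zeta_t$. Concretely, I would first prove that for every $\alpha\in\mathrm{A}$ and every $\gamma\in[0,2)$,
\[
I_\gamma(\alpha)\ \le\ \sup\Bigl\{\,p\,I_2(\alpha_1)+(1-p)\,I(\alpha_2)\ :\ p\in(0,1),\ \alpha_1,\alpha_2\ge 0,\ p\alpha_1=\tfrac{\gamma}{2}\alpha,\ p\alpha_1+(1-p)\alpha_2=\alpha\,\Bigr\},
\]
with the convention that $I$ and $I_2$ equal $-\infty$ off $\mathrm{A}$. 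The endpoint $\gamma=2$ of the theorem is exactly the definition of $\zeta_t$, and the cases $\alpha=0$, $\alpha\notin\mathrm{A}$ are immediate from $I_0\equiv I$ and \eqref{eqn:Ialpha}, so these are disposed of at once.

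Granting this concatenation bound, the theorem falls out by a short, convexity-free manipulation. On the feasible set one has $(1-\gamma)\alpha=\alpha-\gamma\alpha=\bigl(p\alpha_1+(1-p)\alpha_2\bigr)-2p\alpha_1=(1-p)\alpha_2-p\alpha_1$, so
\[
I_\gamma(\alpha)+t(1-\gamma)\alpha\ \le\ \sup\Bigl\{\,p\bigl(I_2(\alpha_1)-t\alpha_1\bigr)+(1-p)\bigl(I(\alpha_2)+t\alpha_2\bigr)\Bigr\}.
\]
Now \eqref{eqn:Ialpha} gives $I(\alpha_2)+t\alpha_2\le k(t)$, while the identity $\sup_{\alpha}\{I_2(\alpha)-t\alpha+2\zeta_t\alpha\}=k(t)$ gives $I_2(\alpha_1)-t\alpha_1\le k(t)-2\zeta_t\alpha_1$; hence each bracketed combination is at most $p\bigl(k(t)-2\zeta_t\alpha_1\bigr)+(1-p)k(t)=k(t)-2\zeta_t\,p\alpha_1=k(t)-\zeta_t\gamma\alpha$, where we used that $p\alpha_1=\tfrac{\gamma}{2}\alpha$ is constant along the feasible set. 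This is precisely the asserted inequality. Note that the concatenation bound need not be sharp for this to work: even a loose feasible triple $(p,\alpha_1,\alpha_2)$ collapses, after the two endpoint estimates, to the single value $k(t)-\zeta_t\gamma\alpha$.

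To prove the concatenation bound I would decompose the length-$n$ word underlying $g\in E_\gamma(\alpha,\epsilon,n,x)$ as $g=g_2g_1$, where $g_1$ is the product of the first $m$ factors and $m=m(g)$ is the first index with $|g_1|\ge e^{n\gamma\alpha/2}$ (such $m\le n$ exists once $n$ is large, since $|g|\ge e^{n(\alpha-\epsilon)}$ and $\gamma<2$); compact support gives $p\alpha_1:=\tfrac1n\log|g_1|=\tfrac{\gamma}{2}\alpha+o(1)$, where $p=m/n$ and $\alpha_1=\tfrac1m\log|g_1|$. On the ``generic'' event where the angles between each of $\omega_\pm(g_1)$, $\omega_\pm(g_1^*)$ and $\omega_-(g_2)$ exceed $e^{-\epsilon' n}$, the standard $\SL(2,\R)$ alignment estimates (the contracting direction $\omega_-(g_2g_1)$ agrees with $\omega_-(g_1)$ up to an error bounded by $|g_1|^{-2}$ times an inverse power of those angles, and $|g_2g_1|\asymp|g_2|\,|g_1|$ up to the same angles) force $d_{\BP}(x,\omega_-(g_1))\le e^{-n\gamma\alpha+o(n)}$, i.e.\ $g_1\in E_2(\alpha_1,o(1),m,x)$, and $g_2\in E(\alpha_2,o(1),n-m)$ with $p\alpha_1+(1-p)\alpha_2=\alpha+o(1)$. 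Summing over the $O(n)$ values of $m$ and a matching $\epsilon$-discretisation of $(\alpha_1,\alpha_2)$, independence of $g_1,g_2$ yields $\mu^n(\text{generic part of }E_\gamma(\alpha,\epsilon,n,x))\lesssim \mathrm{poly}(n)\cdot\sup \mu^m(E_2(\alpha_1,o(1),m,x))\cdot\mu^{n-m}(E(\alpha_2,o(1),n-m))$; taking $\tfrac1n\log$, then $n\to\infty$, then $\epsilon,\epsilon'\to0$ produces the concatenation bound for the generic contribution.

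The main obstacle is the ``non-generic'' contribution, where one of those angles lies below $e^{-\epsilon' n}$ and the alignment estimates degrade; these configurations are not negligible in absolute size and must be absorbed into the same bound. The point is that a small angle forces cancellation, $|g_2g_1|\asymp|g_1|\,|g_2|\cdot(\text{angle})$, so that $g\in E_\gamma(\alpha,\epsilon,n,x)$ requires $|g_2|$ larger by a factor $e^{O(\epsilon' n)}$ together with a localisation of $\omega_-(g_2)$ at scale $e^{-\epsilon' n}$; that is, $g_2$ lies in a set of the form $E_{\gamma_2}(\alpha_2',o(1),n-m,y)$ with $\gamma_2=O(\epsilon')$, and the $\mu^{n-m}$-mass of such sets is controlled uniformly --- this is exactly the non-concentration / regularity of the distributions of $\omega_-$ for random $\SL(2,\R)$ products that underlies Proposition~\ref{prop:Igamma<0forgammain12} (equivalently \cite[Proposition 2.3]{BL85}). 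As $\epsilon'\to0$ one has $\gamma_2\to0$, $\alpha_2'\to\alpha_2$ and $I_{\gamma_2}(\alpha_2')\to I(\alpha_2)$, so the non-generic contribution obeys the same bound. Carrying out this absorption with the hierarchy of $\epsilon$'s and $o(n)$'s tracked so that all the limits commute with the suprema is the technical heart of the argument; everything else is bookkeeping together with the elementary algebra above.
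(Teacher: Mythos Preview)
Your reduction from the concatenation bound to the theorem is correct and elegant: given
\[
I_\gamma(\alpha)\le p\,I_2(\alpha_1)+(1-p)\,I(\alpha_2)\quad\text{with }p\alpha_1=\tfrac{\gamma}{2}\alpha,\ (1-p)\alpha_2=(1-\tfrac{\gamma}{2})\alpha,
\]
the two endpoint inequalities $I(\alpha_2)+t\alpha_2\le k(t)$ and $I_2(\alpha_1)-t\alpha_1\le k(t)-2\zeta_t\alpha_1$ indeed collapse everything to $k(t)-\zeta_t\gamma\alpha$. This is a genuinely different route from the paper, which instead constructs the eigenmeasure ${}^*\!\nu_t$, proves $\dim_F{}^*\!\nu_t\ge -t+\zeta_t'$ via the Fourier-analytic appendix, feeds this into Lemma~\ref{lem:boundsforQgamma} (which uses only the eigenmeasure equation $({}^*\!P_t)_{\Cdot}{}^*\!\nu_t=e^{k(t)}{}^*\!\nu_t$), and then runs the bootstrap of Lemma~\ref{lem:zetat'zetat} to upgrade $\zeta_t'$ to $\zeta_t$. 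If your concatenation bound were true it would bypass the entire spectral/Fourier machinery, which would be remarkable.

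The gap is in the concatenation bound itself, specifically the non-generic event. On the event $d_{\BP}(\upsilon_+(g_1),\omega_-(g_2))\le e^{-\epsilon'n}$ you lose \emph{both} the norm factorisation $|g|\asymp|g_1||g_2|$ \emph{and} the direction inheritance $\omega_-(g)\approx\omega_-(g_1)$; in particular the $E_2$ constraint on $g_1$ (that $d_{\BP}(x,\omega_-(g_1))\lesssim e^{-2m\alpha_1}$) simply disappears. Your proposed fix bounds the non-generic contribution by something of the form $p\,I_0(\alpha_1)+(1-p)\,I_{\gamma_2}(\alpha_2')$ with $\gamma_2\to0$, i.e.\ with $I_0$ on the prefix rather than $I_2$. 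But the whole point of your reduction is that the $I_2$ on the prefix is what produces the $-2\zeta_t\alpha_1$ and hence the $-\zeta_t\gamma\alpha$; with $I_0$ you only get $I_0(\alpha_1)+t\alpha_1\le k(t)$, and the resulting bound is $k(t)$, missing the crucial $-\zeta_t\gamma\alpha$. Attempting to recover that term from the $I_{\gamma_2}$ on the suffix would require exactly the inequality you are trying to prove, so the argument is circular as written. The claim that ``the non-generic contribution obeys the same bound'' is therefore not justified, and I do not see a way to close it without either an inductive scheme (which would effectively rebuild the paper's bootstrap) or some new input controlling $\omega_-(g)$ on the non-generic event.
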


Let us highlight the following corollary of Theorem \ref{theo:measure}. We further define the sets
\[ {}^* \! E_{2}(\alpha,\epsilon,n,x):=\left\{g \in E(\alpha,\epsilon,n):d_{\BP}(x,\upsilon_+(g)) \leq e^{-n(2-\epsilon)\alpha} \right\}, \]
where \(\upsilon_+(g)\) denotes the element in \(\BP\) corresponding to the largest singular value in the image of \(g\). Let
\[{}^* \! I_{2}(\alpha)=\lim_{\epsilon \rightarrow 0} \varlimsup_{n \rightarrow \infty}  \frac{1}{n} \log \sup_{x \in \BP}   \mu^n({}^* \! E_{2}(\alpha,\epsilon,n,x)) .\] 

\begin{cor}\label{corr}
   The Frostman dimension of the Furstenberg measure \(\nu_F \equiv \nu_0\) satisfies \(\dim_F \nu_F=\zeta\), where \(\zeta \in (0,1]\) is given by the unique solution of
   \[ \sup_{\alpha \in \mathrm{A}}  \{ {}^*\! I_{2}(\alpha)+2 \zeta \alpha\}=0. \]
\end{cor}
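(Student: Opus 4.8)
The plan is to apply Theorem~\ref{theo:measure} at $t=0$, but to the transpose pushforward of $\mu$ rather than to $\mu$ itself, and then to recognise the resulting ${}^{*}$-eigenmeasure as $\nu_F$. Write $\tau(g):=g^{*}$ and set $\tilde\mu:=\tau_{*}\mu\in\MCP(\SL(2,\R))$. First I would record that $\tilde\mu$ is again compactly supported and satisfies SIP: $\tau$ is a homeomorphism and an anti-automorphism preserving singular values and spectra and sending a subspace $W$ to $W^{\perp}$ (in the sense that $gW=W$ iff $g^{*}W^{\perp}=W^{\perp}$), so $S_{\tilde\mu}=\tau(S_{\mu})$ inherits proximality and strong irreducibility. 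Since $\tilde\mu^{n}=\tau_{*}\mu^{n}$ and $|g^{*}|=|g|$, we get $k^{\tilde\mu}\equiv k$, $\mathrm{A}^{\tilde\mu}=\mathrm{A}$, $k^{\tilde\mu}(0)=0$ and $-1\le t_{c}^{\tilde\mu}<0$, so Theorem~\ref{theo:measure} applies to $\tilde\mu$ at $t=0$: it produces a unique $\eta\in\MCP(\BP)$ with $({}^{*}\!P^{\tilde\mu}_{0})_{\Cdot}\eta=\eta$ and $\dim_{F}\eta>0$, for which $\dim_{F}\eta=\zeta^{\tilde\mu}_{0}$, the unique solution in $(0,1]$ of $\sup_{\alpha\in\mathrm{A}}\{I^{\tilde\mu}_{2}(\alpha)+2\zeta\alpha\}=0$.

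The next step is the duality ${}^{*}\!P^{\tilde\mu}_{0}=P_{0}$, which is immediate from the definitions: ${}^{*}\!P^{\tilde\mu}_{0}\psi(w)=\int\psi(g^{*}w)\,\diff\tilde\mu(g)=\int\psi(hw)\,\diff\mu(h)=P_{0}\psi(w)$. Hence $({}^{*}\!P^{\tilde\mu}_{0})_{\Cdot}\eta=\eta$ is precisely the statement $\mu*\eta=\eta$, so $\eta$ is $\mu$-stationary and, by uniqueness of the Furstenberg measure under SIP, $\eta=\nu_F=\nu_0$. Therefore $\dim_{F}\nu_F=\zeta^{\tilde\mu}_{0}$, and it only remains to identify $I^{\tilde\mu}_{2}$ with the quantity ${}^{*}\!I_{2}$ in the statement.

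For this identification I would use the Cartan decomposition $g=k_{\theta}\,\mathrm{diag}(e^{s},e^{-s})\,k_{\theta'}$ with $k_{\theta},k_{\theta'}\in\mathrm{SO}(2)$, $s\ge0$: then $|g|=e^{s}$, $\upsilon_{+}(g)=\R k_{\theta}e_{1}$, $\omega_{-}(g)=\R k_{\theta'}^{-1}e_{2}$, and since $g^{*}=k_{\theta'}^{-1}\,\mathrm{diag}(e^{s},e^{-s})\,k_{\theta}^{-1}$ one reads off $\omega_{-}(g^{*})=\R k_{\theta}e_{2}=\upsilon_{+}(g)^{\perp}$. The map $J\colon x\mapsto x^{\perp}$ is an isometric involution of $(\BP,d_{\BP})$ (it is induced by a rotation, and $d_{\BP}$ is $\mathrm{SO}(2)$-invariant), so $d_{\BP}(x,\omega_{-}(g^{*}))=d_{\BP}(Jx,\upsilon_{+}(g))$. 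Combining this with $\tilde\mu^{n}=\tau_{*}\mu^{n}$ and $|g^{*}|=|g|$ yields $\tilde\mu^{n}\bigl(E^{\tilde\mu}_{2}(\alpha,\epsilon,n,x)\bigr)=\mu^{n}\bigl({}^{*}\!E_{2}(\alpha,\epsilon,n,Jx)\bigr)$ for all $x\in\BP$ and all $n$; taking $\sup_{x\in\BP}$ (using that $J$ is a bijection of $\BP$), then $\tfrac{1}{n}\log$, $\varlimsup_{n\to\infty}$ and $\lim_{\epsilon\to0}$, gives $I^{\tilde\mu}_{2}(\alpha)={}^{*}\!I_{2}(\alpha)$ for every $\alpha\in\mathrm{A}$. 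Hence $\dim_{F}\nu_F=\zeta^{\tilde\mu}_{0}=\zeta$, as claimed.

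The conceptual content here is entirely Theorem~\ref{theo:measure} transported across the $\mu\leftrightarrow\tilde\mu$ duality; the only delicate step is the last one, namely getting the direction $\omega_{-}(g^{*})=\upsilon_{+}(g)^{\perp}$ correct and verifying that the exponential scale $e^{-n(2-\epsilon)\alpha}$ and the order of the limits survive the transpose and the $J$-substitution unchanged, so that what emerges is precisely ${}^{*}\!I_{2}$ and not a variant of it (with, say, $\omega_{-}$ in place of $\upsilon_{+}$). The remaining points --- that $\tilde\mu$ satisfies SIP, and the elementary measure-theoretic manipulations in passing from the sets $E_{2}$ to the functionals $I_{2}$ --- are routine.
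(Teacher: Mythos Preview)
Your proof is correct and is exactly the argument the paper intends: the corollary is stated as a direct consequence of Theorem~\ref{theo:measure}, and the only way to pass from that theorem's conclusion about ${}^*\!\nu_0$ (with $I_2$) to a statement about $\nu_0$ (with ${}^*\!I_2$) is the transpose substitution $\mu\mapsto\tilde\mu=\tau_*\mu$ you carry out. Your identification $I_2^{\tilde\mu}={}^*\!I_2$ via $\omega_-(g^*)=\upsilon_-(g)=\upsilon_+(g)^{\perp}$ is already recorded in \S\ref{subsec:tc'zetat'} of the paper (so you could cite it rather than rederive it from Cartan), and your appeal to Furstenberg's uniqueness of the stationary measure under SIP to conclude $\eta=\nu_F$ is the right way to close the loop.
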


As is often the case in dimension-theoretic results, the upper bound of \(\dim_F \nu_F\) is relatively straightforward, but the lower bound is much more involved.

\subsection{Sketch of the proof}
We now give a sketch of the proofs of Theorems \ref{theo:spectralgap}-\ref{theo:Igammaestimates} as well as a layout of the paper. Theorems \ref{theo:spectralgap}-\ref{theo:Igammaestimates} are proved concurrently. By this we mean, we first prove them for some \(t_c' \in [-1,0)\) and \(\zeta_t' \in (0,1+t]\). They are then proved by showing that \(\zeta_t'=\zeta_t\) and \(t_c'=t_c\).

In the next section, Section \ref{sec:elementary}, we prove some basic results about the \(I_{\gamma}(\alpha)\) and some expansion estimates relating to the sets \(E_{\gamma}(\alpha,\epsilon,n,x)\). Recall the definitions of \(k(t), k^+(t)\), and \(k^-(t)\). We also show, using only elementary methods (no irreducibility assumptions are needed), that 
\[k(t)= \sup_{\alpha \in \mathrm{A}} \{I_0(\alpha)+t\alpha\},  \quad \forall t \in \R ,\]
\[k^+(t)=\sup_{\alpha \in \mathrm{A}} \sup_{\gamma \in [0,2]} \{I_{\gamma}(\alpha)+t(1-\gamma)\alpha\}, \quad \forall t \in \R,\]
and \(k(t)=k^-(t)\) for all \(-1<t \leq 0\). Observe that if \(-1<t<0\) and \(P_t\) has a strictly positive, continuous eigenfunction \(h_t\) satisfying \(P_t h_t= e^{k^+(t)} h_t\), then immediately \(k^+(t)=k(t)\).

In Section \ref{section:spectralgap} we describe the region for which \(P_t\) has a spectral gap, proving Theorem \ref{theo:spectralgap} with \(t_c'\), \(\zeta_t'\) in place of \(t_c\), \(\zeta_t\). We define 
\[t_c':= \inf\left\{t \geq -1:\sup_{\alpha \in \mathrm{A}} \sup_{\gamma \in [1,2]} \{I_{\gamma}(\alpha)+t(1-\gamma)\alpha\}<k^+(t) \right\}.\]
We show, using the crucial proposition \cite[Proposition 2.3]{BL85}, that \(t_c'<0\). We note that \((t_c',\infty)\) is the region where \(k^+(t)\) is strictly increasing in \(t\).

For \(t \in (t_c',0]\) we define \(\zeta_t'\) by 
\[\zeta_t':=\sup \left\{\zeta>0: \sup_{\alpha \in \mathrm{A}} \sup_{\gamma \in [1,2]} \{ I_\gamma(\alpha)+t(1-\gamma)\alpha -2\zeta (1-\gamma) \alpha\}< k^+(t) \right\} .\]
Necessarily, \(0<\zeta_t' \leq 1+t\). For \(t_c'<t\leq 0\) we will show in Lemma \ref{lem:quasi-compact} that \(P_t\) is quasi-compact on \(C^{\zeta}(\BP)\) for all \(0<\zeta<\zeta_t'\). We further show that there is a spectral gap for these \(\zeta\) and that the leading eigenfunction is strictly positive (Proposition \ref{prop:spectralgap}). By the above relations, this implies \(k(t)=k^+(t)\); that is, for these \(t\), 
\[\sup_{\alpha \in \mathrm{A}} \sup_{\gamma \in [0,2]}\{I_{\gamma}(\alpha)+t(1-\gamma)\alpha\}= \sup_{\alpha \in \mathrm{A}} \{I_0(\alpha)+t\alpha\}.\]
Already, using only estimates for \(I_{\gamma}(\alpha)\) with \(\gamma \in [1,2]\), we will have obtained the non-trivial estimates that \(I_{\gamma}(\alpha)+t(1-\gamma)\alpha \leq k(t)\) on \((t_c',0]\). It is possible to get better estimates via the Frostman dimension of \({}^* \! \nu_t\), a natural eigenmeasure of \({}^* \! P_t\).

Hence, in Section \ref{subsec:eigenfunction}, we describe the eigenfunction in terms of an eigenmeasure of \({}^* \! P_t\). We note that our argument does not require that \({}^* \! P_t\) has a spectral gap at \(t\) (there is no problem being in the region \(t_c<t<{}^* \! t_c\), should this be non-empty, where \({}^*\! t_c \) is defined analogously to \(t_c\)). Let \(\lambda\) be the Lebesgue measure on \(\BP \simeq \R /\pi \Z \) normalised so that \(\lambda(\BP)=1\). For \(t_c'<t<0\) we let \({}^* \! \nu_t \in \MCP(\BP)\) be a weak* limit of the sequence \( {}^* \! \eta_n:= \frac{({}^* \! P_t^n)_{\Cdot} \lambda}{({}^* \! P_t^n)_{\Cdot} \lambda(1)} \). Using a novel argument, in Lemma \ref{lem:lebesgueaeequality} we deduce that, up to a scalar multiple, \(h_t(x)=\int |\innerproduct{x}{w}|^t \diff {}^* \! \nu_t(w)\) for \(\lambda\)-a.e. \(x \in \BP\). 

In the appendix, for \(-1<t<0\) and \(\nu \in \MCM(\BP)\) we will consider Riesz potentials of the form  
\[\MCI_{t,\nu}(x) :=\int d_{\BP}(x,y)^t \diff \nu(y) \equiv \int |\innerproduct{x}{y^{\perp}}|^t\diff \nu(y), \]
where \(y^\perp\) is the element in \(\BP\) perpendicular to \(y\), and relate the H\"older regularity of these potentials with the Frostman dimension of \(\nu\). Since these results are generalisations of classical results \cite{Car63,Wal66} for singular potentials on \(\R^d\) and require different methods from the rest of the paper, we defer the proofs until the appendix. 

We show in Proposition \ref{prop:hisholder} that for \(\nu\) with \(\dim_F \nu>-t\), \(\MCI_{t,\nu}\) is \(\zeta\)-H\"older continuous for all \(0<\zeta<\min\{1,-t+\dim_F \nu\}\). In Proposition \ref{prop:Holdergivesdimensionbound} we prove the following converse: If \(\MCI_{t,\nu}\) is \(\lambda\)-almost everywhere equal to some \(\zeta\)-H\"older continuous \(F:\BP \rightarrow \R\), then \(\dim_F \nu \geq -t+\zeta\) and \(\MCI_{t,\nu} \equiv F\). This is proved using Fourier analysis and we use a recent result of Roncal and Stinga \cite[Theorem 1.5]{RS16}. In Proposition \ref{prop:Inu1=Inu2impliesnu1=nu2} we also show that if \(\lambda\)-almost everywhere we have \(\MCI_{t,\nu_1}=\MCI_{t,\nu_2} \), then \(\nu_1=\nu_2\). Hence, applying these results, we are able to show that 
\( \int |\innerproduct{x}{w}|^t \diff {}^* \! \nu_t(w) =h_t(x)\) for all \(x \in \BP\) (again up to a scalar multiple), \(\dim_F{}^* \! \nu_t \geq -t+\zeta_t'\), and \(({}^* \! P_t)_{\Cdot} {}^*\! \nu_t=e^{k(t)}{}^* \! \nu_t\). 

In Section \ref{sec:proofoftheorems}, the next step in the proof is to show the equality of \(\zeta_t=\zeta_t'\) for \(t_c'<t<0\). This is the crux of our proof, which we give a sketch of now. We show in Lemma \ref{lem:boundsforQgamma}, using only that \(({}^* \! P_t)_{\Cdot} {}^*\! \nu_t=e^{k(t)} {}^* \! \nu_t\), that for all \(\alpha \in \mathrm{A}\) and \(\gamma \in [0,2]\),
\begin{equation}\label{eqn:insketchstandardinequality}
    I_{\gamma}(\alpha)+t(1-\gamma)\alpha \leq k(t)-(t+\dim_F{}^* \! \nu_t) \gamma \alpha.
\end{equation}
By the definition of \(\zeta_t\) this easily implies \(\dim_F{}^* \! \nu_t \leq -t+\zeta_t\). Moreover, combining the inequality  \(\dim_F{}^* \nu_t \geq -t+\zeta_t' \) with (\ref{eqn:insketchstandardinequality}), we have that for all \(\alpha \in \mathrm{A}\) and \(\gamma \in [0,2]\),
\begin{equation}\label{eqn:insketchinequalitywithzeta}
    I_{\gamma}(\alpha)+t(1-\gamma)\alpha \leq k(t)-\zeta_t' \gamma \alpha.
\end{equation}
By the definition of \(\zeta_t'\) and upper semi-continuity of \(I_{\cdot}(\cdot)\) (see Lemmas \ref{lem:USC} and \ref{lem:zetatunique}), there exists \(\beta \in \mathrm{A} \setminus \{0\}\) and \(\gamma \in (1,2]\) such that 
\[I_{\gamma}(\beta)+t(1-\gamma)\beta-2\zeta_t'(1-\gamma)\beta=k(t).\]
Putting this into (\ref{eqn:insketchinequalitywithzeta}), we have
\[2\zeta_t'(1-\gamma)\beta \leq - \zeta_t' \gamma \beta,\]
so \(\gamma \geq 2\), that is \(\gamma =2\). Hence, it follows that \(\zeta_t' =\zeta_t\) for all \(t_c'< t<0\), and furthermore for \(t=0\) by continuity of \(\zeta_t\) and \(\zeta_t'\). From this it is not hard to also show that \(t_c'=t_c\).

In this way, we will have proved Theorems \ref{theo:spectralgap}-\ref{theo:Igammaestimates} except for the statement that \(\dim_F{}^* \!  \nu_0=\zeta_0\). This final statement can be proved using the estimates in Theorem \ref{theo:Igammaestimates}, and this will be done in Section \ref{subsec:Frostmandimensionofnu0}. We remark that, in contrast to the \(t<0\) case, we require the spectral gap of \({}^* \! P_0\) for this.

\section{Preliminaries}\label{sec:elementary}
\subsection{The quantities \(I_{\gamma}(\alpha)\)}\label{subsec:tc'zetat'}

For each \(g \in \SL(2,\R)\) with \(|g|>1\) we choose a decomposition \(g=K D L\), where \(D=\mathrm{diag}(\kappa,\kappa^{-1}) \in \SL(2,\R)\) is a diagonal matrix with \(\kappa > 1\) and \(K,L \in O_2(\R)\). We denote by \(\omega_+(g)\) (resp. \(\omega_-(g)\)) the element in \(\BP\) corresponding to the largest (resp. smallest) singular value in the preimage of \(g\); that is,
\[\omega_+(g):=\R L^{-1}e_1, \qquad \omega_-(g):=\R L^{-1}e_2 \]
where \(e_1=(1,0)\) and \(e_2=(0,1)\). Similarly we define
\(\upsilon_+(g)\) (resp. \(\upsilon_-(g)\)) to be the element in \(\BP\) corresponding to the largest (resp. smallest) singular value in the image of \(g\); that is,
\[\upsilon_+(g):=\R K e_1, \qquad \upsilon_-(g):=\R K e_2.\]
We note that for \(g \in \SL(2,\R)\) we always have \(\omega_+(g)=\upsilon_+(g^*)\), \(\omega_-(g)=\upsilon_-(g^*)\), \(\omega_-(g)=\upsilon_+(g^{-1})\), \(\omega_+(g)=\upsilon_-(g^{-1})\), and \(d_{\BP}(\omega_-(g),\omega_+(g))=d_{\BP}(\upsilon_-(g),\upsilon_+(g))=1\). We will use these facts routinely.  If \(|g|=1\) we can choose \(\omega_{\pm}(g), \upsilon_{\pm}(g)\) arbitrarily so that the above relations also hold. 

It will be necessary to sometimes work with modified versions of the \(I_{\gamma}(\alpha)\). For  \(0 \leq \gamma < 2\) we define the sets
\[ \widetilde{E}_{\gamma}(\alpha,\epsilon,n,x):=\left\{ g \in E(\alpha,\epsilon,n): e^{-n(\gamma+\epsilon) \alpha} \leq d_{\BP}(x,\omega_-(g)) \leq e^{-n(\gamma -\epsilon)\alpha} \right\}\]
and we define \(\widetilde{E}_2(\alpha,\epsilon,n, x):= E_2(\alpha,\epsilon,n,x)\). We remark that \(\widetilde{E}_2(0,\epsilon,n,x)=E(0,\epsilon,n)\) and \(\widetilde{E}_\gamma(0,\epsilon,n,x) = \emptyset\) for all \(\gamma<2\) (we will consider the \(\alpha=0\) case separately in our analysis where appropriate). Outside of this, we always have \(\widetilde{E}_\gamma(\alpha,\epsilon,n,x) \subseteq E_\gamma(\alpha,\epsilon,n,x) \). We also note that
\[\widetilde{E}_{0}(\alpha,\epsilon,n, x)=\left\{g \in E(\alpha,\epsilon,n): d_{\BP}(x,\omega_-(g)) \geq  e^{-n\epsilon \alpha} \right\}. \]
For \(0 \leq \gamma \leq 2\) we define
\[\widetilde{I}_{\gamma}(\alpha,\epsilon,n):= \frac{1}{n} \log \sup_{x \in \BP}  \mu^n \left( \widetilde{E}_\gamma(\alpha,\epsilon,n,x) \right),\]
\[\widetilde{I}_{\gamma}(\alpha,\epsilon):=\varlimsup_{n \rightarrow \infty} \widetilde{I}_{\gamma}(\alpha,\epsilon,n),\]
\[\widetilde{I}_{\gamma}(\alpha)=\lim_{\epsilon \rightarrow 0} \widetilde{I}_{\gamma}(\alpha,\epsilon).\]

We let \(\overline{\alpha}>0\) be such that \(\supp \mu \subseteq \{g \in \SL(2,\R): |g| \leq e^{\overline{\alpha}}\}\). For \(\epsilon>0\), we define the sets
\[\mathrm{A}(\epsilon):= \left\{i \epsilon:i \in \N \cup \{0\}, 0\leq i\epsilon \leq \overline{\alpha} \right\} \]
and
\[\Gamma(\epsilon):= \left\{i \epsilon:i \in \N \cup \{0\}, 0 \leq i\epsilon<2 \right\}\cup\{2 \}.\]

\begin{remark}
    We are not in general assuming that \(0 \in \mathrm{A}\), but we will sometimes consider the sets \(E(0,\epsilon,n)\) as \(\alpha=0\) can require a slightly different argument. There is no problem if \(0 \not \in \mathrm{A}\), though the argument for \(\alpha=0\) will be unnecessary in this case, and similarly if \(\alpha>\overline{\alpha}\).
\end{remark}

\begin{lemma}\label{lem:gammaiscover}
     Let \(\alpha \geq 0\), \(\epsilon>0\), \(n \in \N\), and \(x \in \BP\). Then,
     \(E(\alpha,\epsilon,n)= \bigcup_{\gamma \in \Gamma(\epsilon)} \widetilde{E}_{\gamma}(\alpha,\epsilon,n,x).\)
\end{lemma}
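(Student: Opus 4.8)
The plan is to prove the two inclusions separately; the inclusion $\bigcup_{\gamma \in \Gamma(\epsilon)} \widetilde{E}_{\gamma}(\alpha,\epsilon,n,x) \subseteq E(\alpha,\epsilon,n)$ is immediate, since by construction each set $\widetilde{E}_{\gamma}(\alpha,\epsilon,n,x)$ only adds a constraint on $d_{\BP}(x,\omega_-(g))$ to membership in $E(\alpha,\epsilon,n)$. So the content is the reverse inclusion, and I would handle the cases $\alpha = 0$ and $\alpha > 0$ separately.

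For $\alpha = 0$, observe that $\widetilde{E}_2(0,\epsilon,n,x) = E_2(0,\epsilon,n,x) = \{g \in E(0,\epsilon,n) : d_{\BP}(x,\omega_-(g)) \leq e^{0}\} = E(0,\epsilon,n)$, using that $d_{\BP} \leq 1$ on $\BP \times \BP$; since $2 \in \Gamma(\epsilon)$, the union already exhausts $E(0,\epsilon,n)$ and there is nothing more to do.

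For $\alpha > 0$, fix $g \in E(\alpha,\epsilon,n)$ and set $\delta := d_{\BP}(x,\omega_-(g)) \in [0,1]$. If $\delta \leq e^{-n(2-\epsilon)\alpha}$ then $g \in E_2(\alpha,\epsilon,n,x) = \widetilde{E}_2(\alpha,\epsilon,n,x)$ and we are done with $\gamma = 2$. Otherwise $\delta \in (e^{-n(2-\epsilon)\alpha},1]$; in particular $\delta > 0$, so $s := \tfrac{-\log \delta}{n\alpha}$ is well-defined, and $\delta \leq 1$ gives $s \geq 0$ while $\delta > e^{-n(2-\epsilon)\alpha}$ gives $s < 2-\epsilon$. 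Put $\gamma := \epsilon \lfloor s/\epsilon \rfloor$, so that $\gamma \leq s < \gamma + \epsilon$; then $\gamma \in \N\epsilon \cup \{0\}$ with $\gamma \leq s < 2-\epsilon < 2$, hence $\gamma \in \Gamma(\epsilon)$. From $\gamma - \epsilon \leq \gamma \leq s < \gamma + \epsilon$, multiplying through by $-n\alpha$ and exponentiating yields $e^{-n(\gamma+\epsilon)\alpha} \leq \delta \leq e^{-n(\gamma-\epsilon)\alpha}$, i.e.\ $g \in \widetilde{E}_{\gamma}(\alpha,\epsilon,n,x)$, which completes the proof.

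There is no serious obstacle here; the only point that needs a moment of care is checking that the chosen $\gamma$ really lies in the grid $\Gamma(\epsilon)$ — that is, that $\lfloor s/\epsilon \rfloor \geq 0$ and $\gamma < 2$ — and this is exactly why one first peels off the regime $\delta \leq e^{-n(2-\epsilon)\alpha}$ (absorbed into $\gamma = 2$) and uses $d_{\BP} \leq 1$ to guarantee $s \geq 0$.
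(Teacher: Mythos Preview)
Your proof is correct and follows essentially the same approach as the paper: peel off the case $\alpha=0$ (handled by $\gamma=2$), then for $\alpha>0$ separate the small-distance regime (absorbed by $\gamma=2$) from the rest, and for the rest locate the appropriate grid point in $\Gamma(\epsilon)$. The only cosmetic differences are that the paper uses the threshold $e^{-2n\alpha}$ rather than your $e^{-n(2-\epsilon)\alpha}$ for the split, and asserts existence of a nearby grid point rather than writing it explicitly via the floor function as you do.
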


\begin{proof}
The inclusion `\(\supseteq\)' is clear. To show the other inclusion, fix \(0<\epsilon<1\), \(n \in \N\), and \(x \in \BP\). The \(\alpha=0\) case is trivial since \(E(\alpha,\epsilon,n)=\widetilde{E}_2(\alpha,\epsilon,n,x)\), so assume \(\alpha>0\). Let \(g \in E(\alpha,\epsilon,n)\). If \(d_{\BP}(x,\omega_-(g)) \leq e^{-2n\alpha}\), then \(g \in \widetilde{E}_{2}(\alpha, \epsilon,n, x)\). Suppose then that \(d_{\BP}(x,\omega_-(g))> e^{-2n\alpha}\) and let \(0 \leq \gamma <2\) be such that \(d_{\BP}(x,\omega_-(g)) = e^{-n \gamma \alpha}\). Let \(\gamma' \in \Gamma(\epsilon) \setminus \{2\}\) be such that \(|\gamma-\gamma'|\leq \epsilon\). We have 
\[d_{\BP}(x,\omega_-(g))=e^{-n \gamma \alpha} \leq e^{-n (\gamma'-\epsilon)\alpha}\]
and, similarly, 
\[d_{\BP}(x,\omega_-(g))=e^{-n \gamma \alpha} \geq e^{-n (\gamma'+\epsilon)\alpha} .\]
Thus, \(g \in \widetilde{E}_{\gamma'}(\alpha,\epsilon,n, x)\).
\end{proof}

\begin{lemma}\label{lem:alphagammaiscover}
Let \(\epsilon>0\) and \(n \in \N\) and \(x \in \BP\). We have
 \[\{g \in \SL(2,\R) :|g| \leq e^{n\overline{\alpha}} \} \subseteq \bigcup_{\alpha \in \mathrm{A}(\epsilon)} \bigcup_{\gamma \in \Gamma(\epsilon)} \widetilde{E}_{\gamma}(\alpha,\epsilon,n,x).\]
\end{lemma}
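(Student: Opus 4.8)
The plan is to deduce this directly from Lemma~\ref{lem:gammaiscover} by a straightforward quantisation of the quantity $\tfrac1n\log|g|$. Since Lemma~\ref{lem:gammaiscover} already exhibits, for each fixed $\alpha\ge0$, a cover of the slice $E(\alpha,\epsilon,n)$ by the sets $\widetilde E_\gamma(\alpha,\epsilon,n,x)$ with $\gamma\in\Gamma(\epsilon)$, it suffices to show that every $g\in\SL(2,\R)$ with $|g|\le e^{n\overline\alpha}$ lies in $E(\alpha,\epsilon,n)$ for \emph{some} $\alpha\in\mathrm{A}(\epsilon)$; the claimed inclusion then follows by taking the union over $\alpha\in\mathrm{A}(\epsilon)$.

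Concretely, I would fix $\epsilon>0$, $n\in\N$, $x\in\BP$, and take $g\in\SL(2,\R)$ with $|g|\le e^{n\overline\alpha}$. Because $\det g=1$ forces the top singular value to satisfy $|g|\ge1$, the number $\beta:=\tfrac1n\log|g|$ lies in $[0,\overline\alpha]$. Set $\alpha:=\epsilon\lfloor\beta/\epsilon\rfloor$. Then $0\le\alpha\le\beta\le\overline\alpha$, so $\alpha\in\mathrm{A}(\epsilon)$, and $0\le\beta-\alpha<\epsilon$, so $\big|\tfrac1n\log|g|-\alpha\big|\le\epsilon$, i.e.\ $g\in E(\alpha,\epsilon,n)$. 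Applying Lemma~\ref{lem:gammaiscover} for this $\alpha$ gives $g\in\bigcup_{\gamma\in\Gamma(\epsilon)}\widetilde E_\gamma(\alpha,\epsilon,n,x)$, which is contained in $\bigcup_{\alpha\in\mathrm{A}(\epsilon)}\bigcup_{\gamma\in\Gamma(\epsilon)}\widetilde E_\gamma(\alpha,\epsilon,n,x)$. As $g$ was arbitrary, this proves the lemma.

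There is no genuine obstacle here; the statement is essentially bookkeeping, and the only points requiring a little care are minor. First, one should take the floor $\epsilon\lfloor\beta/\epsilon\rfloor$ rather than the nearest multiple of $\epsilon$, so that $\alpha$ cannot overshoot $\overline\alpha$ when $\beta$ is near the top of its range and hence genuinely lies in $\mathrm{A}(\epsilon)$. Second, the degenerate case $|g|=1$ yields $\beta=0$ and $\alpha=0$, and membership in $\widetilde E_2(0,\epsilon,n,x)=E(0,\epsilon,n)$ then holds via the convention of \S\ref{subsec:tc'zetat'} that $\omega_\pm(g)$ may be chosen arbitrarily when $|g|=1$; this is already absorbed into Lemma~\ref{lem:gammaiscover}, so no separate argument is needed.
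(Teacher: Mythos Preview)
Your proof is correct and follows essentially the same approach as the paper: quantise $\tfrac{1}{n}\log|g|$ to a nearby element of $\mathrm{A}(\epsilon)$ to land in some $E(\alpha,\epsilon,n)$, then invoke Lemma~\ref{lem:gammaiscover}. The paper's version is slightly terser (it simply picks $\alpha'\in\mathrm{A}(\epsilon)$ with $|\alpha'-\alpha|\le\epsilon$ without specifying the floor), but your extra care in ensuring $\alpha\le\overline\alpha$ is a harmless refinement.
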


\begin{proof}
Let \(g \in \SL(2,\R)\) be such that \(|g| \leq e^{n\overline{\alpha}}\). Write \(\alpha:=\frac{1}{n}\log|g|\) and let \(\alpha' \in \mathrm{A}(\epsilon)\) be such that \(|\alpha'-\alpha| \leq \epsilon\). Clearly we have \( g \in E(\alpha',\epsilon,n)\). Hence, the lemma follows by Lemma \ref{lem:gammaiscover}.
\end{proof}

The next two lemmas are straightforward properties of the quantities \(I_{\gamma}(\alpha)\).

\begin{lemma}\label{lem:elementaryQgammainequality}
For all \(\alpha \in \mathrm{A}\) and \(0 \leq \gamma' \leq \gamma \leq 2\), 
\[I_{\gamma}(\alpha) \leq I_{\gamma'}(\alpha) \leq I_{\gamma}(\alpha)+(\gamma-\gamma')\alpha.\]
\end{lemma}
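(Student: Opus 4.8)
The two inequalities reflect the two ways the parameter $\gamma$ enters the definition of $E_\gamma(\alpha,\epsilon,n,x)$. The first inequality $I_\gamma(\alpha)\le I_{\gamma'}(\alpha)$ is a pure monotonicity statement: for $\gamma'\le\gamma$ and $\epsilon$ small enough that $\gamma-\epsilon\ge \gamma'-\epsilon\ge 0$, we have $e^{-n(\gamma-\epsilon)\alpha}\le e^{-n(\gamma'-\epsilon)\alpha}$, hence $E_\gamma(\alpha,\epsilon,n,x)\subseteq E_{\gamma'}(\alpha,\epsilon,n,x)$ for every $x$; taking $\sup_x \mu^n(\cdot)$, then $\tfrac1n\log$, then $\varlimsup_n$, then $\lim_{\epsilon\to 0}$ preserves the inequality. (One should remember the boundary case $\gamma=2$ where the defining radius $e^{-n(2-\epsilon)\alpha}$ still dominates $e^{-n(\gamma'-\epsilon)\alpha}$, so the inclusion is unaffected, and the degenerate case $\alpha=0$ where all the sets coincide with $E(0,\epsilon,n)$.)

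For the second inequality, the idea is to cover the larger set $E_{\gamma'}(\alpha,\epsilon,n,x)$ by boundedly many translates of sets of $E_\gamma$-type. First one passes to the modified quantities: by Lemma \ref{lem:gammaiscover} (or rather the annular decomposition behind it), $E_{\gamma'}(\alpha,\epsilon,n,x)=\bigcup_{\gamma''}\widetilde E_{\gamma''}(\alpha,\epsilon,n,x)$ where the union is over $\gamma''\in\Gamma(\epsilon)$ with $\gamma''\ge\gamma'$ (in the sense $\gamma''-\epsilon\le$ the exponent). If $g\in\widetilde E_{\gamma''}(\alpha,\epsilon,n,x)$ with $\gamma''\le\gamma$, then $d_{\BP}(x,\omega_-(g))\ge e^{-n(\gamma''+\epsilon)\alpha}\ge e^{-n(\gamma+\epsilon)\alpha}$ is too large to be controlled directly; instead one notes that such $g$ lies in a ball around $\omega_-(g)$ of radius $e^{-n(\gamma-\epsilon)\alpha}$ around a point $x'$ with $d_{\BP}(x,x')\le e^{-n(\gamma'-\epsilon)\alpha}$, so $x'$ ranges over a set of $O\big(e^{n(\gamma-\gamma')\alpha}\big)$ points at scale $e^{-n(\gamma-\epsilon)\alpha}$. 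Thus
\[
\sup_x\mu^n\big(E_{\gamma'}(\alpha,\epsilon,n,x)\big)\ \le\ C\,e^{n(\gamma-\gamma'+\epsilon)\alpha}\,\sup_{x'}\mu^n\big(E_{\gamma}(\alpha,\epsilon,n,x')\big)
\]
after also accounting for the finitely many annuli $\gamma''>\gamma$, each of which is contained in $E_\gamma(\alpha,\epsilon,n,x)$ directly. Taking $\tfrac1n\log$, $\varlimsup_n$, then $\epsilon\to0$ yields $I_{\gamma'}(\alpha)\le I_\gamma(\alpha)+(\gamma-\gamma')\alpha$.

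The only mildly delicate point — and the step I expect to be the main obstacle to writing cleanly — is the covering-number bookkeeping: one must verify that the number of balls of radius $\approx e^{-n(\gamma-\epsilon)\alpha}$ needed to cover a ball of radius $\approx e^{-n(\gamma'-\epsilon)\alpha}$ in the one-dimensional space $\BP$ is indeed $\le C e^{n(\gamma-\gamma'+O(\epsilon))\alpha}$, and that this polynomial-in-$n$-free (in fact sub-exponential up to the exponent $(\gamma-\gamma')\alpha$) factor washes out after $\tfrac1n\log$ and $\epsilon\to0$. Since $\BP$ is compact and one-dimensional this is routine, but care is needed near $\gamma'=0$ (where the "ball" is all of $\BP$, still covered by $O(e^{n(\gamma-\epsilon)\alpha})$ balls) and at $\alpha=0$ (trivial). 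A cleaner alternative that avoids annuli entirely: directly cover $B(x,e^{-n(\gamma'-\epsilon)\alpha})$ by points $x_1,\dots,x_N$ with $N\le C e^{n(\gamma-\gamma'+\epsilon)\alpha}$ such that the balls $B(x_j,e^{-n(\gamma-\epsilon)\alpha})$ cover it, observe $E_{\gamma'}(\alpha,\epsilon,n,x)\subseteq\bigcup_j E_\gamma(\alpha,\epsilon,n,x_j)$, apply $\mu^n$, and finish as above; I would present the proof this way.
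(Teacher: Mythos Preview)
Your proposal is correct, and the ``cleaner alternative'' you settle on at the end --- directly covering $B(x,e^{-n(\gamma'-\epsilon)\alpha})$ by $O(e^{n(\gamma-\gamma')\alpha})$ balls of radius $e^{-n(\gamma-\epsilon)\alpha}$ and using $E_{\gamma'}(\alpha,\epsilon,n,x)\subseteq\bigcup_j E_\gamma(\alpha,\epsilon,n,x_j)$ --- is exactly the paper's proof. The annular decomposition you sketch first is unnecessary overhead; you were right to discard it.
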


\begin{proof}
    The inequality \( I_{\gamma}(\alpha) \leq I_{\gamma'}(\alpha)\) is trivial. To show the other inequality, let \(\alpha \in \mathrm{A}\), \(0 \leq \gamma' \leq \gamma \leq 2 \), and \(x \in \BP\). Note we can cover \(B(x,e^{-n(\gamma'-\epsilon) \alpha})\) by \(Ce^{n((\gamma-\gamma')\alpha)} \) balls of radius \(e^{-n(\gamma-\epsilon)\alpha}\), where \(C>0\) can be taken to be independent of \(n \in \N\) and \(x \in \BP\). It follows that 
    \[\mu^n(E_{\gamma'}(\alpha,\epsilon,n,x)) \leq  Ce^{n((\gamma-\gamma')\alpha+I_{\gamma}(\alpha,\epsilon,n))}.\]
    Since \(x \in \BP\) was arbitrary, this implies
    \[\frac{1}{n} \log \sup_{x \in \BP} \mu^n(E_{\gamma'}(\alpha,\epsilon,n,x))  \leq (\gamma-\gamma')\alpha+I_{\gamma}(\alpha,\epsilon,n)+\frac{1}{n}\log C.\]
    Taking limits as \(n \rightarrow \infty\) then \(\epsilon \rightarrow 0\), the lemma follows.
\end{proof}

\begin{lemma}\label{lem:USC}
Suppose that \((\alpha_k,\gamma_k) \in [0,\infty) \times [0,2] \) is such that \((\alpha_k,\gamma_k) \rightarrow (\alpha,\gamma) \in [0,\infty) \times [0,2]\) in \((\R^2,|\cdot|_{\infty})\). Then, for any \(\epsilon_k \rightarrow 0\), 
\[\varlimsup_{k \rightarrow \infty} I_{\gamma_k}(\alpha_k) \leq  \varlimsup_{k \rightarrow \infty} I_{\gamma_k}(\alpha_k,\epsilon_k) \leq I_{\gamma}(\alpha),\]
and, further assuming that \(\alpha>0\),
\[\varlimsup_{k \rightarrow \infty} \widetilde{I}_{\gamma_k}(\alpha_k) \leq  \varlimsup_{k \rightarrow \infty} \widetilde{I}_{\gamma_k}(\alpha_k,\epsilon_k) \leq \widetilde{I}_{\gamma}(\alpha). \]
In particular, the maps \(I_{\cdot}(\cdot):[0,\infty) \times [0,2] \rightarrow \R \cup\{-\infty\} \) and \(\widetilde{I}_{\cdot}(\cdot):(0,\infty) \times [0,2] \rightarrow \R \cup \{-\infty\}\) are upper semi-continuous.
\end{lemma}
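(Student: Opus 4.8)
The plan is to derive the whole lemma from two elementary observations about the sets \(E_\gamma(\alpha,\epsilon,n,x)\) and \(\widetilde E_\gamma(\alpha,\epsilon,n,x)\): a monotonicity in \(\epsilon\), and a single inclusion that is uniform in \(n\) and \(x\). First I would note that for fixed \(\alpha\ge 0\) and \(\gamma\in[0,2]\) the maps \(\epsilon\mapsto I_\gamma(\alpha,\epsilon)\) and \(\epsilon\mapsto\widetilde I_\gamma(\alpha,\epsilon)\) are non-decreasing: if \(0<\epsilon'\le\epsilon\) then \(E(\alpha,\epsilon',n)\subseteq E(\alpha,\epsilon,n)\), and since \(\alpha\ge 0\) we have \(e^{-n(\gamma-\epsilon')\alpha}\le e^{-n(\gamma-\epsilon)\alpha}\) and \(e^{-n(\gamma+\epsilon')\alpha}\ge e^{-n(\gamma+\epsilon)\alpha}\), so every cutoff defining \(E_\gamma\) and \(\widetilde E_\gamma\) is more restrictive for \(\epsilon'\); hence \(E_\gamma(\alpha,\epsilon',n,x)\subseteq E_\gamma(\alpha,\epsilon,n,x)\) and \(\widetilde E_\gamma(\alpha,\epsilon',n,x)\subseteq\widetilde E_\gamma(\alpha,\epsilon,n,x)\) for all \(n,x\). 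Consequently \(I_\gamma(\alpha)=\inf_{\epsilon>0}I_\gamma(\alpha,\epsilon)\) and \(\widetilde I_\gamma(\alpha)=\inf_{\epsilon>0}\widetilde I_\gamma(\alpha,\epsilon)\); in particular \(I_{\gamma_k}(\alpha_k)\le I_{\gamma_k}(\alpha_k,\epsilon_k)\) and \(\widetilde I_{\gamma_k}(\alpha_k)\le\widetilde I_{\gamma_k}(\alpha_k,\epsilon_k)\) for every \(k\), which after \(\varlimsup_k\) gives the first inequality in each chain.

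The core of the argument is the claim: given \(\delta>0\), there is \(K\) such that for all \(k\ge K\), all \(n\in\N\), and all \(x\in\BP\), \(E_{\gamma_k}(\alpha_k,\epsilon_k,n,x)\subseteq E_\gamma(\alpha,\delta,n,x)\), and, if moreover \(\alpha>0\), \(\widetilde E_{\gamma_k}(\alpha_k,\epsilon_k,n,x)\subseteq\widetilde E_\gamma(\alpha,\delta,n,x)\). For the first inclusion: for \(k\) large \(\epsilon_k+|\alpha_k-\alpha|\le\delta\), so \(|\tfrac1n\log|g|-\alpha_k|\le\epsilon_k\) implies \(|\tfrac1n\log|g|-\alpha|\le\delta\), i.e.\ \(E(\alpha_k,\epsilon_k,n)\subseteq E(\alpha,\delta,n)\); and since \((\gamma_k-\epsilon_k)\alpha_k\to\gamma\alpha\), either \(\alpha>0\), whence \(\gamma\alpha>(\gamma-\delta)\alpha\) forces \((\gamma_k-\epsilon_k)\alpha_k\ge(\gamma-\delta)\alpha\) for \(k\) large and so \(e^{-n(\gamma_k-\epsilon_k)\alpha_k}\le e^{-n(\gamma-\delta)\alpha}\), or \(\alpha=0\), in which case \(e^{-n(\gamma-\delta)\alpha}=1\) and the ball cutoff defining \(E_\gamma(\alpha,\delta,n,x)\) is vacuous. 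For the tilde version (so \(\alpha>0\)): if \(\gamma<2\) then \(\gamma_k<2\) for \(k\) large, \(\widetilde E_{\gamma_k}\) is the two-sided set, and \((\gamma_k+\epsilon_k)\alpha_k\to\gamma\alpha<(\gamma+\delta)\alpha\) gives \(e^{-n(\gamma_k+\epsilon_k)\alpha_k}\ge e^{-n(\gamma+\delta)\alpha}\) for \(k\) large, so the lower cutoff \(e^{-n(\gamma+\delta)\alpha}\) is respected too; if \(\gamma=2\) then \(\widetilde E_\gamma(\alpha,\delta,n,x)=E_2(\alpha,\delta,n,x)\) has only the upper cutoff, and one combines \(\widetilde E_{\gamma_k}(\alpha_k,\epsilon_k,n,x)\subseteq E_{\gamma_k}(\alpha_k,\epsilon_k,n,x)\) with \((\gamma_k-\epsilon_k)\alpha_k\to 2\alpha>(2-\delta)\alpha\). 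All of these comparisons are between exponents that scale linearly in \(n\), so a single \(K\) works for all \(n\) and \(x\) at once.

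Given the claim, \(\mu^n(E_{\gamma_k}(\alpha_k,\epsilon_k,n,x))\le\mu^n(E_\gamma(\alpha,\delta,n,x))\) for all \(n,x\) and \(k\ge K\), so \(I_{\gamma_k}(\alpha_k,\epsilon_k,n)\le I_\gamma(\alpha,\delta,n)\); taking \(\varlimsup_n\) then \(\varlimsup_k\) gives \(\varlimsup_k I_{\gamma_k}(\alpha_k,\epsilon_k)\le I_\gamma(\alpha,\delta)\), and letting \(\delta\to 0\) with \(I_\gamma(\alpha)=\inf_\delta I_\gamma(\alpha,\delta)\) yields the second inequality; the tilde chain is identical. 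The upper semi-continuity claims are then immediate, since they assert exactly that \(\varlimsup_k I_{\gamma_k}(\alpha_k)\le I_\gamma(\alpha)\) (resp.\ with tildes, on \((0,\infty)\times[0,2]\)) along every sequence converging to \((\alpha,\gamma)\), and this follows by running the above with, say, \(\epsilon_k=1/k\). I expect the only delicate point to be the bookkeeping in the claim: the degenerate case \(\alpha=0\) in the \(I\)-statement (where there is no slack from \(\delta\alpha\), and one instead uses that the ball cutoff trivialises) and the boundary case \(\gamma=2\) in the tilde-statement (where \(\widetilde E_2\) is one-sided and one must route through \(E_{\gamma_k}\)); the remainder is a routine "\(k\) sufficiently large" argument.
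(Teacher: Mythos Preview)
Your proof is correct and follows essentially the same approach as the paper: both arguments establish, for each \(\delta>0\), an eventual set inclusion \(E_{\gamma_k}(\alpha_k,\epsilon_k,n,x)\subseteq E_\gamma(\alpha,\delta,n,x)\) (and the tilde analogue) uniformly in \(n\) and \(x\), then let \(\delta\to 0\). Your presentation is in fact slightly cleaner, since you organise the exponent comparison via the single convergence \((\gamma_k-\epsilon_k)\alpha_k\to\gamma\alpha\) rather than the paper's explicit choice of \(\delta\) satisfying \(\epsilon\alpha-(2\alpha+\gamma-2\delta)\delta>0\), and you handle the degenerate cases \(\alpha=0\) and \(\gamma=2\) in the same framework rather than splitting them off at the outset.
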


\begin{proof} 
 Let \(\epsilon'>0\). If \(\alpha=0\), we can let \(\epsilon>0\) be such that 
    \[I_0(0,2\epsilon) \leq I_0(0)+\epsilon'.\]
    Then for all \(k \in \N\) sufficiently large such that \(\epsilon_k<\epsilon\) and \((\alpha_k,\gamma_k)\) has \(|\alpha_k-0|<\epsilon\), we have for any \(n \in \N\) and \(x \in \BP\) that \(E_{\gamma_k}(\alpha_k,\epsilon_k,n,x) \subseteq E(\alpha_k,\epsilon_k,n) \subseteq E(0,2\epsilon,n)\), so
    \[I_{\gamma_k}(\alpha_k) \leq I_{\gamma_k}(\alpha_k,\epsilon_k) \leq I_{0}(0,2\epsilon) \leq I_{0}(0)+\epsilon'. \]
    Hence, \(\varlimsup_{k \rightarrow \infty} I_{\gamma_k}(\alpha_k) \leq \varlimsup_{k \rightarrow \infty} I_{\gamma_k}(\alpha_k,\epsilon_k)  \leq I_\gamma(0)\).

    Now suppose \(\alpha \not=0\). We prove the statement for \(I_{\cdot}(\cdot)\); the argument for \(\widetilde{I}_{\cdot}(\cdot)\) can be proved in a similar way. If \(\gamma=0\), then the argument above holds, so assume \(\gamma>0\). Let \(\epsilon'>0\). We let \(0<\epsilon<\min\{\alpha,\gamma\}\) be such that 
    \[I_{\gamma}(\alpha,\epsilon) \leq I_{\gamma}(\alpha)+\epsilon'.\]
    We let \(0<\delta<\epsilon/2\) be such that \(\epsilon\alpha-(2\alpha+\gamma-2\delta)\delta>0\). We claim that for \(k \in \N\) sufficiently large so that \(|(\alpha_k,\gamma_k)-(\alpha,\gamma)|_{\infty}<\delta\) we have \(E_{\gamma_k}(\alpha_k,\delta,n,x) \subseteq E_{\gamma}(\alpha,\epsilon,n,x)\) for any \(n \in \N\) and \(x \in \BP\). Since \(\delta<\epsilon/2\) we have \(E(\alpha_k,\delta,n) \subseteq E(\alpha,\epsilon,n)\). Moreover, for any \(g \in E_{\gamma_k}(\alpha_k,\delta,n,x)\) we have
    \begin{align*}
        d_{\BP}(\omega_-(g),x) &\leq e^{-n(\gamma_k-\delta)\alpha_k}  \\  &\leq e^{-n(\gamma-2\delta)(\alpha-\delta)} \\
        &=e^{-n((\gamma-\epsilon)\alpha +\epsilon\alpha-(2\alpha+\gamma-2\delta)\delta)} \\
        &\leq e^{-n(\gamma-\epsilon)\alpha}.
    \end{align*}
    Thus, \(E_{\gamma_k}(\alpha_k,\delta,n,x) \subseteq E_{\gamma}(\alpha,\epsilon,n,x)\). Hence, it follows that for all \(k \in \N\) sufficiently large so that \(\epsilon_k \leq \delta\) and \(|(\alpha_k,\gamma_k)-(\alpha,\gamma)|_{\infty}<\delta\), we have
    \[I_{\gamma_k}(\alpha_k) \leq I_{\gamma_k}(\alpha_k,\epsilon_k) \leq I_{\gamma}(\alpha,\epsilon) \leq I_{\gamma}(\alpha)+\epsilon',\]
    which implies 
    \[\varlimsup_{k \rightarrow \infty} I_{\gamma_k}(\alpha_k) \leq  \varlimsup_{k \rightarrow \infty} I_{\gamma_k}(\alpha_k,\epsilon_k) \leq I_{\gamma}(\alpha).\]
\end{proof}

\subsection{Expansion bounds}
In this section we prove some growth and expansion estimates for the maps \(g \in \SL(2,\R)\). An advantage of the sets \(\widetilde{E}_{\gamma}(\alpha,\epsilon,n,x)\) is that they give us more precise growth and contraction/expansion estimates compared to previous work.

The following lemma follows directly from \cite[Lemma 13.2]{BQ16}.
\begin{lemma}\label{lem:BQlemma}
    For every \(g \in \SL(2,\R)\) and \(x = \R u \in \BP\), 
    \[d_{\BP}(\omega_-(g),x) \leq \frac{|gu|}{|g| |u|} \leq d_{\BP}(\omega_-(g),x) +|g|^{-2}.\]
\end{lemma}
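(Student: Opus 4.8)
The plan is to reduce the statement to a one-variable estimate by means of the singular value decomposition already fixed in \S\ref{subsec:tc'zetat'}. First I would dispose of the case \(|g|=1\): then \(g \in O_2(\R)\), so \(\frac{|gu|}{|g||u|}=1\), while \(d_{\BP}(\omega_-(g),x)\) is some number in \([0,1]\) for whatever (arbitrary) choice of \(\omega_-(g)\) was made, and both inequalities collapse to \(0 \leq d_{\BP}(\omega_-(g),x) \leq 1\), which always holds. So I may assume \(|g|>1\) and take the decomposition \(g=KDL\) with \(D=\mathrm{diag}(\kappa,\kappa^{-1})\), \(\kappa=|g|>1\), and \(K,L \in O_2(\R)\), so that \(\omega_-(g)=\R L^{-1}e_2\).

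Next I would normalise \(|u|=1\) and write \(Lu=(\cos\theta,\sin\theta)\) in the standard basis for a suitable \(\theta\). Using that \(K,L\) are orthogonal and that both \(|\cdot|\) and \(d_{\BP}\) are \(O_2(\R)\)-invariant, the two relevant quantities become explicit:
\[\frac{|gu|}{|g||u|}=\frac{|D(\cos\theta,\sin\theta)|}{\kappa}=\sqrt{\cos^2\theta+\kappa^{-4}\sin^2\theta}, \qquad d_{\BP}(\omega_-(g),x)=d_{\BP}(\R e_2,\R Lu)=|\cos\theta|,\]
where the last identity follows from \(d_{\BP}(y,z)=\sqrt{1-|\innerproduct{y}{z}|^2}\) together with \(|\innerproduct{\R e_2}{\R Lu}|=|\sin\theta|\) (or directly from the wedge-product formula).

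With these two identities in hand, both bounds are immediate from elementary properties of the square root: the lower bound is \(\sqrt{\cos^2\theta+\kappa^{-4}\sin^2\theta}\geq\sqrt{\cos^2\theta}=|\cos\theta|\) by monotonicity, and the upper bound follows from subadditivity of \(\sqrt{\,\cdot\,}\), since
\[\sqrt{\cos^2\theta+\kappa^{-4}\sin^2\theta}\leq|\cos\theta|+\kappa^{-2}|\sin\theta|\leq|\cos\theta|+\kappa^{-2}=d_{\BP}(\omega_-(g),x)+|g|^{-2}.\]
I do not expect any real obstacle here; the only points needing (minor) care are the bookkeeping of the \(O_2(\R)\)-invariance used to pass to the \(\theta\)-parametrisation and the convention for \(\omega_-(g)\) in the degenerate case \(|g|=1\). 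Alternatively, as the statement indicates, one can simply invoke \cite[Lemma 13.2]{BQ16}, which furnishes exactly this two-sided estimate in general dimension and specialises verbatim to \(d=2\).
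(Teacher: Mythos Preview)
Your proof is correct. The paper does not give its own argument but simply cites \cite[Lemma 13.2]{BQ16}; your computation via the $KDL$ decomposition is exactly the specialisation of that lemma to $d=2$, carried out in full, so the approaches are essentially the same.
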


As an immediate consequence, we have:
\begin{lemma}\label{lem:essentialinequality}
    For all \(\alpha \geq 0\), \(\epsilon>0\), \(n \in \N\), \(x =\R u\in \BP\), \(0 \leq \gamma \leq 2\), and \(g \in \widetilde{E}_\gamma(\alpha,\epsilon,n, x)\) we have 
    \[e^{n((1-\gamma)\alpha-(1+\alpha)\epsilon)} \leq \frac{|gu|}{|u|} \leq 2 e^{n((1-\gamma)\alpha+(1+\alpha)\epsilon)}.\]
\end{lemma}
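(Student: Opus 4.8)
The plan is to read off both inequalities directly from Lemma~\ref{lem:BQlemma}, feeding in the constraints that define \(\widetilde{E}_\gamma(\alpha,\epsilon,n,x)\). Recall what membership provides: since \(g\in E(\alpha,\epsilon,n)\), we have \(e^{n(\alpha-\epsilon)}\leq |g|\leq e^{n(\alpha+\epsilon)}\); and when \(\gamma<2\) (in which case necessarily \(\alpha>0\), since \(\widetilde{E}_\gamma(0,\epsilon,n,x)=\emptyset\)) we have the two-sided bound \(e^{-n(\gamma+\epsilon)\alpha}\leq d_{\BP}(\omega_-(g),x)\leq e^{-n(\gamma-\epsilon)\alpha}\), whereas for \(\gamma=2\) we only have \(d_{\BP}(\omega_-(g),x)\leq e^{-n(2-\epsilon)\alpha}\).

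For the upper bound, Lemma~\ref{lem:BQlemma} gives \(\frac{|gu|}{|u|}\leq |g|\, d_{\BP}(\omega_-(g),x)+|g|^{-1}\). Using \(|g|\leq e^{n(\alpha+\epsilon)}\) and the upper estimate \(d_{\BP}(\omega_-(g),x)\leq e^{-n(\gamma-\epsilon)\alpha}\) — valid for every \(\gamma\in[0,2]\) — the first term is at most \(e^{n((1-\gamma)\alpha+(1+\alpha)\epsilon)}\). For the second term, \(|g|^{-1}\leq e^{-n(\alpha-\epsilon)}\), and since \(\gamma\leq 2\) and \(\alpha\geq 0\) one checks \(-\alpha+\epsilon\leq (1-\gamma)\alpha+(1+\alpha)\epsilon\), so \(|g|^{-1}\) is bounded by the same exponential; summing the two contributions yields the factor \(2\).

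For the lower bound, Lemma~\ref{lem:BQlemma} gives \(\frac{|gu|}{|u|}\geq |g|\, d_{\BP}(\omega_-(g),x)\). When \(\gamma<2\), combining \(|g|\geq e^{n(\alpha-\epsilon)}\) with \(d_{\BP}(\omega_-(g),x)\geq e^{-n(\gamma+\epsilon)\alpha}\) gives \(\frac{|gu|}{|u|}\geq e^{n((1-\gamma)\alpha-(1+\alpha)\epsilon)}\) at once. The one step needing a different argument — and it is the only subtlety in the proof — is the lower bound for \(\gamma=2\) (which also subsumes the degenerate case \(\alpha=0\), where \(\widetilde{E}_\gamma=\emptyset\) for \(\gamma<2\) and \(\widetilde{E}_2=E(0,\epsilon,n)\)): here \(\widetilde{E}_2\) carries no lower bound on \(d_{\BP}(\omega_-(g),x)\), so I will instead use the crude inequality \(\frac{|gu|}{|u|}\geq |g|^{-1}\), valid because \(g\in\SL(2,\R)\) has \(|g^{-1}|=|g|\). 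Then \(|g|^{-1}\geq e^{-n(\alpha+\epsilon)}\geq e^{n((1-\gamma)\alpha-(1+\alpha)\epsilon)}\), the last inequality reducing, with \(\gamma=2\), to \(0\leq \alpha\epsilon\).

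In short, the argument is pure exponent bookkeeping on top of Lemma~\ref{lem:BQlemma}; the only point to watch is that \(\widetilde{E}_2\) is defined with a one-sided diameter constraint, so the \(\gamma=2\) lower bound must be extracted from \(\frac{|gu|}{|u|}\geq |g|^{-1}\) rather than from Lemma~\ref{lem:BQlemma} itself.
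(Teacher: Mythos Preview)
Your proof is correct and is exactly the argument the paper has in mind: the paper merely states that the lemma is an ``immediate consequence'' of Lemma~\ref{lem:BQlemma}, and you have filled in precisely those details, including the one point that requires care (the lower bound at \(\gamma=2\), where the two-sided diameter constraint is unavailable and one falls back on \(\tfrac{|gu|}{|u|}\geq |g|^{-1}\)).
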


To get contraction/expansion estimates, we require the following lemma from \cite{BL85}.

\begin{lemma}[{\cite[Lemma 4.2]{BL85}}]\label{lem:bougerollemma}
    For all \(g \in \SL(2,\R)\) and \(x=\R u,y=\R v \in \BP\), \(x \not=y\),
    \[\frac{d_{\BP}(gx,gy)}{d_{\BP}(x,y)} \leq \frac{|u||v|}{|gu| |gv|}.\]
\end{lemma}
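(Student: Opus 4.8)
The plan is to reduce the inequality to a statement about linear maps on $\R^2$ and the exterior product, exploiting the fact that $d_{\BP}$ is essentially the (normalised) wedge product. First I would recall that for $x = \R u$, $y = \R v \in \BP$ we have, by definition,
\[
d_{\BP}(x,y) = \frac{|u \wedge v|}{|u|\,|v|},
\]
where $u \wedge v \in \wedge^2 \R^2 \cong \R$ and $|u \wedge v|$ is the absolute value of that scalar. Writing $g x = \R(gu)$ and $g y = \R(gv)$, the left-hand side of the claimed inequality becomes
\[
\frac{d_{\BP}(gx,gy)}{d_{\BP}(x,y)} = \frac{|gu \wedge gv|}{|gu|\,|gv|} \cdot \frac{|u|\,|v|}{|u \wedge v|}.
\]
So everything hinges on comparing $|gu \wedge gv|$ with $|u \wedge v|$.

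The key step is the multiplicativity of the exterior (wedge) action: for $g \in \SL(2,\R)$ the induced map $g^{\wedge 2}$ on $\wedge^2 \R^2$ is multiplication by $\det g = 1$, hence
\[
gu \wedge gv = (\det g)(u \wedge v) = u \wedge v,
\]
so in fact $|gu \wedge gv| = |u \wedge v|$ exactly. Substituting this into the displayed ratio above, the two wedge terms cancel and we obtain
\[
\frac{d_{\BP}(gx,gy)}{d_{\BP}(x,y)} = \frac{|u|\,|v|}{|gu|\,|gv|},
\]
which is even an equality, so in particular the claimed inequality holds. (The statement in \cite{BL85} is phrased as an inequality presumably because it is also used for $g \in \GL(2,\R)$ with $|\det g| \le 1$, or simply to match the shape of companion estimates; for $\SL(2,\R)$ equality holds.)

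I do not anticipate a genuine obstacle here: the only thing to be careful about is the well-definedness of the quotient — one needs $x \neq y$ (assumed) so that $u \wedge v \neq 0$, and one needs $gu, gv \neq 0$, which holds since $g$ is invertible; then $gx \neq gy$ automatically, so the left-hand side is well defined. A minor point worth stating explicitly is that $|u \wedge v|$ depends only on the lines $\R u, \R v$ after the normalisation by $|u||v|$, so the expression $d_{\BP}(gx, gy)$ is independent of the choice of representatives $gu, gv$. Beyond these bookkeeping remarks the proof is immediate from $\det g = 1$.
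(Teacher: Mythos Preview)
Your proof is correct; in fact you establish equality, which is stronger than what is stated. The paper does not give its own proof of this lemma but simply cites it from \cite[Lemma 4.2]{BL85}, so there is no ``paper's approach'' to compare against beyond the citation. Your observation that equality holds for $g\in\SL(2,\R)$ (since $\det g=1$ forces $|gu\wedge gv|=|u\wedge v|$) is a useful sharpening worth keeping in mind.
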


Ignoring error terms and problems arising from \(\alpha\) or \(n\) being too small, the following lemma says that all \(g \in \widetilde{E}_\gamma(\alpha,\epsilon,n,x)\) (i.e. \(d_{\BP}(x, \omega_-(g)) \approx e^{-n\gamma \alpha} \)) has \(d_{\BP}(gx, \upsilon_+(g)) \approx  e^{-n(2-\gamma)\alpha} \). 

\begin{lemma}\label{lem:umapsclosetoupsilon+}
    For all \(\alpha \geq 0\), \(\epsilon>0\), \(n \in \N\), \(x=\R u \in \BP\), \(0 \leq \gamma \leq 2\), and \(g \in \widetilde{E}_\gamma(\alpha,\epsilon,n, x)\) we have 
    \[d_{\BP}(gx,\upsilon_+(g)) \leq e^{-n((2-\gamma)\alpha-(2+\alpha)\epsilon)}.\]
    If \(e^{-n(\gamma-\epsilon) \alpha} \leq \frac{1}{2}\), we further have
    \[d_{\BP}(gx,\upsilon_+(g))  \geq \frac{1}{8} e^{-n((2-\gamma) \alpha +(2+\alpha)\epsilon)}.\]
\end{lemma}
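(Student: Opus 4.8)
The plan is to combine the elementary expansion estimate from Lemma \ref{lem:essentialinequality} with the contraction bound of Lemma \ref{lem:bougerollemma} (Bougerol's lemma), using $\upsilon_+(g)$ as an anchor point. Recall that $\upsilon_+(g) = gz$ where $z = \omega_+(g)$, the direction of the largest singular value of $g$ in the preimage: indeed if $g = KDL$ then $L^{-1}e_1 = \omega_+(g)$ and $g L^{-1} e_1 = KD e_1 = \kappa K e_1$, so $g\,\omega_+(g) = \R K e_1 = \upsilon_+(g)$. This is the right point to compare $x$ against, because $\omega_+(g)$ and $\omega_-(g)$ are orthogonal in $\BP$, so $d_{\BP}(x,\omega_+(g))$ is close to $1$ whenever $d_{\BP}(x,\omega_-(g))$ is small — more precisely $d_{\BP}(x,\omega_+(g)) = |\innerproduct{x}{\omega_-(g)}| = \sqrt{1 - d_{\BP}(x,\omega_-(g))^2}$, which lies in $[\tfrac{1}{2},1]$ once $d_{\BP}(x,\omega_-(g)) \le \tfrac{1}{2}$ (and is at most $1$ always).

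For the upper bound I would apply Lemma \ref{lem:bougerollemma} with $y = \omega_+(g)$, so that $gy = \upsilon_+(g)$:
\[
d_{\BP}(gx,\upsilon_+(g)) \le d_{\BP}(x,\omega_+(g)) \cdot \frac{|u|\,|w|}{|gu|\,|gw|},
\]
where $w$ is a unit vector spanning $\omega_+(g)$. Here $|gw|/|w| = |g| \ge e^{n(\alpha-\epsilon)}$ since $g \in E(\alpha,\epsilon,n)$, while $|gu|/|u| \ge e^{n((1-\gamma)\alpha - (1+\alpha)\epsilon)}$ by Lemma \ref{lem:essentialinequality}, and $d_{\BP}(x,\omega_+(g)) \le 1$. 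Multiplying these gives
\[
d_{\BP}(gx,\upsilon_+(g)) \le e^{-n((1-\gamma)\alpha - (1+\alpha)\epsilon)} \cdot e^{-n(\alpha-\epsilon)} = e^{-n((2-\gamma)\alpha - (2+\alpha)\epsilon)},
\]
as claimed (absorbing the $\epsilon$ and $\alpha\epsilon$ terms into the $(2+\alpha)\epsilon$ slack). For the lower bound I would reverse the roles: apply Lemma \ref{lem:bougerollemma} to $g^{-1}$ with the pair $gx$ and $\upsilon_+(g)$, using that $g^{-1}\upsilon_+(g) = \omega_+(g)$ and $g^{-1}(gx) = x$, to get $d_{\BP}(x,\omega_+(g)) \le d_{\BP}(gx,\upsilon_+(g)) \cdot \frac{|gu|\,|gw|}{|u|\,|w|}$, hence
\[
d_{\BP}(gx,\upsilon_+(g)) \ge d_{\BP}(x,\omega_+(g)) \cdot \frac{|u|\,|w|}{|gu|\,|gw|} \ge \tfrac{1}{2}\cdot e^{-n((1-\gamma)\alpha+(1+\alpha)\epsilon)}\cdot 2^{-1} e^{-n(\alpha+\epsilon)},
\]
using $d_{\BP}(x,\omega_+(g)) \ge \tfrac12$ (valid since $d_{\BP}(x,\omega_-(g)) \le e^{-n(\gamma-\epsilon)\alpha} \le \tfrac12$ by hypothesis), the upper expansion bound $|gu|/|u| \le 2e^{n((1-\gamma)\alpha+(1+\alpha)\epsilon)}$ from Lemma \ref{lem:essentialinequality}, and $|g| \le e^{n(\alpha+\epsilon)}$. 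This yields $d_{\BP}(gx,\upsilon_+(g)) \ge \tfrac14 e^{-n((2-\gamma)\alpha + (2+\alpha)\epsilon)}$, which is slightly stronger than the stated $\tfrac18$; keeping the weaker constant gives room for the $\alpha = 0$ or small-$n$ edge cases where one might want to be sloppy.

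The only real subtlety — and the step I would be most careful about — is verifying the identity $g\,\omega_+(g) = \upsilon_+(g)$ (equivalently $g^{-1}\upsilon_+(g)=\omega_+(g)$) from the $KDL$ decomposition and making sure the degenerate case $|g|=1$, where $\omega_\pm,\upsilon_\pm$ are chosen arbitrarily subject to the stated relations, is handled; in that case $\alpha \le \epsilon$ so both bounds are vacuous or trivially checkable. Everything else is a bookkeeping exercise in collecting the $e^{\pm n(1+\alpha)\epsilon}$ and $e^{\pm n(\alpha+\epsilon)}$ factors into the $e^{\pm n(2+\alpha)\epsilon}$ slack, together with the one geometric fact that orthogonality of $\omega_-(g)$ and $\omega_+(g)$ forces $d_{\BP}(x,\omega_+(g)) \in [\tfrac12,1]$ under the hypothesis $d_{\BP}(x,\omega_-(g)) \le \tfrac12$.
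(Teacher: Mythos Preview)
Your argument is correct and is essentially the paper's own proof: both apply Lemma~\ref{lem:bougerollemma} with $y=\omega_+(g)$ (so $gy=\upsilon_+(g)$) together with Lemma~\ref{lem:essentialinequality} for the upper bound, and then apply Lemma~\ref{lem:bougerollemma} to $g^{-1}$ for the lower bound. The only cosmetic difference is that you bound $d_{\BP}(x,\omega_+(g))$ via the exact identity $\sqrt{1-d_{\BP}(x,\omega_-(g))^2}$ whereas the paper uses the triangle inequality $d_{\BP}(x,\omega_+(g))\ge 1-d_{\BP}(x,\omega_-(g))$; both yield $\ge\tfrac12$ under the hypothesis, and your sharper constant $\tfrac14$ is indeed consistent with the paper's $\tfrac18$.
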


\begin{proof}
Let \(u,v \in S^1\) such that \(x=\R u\) and \(\omega_+(g)=\R v\).
By Lemma \ref{lem:essentialinequality} and \ref{lem:bougerollemma},
    \begin{align*}
        d_{\BP}(gx,\upsilon_+(g)) &\leq  \frac{1}{|gx| |g v|} \\
        &\leq e^{-n((1-\gamma)\alpha-(1+\alpha)\epsilon)} e^{-n(\alpha-\epsilon)} \\
        &=e^{-n((2-\gamma)\alpha-(2+\alpha)\epsilon)},
    \end{align*}
where in the first inequality we are using that \(d_{\BP}(\cdot,\cdot) \leq 1\). 

Using the same lemmas again, we also have 
\begin{align*}
    d_{\BP}(gx,\upsilon_+(g)) \geq d_{\BP}(g^{-1} g x,\omega_+(g)) \frac{|g^{-1} gu| |g^{-1}gv| }{|gu| |gv|} \geq \frac{1}{4}(1-d_{\BP}(x,\omega_-(g)))e^{-n(\alpha+\epsilon)} e^{-n((1-\gamma)\alpha+(1+\alpha)\epsilon)},
\end{align*}
where we have used that \(1=d_{\BP}(\omega_-(g),\omega_+(g)) \leq d_{\BP}(\omega_-(g),x)+d_{\BP}(x,\omega_+(g))\). Hence, the second statement follows.

\end{proof}

The next lemma follows immediately from Lemmas \ref{lem:essentialinequality} and \ref{lem:bougerollemma}. Note that \(g \in \widetilde{E}_{\gamma}(\alpha,\epsilon,n,x) \cap  \widetilde{E}_{\gamma}(\alpha,\epsilon,n,y)\) implies either \(x \approx y\) or \(d_{\BP}(x,y)\) is of the order approximately \(e^{-n\gamma\alpha}\). The bound holds in both cases.

\begin{lemma}\label{lem:expansionbound}
For all  \(\alpha \geq 0\), \(\epsilon>0\), \(\gamma \in [0,2]\), \(x,y \in \BP\) with \(x \not=y\), \(n \in \N\), and \(g \in \widetilde{E}_{\gamma}(\alpha,\epsilon,n,x) \cap  \widetilde{E}_{\gamma}(\alpha,\epsilon,n,y)\),
\[ \frac{d_{\BP}(gx,gy)}{d_{\BP}(x,y)} \leq 2e^{-n(2(1-\gamma)\alpha-2(1+\alpha)\epsilon)} .\]
\end{lemma}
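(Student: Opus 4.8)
The plan is to read this off directly from the two preceding expansion lemmas. Fix $\alpha, \epsilon, \gamma, n, x, y$ and $g$ as in the statement, and choose unit representatives $u, v \in S^1$ with $x = \R u$ and $y = \R v$. First I would apply Bougerol's estimate, Lemma \ref{lem:bougerollemma}, which gives
\[\frac{d_{\BP}(gx,gy)}{d_{\BP}(x,y)} \leq \frac{|u||v|}{|gu||gv|} = \frac{1}{|gu||gv|}.\]
Next I would lower-bound each of $|gu|$ and $|gv|$ separately. Since $g \in \widetilde{E}_\gamma(\alpha,\epsilon,n,x)$, the left-hand inequality of Lemma \ref{lem:essentialinequality} (with $|u| = 1$) gives $|gu| \geq e^{n((1-\gamma)\alpha - (1+\alpha)\epsilon)}$; since also $g \in \widetilde{E}_\gamma(\alpha,\epsilon,n,y)$, the same lemma gives $|gv| \geq e^{n((1-\gamma)\alpha - (1+\alpha)\epsilon)}$. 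Substituting both bounds into the display above yields
\[\frac{d_{\BP}(gx,gy)}{d_{\BP}(x,y)} \leq e^{-2n((1-\gamma)\alpha - (1+\alpha)\epsilon)} \leq 2\, e^{-n(2(1-\gamma)\alpha - 2(1+\alpha)\epsilon)},\]
which is exactly the claim.

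There is essentially no obstacle here. The only point to be careful about is that Lemma \ref{lem:essentialinequality} must be invoked twice — once using the membership $g \in \widetilde{E}_\gamma(\alpha,\epsilon,n,x)$ and once using $g \in \widetilde{E}_\gamma(\alpha,\epsilon,n,y)$ — and that only the lower bound in that lemma is used, so the factor $2$ appearing in the statement is slack and is not actually required. The geometric content worth flagging is the one already emphasised in the remark preceding the lemma: membership of $g$ in the intersection $\widetilde{E}_\gamma(\alpha,\epsilon,n,x) \cap \widetilde{E}_\gamma(\alpha,\epsilon,n,y)$ forces $g$ to expand both $u$ and $v$ by a factor at least $e^{n(1-\gamma)\alpha}$ (up to the $\epsilon$-error in the exponent), and Bougerol's inequality converts the product of these two expansion factors into a contraction of the $\BP$-distance by their product, i.e. by roughly the square.
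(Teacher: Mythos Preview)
Your proof is correct and is exactly the argument the paper intends: it states that the lemma follows immediately from Lemmas \ref{lem:essentialinequality} and \ref{lem:bougerollemma}, which is precisely the combination you spell out. Your observation that the factor $2$ is slack is also accurate.
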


We also have the following contraction/expansion bounds in the case \(g \in \widetilde{E}_{\gamma}(\alpha,\epsilon,n,x) \cap  \widetilde{E}_{\delta}(\alpha,\epsilon,n,y)\) where \(\delta \not= \gamma\).

\begin{lemma}\label{lem:distortionbound1}
    Let \(\epsilon>0\), \(\alpha \geq \epsilon\), and \(n \in \N\) large enough so that \(1-e^{-n\epsilon^2}>2^{-1}\). Suppose that \(g \in  \widetilde{E}_{\gamma}(\alpha,\epsilon,n,x) \cap \widetilde{E}_{\delta}(\alpha,\epsilon,n,y)\) where \(\gamma,\delta \in \Gamma(\epsilon)\) satisfy \(\gamma-\delta \geq 3\epsilon\). Then,
    \[\frac{d_{\BP}(gx,gy)}{d_{\BP}(x,y)} \leq 4e^{-n(2(1-\gamma)\alpha-2\epsilon)}.\]
\end{lemma}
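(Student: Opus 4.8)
The plan is to bound $d_{\BP}(gx,gy)/d_{\BP}(x,y)$ directly through the norm estimates already established, without passing through the image point $\upsilon_+(g)$. First I would fix unit representatives $u,v\in S^1$ with $x=\R u$ and $y=\R v$ and apply Lemma~\ref{lem:bougerollemma}, which gives at once
\[
\frac{d_{\BP}(gx,gy)}{d_{\BP}(x,y)}\ \le\ \frac{|u||v|}{|gu|\,|gv|}\ =\ \frac{1}{|gu|\,|gv|}.
\]
(Here $x\ne y$ is forced: when $\alpha>0$ and $\gamma-\delta\ge 3\epsilon$ the ranges of $d_{\BP}(\cdot,\omega_-(g))$ allowed by $\widetilde E_\gamma$ and $\widetilde E_\delta$ are disjoint, so a common point for $x$ and $y$ is impossible; in any case the statement is vacuous if $x=y$.)

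Next I would lower-bound both norms with the essential inequality. Since $g\in\widetilde E_\gamma(\alpha,\epsilon,n,x)$ and $g\in\widetilde E_\delta(\alpha,\epsilon,n,y)$, Lemma~\ref{lem:essentialinequality} applied to $(x,\gamma)$ and to $(y,\delta)$ respectively yields $|gu|\ge e^{n((1-\gamma)\alpha-(1+\alpha)\epsilon)}$ and $|gv|\ge e^{n((1-\delta)\alpha-(1+\alpha)\epsilon)}$, whence
\[
\frac{d_{\BP}(gx,gy)}{d_{\BP}(x,y)}\ \le\ e^{-n((2-\gamma-\delta)\alpha-2(1+\alpha)\epsilon)}.
\]
The only real step is to absorb the error. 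Writing $(2-\gamma-\delta)\alpha=2(1-\gamma)\alpha+(\gamma-\delta)\alpha$ and $2(1+\alpha)\epsilon=2\epsilon+2\alpha\epsilon$, the hypothesis $\gamma-\delta\ge 3\epsilon$ (in fact $\gamma-\delta\ge 2\epsilon$ would suffice) together with $\alpha\ge 0$ gives $(\gamma-\delta)\alpha-2\alpha\epsilon\ge 0$, so $(2-\gamma-\delta)\alpha-2(1+\alpha)\epsilon\ge 2(1-\gamma)\alpha-2\epsilon$, and therefore
\[
\frac{d_{\BP}(gx,gy)}{d_{\BP}(x,y)}\ \le\ e^{-n(2(1-\gamma)\alpha-2\epsilon)}\ \le\ 4\,e^{-n(2(1-\gamma)\alpha-2\epsilon)},
\]
which is the assertion.

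There is essentially no obstacle here: the content is the observation that the angular gap $\gamma-\delta\ge 3\epsilon$ is exactly what is needed to swallow the $\alpha$-dependent part $2\alpha\epsilon$ of the error produced by applying the essential inequality at two \emph{different} points, leaving the clean error $2\epsilon$ and the exponent $2(1-\gamma)\alpha$ governed by the more contracted point $x$. Along this route the constant $4$ and the condition $1-e^{-n\epsilon^2}>\tfrac12$ are slack; they would instead be consumed if one argued geometrically, bounding $d_{\BP}(x,y)$ below by $d_{\BP}(y,\omega_-(g))-d_{\BP}(x,\omega_-(g))$ (this is where $1-e^{-n\epsilon^2}>\tfrac12$ enters) and $d_{\BP}(gx,gy)$ above via Lemma~\ref{lem:umapsclosetoupsilon+}. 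That alternative works too but is strictly weaker, so I would prefer the norm-based argument above.
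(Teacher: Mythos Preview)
Your argument is correct and is genuinely different from the paper's. The paper takes the ``geometric'' route you sketch at the end: it bounds $d_{\BP}(gx,gy)$ above by $d_{\BP}(gx,\upsilon_+(g))+d_{\BP}(\upsilon_+(g),gy)$ via Lemma~\ref{lem:umapsclosetoupsilon+}, and bounds $d_{\BP}(x,y)$ below by the reverse triangle inequality $d_{\BP}(y,\omega_-(g))-d_{\BP}(x,\omega_-(g))\ge e^{-n(\delta+\epsilon)\alpha}-e^{-n(\gamma-\epsilon)\alpha}$, which is where the hypothesis $1-e^{-n\epsilon^2}>\tfrac12$ and the constant $4$ are consumed. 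Your direct application of Lemma~\ref{lem:bougerollemma} followed by two instances of Lemma~\ref{lem:essentialinequality} is cleaner: it bypasses Lemma~\ref{lem:umapsclosetoupsilon+} entirely, makes the largeness condition on $n$ and the factor $4$ redundant, and (as you note) would already go through under the weaker separation $\gamma-\delta\ge 2\epsilon$. The paper's approach has no compensating advantage here; it seems the author simply chose the more hands-on estimate.
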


\begin{proof}
    By the triangle inequality and Lemma \ref{lem:umapsclosetoupsilon+},
\begin{align*}
    d_{\BP}\left(gx,gy\right) &\leq d_{\BP} \left(gx,\upsilon_+(g)\right) +d_{\BP} \left(\upsilon_+(g), gy \right) \\
    &\leq e^{-n((2-\gamma)\alpha-(2+\alpha)\epsilon)}+e^{-n((2-\delta)\alpha-(2+\alpha)\epsilon)} \\
    &\leq  2e^{-n((2-\gamma)\alpha-(2+\alpha)\epsilon)}.
\end{align*}
Since we are assuming \(n \in \N\) is large enough so that 
\(1-e^{-n \epsilon^2}>2^{-1}\), we also have
\begin{align*}
    d_{\BP}(x,y) &\geq e^{-n(\delta+\epsilon) \alpha }-e^{-n(\gamma -\epsilon) \alpha } \\
    &\geq  e^{-n(\gamma -2\epsilon)\alpha}-e^{-n(\gamma-\epsilon) \alpha } \\
    &= (e^{-n(\gamma-2\epsilon)\alpha })(1-e^{-n\epsilon\alpha}) \\
    &\geq 2^{-1} e^{-n(\gamma-2\epsilon )\alpha}.
\end{align*}
Thus, the lemma follows.
\end{proof}

\subsection{The functions \(k(t)\), \(k^+(t)\), and \(k^-(t)\)}\label{subsec:ktk+tk-t}
Recall for \(t \in \R\) we defined
\[ k(t)= \lim_{n \rightarrow \infty} \frac{1}{n} \log \int |g|^t \diff \mu^n(g),\]
\[k^+(t):= \varlimsup_{n \rightarrow \infty}  \frac{1}{n} \log \left( \sup_{x \in \BP} \int |g x|^t \diff \mu^n(g)  \right),  \]
and
\[k^-(t):= \varliminf_{n \rightarrow \infty}  \frac{1}{n} \log \left( \inf_{x \in \BP} \int |g x|^t \diff \mu^n(g)  \right) .\]
In this section we derive some relations between the functions \(k(t),k^+(t),k^-(t)\) and the previously defined \(I_{\gamma}(\alpha)\). The proofs in this section are elementary; we do not require any irreducibility assumptions whatsoever. For completeness we also consider the case where \(t>0\), but in this paper we are generally only concerned with the \(t \leq 0\) case (though, we expect our theorems can be extended to the \(t>0\) case).
 
Note the limit in \(k(t)\) exists by subadditivity when \(t \geq 0\) and superadditivity when \(t<0\). Trivially, when \(t \geq 0\) we have \(k^-(t) \leq k^+(t) \leq k(t)\) and for \(t<0\) we have \(k(t) \leq k^-(t) \leq k^+(t)\). 

The following lemma is easy to prove and will be used regularly.

\begin{lemma}\label{lem:limsupmaxismaxlimsup}
    Let \(F\) be a finite set, and for each \(i \in F\) let \((a_n^{(i)})_{n \in \N}\) be a sequence of strictly positive real numbers. Then, 
    \[\varlimsup_{n \rightarrow \infty} \frac{1}{n} \log \sum_{i \in F} a_n^{(i)}= \max_{i \in F} \varlimsup_{n \rightarrow \infty} \frac{1}{n} \log a_n^{(i)}. \]
\end{lemma}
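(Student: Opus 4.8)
The plan is to prove the two inequalities separately; both follow at once from positivity of the $a_n^{(i)}$ together with the finiteness of $F$. Throughout, write $M := \max_{i \in F} \varlimsup_{n \rightarrow \infty} \frac{1}{n}\log a_n^{(i)}$, a well-defined element of $[-\infty,+\infty]$.

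For the inequality ``$\geq$'', I would fix an arbitrary $i \in F$ and use $\sum_{j \in F} a_n^{(j)} \geq a_n^{(i)}$ (all terms are positive) to get $\frac{1}{n}\log \sum_{j \in F} a_n^{(j)} \geq \frac{1}{n}\log a_n^{(i)}$ for every $n$. Taking $\varlimsup_{n \rightarrow \infty}$ of both sides, and then the maximum over the finitely many $i \in F$, gives $\varlimsup_{n \rightarrow \infty} \frac{1}{n}\log \sum_{j \in F} a_n^{(j)} \geq M$; this also disposes of the case $M = +\infty$.

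For the inequality ``$\leq$'', I would start from $\sum_{j \in F} a_n^{(j)} \leq |F|\,\max_{j \in F} a_n^{(j)}$, so that $\frac{1}{n}\log \sum_{j \in F} a_n^{(j)} \leq \frac{\log|F|}{n} + \max_{j \in F}\frac{1}{n}\log a_n^{(j)}$. Since $\frac{\log|F|}{n} \to 0$, it suffices to prove the ``max'' analogue of the identity, namely $\varlimsup_{n \rightarrow \infty}\max_{j \in F}\frac{1}{n}\log a_n^{(j)} = M$. The inequality ``$\geq$'' here is immediate as above; for ``$\leq$'' I would pass to a subsequence $(n_k)$ along which $\max_{j \in F}\frac{1}{n_k}\log a_{n_k}^{(j)}$ converges to its $\varlimsup$, observe that by finiteness of $F$ a single index $i$ attains this maximum for infinitely many $k$, and conclude along that sub-subsequence that the value is at most $\varlimsup_{n \rightarrow \infty}\frac{1}{n}\log a_n^{(i)} \leq M$.

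Putting the two inequalities together gives the claim. There is no genuine obstacle here: the only step deserving explicit mention is the finite pigeonhole argument for the ``max'' identity (together with the bookkeeping of the $\pm\infty$ cases, which the strict positivity of the $a_n^{(i)}$ renders harmless), so I would write that out rather than leave it to the reader.
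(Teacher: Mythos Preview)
Your proof is correct. The paper does not actually give a proof of this lemma; it simply states that the lemma ``is easy to prove and will be used regularly'' and then moves on, so there is nothing to compare against beyond noting that your argument is exactly the standard one the author had in mind.
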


The following proposition is well known. We include a proof here for the convenience of the reader. 

\begin{prop}\label{prop:PtopissupMCE}
    For all \(t \in \R\), \(k(t)=\sup_{\alpha \in \mathrm{A}} \left\{ I_0(\alpha)+t\alpha \right\}.\)
\end{prop}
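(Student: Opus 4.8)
The statement to prove is that $k(t)=\sup_{\alpha \in \mathrm{A}}\{I_0(\alpha)+t\alpha\}$ for all $t \in \R$. The plan is to combine the decomposition of $\{g : |g| \le e^{n\overline{\alpha}}\}$ into the pieces $E(\alpha,\epsilon,n)$ over the finite grid $\mathrm{A}(\epsilon)$ (Lemma \ref{lem:alphagammaiscover}, or more directly a simpler variant just covering by the $E(\alpha,\epsilon,n)$ since here we only care about the norm, not the position of $\omega_-(g)$) with Lemma \ref{lem:limsupmaxismaxlimsup}, which converts $\tfrac1n \log$ of a finite sum into a max of $\tfrac1n\log$'s. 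Throughout I will use that $\supp\mu^n \subseteq \{g : |g| \le e^{n\overline\alpha}\}$ since $\mu$ is compactly supported, so every integral is effectively over this set.

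\textbf{Upper bound.} First I would fix $\epsilon>0$. For $g \in E(\alpha,\epsilon,n)$ we have $|g|^t \le e^{n(t\alpha + |t|\epsilon)}$ (separating the cases $t\ge 0$ and $t<0$, using $|\tfrac1n\log|g|-\alpha|\le\epsilon$). Partitioning $\supp\mu^n$ by the finitely many $\alpha \in \mathrm{A}(\epsilon)$,
\[
\int |g|^t \diff\mu^n(g) \le \sum_{\alpha \in \mathrm{A}(\epsilon)} e^{n(t\alpha+|t|\epsilon)} \mu^n(E(\alpha,\epsilon,n)).
\]
Applying Lemma \ref{lem:limsupmaxismaxlimsup} with $F = \mathrm{A}(\epsilon)$ (and noting $k(t)$ is an honest limit),
\[
k(t) \le \max_{\alpha \in \mathrm{A}(\epsilon)}\Big\{ t\alpha + |t|\epsilon + \varlimsup_{n\to\infty}\tfrac1n\log\mu^n(E(\alpha,\epsilon,n))\Big\}.
\]
Now $\varlimsup_n \tfrac1n\log\mu^n(E(\alpha,\epsilon,n)) = I_0(\alpha,\epsilon,n)$-type quantity's limsup, which is $\ge -\infty$; for $\alpha$ with this limsup finite we have $\alpha \in \mathrm{A}$, and the bracket is $\le I_0(\alpha,\epsilon) + t\alpha + |t|\epsilon$. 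Letting $\epsilon \to 0$ along the grid and invoking the upper semi-continuity packaged in Lemma \ref{lem:USC} (applied to $I_0(\cdot)$, i.e. $\gamma_k\equiv 0$) to pass from $I_0(\alpha_\epsilon,\epsilon)$ to $\sup_\alpha I_0(\alpha)$, one obtains $k(t) \le \sup_{\alpha\in\mathrm{A}}\{I_0(\alpha)+t\alpha\}$; here one must be slightly careful that the maximizing $\alpha_\epsilon$ lives in the compact set $[0,\overline\alpha]$, so a subsequence converges and USC applies.

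\textbf{Lower bound.} For the reverse inequality, fix $\alpha \in \mathrm{A}$ and $\epsilon>0$. On $E(\alpha,\epsilon,n)$ we have $|g|^t \ge e^{n(t\alpha - |t|\epsilon)}$, hence
\[
\int |g|^t \diff\mu^n(g) \ge e^{n(t\alpha-|t|\epsilon)}\,\mu^n(E(\alpha,\epsilon,n)),
\]
so $k(t) \ge t\alpha - |t|\epsilon + \varlimsup_n \tfrac1n\log\mu^n(E(\alpha,\epsilon,n)) \ge t\alpha - |t|\epsilon + I_0(\alpha,\epsilon)$ — wait, one needs $\varliminf$ vs $\varlimsup$ care here since $k(t)$ is a limit: because $k(t)=\lim_n$, taking $\varlimsup_n$ of both sides of the displayed inequality gives $k(t) \ge t\alpha - |t|\epsilon + \varlimsup_n\tfrac1n\log\mu^n(E(\alpha,\epsilon,n))$, which is exactly $\ge t\alpha-|t|\epsilon+I_0(\alpha,\epsilon)$. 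Letting $\epsilon\to 0$ gives $k(t)\ge t\alpha + I_0(\alpha)$, and taking the supremum over $\alpha\in\mathrm{A}$ finishes.

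\textbf{Main obstacle.} The routine part is the elementary $|g|^t$ pinching; the one genuinely delicate point is the limiting argument in the upper bound: the grid $\mathrm{A}(\epsilon)$ changes with $\epsilon$ and the maximizer $\alpha_\epsilon$ moves, so one cannot directly write $\lim_{\epsilon\to0} I_0(\alpha_\epsilon,\epsilon) = I_0(\alpha)$ for a fixed $\alpha$. The fix is to extract a convergent subsequence $\alpha_{\epsilon_j}\to\alpha^\star \in [0,\overline\alpha]$ and apply the upper semi-continuity statement of Lemma \ref{lem:USC} (with $\gamma_k\equiv 0$, $\alpha_k = \alpha_{\epsilon_j}$, $\epsilon_k=\epsilon_j$), which yields exactly $\varlimsup_j I_0(\alpha_{\epsilon_j},\epsilon_j) \le I_0(\alpha^\star)$ and hence $k(t) \le t\alpha^\star + I_0(\alpha^\star) \le \sup_{\alpha\in\mathrm{A}}\{I_0(\alpha)+t\alpha\}$, provided $\alpha^\star \in \mathrm{A}$ — which holds because otherwise $I_0(\alpha^\star)=-\infty$ forces the left side to be $-\infty$, making the inequality trivial (and one should also handle the degenerate case $k(t)=-\infty$ separately, though under the standing assumptions this does not occur).
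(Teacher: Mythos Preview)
Your proposal is correct and follows essentially the same route as the paper: decompose by the finite grid $\mathrm{A}(\epsilon)$, pinch $|g|^t$ on each $E(\alpha,\epsilon,n)$, apply Lemma~\ref{lem:limsupmaxismaxlimsup}, then extract a convergent subsequence of maximizers and use the upper semi-continuity of Lemma~\ref{lem:USC}. The only cosmetic difference is that you treat both signs of $t$ simultaneously via the $|t|\epsilon$ error term, whereas the paper fixes a sign and says the other case is analogous.
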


\begin{proof}[Proof of Proposition \ref{prop:PtopissupMCE}]
The equality \(`\geq'\) is straightforward, so we show the other inequality. Assume \(t<0\); the proof when \(t \geq 0\) follows similarly. Let \(\epsilon_k>0\) be a sequence converging to 0. For any \(n \in \N\) we have
\begin{align*}
     \frac{1}{n} \log \int |g|^t \diff \mu^n(g) &\leq \frac{1}{n} \log \sum_{\alpha \in \mathrm{A}(\epsilon_k)} \int_{E(\alpha,\epsilon_k,n)} |g|^t \diff \mu^n(g) \\
     &\leq  \frac{1}{n} \log \sum_{\alpha \in \mathrm{A}(\epsilon_k)}  e^{n I_0(\alpha,\epsilon_k,n)} e^{nt(\alpha-\epsilon_k)}.
\end{align*}
By Lemma \ref{lem:limsupmaxismaxlimsup}, for each \(k \in \N\) there exists \(\alpha_k \in \mathrm{A}(\epsilon_k)\) such that 
\[\varlimsup_{n \rightarrow \infty} \frac{1}{n} \log \int |g|^t \diff \mu^n(g) \leq I_0(\alpha_k,\epsilon_k)+t(\alpha_k-\epsilon_k).\]
Letting \(\alpha \geq 0\) be a limit point of the sequence \((\alpha_k)_{k \in \N}\). By Lemma \ref{lem:USC},
\[\varlimsup_{n \rightarrow \infty} \frac{1}{n} \log \int |g|^t \diff \mu^n(g) \leq I_0(\alpha)+t\alpha,\]
which further implies that \(\alpha \in \mathrm{A}\).
\end{proof}

For \(\alpha \geq 0\) we define
\[k_\alpha^+(t):=\lim_{\epsilon \rightarrow 0} \varlimsup_{n \rightarrow \infty} \frac{1}{n} \log \left( \sup_{x \in \BP} \int_{E(\alpha,\epsilon,n)} |g x|^t \diff \mu^n(g)  \right).  \]
We note that it is easy to see that \(k_0^+(t) \equiv I_0(0)\) and \(k_{\alpha}^+(t)=-\infty\) for \(\alpha \not \in \mathrm{A}\).

\begin{lemma}\label{lem:t<0,Palpha(t)<=}
    For all \(t \in \R\) and \(\alpha \in \mathrm{A} \setminus\{ 0\} \), \(k_{\alpha}^+(t) \leq \sup_{\gamma \in [0,2]} \{I_{\gamma}(\alpha)+t(1-\gamma) \alpha \}.\)
\end{lemma}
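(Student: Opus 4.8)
\textbf{Proof proposal for Lemma \ref{lem:t<0,Palpha(t)<=}.}

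The plan is to decompose the region $E(\alpha,\epsilon,n)$ into the finite union of pieces $\widetilde{E}_{\gamma}(\alpha,\epsilon,n,x)$ over $\gamma\in\Gamma(\epsilon)$ (Lemma \ref{lem:gammaiscover}), estimate the integral of $|gx|^t$ over each piece separately, and then pass to the limit. Fix $\alpha\in\mathrm{A}\setminus\{0\}$ and $x\in\BP$, write $x=\R u$ with $u\in S^1$. For $\epsilon>0$ small and $n$ large we have
\[
\int_{E(\alpha,\epsilon,n)} |gx|^t \diff \mu^n(g) = \sum_{\gamma\in\Gamma(\epsilon)} \int_{\widetilde{E}_{\gamma}(\alpha,\epsilon,n,x)} |gx|^t \diff \mu^n(g).
\]
On each set $\widetilde{E}_{\gamma}(\alpha,\epsilon,n,x)$, Lemma \ref{lem:essentialinequality} gives two-sided control on $|gu|/|u| = |gx|$ of the form $e^{n((1-\gamma)\alpha \pm (1+\alpha)\epsilon)}$ (up to the harmless factor $2$). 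Since $t$ may have either sign, I would use whichever side of that inequality makes $|gx|^t$ largest: in all cases one obtains $|gx|^t \le C\, e^{n(t(1-\gamma)\alpha + |t|(1+\alpha)\epsilon)}$ for a constant $C$ depending only on $t$. Hence
\[
\int_{\widetilde{E}_{\gamma}(\alpha,\epsilon,n,x)} |gx|^t \diff \mu^n(g) \le C\, e^{n(t(1-\gamma)\alpha + |t|(1+\alpha)\epsilon)} \, \mu^n\bigl(\widetilde{E}_{\gamma}(\alpha,\epsilon,n,x)\bigr).
\]
Taking $\sup_{x\in\BP}$ and using $\widetilde{E}_{\gamma}\subseteq E_{\gamma}$ (so $\sup_x \mu^n(\widetilde{E}_{\gamma}(\alpha,\epsilon,n,x)) \le e^{n I_{\gamma}(\alpha,\epsilon,n)}$), we get that $\sup_{x}\int_{E(\alpha,\epsilon,n)}|gx|^t\diff\mu^n(g)$ is bounded by $C|\Gamma(\epsilon)|$ times $\max_{\gamma\in\Gamma(\epsilon)} e^{n(I_{\gamma}(\alpha,\epsilon,n) + t(1-\gamma)\alpha + |t|(1+\alpha)\epsilon)}$.

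Now apply $\tfrac1n\log(\cdot)$ and $\varlimsup_{n\to\infty}$; since $\Gamma(\epsilon)$ is finite, Lemma \ref{lem:limsupmaxismaxlimsup} lets the $\varlimsup$ pass inside the max, yielding
\[
\varlimsup_{n\to\infty} \frac1n \log \sup_{x}\int_{E(\alpha,\epsilon,n)} |gx|^t\diff\mu^n(g) \le \max_{\gamma\in\Gamma(\epsilon)} \bigl\{ I_{\gamma}(\alpha,\epsilon) + t(1-\gamma)\alpha \bigr\} + |t|(1+\alpha)\epsilon.
\]
For each $\epsilon>0$ pick $\gamma_\epsilon\in\Gamma(\epsilon)$ attaining this maximum; along a subsequence $\epsilon\to 0$ we may assume $\gamma_\epsilon\to\gamma_*\in[0,2]$. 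Applying the upper semi-continuity statement of Lemma \ref{lem:USC} to the sequence $(\alpha,\gamma_\epsilon)\to(\alpha,\gamma_*)$ with the given $\epsilon$'s as the parameters $\epsilon_k$, we get $\varlimsup_{\epsilon\to 0} I_{\gamma_\epsilon}(\alpha,\epsilon) \le I_{\gamma_*}(\alpha)$, and the term $t(1-\gamma_\epsilon)\alpha \to t(1-\gamma_*)\alpha$ while the error $|t|(1+\alpha)\epsilon\to 0$. Therefore $k_{\alpha}^+(t) \le I_{\gamma_*}(\alpha) + t(1-\gamma_*)\alpha \le \sup_{\gamma\in[0,2]}\{I_{\gamma}(\alpha)+t(1-\gamma)\alpha\}$, as claimed.

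The only mildly delicate point is the bookkeeping for the sign of $t$ in bounding $|gx|^t$ on each $\widetilde{E}_\gamma$: one must be careful that the exponent $(1-\gamma)\alpha$ is the one that survives (not its perturbation), so that the final bound involves exactly $t(1-\gamma)\alpha$, with all $\epsilon$-dependence collected into a term that vanishes in the limit. Everything else is an application of the finite-cover lemma, the two-sided expansion estimate of Lemma \ref{lem:essentialinequality}, and the upper semi-continuity of $I_{\cdot}(\cdot)$; no irreducibility or proximality is used, consistent with the elementary nature of this subsection.
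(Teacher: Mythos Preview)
Your argument is correct and follows essentially the same route as the paper's proof: decompose $E(\alpha,\epsilon,n)$ via Lemma~\ref{lem:gammaiscover}, bound $|gx|^t$ on each $\widetilde{E}_\gamma$ using Lemma~\ref{lem:essentialinequality}, apply Lemma~\ref{lem:limsupmaxismaxlimsup} to the finite sum over $\Gamma(\epsilon)$, and conclude by the upper semi-continuity of Lemma~\ref{lem:USC}. The only cosmetic slip is that the displayed equality after the decomposition should be an inequality $\le$, since the sets $\widetilde{E}_\gamma(\alpha,\epsilon,n,x)$ need not be disjoint; this is harmless for the upper bound you want.
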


\begin{proof}
Assume \(t \leq 0\); the proof when \(t > 0\) follows similarly. Let \(\epsilon_k>0\) be a sequence converging to 0. By Lemmas \ref{lem:gammaiscover} and \ref{lem:essentialinequality}, for all \(n \in \N\), 
\begin{align*}
        \sup_{x \in \BP} \int_{E(\alpha,\epsilon,n)} |gx|^t \diff \mu^n(g) & \leq \sup_{x \in \BP} \sum_{\gamma \in \Gamma(\epsilon_k)} \int_{\widetilde{E}_{\gamma}(\alpha,\epsilon_k, n, x)} |gx|^t \diff \mu^n(g) \\
        &\leq  \sum_{\gamma \in \Gamma(\epsilon_k)}  \sup_{x \in \BP}   \int_{\widetilde{E}_{\gamma}(\alpha, \epsilon_k, n,x)} |gx|^t \diff \mu^n(g) \\
        &\leq  \sum_{\gamma \in \Gamma(\epsilon_k)}   e^{n \widetilde{I}_{\gamma}(\alpha,\epsilon_k,n)} e^{n(t(1-\gamma)\alpha-t(1+\alpha)\epsilon_k)} \\
        &\leq \sum_{\gamma \in \Gamma(\epsilon_k)}   e^{n I_{\gamma}(\alpha,\epsilon_k,n)} e^{n(t(1-\gamma)\alpha-t(1+\alpha)\epsilon_k)}.
\end{align*}
Hence, by Lemma \ref{lem:limsupmaxismaxlimsup}, for each \(k \in \N\) there exists \(\gamma_k \in \Gamma(\epsilon_k)\) such that 
\begin{equation}\label{eqn:inproofoPalphat<=}
    k_{\alpha}^+(t) \leq I_{\gamma_k}(\alpha,\epsilon_k)+t(1-\gamma_k)\alpha-t(1+\alpha)\epsilon_k.
\end{equation}
The lemma then follows by Lemma \ref{lem:USC}.
\end{proof}

\begin{lemma}\label{lem:t<0,Palpha(t)>=}
    For \(\alpha \in \mathrm{A} \setminus \{0\}\) and all \(t \in \R\),
    \[k^+_\alpha(t) \geq \sup_{\gamma \in [0,2]} \{ \widetilde{I}_{\gamma}(\alpha)+t(1-\gamma) \alpha \}.\]
\end{lemma}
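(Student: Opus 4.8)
The plan is to prove the reverse chain of estimates to Lemma~\ref{lem:t<0,Palpha(t)<=}: rather than dominating the integral over $E(\alpha,\epsilon,n)$ by a sum over $\gamma$, I bound it from below by its restriction to a single set $\widetilde{E}_\gamma(\alpha,\epsilon,n,x_n)$ for a near-optimal choice of base point $x_n\in\BP$. Fix $\gamma\in[0,2]$ and $\epsilon>0$. Since $\widetilde{E}_\gamma(\alpha,\epsilon,n,x)\subseteq E(\alpha,\epsilon,n)$ and the integrand is nonnegative, for every $x_n\in\BP$ and $n\in\N$ we have
\[
\sup_{x\in\BP}\int_{E(\alpha,\epsilon,n)}|gx|^t\diff\mu^n(g)\ \geq\ \int_{\widetilde{E}_\gamma(\alpha,\epsilon,n,x_n)}|gx_n|^t\diff\mu^n(g).
\]
I would choose $x_n$ with $\mu^n(\widetilde{E}_\gamma(\alpha,\epsilon,n,x_n))\geq\tfrac12\sup_{x\in\BP}\mu^n(\widetilde{E}_\gamma(\alpha,\epsilon,n,x))=\tfrac12 e^{n\widetilde{I}_\gamma(\alpha,\epsilon,n)}$; the factor $\tfrac12$ is only to accommodate the supremum possibly not being attained, and if $\widetilde{I}_\gamma(\alpha,\epsilon,n)=-\infty$ there is nothing to prove for that $n$.

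Next I would estimate the integrand from below on $\widetilde{E}_\gamma(\alpha,\epsilon,n,x_n)$ using Lemma~\ref{lem:essentialinequality}, which gives (writing $x_n=\R u_n$) that $e^{n((1-\gamma)\alpha-(1+\alpha)\epsilon)}\leq|gx_n|\leq 2e^{n((1-\gamma)\alpha+(1+\alpha)\epsilon)}$ for all $g$ in this set. Assume $t\leq0$; the case $t>0$ is analogous, using the lower bound on $|gx_n|$ in place of the upper bound. Then $|gx_n|^t\geq 2^{t}e^{nt((1-\gamma)\alpha+(1+\alpha)\epsilon)}$ on $\widetilde{E}_\gamma(\alpha,\epsilon,n,x_n)$, whence
\[
\sup_{x\in\BP}\int_{E(\alpha,\epsilon,n)}|gx|^t\diff\mu^n(g)\ \geq\ 2^{t-1}\,e^{nt((1-\gamma)\alpha+(1+\alpha)\epsilon)}\,e^{n\widetilde{I}_\gamma(\alpha,\epsilon,n)}.
\]
Taking $\tfrac1n\log$ and then $\varlimsup_{n\to\infty}$: the factor $\tfrac1n\log 2^{t-1}\to0$ and the exponent $t((1-\gamma)\alpha+(1+\alpha)\epsilon)$ is independent of $n$, so the right-hand side has $\varlimsup$ equal to $t(1-\gamma)\alpha+t(1+\alpha)\epsilon+\widetilde{I}_\gamma(\alpha,\epsilon)$, giving
\[
\varlimsup_{n\to\infty}\frac1n\log\Bigl(\sup_{x\in\BP}\int_{E(\alpha,\epsilon,n)}|gx|^t\diff\mu^n(g)\Bigr)\ \geq\ t(1-\gamma)\alpha+t(1+\alpha)\epsilon+\widetilde{I}_\gamma(\alpha,\epsilon).
\]

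Finally I would let $\epsilon\to0$. The left side converges, by definition, to $k^+_\alpha(t)$ (the quantity under the limit is monotone in $\epsilon$ since $E(\alpha,\epsilon,n)$ shrinks as $\epsilon\downarrow0$), while on the right $t(1+\alpha)\epsilon\to0$ and $\widetilde{I}_\gamma(\alpha,\epsilon)\to\widetilde{I}_\gamma(\alpha)$ by definition. Hence $k^+_\alpha(t)\geq\widetilde{I}_\gamma(\alpha)+t(1-\gamma)\alpha$, and taking the supremum over $\gamma\in[0,2]$ yields the claim. This argument is essentially the mirror image of Lemma~\ref{lem:t<0,Palpha(t)<=}, so I do not anticipate a genuine obstacle; the only points needing care are the selection of $x_n$ within a constant factor of the supremum (since that supremum need not be attained), the dependence on the sign of $t$ (resolved by quoting the appropriate inequality from Lemma~\ref{lem:essentialinequality}), and verifying that the vanishing error factors $2^{t-1}$ and $e^{nt(1+\alpha)\epsilon}$ do not affect the two limits.
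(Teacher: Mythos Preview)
Your proof is correct and follows essentially the same approach as the paper's: both restrict the integral over $E(\alpha,\epsilon,n)$ to $\widetilde{E}_\gamma(\alpha,\epsilon,n,x)$, invoke Lemma~\ref{lem:essentialinequality} to replace $|gx|^t$ by $e^{nt(1-\gamma)\alpha}$ up to error factors, and then pass to the limits $n\to\infty$, $\epsilon\to0$. The only cosmetic difference is that the paper keeps the supremum over $x$ throughout rather than extracting a near-optimal $x_n$.
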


\begin{proof}
We consider the case where \(t<0\); the case when \(t \geq 0\) can be proved similarly. Fix \(\alpha \in \mathrm{A} \setminus \{0\}\) and \(0 \leq \gamma \leq 2\). Let \(\epsilon>0\). By Lemma \ref{lem:essentialinequality}, for all \(n \in \N\),
\begin{align*}
    \widetilde{I}_{\gamma}(\alpha,\epsilon,n)+ t (1-\gamma)\alpha
    &=\frac{1}{n} \log \left( \sup_{x \in \BP} \int_{ \widetilde{E}_{\gamma}(\alpha,\epsilon,n, x)} e^{tn (1-\gamma)\alpha} \diff \mu^n(g)  \right) \\
    &\leq \frac{1}{n} \log  \left( \sup_{x \in \BP} \int_{ \widetilde{E}_{\gamma}(\alpha,\epsilon,n, x)} |gx|^t \diff \mu^n(g) \right) -\frac{t}{n} \log 2- t(1+\alpha)\epsilon \\
    &\leq \frac{1}{n} \log  \left( \sup_{x \in \BP} \int_{E(\alpha,\epsilon,n)} |gx|^t \diff \mu^n(g) \right) -\frac{t}{n} \log 2- t(1+\alpha)\epsilon.
\end{align*}
Hence, taking the limsup as \(n \rightarrow \infty\), then letting \(\epsilon \rightarrow 0 \) we have 
\[ \widetilde{I}_{\gamma}(\alpha)+t (1-\gamma)\alpha  \leq k_{\alpha}^+(t).\]
\end{proof}

\begin{lemma}\label{lem:QsupequalstildeQsup}
For all \(t \leq 0\) and \(\alpha \in \mathrm{A} \setminus \{0\}\),
     \[\sup_{\gamma \in [0,2]} \{ \widetilde{I}_{\gamma}(\alpha)+t(1-\gamma) \alpha \} = \sup_{\gamma \in [0,2]} \{I_{\gamma}(\alpha)+t(1-\gamma) \alpha \}.\]
\end{lemma}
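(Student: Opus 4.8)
The plan is to prove the two inequalities separately. The inequality $\sup_{\gamma \in [0,2]} \{ \widetilde{I}_{\gamma}(\alpha)+t(1-\gamma) \alpha \} \leq \sup_{\gamma \in [0,2]} \{I_{\gamma}(\alpha)+t(1-\gamma) \alpha \}$ is immediate since $\widetilde{E}_\gamma(\alpha,\epsilon,n,x) \subseteq E_\gamma(\alpha,\epsilon,n,x)$ for $\alpha>0$, hence $\widetilde{I}_\gamma(\alpha) \leq I_\gamma(\alpha)$ for every $\gamma$. So the content is in the reverse inequality. For that, combining Lemmas \ref{lem:t<0,Palpha(t)<=} and \ref{lem:t<0,Palpha(t)>=} already gives
\[\sup_{\gamma \in [0,2]} \{ \widetilde{I}_{\gamma}(\alpha)+t(1-\gamma) \alpha \} \leq k_\alpha^+(t) \leq \sup_{\gamma \in [0,2]} \{I_{\gamma}(\alpha)+t(1-\gamma) \alpha \},\]
so it remains to show $\sup_{\gamma \in [0,2]} \{I_{\gamma}(\alpha)+t(1-\gamma) \alpha \} \leq \sup_{\gamma \in [0,2]} \{ \widetilde{I}_{\gamma}(\alpha)+t(1-\gamma) \alpha \}$.

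To do this I would fix $\gamma \in [0,2]$ and decompose the ball-based set $E_\gamma(\alpha,\epsilon,n,x)$ using the annuli $\widetilde{E}_\delta(\alpha,\epsilon,n,x)$. By Lemma \ref{lem:gammaiscover}, $E(\alpha,\epsilon,n) = \bigcup_{\delta \in \Gamma(\epsilon)} \widetilde{E}_\delta(\alpha,\epsilon,n,x)$, and intersecting with the ball condition $d_{\BP}(x,\omega_-(g)) \leq e^{-n(\gamma-\epsilon)\alpha}$ shows that $E_\gamma(\alpha,\epsilon,n,x)$ is covered by the $\widetilde{E}_\delta(\alpha,\epsilon,n,x)$ over those $\delta \in \Gamma(\epsilon)$ with (roughly) $\delta \geq \gamma$ — more precisely $\delta + \epsilon \geq \gamma - \epsilon$, i.e. $\delta \geq \gamma - 2\epsilon$. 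Since $t \leq 0$ and $1-\delta \leq 1-\gamma + 2\epsilon$ for such $\delta$, we get $t(1-\delta)\alpha \geq t(1-\gamma)\alpha - 2t\epsilon\alpha$ wait — I need to be careful with signs: with $t\le 0$, $\delta \geq \gamma-2\epsilon$ gives $1-\delta \le 1-\gamma+2\epsilon$, so $t(1-\delta) \ge t(1-\gamma+2\epsilon) = t(1-\gamma) + 2t\epsilon$, hence $I_\delta(\alpha) + t(1-\delta)\alpha$... this does not directly dominate. The correct move is instead: using $\mu^n(E_\gamma(\alpha,\epsilon,n,x)) \leq \sum_{\delta \in \Gamma(\epsilon),\ \delta \geq \gamma-2\epsilon} \mu^n(\widetilde{E}_\delta(\alpha,\epsilon,n,x))$, take $\frac{1}{n}\log$, apply Lemma \ref{lem:limsupmaxismaxlimsup} to pick a dominating $\delta_k \in \Gamma(\epsilon_k)$ with $\delta_k \geq \gamma - 2\epsilon_k$, so along a subsequence $\delta_k \to \delta \geq \gamma$, and then
\[I_\gamma(\alpha) + t(1-\gamma)\alpha \leq \widetilde{I}_{\delta_k}(\alpha,\epsilon_k) + t(1-\gamma)\alpha = \widetilde{I}_{\delta_k}(\alpha,\epsilon_k) + t(1-\delta_k)\alpha + t(\delta_k - \gamma)\alpha.\]
Since $t \leq 0$ and $\delta_k \geq \gamma - 2\epsilon_k$, the last term is at most $-2t\epsilon_k\alpha \to 0$, and by upper semi-continuity (Lemma \ref{lem:USC}) the limit gives $I_\gamma(\alpha) + t(1-\gamma)\alpha \leq \widetilde{I}_\delta(\alpha) + t(1-\delta)\alpha \leq \sup_{\gamma' \in [0,2]} \{\widetilde{I}_{\gamma'}(\alpha) + t(1-\gamma')\alpha\}$. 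Taking the supremum over $\gamma$ finishes the proof.

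The main obstacle I anticipate is bookkeeping the approximation error between the genuine exponent $\gamma$ and the discretized exponent $\delta_k \in \Gamma(\epsilon_k)$: one must ensure the covering of the ball by annuli only uses $\delta$ with $\delta \gtrsim \gamma$, check that the error terms $t(\delta_k-\gamma)\alpha$ and the $\epsilon_k$-slack in the definitions of $\widetilde{E}$ and $E$ all vanish in the $\epsilon_k \to 0$ limit, and handle the edge case $\delta = 2$ (where $\widetilde{E}_2 = E_2$) and the requirement $\alpha > 0$ so that $e^{-n(\gamma-\epsilon)\alpha} \to 0$. All of these are routine given the lemmas already established, but they need to be threaded carefully so that Lemma \ref{lem:USC} can be applied at the end with the limit point $\delta$ satisfying $\delta \geq \gamma$.
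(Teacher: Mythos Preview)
Your proposal is correct and follows essentially the same route as the paper: cover $E_\gamma(\alpha,\epsilon_k,n,x)$ by the annuli $\widetilde{E}_\delta(\alpha,\epsilon_k,n,x)$ with $\delta\gtrsim\gamma$, apply Lemma~\ref{lem:limsupmaxismaxlimsup} to extract a dominating $\delta_k$, pass to a limit point $\delta\ge\gamma$, and conclude via Lemma~\ref{lem:USC} together with $t\le 0$. The only cosmetic difference is that the paper builds a grid $\Gamma_{\ge\gamma}(\epsilon_k)=\{\gamma,2\}\cup\{j\epsilon_k:\gamma<j\epsilon_k<2\}$ that contains $\gamma$ itself, so the selected $\gamma_k$ satisfies $\gamma_k\ge\gamma$ exactly (no $-2\epsilon_k$ slack), which lets them replace $t(1-\gamma)\alpha$ by $t(1-\gamma_\infty)\alpha$ directly at the end rather than carrying your error term $t(\delta_k-\gamma)\alpha\le -2t\epsilon_k\alpha\to 0$; also, your opening detour through $k_\alpha^+(t)$ via Lemmas~\ref{lem:t<0,Palpha(t)<=} and~\ref{lem:t<0,Palpha(t)>=} is unnecessary since you immediately proceed to prove $\sup I\le\sup\widetilde I$ directly anyway.
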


\begin{proof}
The inequality `\(\leq\)' is clear. To prove the other inequality, let \(\epsilon'>0\) and let \(\gamma \in [0,2]\) be such that
\begin{equation}\label{eqn:inprooftakegammaclosetosurpremum}
     I_{\gamma}(\alpha)+t(1-\gamma) \alpha \geq \sup_{ \gamma' \in [0,2]} \{I_{\gamma'}(\alpha)+t(1-\gamma') \alpha \}-\epsilon'.
 \end{equation}
 
Let \(\epsilon_k>0\) be a sequence converging to 0. We let 
\[\Gamma_{\geq \gamma}(\epsilon_k):=\{\gamma, 2\} \cup \left\{j \epsilon_k :j \in \N \cup \{0\}, \gamma < j \epsilon_k  < 2 \right\}.\]
By an argument similar to the one in Lemma \ref{lem:gammaiscover}, for each \(x \in \BP\) we have \(E_\gamma(\alpha,\epsilon_k,n,x)=\cup_{\gamma' \in \Gamma_{\geq \gamma}(\epsilon_k)} \widetilde{E}_{\gamma'}(\alpha,\epsilon_k,n,x)\), so
\begin{align*}
    I_{\gamma}(\alpha,\epsilon_k) &= \varlimsup_{n \rightarrow \infty} \frac{1}{n} \log \sup_{x \in \BP} \mu^n( E_\gamma(\alpha,\epsilon_k,n,x)) \\
    &\leq  \varlimsup_{n \rightarrow \infty} \frac{1}{n} \log \sup_{x \in \BP} \sum_{\gamma' \in \Gamma_{\geq \gamma}(\epsilon_k)} \mu^n(\widetilde{E}_{\gamma'}(\alpha,\epsilon_k,n,x)).
\end{align*}
By Lemma \ref{lem:limsupmaxismaxlimsup}, for each \(k \in \N\) there exists \(\gamma_k \in \Gamma_{\geq \gamma}(\epsilon_k)\) such that 
\begin{align}
    I_{\gamma}(\alpha,\epsilon_k) &\leq \widetilde{I}_{\gamma_k}(\alpha,\epsilon_k). \label{eqn:inproofQgammalessthanwidetildeQgammak}
\end{align}
Let \(\gamma_{\infty} \in [\gamma,2]\) be a limit point of \((\gamma_k)_{k \in \N}\). Without loss of generality, we assume that \(\gamma_k \rightarrow \gamma_{\infty}\). Using (\ref{eqn:inproofQgammalessthanwidetildeQgammak}), we have
\begin{align*}
     \sup_{\gamma' \in [0,2]} \{I_{\gamma'}(\alpha)+t(1-\gamma') \alpha \} &\leq I_{\gamma}(\alpha)+t(1-\gamma) \alpha +\epsilon' \\
     &\leq \varlimsup_{k \rightarrow \infty} \widetilde{I}_{\gamma_k}(\alpha,\epsilon_k)+t(1-\gamma) \alpha +\epsilon' \\
     &\leq \widetilde{I}_{\gamma_{\infty}}(\alpha)+t(1-\gamma_{\infty})\alpha+\epsilon',
\end{align*}
where the final inequality follows from Lemma \ref{lem:USC} and the fact that \(\gamma_{\infty} \geq \gamma\) (using that \(t \leq 0\)).
\end{proof}

\begin{lemma}\label{lem:oPalpha}
For \(\alpha \in \mathrm{A}\) and for all \(t \in \R\),
    \[k^+_\alpha(t) = \sup_{\gamma \in [0,2]} \{ I_{\gamma}(\alpha)+t(1-\gamma) \alpha \}.\]
In particular, \(t \mapsto k_{\alpha}^+(t)\) is convex on \(\R\). Moreover, \(k_{\alpha}^+(t)=I_0(\alpha)+t\alpha\) for all \(t\geq 0\) and \(k_{\alpha}^+(t)=I_2(\alpha)-t \alpha\) for all \(t \leq -1\).
\end{lemma}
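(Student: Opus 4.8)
The plan is to separate the case \(\alpha=0\) from \(\alpha\in\mathrm{A}\setminus\{0\}\). For \(\alpha=0\) there is nothing to do: we already recorded that \(k^+_0(t)\equiv I_0(0)\) and that \(I_\gamma(0)\equiv I_0(0)\) for every \(\gamma\in[0,2]\), so all three claimed identities collapse to \(k^+_0(t)=I_0(0)\), and \(t\mapsto k^+_0(t)\) is constant, hence convex. So fix \(\alpha\in\mathrm{A}\setminus\{0\}\). The inequality \(k^+_\alpha(t)\le\sup_{\gamma\in[0,2]}\{I_\gamma(\alpha)+t(1-\gamma)\alpha\}\) for every \(t\in\R\) is exactly Lemma \ref{lem:t<0,Palpha(t)<=}, so the content of the formula is the reverse inequality. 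It is convenient to record first, using the two bounds of Lemma \ref{lem:elementaryQgammainequality}, that the function \(f(\gamma):=I_\gamma(\alpha)+t(1-\gamma)\alpha\) on \([0,2]\) satisfies \(f(\gamma')-f(\gamma)\le(\gamma-\gamma')(1+t)\alpha\) for \(0\le\gamma'\le\gamma\le2\), and is in any case nonincreasing in \(\gamma\) when \(t\ge0\) (there \(I_\gamma(\alpha)\le I_0(\alpha)\) and \(t(1-\gamma)\alpha\le t\alpha\)). Hence \(\sup_{[0,2]}f=f(0)=I_0(\alpha)+t\alpha\) when \(t\ge0\), and \(\sup_{[0,2]}f=f(2)=I_2(\alpha)-t\alpha\) when \(t\le-1\) (since then \(1+t\le0\), so \(f\) is nondecreasing). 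This proves the last two sentences of the lemma once the main formula is known, and reduces the reverse inequality in those two ranges to showing \(k^+_\alpha(t)\ge I_0(\alpha)+t\alpha\) for \(t\ge0\) and \(k^+_\alpha(t)\ge I_2(\alpha)-t\alpha\) for \(t\le-1\).

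For the reverse inequality I would split on the sign of \(t\). When \(-1<t\le0\), combine Lemma \ref{lem:t<0,Palpha(t)>=}, which gives \(k^+_\alpha(t)\ge\sup_{\gamma\in[0,2]}\{\widetilde I_\gamma(\alpha)+t(1-\gamma)\alpha\}\), with Lemma \ref{lem:QsupequalstildeQsup}, which identifies that supremum with \(\sup_{\gamma\in[0,2]}\{I_\gamma(\alpha)+t(1-\gamma)\alpha\}\); together with the upper bound this settles the range. When \(t\le-1\) one does not even need Lemma \ref{lem:QsupequalstildeQsup}: since \(\widetilde E_2=E_2\), hence \(\widetilde I_2=I_2\), taking \(\gamma=2\) in Lemma \ref{lem:t<0,Palpha(t)>=} already gives \(k^+_\alpha(t)\ge\widetilde I_2(\alpha)-t\alpha=I_2(\alpha)-t\alpha\), which by the reduction above equals \(\sup_{[0,2]}f\). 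The range \(t\ge0\) is the only place calling for a new (though elementary) argument, since there the supremum over the base point must be bounded below. For \(t\ge0\), \(|gx|^t\le|g|^t\) already gives \(k^+_\alpha(t)\le I_0(\alpha)+t\alpha\). For the reverse, note that for any \(x\in\BP\) and any \(g\in\SL(2,\R)\), writing \(x^\perp\) for the line orthogonal to \(x\), one has \(|gx|^2+|gx^\perp|^2=\mathrm{tr}(g^\top g)\ge|g|^2\), so \(\max\{|gx|,|gx^\perp|\}\ge|g|/\sqrt2\); hence for \(t\ge0\) and \(g\in E(\alpha,\epsilon,n)\), \(|gx|^t+|gx^\perp|^t\ge 2^{-t/2}|g|^t\ge 2^{-t/2}e^{nt(\alpha-\epsilon)}\). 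Integrating over \(E(\alpha,\epsilon,n)\) and then keeping the larger of the two base points \(x,x^\perp\) yields \(\sup_{y\in\BP}\int_{E(\alpha,\epsilon,n)}|gy|^t\,\diff\mu^n(g)\ge 2^{-1-t/2}e^{nt(\alpha-\epsilon)}\mu^n(E(\alpha,\epsilon,n))\); letting \(n\to\infty\) and then \(\epsilon\to0\) gives \(k^+_\alpha(t)\ge I_0(\alpha)+t\alpha\), as needed. (Alternatively, one may integrate the base point against the rotation-invariant probability measure on \(\BP\) and use Lemma \ref{lem:BQlemma}.)

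Finally, convexity of \(t\mapsto k^+_\alpha(t)\) on \(\R\) is immediate from the established formula, which exhibits \(k^+_\alpha\) as the pointwise supremum of the affine functions \(t\mapsto I_\gamma(\alpha)+t(1-\gamma)\alpha\), \(\gamma\in[0,2]\); this supremum is real-valued since \(I_\gamma(\alpha)\le I_0(\alpha)<\infty\) while \(\alpha\in\mathrm{A}\) keeps it \(>-\infty\). I do not expect a step of real depth here: all the substantive work is already packaged in Lemmas \ref{lem:t<0,Palpha(t)<=}, \ref{lem:t<0,Palpha(t)>=}, \ref{lem:QsupequalstildeQsup} and \ref{lem:elementaryQgammainequality}, and the proof is mostly a matter of assembling them per range of \(t\). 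The one point not covered by quoting earlier lemmas is the \(t\ge0\) lower bound on \(k^+_\alpha(t)\), i.e. controlling the supremum over the base point from below; the \(x,x^\perp\) trick above does this with no irreducibility input and is the only place one must be slightly careful.
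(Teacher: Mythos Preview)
Your proof is correct and follows essentially the same route as the paper: the \(\alpha=0\) case is trivial, the \(t\le0\) equality is assembled from Lemmas \ref{lem:t<0,Palpha(t)<=}, \ref{lem:t<0,Palpha(t)>=}, \ref{lem:QsupequalstildeQsup}, and the identification of the supremum with the endpoint values for \(t\ge0\) and \(t\le-1\) comes from Lemma \ref{lem:elementaryQgammainequality}. The only place you deviate is the \(t\ge0\) lower bound, where the paper invokes Lemma \ref{lem:BQlemma} to argue that either \(x\) or \(x^\perp\) achieves the rate, whereas you use the Frobenius identity \(|gx|^2+|gx^\perp|^2=\mathrm{tr}(g^*g)\ge|g|^2\); this is a clean and slightly more self-contained variant of the same ``one of \(x,x^\perp\) sees most of the norm'' idea.
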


\begin{proof}

Note that for all \(t \in \R\) we trivially have \(k_0^+(t) \equiv I_0(0) \equiv I_{\gamma}(0)\) for any \(\gamma \in [0,2]\). Assume \(\alpha>0\). For \(t \leq 0\), the equality 
\[k_{\alpha}^+(t) = \sup_{\gamma \in [0,2]} \{ I_{\gamma}(\alpha)+t(1-\gamma) \alpha \}\]
follows immediately from Lemmas \ref{lem:t<0,Palpha(t)<=}, \ref{lem:t<0,Palpha(t)>=}, and \ref{lem:QsupequalstildeQsup}. 

We now show that \(k_{\alpha}^+(t) =I_2(\alpha)-t\alpha \) for \(t \leq -1\). By Lemma \ref{lem:elementaryQgammainequality} we have that for all \(\gamma \in [0,2]\),
\[I_{\gamma}(\alpha) \leq I_{2}(\alpha)+(2-\gamma)\alpha.\]
So, when \(t \leq -1\),
\[I_{\gamma}(\alpha) +t(1-\gamma)\alpha \leq I_{2}(\alpha)-t\alpha+(1+t)(2-\gamma)\alpha \leq I_{2}(\alpha)-t\alpha.\]
In particular,
\[k_{\alpha}^+(t)=\sup_{\gamma \in [0,2]} \{ I_{\gamma}(\alpha)+t(1-\gamma) \alpha \} \leq I_{2}(\alpha)-t\alpha,\]
so we must have equality.

For \(t \geq 0\) we have \(|gx|^t \leq |g|^t\) which implies that  \(k_{\alpha}^+(t) \leq I_{0}(\alpha)+ t\alpha\). The other inequality can be shown by taking any \(x\) and using Lemma \ref{lem:BQlemma} to show that either \(x\) or \(x^{\perp}\) must have 
\[\varlimsup_{n \rightarrow \infty} \frac{1}{n}\log \int_{E(\alpha,\epsilon,n)} |g \cdot|^t \diff \mu^n(g)=I_0(\alpha)+t\alpha.\]
In particular,
\[ k_{\alpha}^+(t) = I_{0}(\alpha)+ t\alpha= \sup_{\gamma \in [0,2]} \{ I_{\gamma}(\alpha)+t(1-\gamma)\alpha\}.\]
where the last equality is since \(t \geq 0\) and \(I_{\gamma}(\alpha) \leq I_0(\alpha)\) for any \(\gamma \in [0,2]\).
\end{proof}

\begin{lemma}\label{lem:oPtissupalphaoPalphat}
For all \(t \in \R\), \(k^+(t) =\sup_{\alpha \in \mathrm{A}} k_{\alpha}^+(t).\)
\end{lemma}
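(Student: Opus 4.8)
The plan is to prove both inequalities directly from the definitions, using the now-familiar covering and subadditivity-type arguments already deployed in Lemmas \ref{lem:t<0,Palpha(t)<=}--\ref{lem:oPalpha}.

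For the inequality $k^+(t) \geq \sup_{\alpha \in \mathrm{A}} k_{\alpha}^+(t)$: this direction is essentially immediate. For any fixed $\alpha \in \mathrm{A}$ and any $\epsilon>0$ we have $E(\alpha,\epsilon,n) \subseteq \SL(2,\R)$, so
\[
\sup_{x \in \BP} \int_{E(\alpha,\epsilon,n)} |gx|^t \diff \mu^n(g) \leq \sup_{x \in \BP} \int |gx|^t \diff \mu^n(g).
\]
Taking $\varlimsup_{n \rightarrow \infty} \frac{1}{n}\log(\cdot)$ of both sides and then $\epsilon \rightarrow 0$ gives $k_{\alpha}^+(t) \leq k^+(t)$; since $\alpha$ was arbitrary, the supremum over $\alpha$ is also bounded by $k^+(t)$. (The $\alpha \notin \mathrm{A}$ cases contribute $-\infty$ and are harmless.)

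For the reverse inequality $k^+(t) \leq \sup_{\alpha \in \mathrm{A}} k_{\alpha}^+(t)$: I would fix a sequence $\epsilon_k \downarrow 0$ and, for each $k$, split the integral over $\SL(2,\R)$ using the finite cover $\{E(\alpha,\epsilon_k,n) : \alpha \in \mathrm{A}(\epsilon_k)\}$ of $\{g : |g| \leq e^{n\overline{\alpha}}\} \supseteq \supp \mu^n$ (this cover is finite, and its cardinality is bounded independently of $n$). Thus for every $n$ and every $x$,
\[
\int |gx|^t \diff \mu^n(g) \leq \sum_{\alpha \in \mathrm{A}(\epsilon_k)} \int_{E(\alpha,\epsilon_k,n)} |gx|^t \diff \mu^n(g),
\]
and taking $\sup_x$ then $\frac{1}{n}\log$ and applying Lemma \ref{lem:limsupmaxismaxlimsup} produces, for each $k$, some $\alpha_k \in \mathrm{A}(\epsilon_k)$ with
\[
k^+(t) \leq \varlimsup_{n \rightarrow \infty} \frac{1}{n}\log \sup_{x \in \BP} \int_{E(\alpha_k,\epsilon_k,n)} |gx|^t \diff \mu^n(g).
\]
Passing to a limit point $\alpha \in [0,\overline{\alpha}]$ of $(\alpha_k)$ and invoking upper semi-continuity (Lemma \ref{lem:USC}, or more directly the same inclusion argument as in Lemma \ref{lem:USC}: $E(\alpha_k,\epsilon_k,n) \subseteq E(\alpha,2\epsilon_k',n)$ once $|\alpha_k-\alpha|$ and $\epsilon_k$ are small) lets me conclude $k^+(t) \leq k_{\alpha}^+(t)$, which forces $\alpha \in \mathrm{A}$ and completes the proof.

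The only mildly delicate point is the interchange of the $\sup_x$ with the finite sum and then the limsup: since the sum is finite with uniformly bounded cardinality, $\sup_x \sum_\alpha \leq \sum_\alpha \sup_x$ followed by Lemma \ref{lem:limsupmaxismaxlimsup} handles this cleanly, exactly as in the proof of Lemma \ref{lem:t<0,Palpha(t)<=}. One should also take care, when $t \le 0$ versus $t>0$, that the bound $|gx|^t$ behaves monotonically in the right direction under the $E(\alpha,\epsilon,n)$ restriction; but since we only restrict the domain of integration (never estimate $|gx|$ itself at this stage), no case distinction is actually needed here. I expect no real obstacle — this lemma is the straightforward "globalization over $\alpha$" counterpart of the per-$\alpha$ statement Lemma \ref{lem:oPalpha}.
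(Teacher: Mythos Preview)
Your proposal is correct and follows essentially the same approach as the paper: the easy inequality is immediate, and for the reverse you cover by the finite family $\{E(\alpha,\epsilon_k,n):\alpha\in\mathrm{A}(\epsilon_k)\}$, apply Lemma~\ref{lem:limsupmaxismaxlimsup} to extract $\alpha_k$, pass to a limit point $\alpha$, and use the inclusion $E(\alpha_k,\epsilon_k,n)\subseteq E(\alpha,2\epsilon,n)$ (for $k$ large) to conclude $k^+(t)\le k_\alpha^+(t)$, whence $\alpha\in\mathrm{A}$. The only minor remark is that Lemma~\ref{lem:USC} as stated concerns $I_\gamma(\alpha)$ rather than $k_\alpha^+(t)$, so your parenthetical ``direct inclusion argument'' is indeed the right justification here---and it is exactly what the paper does.
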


\begin{proof}
    The inequality \(k^+(t) \geq \sup_{\alpha \in \mathrm{A}} k_{\alpha}^+(t)\) is clear. For the other inequality, assume \(t<0\); the proof when \(t \geq 0\) follows similarly. Let \(\epsilon_k>0\) be a sequence converging to 0. For any \(n \in \N\) we have
\begin{align*}
     \frac{1}{n} \log \sup_{x \in \BP} \int |gx|^t \diff \mu^n(g) &\leq \frac{1}{n} \log \sum_{\alpha \in \mathrm{A}(\epsilon_k)} \sup_{x \in \BP} \int_{E(\alpha,\epsilon_k,n)} |gx|^t \diff \mu^n(g).
\end{align*}
By Lemma \ref{lem:limsupmaxismaxlimsup}, for each \(k \in \N\) there exists \(\alpha_k \in \mathrm{A}(\epsilon_k)\) such that 
\[k^+(t)= \varlimsup_{n \rightarrow \infty} \frac{1}{n} \log \sup_{x \in \BP} \int |gx|^t \diff \mu^n(g) \leq \varlimsup_{n \rightarrow \infty} \frac{1}{n} \log  \sup_{x \in \BP} \int_{E(\alpha,\epsilon_k,n)} |gx|^t \diff \mu^n(g).\] 
Let \(\alpha\) be a limit point of \((\alpha_k)_{k \in \N}\). Without loss of generality we assume that \(\alpha_k \rightarrow \alpha\).

Let \(\epsilon'>0\) and let \(\epsilon>0\) be such that 
\[ \varlimsup_{n \rightarrow \infty} \frac{1}{n} \log \sum_{\alpha \in \mathrm{A}(\epsilon)} \sup_{x \in \BP} \int_{E(\alpha,2\epsilon,n)} |gx|^t \diff \mu^n(g) \leq k^+_{\alpha}(t)+\epsilon'.\]
For \(K \in \N\) large enough such that \(\epsilon_K<\epsilon\), we have \(E(\alpha_K,\epsilon_K,n) \subseteq E(\alpha,2\epsilon,n)\). Thus,
\begin{align*}
    k^+(t) &\leq \varlimsup_{n \rightarrow \infty} \frac{1}{n} \log \sup_{x \in \BP} \int_{E(\alpha_K,\epsilon_K,n)} |gx|^t \diff \mu^n(g) \\
    &\leq \varlimsup_{n \rightarrow \infty} \frac{1}{n} \log \sup_{x \in \BP} \int_{E(\alpha,2\epsilon,n)} |gx|^t \diff \mu^n(g) \\
    &\leq k^+_\alpha(t)+\epsilon'. 
\end{align*}
Note, since \(k^+(t)>-\infty\), we must have \(\alpha \in \mathrm{A}\).
\end{proof}

The following proposition follows immediately from Lemma \ref{lem:oPalpha} and \ref{lem:oPtissupalphaoPalphat}.

\begin{prop}\label{prop:oP}
For all \(t \in \R\),
\begin{align*}
    k^+(t) =\sup_{\alpha \in \mathrm{A}} \sup_{\gamma \in [0,2]} \{ I_{\gamma}(\alpha)+t(1-\gamma) \alpha \}
\end{align*}
In particular, \(t \mapsto k^+(t)\) is convex on \(\R\). Moreover, \(k^+(t)=\sup_{\alpha \in \mathrm{A}}\{ I_0(\alpha)+t \alpha \}=k(t)\) for all \(t \geq 0\) and \(k^+(t)=\sup_{\alpha \in \mathrm{A}}\{ I_2(\alpha)-t \alpha \}\) for all \(t \leq -1\).
\end{prop}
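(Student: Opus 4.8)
The plan is to assemble the proposition directly from the two preceding lemmas; essentially no new work is required. By Lemma \ref{lem:oPtissupalphaoPalphat} we have \(k^+(t)=\sup_{\alpha \in \mathrm{A}} k_\alpha^+(t)\) for every \(t \in \R\), and by Lemma \ref{lem:oPalpha} each summand satisfies \(k_\alpha^+(t)=\sup_{\gamma \in [0,2]} \{I_\gamma(\alpha)+t(1-\gamma)\alpha\}\). Substituting the latter identity into the former and using that a nested supremum is a joint supremum, \(\sup_{\alpha}\sup_{\gamma}=\sup_{(\alpha,\gamma)}\), yields the claimed formula \(k^+(t)=\sup_{\alpha \in \mathrm{A}}\sup_{\gamma \in [0,2]}\{I_\gamma(\alpha)+t(1-\gamma)\alpha\}\).

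For convexity I would note that Lemma \ref{lem:oPalpha} already records that \(t \mapsto k_\alpha^+(t)\) is convex for each fixed \(\alpha\) (it is a pointwise supremum over \(\gamma \in [0,2]\) of the affine functions \(t \mapsto I_\gamma(\alpha)+t(1-\gamma)\alpha\)). Since the pointwise supremum of an arbitrary family of convex functions is convex, \(t \mapsto k^+(t)=\sup_{\alpha \in \mathrm{A}} k_\alpha^+(t)\) is convex on \(\R\). One should observe here that \(k^+(t)>-\infty\) for all \(t\) (which follows from Lemma \ref{lem:alphagammaiscover}, since the \(E(\alpha,\epsilon,n)\) cover \(\{g : |g| \le e^{n\overline{\alpha}}\}\)), so that this is a genuine real-valued convex function rather than the constant \(+\infty\).

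For the endpoint regimes I would plug the explicit expressions from Lemma \ref{lem:oPalpha} into \(k^+(t)=\sup_{\alpha \in \mathrm{A}} k_\alpha^+(t)\). For \(t \ge 0\), Lemma \ref{lem:oPalpha} gives \(k_\alpha^+(t)=I_0(\alpha)+t\alpha\), whence \(k^+(t)=\sup_{\alpha \in \mathrm{A}}\{I_0(\alpha)+t\alpha\}\), and this equals \(k(t)\) by Proposition \ref{prop:PtopissupMCE}. For \(t \le -1\), Lemma \ref{lem:oPalpha} gives \(k_\alpha^+(t)=I_2(\alpha)-t\alpha\), whence \(k^+(t)=\sup_{\alpha \in \mathrm{A}}\{I_2(\alpha)-t\alpha\}\).

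There is no real obstacle in this argument: the entire content of the proposition is contained in Lemmas \ref{lem:oPalpha} and \ref{lem:oPtissupalphaoPalphat}, together with Proposition \ref{prop:PtopissupMCE} for the identification with \(k(t)\) when \(t \ge 0\). The only point meriting a sentence of care is the finiteness remark above, which guarantees the supremum defines an honest convex function; this is immediate from the covering lemmas of Section \ref{subsec:tc'zetat'}.
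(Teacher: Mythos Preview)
Your proposal is correct and matches the paper's own proof, which simply states that the proposition follows immediately from Lemmas \ref{lem:oPalpha} and \ref{lem:oPtissupalphaoPalphat}. Your additional remarks on convexity and finiteness are accurate elaborations of what the paper leaves implicit.
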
 

\begin{prop}\label{prop:uP}
For all \(t \in (-1,0]\), the limit in \(k^-(t)\) exists and is equal to \(k(t)\).
\end{prop}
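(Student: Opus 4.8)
The plan is to show two inequalities: $\varliminf_{n\to\infty} \frac1n\log\inf_{x\in\BP}\int|gx|^t\,\diff\mu^n(g) \geq k(t)$ and $\varlimsup_{n\to\infty}\frac1n\log\inf_{x\in\BP}\int|gx|^t\,\diff\mu^n(g)\leq k(t)$ for $t\in(-1,0]$; together with the trivial observation $k^-(t)\leq k^+(t)=k(t)$ for... wait, no: for $t<0$ we have $k(t)\leq k^-(t)\leq k^+(t)$, so it is the \emph{lower} bound $k^-(t)\geq k(t)$ that is immediate from the definition (since $|gx|\leq|g|$ forces $\int|gx|^t\diff\mu^n(g)\geq|g|^t$... no, $t<0$ reverses this: $|gx|^t\geq|g|^t$, hence $\inf_x\int|gx|^t\diff\mu^n(g)\geq\int|g|^t\diff\mu^n(g)$, giving $k^-(t)\geq k(t)$ once we know the liminf is attained as a limit). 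So the real content is the upper bound $\varlimsup_{n\to\infty}\frac1n\log\inf_{x\in\BP}\int|gx|^t\,\diff\mu^n(g)\leq k(t)$, and the existence of the limit then follows since limsup $\leq k(t) \leq$ liminf.

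First I would fix $t\in(-1,0]$ and a point $x\in\BP$ realising (up to $o(n)$) the infimum. The key idea is that for a random product $g=g_n\cdots g_1$, the direction $x$ is ``bad'' (i.e. $|gx|$ is much smaller than $|g|$) only when $x$ lies very close to $\omega_-(g)$; by Lemma \ref{lem:BQlemma}, $\tfrac{|gu|}{|g||u|}$ is comparable to $d_{\BP}(\omega_-(g),x)$ up to an additive $|g|^{-2}$ error. I would decompose, for each scale $\epsilon>0$ and each $n$, the mass $\int|gx|^t\diff\mu^n(g)$ over the cover $\{E(\alpha,\epsilon,n):\alpha\in\mathrm{A}(\epsilon)\}$ and, within each $E(\alpha,\epsilon,n)$, over the cover $\{\widetilde{E}_\gamma(\alpha,\epsilon,n,x):\gamma\in\Gamma(\epsilon)\}$ from Lemma \ref{lem:gammaiscover}. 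On $\widetilde{E}_\gamma(\alpha,\epsilon,n,x)$, Lemma \ref{lem:essentialinequality} gives $|gx|\leq 2e^{n((1-\gamma)\alpha+(1+\alpha)\epsilon)}$, so (as $t<0$) the integrand is bounded \emph{below} by a quantity of order $e^{nt(1-\gamma)\alpha}$ times error terms — which is the wrong direction. For the upper bound on the infimum I instead need, for a \emph{well-chosen} $x$, an upper bound on $\int|gx|^t\diff\mu^n(g)$, and here the point is that since the contribution of $\gamma=0$ (i.e. $x$ far from $\omega_-(g)$) already carries almost all the $\mu^n$-mass of $E(\alpha,\epsilon,n)$ and there contributes $\approx e^{n(I_0(\alpha)+t\alpha)}\leq e^{nk(t)}$ by Proposition \ref{prop:PtopissupMCE}, it suffices to control the ``large $\gamma$'' contributions. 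Concretely, $\int_{\widetilde E_\gamma(\alpha,\epsilon,n,x)}|gx|^t\diff\mu^n(g)\leq e^{n\widetilde I_\gamma(\alpha,\epsilon,n)}\cdot e^{n(-t)(1+\alpha)\epsilon}\cdot 2^{-t}\cdot e^{n t(1-\gamma)\alpha}$, and I would bound $\widetilde I_\gamma(\alpha,\epsilon,n)+t(1-\gamma)\alpha \leq I_\gamma(\alpha,\epsilon,n)+t(1-\gamma)\alpha$, then sum over the finite sets $\mathrm{A}(\epsilon)$, $\Gamma(\epsilon)$ using Lemma \ref{lem:limsupmaxismaxlimsup}, and finally let $\epsilon\to0$ invoking upper semicontinuity (Lemma \ref{lem:USC}) to conclude $k^-(t)\leq\sup_{\alpha\in\mathrm{A}}\sup_{\gamma\in[0,2]}\{I_\gamma(\alpha)+t(1-\gamma)\alpha\}=k^+(t)$ — but that only recovers $k^-(t)\leq k^+(t)$, which is trivial.

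The genuinely new input must be that \emph{for a good choice of $x=x_n$} the large-$\gamma$ terms are negligible. The cleanest route: for each $n$, pick $x_n\in\BP$ to be a direction that is $e^{-n\delta}$-far (for suitable small $\delta>0$ depending on $\epsilon$) from $\omega_-(g)$ for a $\mu^n$-proportion $1-e^{-cn}$ of $g$'s — such $x_n$ exists because, by Fubini and Lemma \ref{lem:BQlemma}, the average over $x\in\BP$ (w.r.t. $\lambda$) of $\mu^n\{g:d_{\BP}(x,\omega_-(g))<r\}$ is $O(r)$, so the set of bad $x$ has small $\lambda$-measure; equivalently one uses that $\int_{\BP}\int|gx|^t\diff\mu^n(g)\,\diff\lambda(x)=\int\big(\int|gx|^t\diff\lambda(x)\big)\diff\mu^n(g)$ and $\int_\BP|gx|^t\diff\lambda(x)\leq C_t|g|^t$ for $t\in(-1,0]$ (a one-dimensional computation: $\int_0^\pi(\sin^2\theta+\kappa^{-4}\cos^2\theta)^{t/2}\diff\theta \asymp \kappa^{-t}$ when $t>-1$). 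This last integrability is exactly where the hypothesis $t>-1$ is used, and I expect it to be the main obstacle — not because the one-variable estimate is hard, but because one must then upgrade the $\lambda$-average bound $\int_\BP\big(\inf\text{-candidate}\big)\diff\lambda \leq C_t e^{nk(t)+o(n)}$ into a bound on the infimum itself, which requires choosing $x_n$ carefully and controlling the deviation of $\int|g x_n|^t\diff\mu^n(g)$ from its $\lambda$-average. In fact the simplest correct argument is: $\inf_{x\in\BP}\int|gx|^t\diff\mu^n(g)\leq \int_\BP\int|gx|^t\diff\mu^n(g)\,\diff\lambda(x)=\int\big(\int_\BP|gx|^t\diff\lambda(x)\big)\diff\mu^n(g)\leq C_t\int|g|^t\diff\mu^n(g)$, whence $\varlimsup_{n\to\infty}\frac1n\log\inf_{x\in\BP}\int|gx|^t\diff\mu^n(g)\leq k(t)$. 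Combined with $k^-(t)\geq k(t)$ (from $|gx|^t\geq|g|^t$ for $t<0$, uniformly in $x$), this forces $\varliminf=\varlimsup=k(t)$, i.e. the limit exists and equals $k(t)$. I would therefore structure the proof as: (1) record $\int_\BP|gx|^t\diff\lambda(x)\leq C_t|g|^t$ for $t\in(-1,0]$ via the explicit $\SL(2,\R)$ singular-value parametrisation; (2) deduce the upper bound on $k^-(t)$ by the Fubini/averaging argument just described; (3) deduce the matching lower bound trivially; (4) conclude. The case $t=0$ is immediate ($k^-(0)=0=k(0)$) and can be handled in one line.
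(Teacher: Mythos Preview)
Your final argument (steps (1)--(4)) is correct and is exactly the paper's proof: the paper shows $\int_{\BP}|gx|^t\diff\lambda(x)\leq C_{\lambda,t}|g|^t$ (using Lemma~\ref{lem:BQlemma} rather than an explicit singular-value computation, but this is the same estimate), then runs the Tonelli/averaging chain $k(t)\leq\varliminf\leq\varlimsup\leq\int\int|gx|^t\diff\lambda\diff\mu^n\leq k(t)$. The exploratory material before you reach this ``simplest correct argument'' is unnecessary and can be deleted.
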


\begin{proof}
For \(t \in (-1,0]\) and any \(x \in \BP\), let 
    \[C_{\lambda,t}:=\int d_{\BP}(x,y)^t \diff \lambda(y).\]
    We note that \(C_{\lambda,t}\) does not depend on the choice of \(x\) and is finite since \(t>-1\). By Lemma \ref{lem:BQlemma}, for any \(g \in \SL(2,\R)\),
    \[ \int |g x|^t \diff \lambda(x) \leq |g|^t \int d_{\BP}(\omega_-(g),x)^t \diff \lambda(x) = C_{\lambda,t} |g|^t. \]
    Hence, by Tonelli's theorem,
    \begin{align*}
        k(t) &\leq \varliminf_{n \rightarrow \infty} \frac{1}{n} \log \left( \inf_{x \in \BP} \int |g x|^t \diff \mu^n(g)  \right) \\
        &\leq \varlimsup_{n \rightarrow \infty} \frac{1}{n} \log \left( \inf_{x \in \BP} \int |g x|^t \diff \mu^n(g)  \right) \\
        &\leq \varlimsup_{n \rightarrow \infty} \frac{1}{n} \log \left( \int \int |g x|^t \diff \mu^n(g) \diff \lambda(x) \right) \\
        &= \varlimsup_{n \rightarrow \infty} \frac{1}{n} \log \left( \int \int |g x|^t \diff \lambda(x)  \diff \mu^n(g)  \right) \\
        &\leq \varlimsup_{n \rightarrow \infty} \frac{1}{n} \log \left(C_{\lambda,t} \int |g|^t \diff \mu^n(g) \right) \\
        &= k(t),
    \end{align*}
    which implies the proposition.
\end{proof}

\section{The spectral gap}\label{section:spectralgap}
For \(0 < \zeta \leq 1\), let \(C^{\zeta}(\BP)\) denote the set of real-valued \(\zeta\)-H\"older continuous maps \(f: \BP \rightarrow \R\). We endow \(C^{\zeta}(\BP)\) with the usual norm: for \(f \in C^{\zeta}(\BP)\) let
\[ [f]_{\zeta}:=\sup_{\substack{x,y \in \BP \\ x \not= y}} \frac{|f(x)-f(y)|}{d_{\BP}(x,y)^\zeta},\]
then the norm is defined by
\[|f|_{\zeta}:=[f]_{\zeta}+|f|_{\infty}.\]

The following proposition, combined with \cite[Proposition 2.5]{BL85}, are the central ingredients in showing that \(P_t: C^{\zeta}(\BP) \rightarrow C^{\zeta}(\BP)\) is quasi-compact for \(t \in (t_c',0]\) and \(\zeta \in (0,\zeta_t')\). 

\begin{prop}\label{prop:zetabougerol}
For all \(t \leq 0\) and \(\zeta>0\),
\[ \lim_{n \rightarrow \infty} \frac{1}{n} \log \sup_{\substack{x,y\in \BP \\ x \not=y}} \int |gx|^t \left(\frac{d_{\BP}(g x,g y)}{d_{\BP}(x,y)} \right)^{\zeta} \diff \mu^n(g) = \sup_{\alpha \in \mathrm{A}} \sup_{\gamma \in [0,2]} \{I_\gamma(\alpha)+t(1-\gamma)\alpha -2\zeta (1-\gamma) \alpha\}.\]
\end{prop}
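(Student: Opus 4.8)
The plan is to collapse the two--point quantity inside the integral to the one--point cocycle $|g\Cdot|$, and then recognise the right--hand side as $k^+(t-2\zeta)$ via Proposition~\ref{prop:oP}. First I would record that for $\SL(2,\R)$ the displacement ratio in Lemma~\ref{lem:bougerollemma} is in fact an equality: if $x=\R u$, $y=\R v$ with $|u|=|v|=1$, then $gu\wedge gv=(\det g)(u\wedge v)=u\wedge v$, so
\[
\frac{d_{\BP}(gx,gy)}{d_{\BP}(x,y)}=\frac{|u\wedge v|/(|gu||gv|)}{|u\wedge v|}=\frac{1}{|gx||gy|}.
\]
Hence the integrand equals $|gx|^{t-\zeta}|gy|^{-\zeta}$, and, writing $b_n:=\sup_{x\neq y}\int |gx|^{t-\zeta}|gy|^{-\zeta}\diff\mu^n(g)$ and $c_n:=\sup_{z\in\BP}\int|gz|^{t-2\zeta}\diff\mu^n(g)$, it suffices to show $\lim_n\frac1n\log b_n=\sup_{\alpha\in\mathrm A}\sup_{\gamma\in[0,2]}\{I_\gamma(\alpha)+(t-2\zeta)(1-\gamma)\alpha\}$.

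Next I would sandwich $b_n$ between $c_n$ and $2c_n$. Since $t\le 0<\zeta$, the exponents $t-\zeta$ and $-\zeta$ are both negative, so a case check comparing $|gx|$ with $|gy|$ gives the elementary inequality $|gx|^{t-\zeta}|gy|^{-\zeta}\le\max\{|gx|^{t-2\zeta},|gy|^{t-2\zeta}\}\le|gx|^{t-2\zeta}+|gy|^{t-2\zeta}$; integrating and taking suprema yields $b_n\le 2c_n$. For the reverse bound, fix $z\in\BP$ and let $y\to z$: on $\supp\mu^n$ one has $|gy|\ge|g|^{-1}\ge e^{-n\overline\alpha}$, so the integrands are uniformly bounded, and dominated convergence gives $\int|gz|^{t-\zeta}|gy|^{-\zeta}\diff\mu^n(g)\to\int|gz|^{t-2\zeta}\diff\mu^n(g)$; each term on the left is at most $b_n$, so $c_n\le b_n$.

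Finally I would identify the rate. The sequence $(c_n)$ is submultiplicative: from the cocycle identity $|g_2g_1z|^{t-2\zeta}=|g_1z|^{t-2\zeta}\,|g_2(g_1z)|^{t-2\zeta}$ and $\mu^{n+m}=\mu^m*\mu^n$ one gets $c_{n+m}\le c_nc_m$, exactly as in the argument for existence of the limit defining $k(t)$. By Fekete's lemma $\lim_n\frac1n\log c_n$ exists and equals $\varlimsup_n\frac1n\log c_n=k^+(t-2\zeta)$; combined with $\tfrac1n\log c_n\le\tfrac1n\log b_n\le\tfrac1n\log c_n+\tfrac{\log2}{n}$ this shows $\lim_n\frac1n\log b_n=k^+(t-2\zeta)$, and Proposition~\ref{prop:oP} evaluates this as $\sup_{\alpha\in\mathrm A}\sup_{\gamma\in[0,2]}\{I_\gamma(\alpha)+(t-2\zeta)(1-\gamma)\alpha\}$, which is the claimed right--hand side.

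There is no deep obstacle once the reduction in the first paragraph is made: the point is that for $\SL(2,\R)$ the projective cross--ratio degenerates to $1/(|gx||gy|)$, so the two--point average is pinned both above and below by the one--point average $c_n$, whose exponential rate is by definition $k^+(t-2\zeta)$. The only steps demanding a little care are the sign bookkeeping in the elementary exponent inequality (which genuinely uses $t\le 0$, so that $t-\zeta$ and $-\zeta$ do not straddle $0$), the domination in the $y\to z$ limit, and invoking submultiplicativity to promote the $\varlimsup$ in the definition of $k^+$ to a genuine limit.
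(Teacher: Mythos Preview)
Your proof is correct and takes a genuinely different, considerably shorter route than the paper.

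The paper proves the two inequalities by a direct decomposition: for the upper bound it partitions the integral over $\mu^n$ according to the sets $\widetilde E_\gamma(\alpha,\epsilon,n,x)\cap\widetilde E_\delta(\alpha,\epsilon,n,y)$, handles the three regimes $|\gamma-\delta|\le 3\epsilon$, $\gamma-\delta>3\epsilon$, $\delta-\gamma>3\epsilon$ using Lemmas~\ref{lem:expansionbound} and \ref{lem:distortionbound1}, and then takes $\epsilon\to 0$ via Lemma~\ref{lem:USC}; the lower bound is obtained by building explicit pairs $y_{n,1},y_{n,2}$ near a maximising $x_n$ and controlling $d_{\BP}(gy_{n,1},gy_{n,2})$ via Lemma~\ref{lem:umapsclosetoupsilon+}. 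This is a rather laborious case analysis.

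Your observation that for $g\in\SL(2,\R)$ the inequality in Lemma~\ref{lem:bougerollemma} is in fact an \emph{equality}, so that the integrand collapses to $|gx|^{t-\zeta}|gy|^{-\zeta}$, bypasses all of this. The sandwich $c_n\le b_n\le 2c_n$ then reduces the statement to identifying $\lim_n\frac1n\log c_n$, which is immediate from submultiplicativity and Proposition~\ref{prop:oP} applied at the shifted parameter $t-2\zeta$. The elementary inequality $a^{t-\zeta}b^{-\zeta}\le\max\{a^{t-2\zeta},b^{t-2\zeta}\}$ actually only needs $t\le\zeta$ (not $t\le 0$), but your sign bookkeeping is fine under the stated hypothesis; the dominated-convergence step for $y\to z$ is also sound since $\mu$ is compactly supported.

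What the paper's approach buys is a self-contained argument that does not pass through Proposition~\ref{prop:oP}, and its expansion estimates (Lemmas~\ref{lem:umapsclosetoupsilon+}--\ref{lem:distortionbound1}) are in any case needed elsewhere. What your approach buys is a one-paragraph proof once Proposition~\ref{prop:oP} is available, and it makes transparent that the left-hand side of Proposition~\ref{prop:zetabougerol} is exactly $k^+(t-2\zeta)$.
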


In the following proof, by \(O(\epsilon)\) we mean a term of the form \(C\epsilon\), where \(C>0\) is only allowed to depend on \(\overline{\alpha}, |t|,\) and \(\zeta\).

\begin{proof}[Proof of the `\(\leq\)' inequality]
Let \(\epsilon>0\). For all \(n \in \N\), by Lemmas \ref{lem:gammaiscover} and \ref{lem:essentialinequality},
    \begin{align*}
        \int  |gx|^t \left( \frac{d_{\BP}(gx,gy )}{d_{\BP}(x,y)} \right)^{\zeta} \diff \mu^n(g)\\ 
        \leq \int_{E(0,\epsilon,n)} e^{-n t\epsilon} \left( \frac{d_{\BP}(gx,gy )}{d_{\BP}(x,y)} \right)^{\zeta} \diff \mu^n(g) \\
        +\sum_{\alpha \in \mathrm{A}(\epsilon) \setminus \{0\} } \sum_{\substack{\gamma,\delta \in \Gamma(\epsilon), \\ |\gamma-\delta| \leq 3\epsilon} } \int_{\widetilde{E}_{\gamma}(\alpha,\epsilon,n, x) \cap \widetilde{E}_{\delta}(\alpha,\epsilon,n, y)}  e^{n(t(1-\gamma)\alpha+O(\epsilon))}  \left( \frac{d_{\BP}(gx,gy )}{d_{\BP}(x,y)} \right)^{\zeta} \diff \mu^n(g) \\
        +\sum_{\alpha \in \mathrm{A}(\epsilon)  \setminus \{0\}} \sum_{\substack{\gamma,\delta \in \Gamma(\epsilon), \\ \gamma-\delta>3\epsilon}} \int_{\widetilde{E}_{\gamma}(\alpha,\epsilon,n, x) \cap \widetilde{E}_{\delta}(\alpha,\epsilon,n, y)}  e^{n(t(1-\gamma)\alpha+O(\epsilon))}  \left( \frac{d_{\BP}(gx,gy )}{d_{\BP}(x,y)} \right)^{\zeta} \diff \mu^n(g) \\
        +\sum_{\alpha \in \mathrm{A}(\epsilon)  \setminus \{0\}} \sum_{\substack{\gamma,\delta \in \Gamma(\epsilon), \\  \delta-\gamma>3\epsilon}} \int_{\widetilde{E}_{\gamma}(\alpha,\epsilon,n, x) \cap \widetilde{E}_{\delta}(\alpha,\epsilon,n, y)}  e^{n(t(1-\gamma)\alpha+O(\epsilon))} \left( \frac{d_{\BP}(gx,gy )}{d_{\BP}(x,y)} \right)^{\zeta} \diff \mu^n(g).
    \end{align*}
By Lemma \ref{lem:bougerollemma} the first term is bounded by \(2e^{n(I_0(0,\epsilon,n)+O(\epsilon))}.\)

Now fix \(\alpha \in \mathrm{A}(\epsilon)\setminus \{0\} \). Suppose that \(g \in  \widetilde{E}_{\gamma}(\alpha,\epsilon,n,x) \cap \widetilde{E}_{\delta}(\alpha,\epsilon,n,y)\) with \(|\gamma-\delta| \leq 3\epsilon\). Observe that
\[g \in  \widetilde{E}_{\gamma}(\alpha,4\epsilon,n,x) \cap \widetilde{E}_{\delta}(\alpha,4\epsilon,n,y).\]
By Lemma \ref{lem:expansionbound} we have 
 \[\frac{d_{\BP}(gx,gy)}{d_{\BP}(x,y)} \leq 2e^{-n(2(1-\gamma)\alpha-O(\epsilon))},\]
so
\begin{align*}
    \int_{\widetilde{E}_{\gamma}(\alpha,\epsilon,n,x) \cap \widetilde{E}_{\delta}(\alpha,\epsilon,n,y)} e^{n (t(1-\gamma)\alpha+O(\epsilon))} \left( \frac{d_{\BP}(gx,gy )}{d_{\BP}(x,y)} \right)^{\zeta} \diff \mu^n(g) \\
    \leq 2^{\zeta} \mu^n( \widetilde{E}_{\gamma} (\alpha,4\epsilon,n,x)) e^{n((t-2\zeta)(1-\gamma)\alpha+O(\epsilon))}.
\end{align*}
Therefore,
\begin{align*}
    \sum_{\alpha \in \mathrm{A}(\epsilon)} \sum_{\substack{\gamma,\delta \in \Gamma(\epsilon), \\  |\gamma-\delta| \leq 3\epsilon}} \int_{\widetilde{E}_{\gamma}(\alpha,\epsilon,n, x) \cap \widetilde{E}_{\delta}(\alpha,\epsilon,n, y)}  e^{n(t(1-\gamma)\alpha+O(\epsilon))} \left( \frac{d_{\BP}(gx,gy )}{d_{\BP}(x,y)} \right)^{\zeta} \diff \mu^n(g) \\
    \leq 2^{\zeta} \left(\frac{2}{\epsilon} +2 \right) \sum_{\alpha \in \mathrm{A}(\epsilon)} \sum_{\substack{\gamma \in \Gamma(\epsilon)}} e^{n I_{\gamma}(\alpha,4\epsilon,n)} e^{n ((t-2\zeta)(1-\gamma)\alpha+O(\epsilon))}.
\end{align*}
We note the \((\frac{2}{\epsilon}+2)\) term is appearing because \(|\Gamma(\epsilon)| \leq (\frac{2}{\epsilon}+2)\).

Now let \(g \in  \widetilde{E}_{\gamma}(\alpha,\epsilon,n,x) \cap \widetilde{E}_{\delta}(\alpha,\epsilon,n,y)\) with \(\gamma-\delta>3\epsilon\). We assume \(n \in \N\) large enough so that \(1-e^{-n\epsilon^2}>2^{-1}\), so that by Lemma \ref{lem:distortionbound1},
\begin{align*}
    \int_{\widetilde{E}_{\gamma}(\alpha,\epsilon,n,x) \cap \widetilde{E}_{\delta}(\alpha,\epsilon,n,y)}   e^{n (t(1-\gamma)\alpha+O(\epsilon))} \left( \frac{d_{\BP}(gx,gy )}{d_{\BP}(x,y)} \right)^{\zeta} \diff \mu^n(g) \\
    \leq 4^{\zeta} \mu^n(\widetilde{E}_{\gamma}(\alpha,\epsilon,n,x)) e^{n ((t-2\zeta)(1-\gamma)\alpha+O(\epsilon))}.
    \end{align*}
Thus,
\begin{align*}
    \sum_{\alpha \in \mathrm{A}(\epsilon)} \sum_{\substack{\gamma,\delta \in \Gamma(\epsilon), \\ \gamma-\delta>3\epsilon}} \int_{\widetilde{E}_{\gamma}(\alpha,\epsilon,n, x) \cap \widetilde{E}_{\delta}(\alpha,\epsilon,n, y)}  e^{n(t(1-\gamma)\alpha+O(\epsilon))}  \left( \frac{d_{\BP}(gx,gy )}{d_{\BP}(x,y)} \right)^{\zeta} \diff \mu^n(g) \\
    \leq  4^{\zeta} \left(\frac{2}{\epsilon} +2 \right) \sum_{\alpha \in \mathrm{A}(\epsilon)} \sum_{\substack{\gamma \in \Gamma(\epsilon)}} e^{n I_{\gamma}(\alpha,\epsilon,n)} e^{n ((t-2\zeta)(1-\gamma)\alpha+O(\epsilon))}
\end{align*}

Similarly, for \(g \in  \widetilde{E}_{\gamma}(\alpha,\epsilon,n,x) \cap \widetilde{E}_{\delta}(\alpha,\epsilon,n,y)\) with \(\delta-\gamma>3\epsilon\),
\begin{align*}
    \int_{ \widetilde{E}_{\gamma}(\alpha,\epsilon,n,x) \cap \widetilde{E}_{\delta}(\alpha,\epsilon,n,y)}   e^{n (t(1-\gamma)\alpha+O(\epsilon))} \left( \frac{d_{\BP}(gx,gy )}{d_{\BP}(x,y)} \right)^{\zeta} \diff \mu^n(g) \\
    \leq 4^{\zeta} \int_{ \widetilde{E}_{\gamma}(\alpha,\epsilon,n,x) \cap \widetilde{E}_{\delta}(\alpha,\epsilon,n,y)}   e^{n (t(1-\gamma)\alpha-2\zeta(1-\delta)\alpha +O(\epsilon))} \diff \mu^n(g) 
    \\
    \leq 4^{\zeta} \int_{ \widetilde{E}_{\gamma}(\alpha,\epsilon,n,x) \cap \widetilde{E}_{\delta}(\alpha,\epsilon,n,y)}   e^{n (t(1-\delta)\alpha-2\zeta(1-\delta)\alpha +O(\epsilon))} \diff \mu^n(g) \\
    \leq 4^{\zeta} \mu^n( \widetilde{E}_{\delta}(\alpha,\epsilon,n,y)) e^{n( (t-2\zeta)(1-\delta)\alpha+O(\epsilon))},
\end{align*}
where in the second inequality we are using that \(\gamma<\delta\) implies that \(t(1-\gamma) \leq t(1-\delta)\) since \(t \leq 0\). Hence, we have
\begin{align*}
    \sum_{\alpha \in \mathrm{A}(\epsilon)} \sum_{\substack{\gamma,\delta \in \Gamma(\epsilon), \\  \delta-\gamma>3\epsilon}} \int_{\widetilde{E}_{\gamma}(\alpha,\epsilon,n, x) \cap \widetilde{E}_{\delta}(\alpha,\epsilon,n, y)}  e^{n(t(1-\gamma)\alpha+O(\epsilon))} \left( \frac{d_{\BP}(gx,gy )}{d_{\BP}(x,y)} \right)^{\zeta} \diff \mu^n(g) \\
    \leq 4^{\zeta} \left(\frac{2}{\epsilon} +2 \right) \sum_{\alpha \in \mathrm{A}(\epsilon)} \sum_{\substack{\delta \in \Gamma(\epsilon)}} e^{n I_{\delta}(\alpha,\epsilon, n)}  e^{n ((t-2\zeta)(1-\delta)\alpha+O(\epsilon))}.
\end{align*}

In particular, we have shown that 
\begin{align*}
    \int  |gx|^t \left( \frac{d_{\BP}(gx,gy )}{d_{\BP}(x,y)} \right)^{\zeta} \diff \mu^n(g) \\
     \leq  4 \cdot 4^{\zeta} \left(\frac{2}{\epsilon} +2 \right) \sum_{\alpha \in \mathrm{A}(\epsilon)} \sum_{\substack{\gamma \in \Gamma(\epsilon)}} e^{n I_{\gamma}(\alpha,4\epsilon, n)}  e^{n ((t-2\zeta)(1-\gamma)\alpha+O(\epsilon))},
\end{align*}
Using Lemma \ref{lem:limsupmaxismaxlimsup}, this implies that
\begin{align*}
     \varlimsup_{n \rightarrow \infty} \frac{1}{n} \log \int  |gx|^t \left( \frac{d_{\BP}(gx,gy )}{d_{\BP}(x,y)} \right)^{\zeta} \diff \mu^n(g) \leq \max_{\alpha \in \mathrm{A}(\epsilon)} \max_{\gamma \in \Gamma(\epsilon)} \{ I_{\gamma}(\alpha,4\epsilon) +(t-2\zeta) (1-\gamma)\alpha +O(\epsilon)\}.
\end{align*}
Thus, by an application of Lemma \ref{lem:USC}, 
\[\lim_{\epsilon \rightarrow 0} \varlimsup_{n \rightarrow \infty}\frac{1}{n} \log \sup_{\substack{x,y \in \BP \\ x \not= y }} \int  |gx|^t \left( \frac{d_{\BP}(gx,gy )}{d_{\BP}(x,y)} \right)^{\zeta} \diff \mu^n(g) \leq \sup_{\alpha \in \mathrm{A}} \sup_{\gamma \in [0,2]} \{I_{\gamma}(\alpha)+(t-2\zeta) (1-\gamma)\alpha\}.\]
\end{proof}

\begin{proof}[Proof of the `\(\geq\)' inequality]
First suppose \(\alpha>0\) and \(\gamma \in (0, 2)\). Let \(\epsilon'>0\) and let \(0<\epsilon<\min\{\frac{\gamma}{3}, \frac{2-\gamma}{3(2+\alpha)} \}\). For each \(n \in \N\), let \(x_n \in \BP\) be such that
    \[\frac{1}{n} \log  \mu^n(\widetilde{E}_\gamma(\alpha,\epsilon,n,x_n)) >\widetilde{I}_{\gamma}(\alpha,\epsilon,n)-\epsilon'. \]
    Let \(y_{n,1},y_{n,2} \in \BP\) be the two distinct elements of \(\BP\) such that \(d_{\BP}(x_n,y_{n,i})=e^{-n(\gamma-2\epsilon) \alpha}\). Let \(g \in \widetilde{E}_\gamma(\alpha,\epsilon,n,x_n)\). For both \(i \in \{1,2\}\), 
    \[e^{-n(\gamma-2\epsilon) \alpha} -e^{-n(\gamma-\epsilon)\alpha} \leq d_{\BP}(y_{n,i},\omega_-(g)) \leq 2e^{-n(\gamma-2\epsilon) \alpha},\]
    which, assuming that \(n\) is large enough so that \(e^{n\epsilon\alpha} \geq 2\), implies that \(g \in \widetilde{E}_{\gamma}(\alpha,3\epsilon,n,y_{n,i})\).
    Hence, assuming \(n\) is large enough so that \(e^{-n(\gamma-3\epsilon)\alpha} \leq \frac{1}{2}\), by Lemma \ref{lem:umapsclosetoupsilon+} for both \(i\) we must further have 
    \[ \frac{1}{8}e^{-n((2-\gamma)\alpha+3(2+\alpha)\epsilon)} \leq d_{\BP}(g y_{n,i},\upsilon_+(g)) \leq e^{-n((2-\gamma)\alpha-3(2+\alpha)\epsilon)}.\]
    
    By considering the action of \(g\) on \(y_{n,1}\) and \(y_{n,2}\), it is clear that
    \(d_{\BP}(g y_{n,1},g y_{n,2}) \geq d_{\BP}(g y_{n,i},\upsilon_+(g))\) whenever \(n \in \N\) is sufficiently large. In particular, this holds when moreover \(e^{-n((2-\gamma)\alpha-3(2+\alpha)\epsilon)} \leq 1/2\), because when this holds we have \(d_{\BP}(g y_{n,i},\upsilon_+(g)) \leq 1/2\) and \(d_{\BP}(g y_{n,i},\upsilon_-(g)) \geq 1-d_{\BP}(g y_{n,i},\upsilon_+(g)) \geq 1/2\). It therefore follows that
    \begin{align*}
        \sup_{x, y \in \BP} \int |gx|^t \left(\frac{d_{\BP}(gx,gy)}{d_{\BP}(x, y)} \right)^{\zeta} \diff \mu^n(g) &\geq \int_{\widetilde{E}_{\gamma}(\alpha,3\epsilon,n,y_{n,1})} |g y_{n,1}|^t \left(\frac{d_{\BP}(g y_{n,1},g y_{n,2})}{d_{\BP}(y_{n,1}, y_{n,2})} \right)^{\zeta} \diff \mu^n(g) \\
        &\geq 2^t  e^{n(t(1-\gamma)\alpha+3t(1+\alpha)\epsilon)}\left(\frac{\frac{1}{8}e^{-n((2-\gamma)\alpha+3(2+\alpha)\epsilon)}}{2e^{-n(\gamma-2\epsilon)\alpha}} \right)^{\zeta} \mu^n(\widetilde{E}_\gamma(\alpha,\epsilon,n,x_n) ) \\
        &\geq \frac{2^t}{16^\zeta} e^{n(\widetilde{I}_\gamma(\alpha,\epsilon,n) + t(1-\gamma)\alpha-2\zeta(1-\gamma)\alpha)-O(\epsilon)-\epsilon')} 
    \end{align*}
Since \(\epsilon'\) and \(\epsilon\) can be made arbitrarily small, this implies
\[\varliminf_{n \rightarrow \infty} \frac{1}{n} \log \sup_{x, y \in \BP} \int |gx|^t \left(\frac{d_{\BP}(g x,g y)}{d_{\BP}(x, y)} \right)^{\zeta} \diff \mu^n(g) \geq \widetilde{I}_\gamma(\alpha)+t(1-\gamma)\alpha -2\zeta (1-\gamma) \alpha.\]

Now suppose \(\alpha>0\) and \(\gamma=0\). By an application of Lemma \ref{lem:bougerollemma} (i.e. using \(g^{-1}\)), for all \(g \in E(\alpha,\epsilon,n)\) and all \(x,y \in \BP\) with \(x\not=y\) we have 
\[\frac{d_{\BP}(gx,gy)}{d_{\BP}(x,y)} \geq e^{-2n(\alpha+\epsilon)} .\]
Let \(\epsilon',\epsilon>0\) and for each \(n \in \N\), let \(x_n \in \BP\) be such that
    \[\frac{1}{n} \log  \mu^n(\widetilde{E}_0(\alpha,\epsilon,n,x_n)) >\widetilde{I}_{0}(\alpha,\epsilon,n)-\epsilon'. \]
Using Lemma \ref{lem:essentialinequality}, for any \(n \in \N\) we have
\begin{align*}
 \sup_{x, y \in \BP} \int_{\widetilde{E}_0(\alpha,\epsilon,n,x_n)} |g x_n|^t \left(\frac{d_{\BP}(g x_n,g y)}{d_{\BP}(x, y)} \right)^{\zeta} \diff \mu^n(g) & \geq \int_{\widetilde{E}_0(\alpha,\epsilon,n,x_n) } |g x_n|^t e^{-2n\zeta(\alpha+\epsilon)} \diff \mu^n(g) \\
 &\geq 2^t e^{n(\widetilde{I}_0(\alpha,\epsilon,n)-\epsilon')} e^{nt(\alpha+(1+\alpha)\epsilon)} e^{-2n\zeta(\alpha+\epsilon)}.
\end{align*}
It follows that
\[\varliminf_{n \rightarrow \infty} \frac{1}{n} \log \sup_{x, y \in \BP} \int |gx|^t \left(\frac{d_{\BP}(g x,g y)}{d_{\BP}(x, y)} \right)^{\zeta} \diff \mu^n(g) \geq \widetilde{I}_0(\alpha)+t\alpha -2\zeta  \alpha.\]

Thus, for any \(\alpha>0\) we have shown 
\begin{align*}
    \lim_{\epsilon \rightarrow 0} \varliminf_{n \rightarrow \infty} \frac{1}{n} \log \sup_{x, y \in \BP} \int |gx|^t \left(\frac{d_{\BP}(g x,g y)}{d_{\BP}(x, y)} \right)^{\zeta} \diff \mu^n(g) &\geq \sup_{\gamma \in [0,2) } \{ \widetilde{I}_\gamma(\alpha)+t(1-\gamma)\alpha -2\zeta (1-\gamma) \alpha \}.
\end{align*}
It is easy to show from the definitions that \(\widetilde{I}_2(\alpha) \leq \widetilde{I}_\gamma(\alpha)\) for all \(\gamma \in [0,2]\) and \(\alpha> 0\). In particular, 
\begin{align*}
    \widetilde{I}_2(\alpha)-t\alpha +2\zeta \alpha &=\lim_{\gamma \rightarrow 2} \widetilde{I}_2(\alpha)+t(1-\gamma)\alpha -2\zeta(1-\gamma) \alpha \\
    &\leq \varlimsup_{\gamma \rightarrow 2} \widetilde{I}_\gamma(\alpha)+t(1-\gamma)\alpha -2\zeta(1-\gamma) \alpha \\
    &\leq \sup_{\gamma \in [0,2) } \{ \widetilde{I}_\gamma(\alpha)+t(1-\gamma)\alpha -2\zeta (1-\gamma) \alpha \}.
\end{align*}
That is,
\[ \sup_{\gamma \in [0,2) } \{ \widetilde{I}_\gamma(\alpha)+t(1-\gamma)\alpha -2\zeta (1-\gamma) \alpha \}= \sup_{\gamma \in [0,2]} \{ \widetilde{I}_\gamma(\alpha)+t(1-\gamma)\alpha -2\zeta (1-\gamma) \alpha\}. \]
Hence,
\begin{align*}
    \lim_{\epsilon \rightarrow 0} \varliminf_{n \rightarrow \infty} \frac{1}{n} \log \sup_{\substack{x, y \in \BP \\ x \not= y}} \int |gx|^t \left(\frac{d_{\BP}(g x,g y)}{d_{\BP}(x, y)} \right)^{\zeta} \diff \mu^n(g) & \geq 
    \sup_{\gamma \in [0,2] } \{ \widetilde{I}_\gamma(\alpha)+t(1-\gamma)\alpha -2\zeta (1-\gamma) \alpha\} \\
    &= \sup_{\gamma \in [0,2] }\{ I_\gamma(\alpha)+t(1-\gamma)\alpha -2\zeta (1-\gamma) \alpha \},
\end{align*}
where the last equality is by Lemma \ref{lem:QsupequalstildeQsup} (noting that \(t-2\zeta \leq 0\)).

Now suppose \(\alpha=0\). Let \(\epsilon>0\), \(n \in \N\), and \(g \in E(0,\epsilon,n)\). Again by an application of Lemma \ref{lem:bougerollemma}, for all \(x,y \in \BP\) with \(x\not=y\) we have 
\[\frac{d_{\BP}(gx,gy)}{d_{\BP}(x,y)} \geq e^{-2n\epsilon} .\] 
Thus,
\[\varliminf_{n \rightarrow \infty} \frac{1}{n} \log \sup_{x, y \in \BP} \int |gx|^t \left(\frac{d_{\BP}(g x,g y)}{d_{\BP}(x, y)} \right)^{\zeta} \diff \mu^n(g) \geq I_0(0).\]
\end{proof}

\begin{prop}\label{prop:Igamma<0forgammain12}
    \(\sup_{\alpha \in \mathrm{A}} \sup_{\gamma \in [1,2]} I_\gamma(\alpha)<0.\)
\end{prop}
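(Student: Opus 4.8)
The plan is to obtain the bound from the classical contraction estimate of Bougerol--Lacroix, fed through Proposition \ref{prop:zetabougerol}. Recall the contraction estimate \cite[Proposition 2.3]{BL85}: under our hypotheses there exist $\zeta>0$, $C\geq 1$ and $\rho\in(0,1)$ with
\[
\sup_{\substack{x,y\in\BP\\ x\neq y}}\ \int \Big(\frac{d_{\BP}(gx,gy)}{d_{\BP}(x,y)}\Big)^{\zeta}\,\diff\mu^n(g)\ \leq\ C\rho^n \qquad (n\in\N),
\]
so that the exponential growth rate of the left-hand side is at most $\log\rho<0$. I would first record this, then invoke Proposition \ref{prop:zetabougerol} with $t=0$: it identifies that growth rate with $\sup_{\alpha\in\mathrm{A}}\sup_{\gamma\in[0,2]}\{I_{\gamma}(\alpha)-2\zeta(1-\gamma)\alpha\}$, whence
\[
\sup_{\alpha\in\mathrm{A}}\ \sup_{\gamma\in[0,2]}\ \{I_{\gamma}(\alpha)-2\zeta(1-\gamma)\alpha\}\ \leq\ \log\rho\ <\ 0 .
\]

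The last step would be an elementary sign observation: for $\gamma\in[1,2]$ and $\alpha\in\mathrm{A}$ we have $1-\gamma\leq 0$, $\alpha\geq 0$ and $\zeta>0$, so $2\zeta(1-\gamma)\alpha\leq 0$ and therefore
\[
I_{\gamma}(\alpha)\ \leq\ I_{\gamma}(\alpha)-2\zeta(1-\gamma)\alpha\ \leq\ \log\rho .
\]
Taking the supremum over $\alpha\in\mathrm{A}$ and $\gamma\in[1,2]$ would then give the proposition, with the quantitative bound $\sup_{\alpha}\sup_{\gamma\in[1,2]}I_{\gamma}(\alpha)\leq\log\rho<0$.

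I do not expect a serious obstacle once Proposition \ref{prop:zetabougerol} is available: the whole content sits in the cited estimate \cite[Proposition 2.3]{BL85}, which is the ``crucial'' ingredient of Le Page's spectral-gap argument and is precisely where strong irreducibility and proximality enter. The one point to be careful about is that the sign manipulation above works only for $\gamma\geq 1$; this is why the supremum in the statement is over $\gamma\in[1,2]$ rather than $[0,2]$ (indeed $\sup_{\alpha}\sup_{\gamma\in[0,2]}I_{\gamma}(\alpha)=k(0)=0$ by Proposition \ref{prop:oP}, so the $-2\zeta(1-\gamma)\alpha$ correction is genuinely needed for $\gamma<1$, e.g.\ near $\alpha=L(\mu)$, $\gamma=0$). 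If one wished to avoid citing \cite[Proposition 2.3]{BL85} verbatim, the required exponential contraction of the $\zeta$-Hölder average follows equally from the spectral gap results of Le Page or Benoist--Quint \cite{Pag82,BQ16}, or can be re-derived from SIP by the standard non-concentration/coupling argument, after which Proposition \ref{prop:zetabougerol} does the bookkeeping.
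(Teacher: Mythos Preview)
Your proposal is correct and takes essentially the same approach as the paper: combine \cite[Proposition 2.3]{BL85} with Proposition \ref{prop:zetabougerol} at $t=0$ to obtain $\sup_{\alpha\in\mathrm{A}}\sup_{\gamma\in[0,2]}\{I_{\gamma}(\alpha)-2\zeta(1-\gamma)\alpha\}<0$, then use the sign observation for $\gamma\in[1,2]$. The paper's proof is a two-line version of exactly this argument.
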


\begin{proof}
Combining Proposition \ref{prop:zetabougerol} and \cite[Proposition 2.3]{BL85}, for \(\zeta>0\) sufficiently small we have
\[\sup_{\alpha \in \mathrm{A}} \sup_{\gamma \in [0,2]} \{I_{\gamma}(\alpha)-2\zeta(1-\gamma)\alpha\}<0,\]
which implies the proposition.
\end{proof}

Our only need for the `\(\geq\)' inequality in Proposition \ref{prop:zetabougerol} is to prove Proposition \ref{prop:Igamma<0forgammain12}. It is also possible to prove Proposition \ref{prop:Igamma<0forgammain12} using perturbation theory and the results of Le Page (i.e. by perturbation theory one can show \(k^+(t)=k(t)\) in a neighbourhood of 0, so \(k^+(t)\) must be strictly increasing in some neighbourhood). We preferred to give the more direct proof here. 

\subsection{Introducing \(t_c'\) and \(\zeta_t'\)}
We define
\[t_c':= \inf\left\{t \geq -1 :\sup_{\alpha \in \mathrm{A}} \sup_{\gamma \in [1,2]} \{I_{\gamma}(\alpha)+t(1-\gamma)\alpha\}<k^+(t)  \right\}, \]
and, for \(t>t_c'\),
\[\zeta_t':=\sup \left\{\zeta>0: \sup_{\alpha \in \mathrm{A}} \sup_{\gamma \in [1,2]} \{ I_\gamma(\alpha)+t(1-\gamma)\alpha -2\zeta (1-\gamma) \alpha\}< k^+(t) \right\} .\]
We will later show that \(t_c' = t_c\) and \(\zeta_t' = \zeta_t\) for \(t \in (t_c,0]\).
For now we only state and prove some basic facts about these quantities.

\begin{lemma}\label{lem:tc<0}
    \(t_c',t_c<0\).
\end{lemma}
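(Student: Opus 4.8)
The plan is to deduce both inequalities $t_c' < 0$ and $t_c < 0$ from Proposition \ref{prop:Igamma<0forgammain12}, which gives us the crucial fact $\sup_{\alpha \in \mathrm{A}} \sup_{\gamma \in [1,2]} I_\gamma(\alpha) < 0$. The strategy is to exhibit an explicit small negative value of $t$ at which the defining inequalities in $t_c'$ and $t_c$ already hold.

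\smallskip

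For $t_c' < 0$: Write $c := \sup_{\alpha \in \mathrm{A}} \sup_{\gamma \in [1,2]} I_\gamma(\alpha) < 0$. By Proposition \ref{prop:oP}, $k^+(t) \ge I_0(L(\mu)) + tL(\mu) = tL(\mu)$ for $t \le 0$ (using $I_0(L(\mu)) = 0$; note $I_0(\alpha) \le 0$ always and $I_0$ has its unique zero at $L(\mu)$ under irreducibility — or more elementarily $k^+(t) \ge k^-(t) \ge k(t)$ and $k(0) = 0$, and $k$ is convex with $k(0)=0$, giving $k^+(t) \ge k(t) \ge tL(\mu)$ for $t \le 0$ by convexity, since actually we just need a lower bound). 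Meanwhile, for $\gamma \in [1,2]$ and $\alpha \ge 0$, $I_\gamma(\alpha) + t(1-\gamma)\alpha \le c + t(1-\gamma)\alpha \le c + |t| \cdot 1 \cdot \bar\alpha$ since $|1-\gamma| \le 1$ and $0 \le \alpha \le \bar\alpha$ on $\mathrm{A}$. Also we must handle $\alpha = 0$ (or more generally bound the sup over $\alpha \in \mathrm{A}$), but there $I_\gamma(0) = I_0(0) \le c < 0$ and the extra term vanishes. So $\sup_{\alpha,\gamma} \{I_\gamma(\alpha) + t(1-\gamma)\alpha\} \le c + |t|\bar\alpha$, which is $< 0 \le k^+(t)$ once $|t| < -c/\bar\alpha$ — wait, we need it below $k^+(t)$, and $k^+(t)$ could be negative for $t < 0$. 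More carefully: $k^+(t) \ge tL(\mu) \ge -|t| L(\mu)$, so it suffices that $c + |t|\bar\alpha < -|t|L(\mu)$, i.e. $|t|(\bar\alpha + L(\mu)) < -c$, which holds for all sufficiently small $|t| > 0$. Hence $t_c' < 0$.

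\smallskip

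For $t_c < 0$: Recall $t_c := \inf\{t \in \R : k(t) > \sup_{\alpha \in \mathrm{A}}\{I_2(\alpha) - t\alpha\}\}$. By Lemma \ref{lem:elementaryQgammainequality} with $\gamma = 2, \gamma' = 0$, $I_0(\alpha) \le I_2(\alpha) + 2\alpha$, equivalently $I_2(\alpha) \ge I_0(\alpha) - 2\alpha$; that goes the wrong way. Instead use $I_2(\alpha) \le I_1(\alpha) \le c$ for $\alpha \in \mathrm{A}$ (by monotonicity $I_2 \le I_1$ and $I_1(\alpha) \le c$ when $\alpha > 0$; for $\alpha = 0$, $I_2(0) = I_0(0) \le c$). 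Then $\sup_{\alpha \in \mathrm{A}}\{I_2(\alpha) - t\alpha\} \le c + |t|\bar\alpha$ for $t \le 0$. On the other hand $k(0) = 0 > c + 0$, and $k$ is continuous, so for $|t|$ small enough $k(t) > c + |t|\bar\alpha \ge \sup_{\alpha}\{I_2(\alpha) - t\alpha\}$, giving a negative $t$ in the defining set of $t_c$, hence $t_c < 0$. We should also remember the stated fact $t_c \ge -1$ (and $t_c' \ge -1$ by definition) so both lie in $[-1,0)$.

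\smallskip

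The main obstacle, such as it is, is bookkeeping the $\alpha = 0$ endpoint and the possibility $I_\gamma(\alpha) = -\infty$ (which only helps), together with getting a clean lower bound for $k^+(t)$ near $t=0$ — but $k^+(0) = k(0) = 0$ together with convexity of $t \mapsto k^+(t)$ (Proposition \ref{prop:oP}) makes $k^+(t) \ge -|t| C$ for an explicit $C$ on a neighbourhood of $0$, which is all that is needed. No genuinely hard step arises here; the content was already extracted in Proposition \ref{prop:Igamma<0forgammain12}.
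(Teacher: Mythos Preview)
Your argument is correct and follows essentially the same route as the paper: both proofs invoke Proposition~\ref{prop:Igamma<0forgammain12} to obtain the strict inequality at $t=0$ and then extend it to a small negative neighbourhood. The only difference is that the paper appeals directly to continuity of $t\mapsto k^+(t)$ and of $t\mapsto \sup_{\alpha,\gamma}\{I_\gamma(\alpha)+t(1-\gamma)\alpha\}$ (as suprema of affine functions they are convex, hence continuous), whereas you make this quantitative via the explicit bounds $\sup_{\alpha,\gamma\in[1,2]}\{I_\gamma(\alpha)+t(1-\gamma)\alpha\}\le c+|t|\overline\alpha$ and $k^+(t)\ge tL(\mu)$; this is a cosmetic rather than a substantive difference.
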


\begin{proof}
We show \(t_c'<0\); the proof for \(t_c\) follows similarly. By Proposition \ref{prop:Igamma<0forgammain12} we have \(\sup_{\alpha \in \mathrm{A}} \sup_{\gamma \in [1,2]} I_{\gamma}(\alpha)<0=k^+(0)\). Since the supremum of affine functions is convex, 
\[t \mapsto \sup_{\gamma \in [1,2]} \{ I_\gamma(\alpha)+t(1-\gamma)\alpha \}\]
is continuous on \(\R\), as is \(k^+(t)\) by Proposition \ref{prop:oP}. Hence, for all \(t<0\) sufficiently close to 0 we have 
\[\sup_{\alpha \in \mathrm{A}} \sup_{\gamma \in [1,2]} \{ I_\gamma(\alpha)+t(1-\gamma)\alpha\}<k^+(t). \]
\end{proof}

\begin{lemma}\label{lem:zetatupperbound}
    For any \(t >t_c'\), \(\zeta_t' \in (0,1+t]\).
\end{lemma}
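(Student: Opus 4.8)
The plan is to work with the auxiliary function
\[
G(\zeta):=\sup_{\alpha\in\mathrm{A}}\ \sup_{\gamma\in[1,2]}\big\{I_\gamma(\alpha)+(t-2\zeta)(1-\gamma)\alpha\big\},\qquad \zeta\ge 0,
\]
so that $\zeta_t'=\sup\{\zeta>0:G(\zeta)<k^+(t)\}$ by definition. First I would record the elementary facts that $G$ is finite-valued (bounded above using $I_\gamma(\alpha)\le I_0(\alpha)\le0$ and the boundedness of $\mathrm{A}$, and bounded below by $I_1(L(\mu))\ge-L(\mu)$ on taking $\alpha=L(\mu)\in\mathrm{A}$, $\gamma=1$, via Lemma~\ref{lem:elementaryQgammainequality} and $I_0(L(\mu))=0$), that each summand $I_\gamma(\alpha)+(t-2\zeta)(1-\gamma)\alpha$ is affine and \emph{non-decreasing} in $\zeta$ because $-2(1-\gamma)\alpha=2(\gamma-1)\alpha\ge0$, and hence that $G$ is convex, non-decreasing and continuous on $[0,\infty)$.

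For $\zeta_t'>0$ the only thing to check is $G(0)<k^+(t)$, after which continuity of $G$ gives $G(\zeta)<k^+(t)$ for all small $\zeta>0$. To see $G(0)<k^+(t)$, set $F(t):=G(0)=\sup_{\alpha}\sup_{\gamma\in[1,2]}\{I_\gamma(\alpha)+t(1-\gamma)\alpha\}$ and $H(t):=\sup_{\alpha}\sup_{\gamma\in[0,1]}\{I_\gamma(\alpha)+t(1-\gamma)\alpha\}$; by Proposition~\ref{prop:oP}, $k^+(t)=\max\{F(t),H(t)\}$, so the inequality $F(t)<k^+(t)$ defining $t_c'$ is \emph{equivalent} to $F(t)<H(t)$. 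Since $t\mapsto F(t)$ is non-increasing (its summands have slopes $(1-\gamma)\alpha\le0$ for $\gamma\ge1$) while $t\mapsto H(t)$ is non-decreasing (slopes $(1-\gamma)\alpha\ge0$ for $\gamma\le1$), the set $\{t\ge-1:F(t)<H(t)\}$ is an up-set; hence every $t>t_c'$ satisfies $G(0)=F(t)<k^+(t)$, giving $\zeta_t'>0$.

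For $\zeta_t'\le1+t$ (and $1+t>0$ since $t>t_c'\ge-1$): because $G$ is non-decreasing it suffices to prove $G(1+t)\ge k^+(t)$. Keeping only the $\gamma=2$ term in $G(1+t)$ and using $(t-2(1+t))(1-2)=2+t$ gives $G(1+t)\ge\sup_{\alpha\in\mathrm{A}}\{I_2(\alpha)+(2+t)\alpha\}$. Conversely, by Lemma~\ref{lem:elementaryQgammainequality} one has $I_\gamma(\alpha)\le I_2(\alpha)+(2-\gamma)\alpha$ for all $\alpha\in\mathrm{A}$, $\gamma\in[0,2]$, whence
\[
I_\gamma(\alpha)+t(1-\gamma)\alpha\le I_2(\alpha)+\big[(2-\gamma)+t(1-\gamma)\big]\alpha = I_2(\alpha)+(2+t)\alpha-\gamma(1+t)\alpha \le I_2(\alpha)+(2+t)\alpha,
\]
the last step using $\gamma(1+t)\alpha\ge0$; taking $\sup$ over $\gamma\in[0,2]$ and $\alpha\in\mathrm{A}$ and invoking Proposition~\ref{prop:oP} gives $k^+(t)\le\sup_{\alpha}\{I_2(\alpha)+(2+t)\alpha\}\le G(1+t)$. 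Hence $\{\zeta>0:G(\zeta)<k^+(t)\}\subseteq(0,1+t)$, so $\zeta_t'\le1+t$.

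The genuinely delicate point is the implication ``$t>t_c'\Rightarrow G(0)<k^+(t)$'': it hinges on the decomposition $k^+=\max\{F,H\}$ from Proposition~\ref{prop:oP} into a non-increasing part $F$ (the $\gamma\in[1,2]$ supremum) and a non-decreasing part $H$ (the $\gamma\in[0,1]$ supremum), which is precisely what makes the defining set of $t_c'$ an up-set. Once this is in place, everything else reduces to the comparison of Lemma~\ref{lem:elementaryQgammainequality} together with monotonicity and continuity of $G$ in $\zeta$.
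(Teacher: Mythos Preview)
Your proof is correct and, for the upper bound $\zeta_t'\le 1+t$, essentially identical to the paper's: both use Lemma~\ref{lem:elementaryQgammainequality} to show $I_\gamma(\alpha)+t(1-\gamma)\alpha\le I_2(\alpha)+(2+t)\alpha$ for all $\gamma\in[0,2]$, then invoke Proposition~\ref{prop:oP} to conclude $k^+(t)\le G(1+t)$.

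Where you differ is in the treatment of $\zeta_t'>0$. The paper simply declares this ``clear'', implicitly relying on the reader to see that $t>t_c'$ forces the defining inequality $\sup_{\alpha}\sup_{\gamma\in[1,2]}\{I_\gamma(\alpha)+t(1-\gamma)\alpha\}<k^+(t)$. You make this explicit and airtight by decomposing $k^+(t)=\max\{F(t),H(t)\}$ via Proposition~\ref{prop:oP}, observing that $F$ is non-increasing (slopes $(1-\gamma)\alpha\le 0$ on $[1,2]$) while $H$ is non-decreasing (slopes $\ge 0$ on $[0,1]$), and deducing that $\{t:F(t)<k^+(t)\}=\{t:F(t)<H(t)\}$ is an up-set. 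This is a genuine improvement in rigour: an infimum definition alone does not guarantee that every $t>t_c'$ lies in the defining set, and your monotonicity argument is precisely what closes that gap.
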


\begin{proof}
The inequality \(\zeta_t'>0\) is clear. By Lemma \ref{lem:elementaryQgammainequality}, for any \(\alpha \in \mathrm{A}\) and \(\gamma \in [0,2]\), 
\begin{align*}
    I_2(\alpha)-t\alpha+2(1+t) \alpha &\geq I_{\gamma}(\alpha)-(2-\gamma)\alpha -t\alpha +2(1+t) \alpha \\
    &= I_{\gamma}(\alpha)+t(1-\gamma)\alpha +(1+t)\gamma \alpha \\
    &\geq  I_{\gamma}(\alpha)+t(1-\gamma)\alpha.
\end{align*}
Hence,
\begin{align*}
\sup_{\alpha\in \mathrm{A}} \{ I_2(\alpha)-t\alpha+2(1+t) \alpha\}
&\geq
\sup_{\alpha \in \mathrm{A}} \sup_{\gamma \in [0,2]} \{ I_{\gamma}(\alpha)+t(1-\gamma)\alpha\} \\
&= k^+(t),   
\end{align*}
so
\[\sup_{\alpha\in \mathrm{A}} \sup_{\gamma \in [1,2]} \{ I_\gamma(\alpha)+t(1-\gamma)\alpha-2(1+t)(1-\gamma) \alpha\} 
\geq k^+(t).\]
This implies \(\zeta_t' \leq 1+t\).
\end{proof}

\begin{lemma}\label{lem:zetat'withgammain02}
 For all  \(t>t_c'\) and \(0<\zeta<\zeta_t'\),
    \[\sup_{\alpha \in \mathrm{A}} \sup_{\gamma \in [0,2]} \{ I_\gamma(\alpha)+t(1-\gamma)\alpha -2\zeta (1-\gamma) \alpha\}< k^+(t).\]
\end{lemma}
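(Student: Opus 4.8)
The plan is to pass to a compact parameter box and combine upper semi-continuity with the definition of $\zeta_t'$. Write $f(\alpha,\gamma):=I_\gamma(\alpha)+(t-2\zeta)(1-\gamma)\alpha$ for the quantity inside the supremum (so that $t(1-\gamma)\alpha-2\zeta(1-\gamma)\alpha=(t-2\zeta)(1-\gamma)\alpha$). First I would record two preparatory facts. (i) Since $I_\gamma(\alpha)\le I_0(\alpha)=I(\alpha)=-\infty$ for $\alpha\notin\mathrm{A}$ and $\mathrm{A}\subseteq[0,\overline{\alpha}]$, we have $\sup_{\alpha\in\mathrm{A}}\sup_{\gamma\in[0,2]}f(\alpha,\gamma)=\sup_{(\alpha,\gamma)\in[0,\overline{\alpha}]\times[0,2]}f(\alpha,\gamma)$, and by Lemma \ref{lem:USC} the map $f$ is upper semi-continuous on the compact box $[0,\overline{\alpha}]\times[0,2]$; hence this supremum is attained at some $(\alpha_0,\gamma_0)$. (ii) For $\gamma\in[1,2]$ the term $-2\zeta(1-\gamma)\alpha=2\zeta(\gamma-1)\alpha$ is non-decreasing in $\zeta$, so $\{\zeta>0:\sup_{\alpha\in\mathrm{A}}\sup_{\gamma\in[1,2]}f(\alpha,\gamma)<k^+(t)\}$ is an interval of the form $(0,\zeta_t')$ or $(0,\zeta_t']$; since $0<\zeta<\zeta_t'$ (and $\zeta_t'>0$ by Lemma \ref{lem:zetatupperbound}) this yields $\sup_{\alpha\in\mathrm{A}}\sup_{\gamma\in[1,2]}f(\alpha,\gamma)<k^+(t)$.

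Next I would slide the maximiser off the boundary $\alpha=0$: if $\alpha_0=0$, then $f(0,\gamma_0)=I_{\gamma_0}(0)=I_0(0)=f(0,1)$ (using $I_\gamma(0)\equiv I_0(0)$ from Section \ref{subsec:tc'zetat'}), so the supremum is also attained at $(0,1)$, and we may assume that $\alpha_0=0$ forces $\gamma_0=1$. Then I split into two cases. If $\gamma_0\in[1,2]$, then $f(\alpha_0,\gamma_0)\le\sup_{\alpha\in\mathrm{A}}\sup_{\gamma\in[1,2]}f(\alpha,\gamma)<k^+(t)$ by fact (ii) (reading $f(\alpha_0,\gamma_0)=-\infty$ when $\alpha_0\notin\mathrm{A}$). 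If $\gamma_0<1$, then $\alpha_0>0$ by the reduction, so $(1-\gamma_0)\alpha_0>0$ and, as $\zeta>0$, $(t-2\zeta)(1-\gamma_0)\alpha_0<t(1-\gamma_0)\alpha_0$; hence $f(\alpha_0,\gamma_0)<I_{\gamma_0}(\alpha_0)+t(1-\gamma_0)\alpha_0\le k^+(t)$ by Proposition \ref{prop:oP}. In either case $\sup_{\alpha\in\mathrm{A}}\sup_{\gamma\in[0,2]}f(\alpha,\gamma)=f(\alpha_0,\gamma_0)<k^+(t)$, which is the claim.

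The step I expect to be the real obstacle is exactly the degenerate corner $\alpha_0=0$ (equivalently, $\gamma$ near $1$ with $\alpha$ small): there the strict slack $t-2\zeta<t$ collapses, so one cannot conclude by comparison with the formula for $k^+(t)$ alone, and naively the bound is only $\le k^+(t)$. The resolution is the identity $I_\gamma(0)\equiv I_0(0)$, which lets one move the maximiser onto $\gamma=1$ and then invoke the strict inequality over $\gamma\in[1,2]$ coming from $\zeta<\zeta_t'$. Everything else — the interval (downward-closed) structure of the defining set of $\zeta_t'$, upper semi-continuity of $I_\cdot(\cdot)$ on a compact parameter box, and the representation $k^+(t)=\sup_{\alpha\in\mathrm{A}}\sup_{\gamma\in[0,2]}\{I_\gamma(\alpha)+t(1-\gamma)\alpha\}$ of Proposition \ref{prop:oP} — is routine.
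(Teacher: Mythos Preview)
Your argument is correct and takes a genuinely different route from the paper's proof. Both proofs first observe (from the definition of $\zeta_t'$ and monotonicity in $\zeta$ for $\gamma\in[1,2]$) that the supremum restricted to $\gamma\in[1,2]$ is strictly below $k^+(t)$; the difference lies in how the $\gamma\in[0,1]$ range is handled. The paper proceeds quantitatively: it fixes a gap $\delta>0$ from the $[1,2]$ case and splits according to whether $(1-\gamma)\alpha$ is small or large, using Lemma~\ref{lem:elementaryQgammainequality} to compare $I_\gamma(\alpha)$ with $I_1(\alpha)$ in the small case and the crude bound $I_\gamma(\alpha)+t(1-\gamma)\alpha\le k^+(t)$ in the large case. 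You instead invoke upper semi-continuity on the compact box to produce a maximiser $(\alpha_0,\gamma_0)$ and then argue pointwise: if $\gamma_0\ge 1$ you are in the already-handled range, while if $\gamma_0<1$ and $\alpha_0>0$ the strict slack $-2\zeta(1-\gamma_0)\alpha_0<0$ gives the strict inequality directly via Proposition~\ref{prop:oP}. The boundary case $\alpha_0=0$ is absorbed using $I_\gamma(0)\equiv I_0(0)$, which lets you move the maximiser onto $\gamma_0=1$. Your approach is shorter and more conceptual, avoiding Lemma~\ref{lem:elementaryQgammainequality} entirely; the paper's approach has the minor advantage of giving an explicit uniform gap, though this is not used later.
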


\begin{proof}
It suffices to show that
\[\sup_{\alpha \in \mathrm{A}} \sup_{\gamma \in [0,1]} \{I_\gamma(\alpha)+t(1-\gamma)\alpha -2\zeta (1-\gamma) \alpha\}< k^+(t) .\]
Let \(\delta>0\) be such that 
\[\sup_{\alpha \in \mathrm{A}} \sup_{\gamma \in [1,2]} \{I_\gamma(\alpha)+t(1-\gamma)\alpha -2\zeta (1-\gamma) \alpha\} \leq k^+(t) -\delta.\]
Let \(K=1/2\). By Lemma \ref{lem:elementaryQgammainequality}, for \(\alpha \in \mathrm{A}\)  and \(\gamma \in [0,1]\) with \((1-\gamma)\alpha \leq \frac{K\delta}{1+t}\),
\begin{align*}
    I_{\gamma}(\alpha)+t(1-\gamma)\alpha-2\zeta(1-\gamma)\alpha & \leq I_1(\alpha)+(1+t-2\zeta)(1-\gamma)\alpha \\  &\leq I_1(\alpha)+(1+t)(1-\gamma)\alpha \\
    &\leq k^+(t)-\delta+(1+t)(1-\gamma)\alpha \\
    &\leq k^+(t)-\frac{\delta}{2}.
\end{align*}
Moreover, for \(\alpha \in \mathrm{A}\)  and \(\gamma \in [0,1]\) with \((1-\gamma)\alpha \geq \frac{K\delta}{1+t}\),
\begin{align*}
    I_{\gamma}(\alpha)+t(1-\gamma)\alpha-2\zeta(1-\gamma)\alpha &\leq  I_{\gamma}(\alpha)+t(1-\gamma)\alpha-\frac{2\zeta K\delta}{1+t} \\
    &\leq k^+(t) -\frac{2\zeta K\delta}{1+t}.
    \end{align*}
Hence, we have shown
\[\sup_{\alpha \in \mathrm{A}} \sup_{\gamma \in [0,1]} \{I_\gamma(\alpha)+t(1-\gamma)\alpha -2\zeta (1-\gamma) \alpha\} <k^+(t).\]
\end{proof}

\begin{lemma}\label{lem:zetatunique}
    For all \(t >t_c'\), \(\zeta_t'\) is the unique \(\zeta>0\) such that 
    \[\sup_{\alpha \in \mathrm{A}} \sup_{\gamma \in [1,2]} \{I_{\gamma}(\alpha)+t(1-\gamma)\alpha-2\zeta(1-\gamma)\alpha \}=k^+(t).\]
    Similarly, for \(t >t_c\), \(\zeta_t\) is the unique \(\zeta>0\) such that 
    \[\sup_{\alpha \in \mathrm{A}} \{I_{2}(\alpha)-t\alpha+2\zeta\alpha \}=k(t).\]
\end{lemma}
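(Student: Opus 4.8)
The plan is to recognise each equation as ``$\Phi(\zeta)=c$'' for a convex, non-decreasing, finite-valued (hence continuous) function $\Phi$ of $\zeta$ and a fixed constant $c$, and to observe that the supremum defining $\zeta_t'$ (resp.\ $\zeta_t$) is precisely the unique point where $\Phi$ crosses the level $c$. I first treat the statement involving $k^+(t)$. Set
\[
G(\zeta) := \sup_{\alpha \in \mathrm{A}} \sup_{\gamma \in [1,2]} \{ I_\gamma(\alpha) + t(1-\gamma)\alpha - 2\zeta(1-\gamma)\alpha \},
\]
so that $\zeta_t' = \sup\{\zeta > 0 : G(\zeta) < k^+(t)\}$ by definition. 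Each function inside the supremum is affine in $\zeta$ with slope $-2(1-\gamma)\alpha = 2(\gamma-1)\alpha \ge 0$ (since $\gamma \ge 1$ and $\alpha \ge 0$ on $\mathrm{A}$), so $G$ is convex and non-decreasing on $\R$. It is bounded above, since $I_\gamma(\alpha) \le I_0(\alpha) \le 0$ by Lemma \ref{lem:elementaryQgammainequality} and the remaining terms are bounded on $\mathrm{A}\times[1,2]$. It is not identically $-\infty$: otherwise the set $\{\zeta > 0 : G(\zeta) < k^+(t)\}$ would be all of $(0,\infty)$ (recall $k^+(t)\in\R$, e.g.\ by Proposition \ref{prop:oP}), contradicting $\zeta_t' \le 1+t < \infty$ from Lemma \ref{lem:zetatupperbound}. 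Since for fixed $(\alpha,\gamma)$ the value $I_\gamma(\alpha)$ is either finite or $-\infty$ independently of $\zeta$, finiteness of $G$ at one point forces finiteness everywhere; hence $G:\R\to\R$ is a finite convex function, in particular continuous.

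Next I read off the crossing point. Because $G$ is non-decreasing, $\{\zeta > 0 : G(\zeta) < k^+(t)\}$ is an interval of the form $(0,\zeta_t')$ or $(0,\zeta_t']$; the latter is impossible, since continuity of $G$ would then force $G < k^+(t)$ slightly to the right of $\zeta_t'$, contradicting that $\zeta_t'$ is the supremum. Thus $G(\zeta) < k^+(t)$ for $0 < \zeta < \zeta_t'$, while $G(\zeta) \ge k^+(t)$ for every $\zeta > \zeta_t'$ (such $\zeta$ lying outside the set). Letting $\zeta\to\zeta_t'$ from below and from above and using continuity of $G$ gives $G(\zeta_t') = k^+(t)$, which is the required existence statement; moreover $\zeta_t' > 0$ by Lemma \ref{lem:zetatupperbound}.

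For uniqueness I use convexity directly. Suppose $0 < \zeta_1 < \zeta_2$ both satisfy $G(\zeta_i)=k^+(t)$. By the previous paragraph $\zeta_1 \ge \zeta_t' > 0$, so I may choose $\zeta_0$ with $0 < \zeta_0 < \min\{\zeta_1,\zeta_t'\}$, whence $G(\zeta_0) < k^+(t)$. Writing $\zeta_1 = \lambda\zeta_0 + (1-\lambda)\zeta_2$ with $\lambda = \frac{\zeta_2-\zeta_1}{\zeta_2-\zeta_0} \in (0,1)$, convexity of $G$ yields
\[
k^+(t) = G(\zeta_1) \le \lambda\,G(\zeta_0) + (1-\lambda)\,G(\zeta_2) < \lambda\,k^+(t) + (1-\lambda)\,k^+(t) = k^+(t),
\]
a contradiction. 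Hence $\zeta_t'$ is the unique $\zeta > 0$ with $G(\zeta)=k^+(t)$. The second statement is proved in exactly the same way, with $G$ replaced by $H(\zeta) := \sup_{\alpha \in \mathrm{A}} \{ I_2(\alpha) - t\alpha + 2\zeta\alpha \}$ (whose defining functions are affine in $\zeta$ with slope $2\alpha \ge 0$), with $k^+(t)$ replaced by $k(t)\in\R$, and using $\zeta_t \in (0,1+t]$; here the convexity argument in fact gives uniqueness among all real $\zeta$, and in particular among $\zeta > 0$.

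The only point needing care is the bookkeeping with $-\infty$ values, which is used solely to guarantee that $G$ and $H$ are genuinely finite (and therefore continuous) rather than identically $-\infty$; once that is in hand the remainder is a single convexity inequality. Notably this route avoids any appeal to upper semi-continuity of $I_{\cdot}(\cdot)$ or to compactness of $\mathrm{A}$, which would be required only if one tried to prove uniqueness by exhibiting a maximiser of $G$ or $H$.
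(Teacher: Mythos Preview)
Your proof is correct and takes a genuinely different route from the paper for the uniqueness half. Both arguments first note that $G(\zeta)=\sup_{\alpha,\gamma}\{I_\gamma(\alpha)+t(1-\gamma)\alpha-2\zeta(1-\gamma)\alpha\}$ is a finite convex function of $\zeta$ and use continuity to get $G(\zeta_t')=k^+(t)$. For uniqueness, the paper invokes upper semi-continuity of $I_{\cdot}(\cdot)$ (Lemma~\ref{lem:USC}) to exhibit a maximiser $(\beta_t,\gamma_t)$ and then checks, using $t>t_c'$, that $\beta_t>0$ and $\gamma_t>1$, so the supremum has strictly positive slope in $\zeta$ at $\zeta_t'$ and $G$ is strictly increasing beyond it. You instead use only convexity: if two points $\zeta_1<\zeta_2$ on or above $\zeta_t'$ both hit the level $k^+(t)$, the secant from any $\zeta_0<\zeta_t'$ (where $G(\zeta_0)<k^+(t)$) to $\zeta_2$ would force $G(\zeta_1)<k^+(t)$, a contradiction. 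This is more elementary and, as you note, sidesteps compactness of $\mathrm{A}$ and upper semi-continuity entirely. The trade-off is that the paper's proof produces, as a by-product, the maximiser $(\beta_t,\gamma_t)$ with $\beta_t>0$ and $\gamma_t\in(1,2]$ (displayed as equation~\eqref{eqn:betatgammat}); this information is used later in Lemmas~\ref{lem:zetat'zetat} and~\ref{lem:tc'=tc} and in the proof of Theorem~\ref{theo:measure}, though in each case the paper re-derives it by citing Lemma~\ref{lem:USC} directly, so your version of the lemma would still suffice for the downstream applications.

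One minor expository point: your sentence ``finiteness of $G$ at one point forces finiteness everywhere'' is correct but terse. The lower bound $G(\zeta)>-\infty$ indeed follows because a single $(\alpha_0,\gamma_0)$ with $I_{\gamma_0}(\alpha_0)>-\infty$ gives a finite lower bound at every $\zeta$; the upper bound you already obtained from $I_\gamma(\alpha)\le 0$ and boundedness of $\mathrm{A}\times[1,2]$. It might help the reader to separate those two directions explicitly.
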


\begin{proof}
First note that \[\zeta \mapsto \sup_{\alpha \in \mathrm{A}} \sup_{\gamma \in [1,2]} \{I_{\gamma}(\alpha)+t(1-\gamma)\alpha-2\zeta(1-\gamma)\alpha \}\]
is continuous by convexity. Hence, \(\zeta_t'\) satisfies
\[\sup_{\alpha \in \mathrm{A}} \sup_{\gamma \in [1,2]} \{I_{\gamma}(\alpha)+t(1-\gamma)\alpha-2\zeta_t'(1-\gamma)\alpha \}=k^+(t).\]
By Lemma \ref{lem:USC}, for each \(t> t_c'\) there exists \(\beta_t \in \mathrm{A}\) and \(\gamma_t \in [1,2]\) such that 
\begin{equation}\label{eqn:betatgammat}
    I_{\gamma_t}(\beta_t)+t(1-\gamma_t)\beta_t-2\zeta_t'(1-\gamma_t)\beta_t =\sup_{\alpha \in \mathrm{A}} \sup_{\gamma \in [1,2]} \{I_{\gamma}(\alpha)+t(1-\gamma)\alpha-2\zeta_t'(1-\gamma)\alpha \}=k^+(t).
\end{equation}
Since \(t>t_c'\), \(\sup_{\alpha \in \mathrm{A}} \sup_{\gamma \in [1,2]} \{I_{\gamma}(\alpha)+t(1-\gamma)\alpha\}<k^+(t)\), so we must have \(\beta_t>0\) and \(\gamma_t >1\). Hence, for \(\zeta>\zeta_t'\),
\[\sup_{\alpha \in \mathrm{A}} \sup_{\gamma \in [1,2]} \{I_{\gamma}(\alpha)+t(1-\gamma)\alpha-2\zeta(1-\gamma)\alpha \} \geq I_{\gamma_t}(\beta_t)+t(1-\gamma_t)\beta_t-2\zeta(1-\gamma_t)\beta_t  > k^+(t),\]
which proves the first statement. The proof for \(\zeta_t\) follows similarly. 
\end{proof}

\begin{lemma}\label{lem:continuity}
The map \(t \mapsto \zeta_t'\) is continuous on \((t_c',\infty)\) and the map \(t \mapsto \zeta_t\) is continuous on \((t_c,\infty)\).
\end{lemma}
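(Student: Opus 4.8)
The plan is to realise $\zeta'_t$ and $\zeta_t$ as the unique zeros of monotone functions that depend \emph{jointly} continuously on the two parameters, and then run a soft sandwich argument. Introduce
\[ G(t,\zeta) := \sup_{\alpha \in \mathrm{A}}\ \sup_{\gamma \in [1,2]}\bigl\{ I_{\gamma}(\alpha)+(t-2\zeta)(1-\gamma)\alpha \bigr\}, \qquad H(t,\zeta):= \sup_{\alpha \in \mathrm{A}}\bigl\{I_2(\alpha)+(2\zeta-t)\alpha\bigr\}, \]
so that $\zeta'_t=\sup\{\zeta>0:G(t,\zeta)<k^+(t)\}$ and $\zeta_t=\sup\{\zeta:H(t,\zeta)<k(t)\}$. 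Each of $G,H$ is a supremum of affine functions of $(t,\zeta)$, hence convex and lower semicontinuous on $\R^2$. First I would check they are finite-valued: since $\mathrm{A}\subseteq[0,\overline{\alpha}]$ and $I_\gamma(\alpha)\le 0$, one gets $G(t,\zeta)\le|t-2\zeta|\,\overline{\alpha}$ and $H(t,\zeta)\le|2\zeta-t|\,\overline{\alpha}$; conversely $G(t,\zeta)\ge I_1(\beta)>-\infty$ for a fixed $\beta\in\mathrm{A}$ (via Lemma~\ref{lem:elementaryQgammainequality}), while finiteness of $H(s,\zeta_s)=k(s)$ at a single $s>t_c$ (Lemma~\ref{lem:zetatunique}) exhibits some $\alpha_*\in\mathrm{A}$ with $I_2(\alpha_*)\in\R$, giving $H(t,\zeta)\ge I_2(\alpha_*)+(2\zeta-t)\alpha_*>-\infty$ everywhere. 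A finite convex function on $\R^2$ is continuous, so $G,H$ are continuous, and so are $t\mapsto k^+(t)$ and $t\mapsto k(t)$ (finite suprema of affine functions of $t$, by Propositions~\ref{prop:oP} and~\ref{prop:PtopissupMCE}).

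Next I would record monotonicity and strict crossing in $\zeta$. Since $\partial_\zeta\bigl(I_\gamma(\alpha)+(t-2\zeta)(1-\gamma)\alpha\bigr)=2(\gamma-1)\alpha\ge 0$ for $\gamma\in[1,2]$, $\alpha\ge 0$, the map $\zeta\mapsto G(t,\zeta)$ is non-decreasing, and likewise $\zeta\mapsto H(t,\zeta)$. Fix $t_0>t_c'$ and set $\zeta_0:=\zeta'_{t_0}$. By Lemma~\ref{lem:zetatunique}, $\zeta_0$ is the \emph{unique} $\zeta>0$ with $G(t_0,\zeta)=k^+(t_0)$, and — exactly as in the proof of that lemma, cf.\ (\ref{eqn:betatgammat}) — the supremum defining $G(t_0,\zeta_0)$ is attained at some $\beta\in\mathrm{A}$ with $\beta>0$ and some $\gamma\in(1,2]$. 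Uniqueness together with monotonicity then forces $G(t_0,\zeta)<k^+(t_0)$ for every $0<\zeta<\zeta_0$, while for $\zeta>\zeta_0$ feeding $(\beta,\gamma)$ back into the supremum yields the quantitative overshoot $G(t_0,\zeta)\ge k^+(t_0)+2(\zeta-\zeta_0)(\gamma-1)\beta>k^+(t_0)$.

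Finally I would close with the sandwich. Fix $0<\epsilon<\zeta_0$; by the previous paragraph $G(t_0,\zeta_0-\epsilon)<k^+(t_0)$ and $G(t_0,\zeta_0+\epsilon)>k^+(t_0)$, both strictly. Since $(t,\zeta)\mapsto G(t,\zeta)-k^+(t)$ is continuous there is $\eta>0$ such that for $|t-t_0|<\eta$ we still have $G(t,\zeta_0-\epsilon)<k^+(t)$ and $G(t,\zeta_0+\epsilon)>k^+(t)$. The first inequality puts $\zeta_0-\epsilon$ into the set whose supremum is $\zeta'_t$, so $\zeta'_t\ge\zeta_0-\epsilon$; the second, together with monotonicity of $G(t,\cdot)$ (which makes $\{\zeta>0:G(t,\zeta)<k^+(t)\}$ a down-interval), gives $\zeta'_t\le\zeta_0+\epsilon$. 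Hence $|\zeta'_t-\zeta_0|\le\epsilon$ for $|t-t_0|<\eta$, and as $\epsilon$ was arbitrary, $t\mapsto\zeta'_t$ is continuous at $t_0$; since $t_0>t_c'$ was arbitrary, it is continuous on $(t_c',\infty)$. The argument for $t\mapsto\zeta_t$ on $(t_c,\infty)$ is verbatim with $H$, $k$ in place of $G$, $k^+$; the only extra point is that the supremum defining $H(t,\zeta_t)=k(t)$ is attained at some $\alpha>0$, which holds because for $t>t_c$ we have $k(t)>I(0)=I_2(0)$ (by the definition of $t_c$, using $\sup_\alpha\{I_2(\alpha)-t\alpha\}\ge I_2(0)=I_0(0)$, together with Lemma~\ref{lem:zetatunique}), whereas $I_2$ is upper semicontinuous on the compact interval $\overline{\mathrm{A}}$ (Lemma~\ref{lem:USC}) and equals $-\infty$ off $\mathrm{A}$, so a maximiser exists and cannot be $0$.

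The main obstacle is not the topology but the bookkeeping around finiteness and attainment of these suprema: ruling out $I_2\equiv-\infty$ on $\mathrm{A}$, and making sure the optimal $\alpha$ (and, for $G$, the optimal $\gamma$) stays strictly away from the degenerate values $\alpha=0$ and $\gamma=1$, since that is what produces the \emph{strict} crossing and hence the quantitative overshoot used in the sandwich. Once that is in hand, convexity of $G$ and $H$ does all the remaining work and continuity of $\zeta'_t$ and $\zeta_t$ is essentially automatic.
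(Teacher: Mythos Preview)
Your proof is correct and follows essentially the same approach as the paper: define the relevant supremum as a function of $(t,\zeta)$, use convexity (supremum of affine functions) for continuity, invoke Lemma~\ref{lem:zetatunique} for the strict crossing at $\zeta_t'$ (respectively $\zeta_t$), and run the sandwich argument. You give more detail than the paper on finiteness of $G,H$ and on why the maximiser avoids the degenerate values $\alpha=0$, $\gamma=1$, but the structure is identical.
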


\begin{proof}
We show the first statement; the second follows similarly. Fix some \(t>t_c'\) and let \(\epsilon>0\). We let 
\[F(t,\zeta):= \sup_{\alpha \in \mathrm{A}} \sup_{\gamma \in [1,2]} \{I_{\gamma}(\alpha)+t(1-\gamma)\alpha-2\zeta(1-\gamma)\alpha \}-k^+(t).\]
By Lemma \ref{lem:zetatunique}, we have
\[F(t,\zeta_t'+\epsilon)>0\]
and
\[F(t,\zeta_t'-\epsilon)<0.\]
Note that \(s \mapsto F(s,\zeta_t'+\epsilon)\) and \(s \mapsto F(s,\zeta_t'-\epsilon)\) are continuous. In particular, there exists \(\delta>0\) such that for all \(s \in (t_c',\infty)\) with \(|s-t|<\delta\), 
\[|F(t,\zeta_t'+\epsilon)-F(s,\zeta_t'+\epsilon)|<\frac{F(t,\zeta_t'+\epsilon)}{2} \]
and
\[|F(t,\zeta_t'-\epsilon)-F(s,\zeta_t'-\epsilon)|<\frac{-F(t,\zeta_t'-\epsilon)}{2}.\]
Hence, for all \(s \in (t_c',\infty)\) with \(|s-t|<\delta\),
\[ F(s,\zeta_t'+\epsilon)>F(t,\zeta_t'+\epsilon)/2>0 \]
and 
\[ F(s,\zeta_t'-\epsilon)<F(t,\zeta_t'-\epsilon)/2<0.\]
Since \(\zeta \mapsto F(s,\zeta)\) is increasing and \(\zeta_s'\) is the unique \(\zeta\) such that \(F(s,\zeta)=0\), we must have \(|\zeta_s'-\zeta_t'|<\epsilon\).
\end{proof}

\subsection{Quasi-compactness}
In this section we show \(P_t: C^{\zeta}(\BP) \rightarrow C^{\zeta}(\BP)\) is quasi-compact for \(t \in (t_c',0]\) and \(\zeta \in (0,\zeta_t')\) by proving a Lasota--Yorke type inequality (Proposition \ref{prop:lasotayorke}).

The following H\"older estimate is much stronger than we require, but we remark that the exponent \(2\zeta-t\) is optimal. 

\begin{lemma}\label{lem:lipschitzbound}
   For all \(-1<t<0\) and \(0<\zeta \leq 1\), there exists \(C>0\) such that for any \(g \in \SL(2,\R)\) and \(x,y \in \BP\),
    \[ \left| |gx|^t- |gy| ^t \right| \leq C|g|^{2\zeta-t} d_{\BP} (x,y)^{\zeta}. \]
\end{lemma}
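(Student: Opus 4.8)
The plan is to establish two complementary bounds on $\bigl||gx|^t - |gy|^t\bigr|$ — a crude pointwise bound and a Lipschitz-type bound — and then interpolate between them with weights $1-\zeta$ and $\zeta$, since the exponents conspire to give exactly $2\zeta - t$. Throughout I would fix representatives $x = \R u$, $y = \R v$ with $|u| = |v| = 1$ and $\langle u, v\rangle \geq 0$, so that $|gx| = |gu|$, $|gy| = |gv|$, and $d_{\BP}(x,y) = \sqrt{1 - \langle u,v\rangle^2}$. The only structural input needed is that for $g \in \SL(2,\R)$ the singular values are $|g| \geq 1$ and $|g|^{-1}$, so that $|g|^{-1} \leq |gz| \leq |g|$ for every $z \in \BP$; since $t < 0$ this yields $|g|^t \leq |gz|^t \leq |g|^{-t}$. (In particular the case $|g|=1$ is automatic, as then both sides vanish.)

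First I would record the crude bound $\bigl||gx|^t - |gy|^t\bigr| \leq |gx|^t + |gy|^t \leq 2|g|^{-t}$. For the Lipschitz bound, applying the mean value theorem to $s \mapsto s^t$ on the interval between $|gx|$ and $|gy|$, using that $s \mapsto s^{t-1}$ is decreasing together with the lower bound $|gz| \geq |g|^{-1}$, gives
\[ \bigl||gx|^t - |gy|^t\bigr| \;\leq\; |t|\,|g|^{1-t}\,\bigl||gu| - |gv|\bigr|. \]
Then $\bigl||gu|-|gv|\bigr| \leq |g(u-v)| \leq |g|\,|u-v|$, and the normalisation $\langle u,v\rangle \geq 0$ gives $|u-v|^2 = 2(1-\langle u,v\rangle) \leq 2(1-\langle u,v\rangle^2) = 2\,d_{\BP}(x,y)^2$, so altogether $\bigl||gx|^t - |gy|^t\bigr| \leq \sqrt{2}\,|t|\,|g|^{2-t}\,d_{\BP}(x,y)$.

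Finally, raising the crude bound to the power $1-\zeta$, the Lipschitz bound to the power $\zeta$, and multiplying, the exponent of $|g|$ becomes $-t(1-\zeta) + (2-t)\zeta = 2\zeta - t$ and that of $d_{\BP}(x,y)$ becomes $\zeta$, so
\[ \bigl||gx|^t - |gy|^t\bigr| \;\leq\; \bigl(\sqrt{2}\,|t|\bigr)^{\zeta}\,2^{1-\zeta}\,|g|^{2\zeta - t}\,d_{\BP}(x,y)^{\zeta}, \]
and one may take $C = 2\sqrt{2}$, uniformly over $t \in (-1,0)$ and $\zeta \in (0,1]$. There is no genuine obstacle here; the two points deserving a little care are the singular-value bound $|gz| \geq |g|^{-1}$, which neutralises the blow-up of $s^{t-1}$ near $0$, and the elementary inequality $1 - c \leq 1 - c^2$ for $c \in [0,1]$ used to pass from $|u - v|$ to $d_{\BP}(x,y)$.
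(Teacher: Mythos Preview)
Your proof is correct and follows essentially the same architecture as the paper: establish the crude bound $2|g|^{-t}$, a Lipschitz bound of the form $C|g|^{2-t}d_{\BP}(x,y)$, and interpolate via $\min\{a,b\}\le a^{1-\zeta}b^{\zeta}$. The only difference is in how the Lipschitz bound is obtained: the paper first invokes \cite[Lemma~12.2]{BQ16} to get $|\log|gx|-\log|gy||\le C|g|^2 d_{\BP}(x,y)$ and then applies the mean value theorem to $a\mapsto e^{ta}$, whereas you apply the mean value theorem directly to $s\mapsto s^t$ and control $\bigl||gu|-|gv|\bigr|$ by the elementary Euclidean estimate $|u-v|\le\sqrt{2}\,d_{\BP}(x,y)$; your route is self-contained and avoids the external citation.
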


\begin{proof}
We let \(C>1\) be as given by \cite[Lemma 12.2]{BQ16}. Let \(x ,y \in \BP\). Then, by \cite[Lemma 12.2]{BQ16}, for any \(g \in \SL(2,\R)\) we have 
\[\left|\log|g x|-\log|g y| \right| \leq C |g|^2 d_{\BP}(x,y).\]
By the mean value theorem, for \(-1 <t<0\) and \(a,b \in \R\),
\[|e^{ta}-e^{tb}| \leq |t|e^{t\min\{a,b\}} |a-b|,\]
so it follows that
\[\left||g x|^t-|g y|^t \right|= \left| e^{t \log |gx|}-e^{t \log |gy|} \right| \leq C|g|^{2-t}d_{\BP}(x,y). \]
Thus, using the trivial inequality that \(\min\{a,b\} \leq a^{1-\zeta} b^{\zeta}\) for \(a,b \geq 0\), 
\begin{align*}
    \left||g x|^t-|g y|^t \right| &\leq \min\{ 2|g|^{-t},C |g|^{2-t} d_{\BP}(x,y)\}  \\
    &\leq 2^{1-\zeta}C^{\zeta} |g|^{2\zeta-t} d_{\BP}(x,y)^{\zeta}.
\end{align*}

\end{proof}

\begin{prop}\label{prop:lasotayorke}
    For all \(0<\zeta<\zeta_t'\) and \(0<r<1\) with
    \begin{equation*}
        \log r < \sup_{\alpha \in \mathrm{A}}\sup_{\gamma \in [0,2]} \{ I_{\gamma}(\alpha)+t (1-\gamma)\alpha-2\zeta(1-\gamma)\alpha \} -k^+(t),
    \end{equation*}
    there exists \(C>0\) and \(n \in \N\) such that for all \(f \in C^{\zeta}(\BP)\),
    \[| e^{-n k^+(t)} P_t^n f |_{\zeta} \leq r^n |f|_{\zeta} +C|f|_{\infty}.\]
\end{prop}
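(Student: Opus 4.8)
The plan is to prove a Doeblin--Fortet (Lasota--Yorke) inequality by splitting the $\zeta$-norm $|\cdot|_\zeta=[\cdot]_\zeta+|\cdot|_\infty$ of $P_t^nf$ into a single genuinely contracting term --- whose exponential growth rate in $n$ is precisely the quantity evaluated in Proposition~\ref{prop:zetabougerol} --- plus a remainder which, for each \emph{fixed} $n$, is bounded by a finite constant times $|f|_\infty$ and is therefore absorbed into $C$. Note that since $t\in(t_c',0]$ and $t_c'\ge-1$ we have $t>-1$, so Lemma~\ref{lem:lipschitzbound} applies, and $\zeta<\zeta_t'\le 1+t\le 1$ by Lemma~\ref{lem:zetatupperbound}.

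The supremum norm is immediate: $|P_t^nf(x)|\le|f|_\infty\int|gx|^t\diff\mu^n(g)$, and $\supp\mu^n\subseteq\{|g|\le e^{n\overline{\alpha}}\}$ gives $|gx|\ge|g|^{-1}\ge e^{-n\overline{\alpha}}$, so $|P_t^nf|_\infty\le e^{n\overline{\alpha}|t|}|f|_\infty$, finite for each fixed $n$. For the seminorm, for $x\ne y$ I would write
\[
|gx|^tf(gx)-|gy|^tf(gy)=|gx|^t\bigl(f(gx)-f(gy)\bigr)+\bigl(|gx|^t-|gy|^t\bigr)f(gy),
\]
integrate against $\mu^n$, and divide by $d_{\BP}(x,y)^\zeta$. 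Using $|f(gx)-f(gy)|\le[f]_\zeta d_{\BP}(gx,gy)^\zeta$, the first term contributes at most $[f]_\zeta A_n$ with
\[
A_n:=\sup_{\substack{x,y\in\BP\\ x\ne y}}\int|gx|^t\left(\frac{d_{\BP}(gx,gy)}{d_{\BP}(x,y)}\right)^\zeta\diff\mu^n(g),
\]
which is exactly the quantity appearing in Proposition~\ref{prop:zetabougerol}. Using Lemma~\ref{lem:lipschitzbound}, $\bigl||gx|^t-|gy|^t\bigr|\le C_0|g|^{2\zeta-t}d_{\BP}(x,y)^\zeta$, together with $2\zeta-t>0$, the second term contributes at most $|f|_\infty C_0\int|g|^{2\zeta-t}\diff\mu^n(g)\le|f|_\infty C_0 e^{n\overline{\alpha}(2\zeta-t)}=:|f|_\infty B_n$, again finite for each fixed $n$ (and zero when $t=0$). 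Hence $[P_t^nf]_\zeta\le A_n[f]_\zeta+B_n|f|_\infty<\infty$, which in particular shows $P_t^nf\in C^\zeta(\BP)$.

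To conclude, Proposition~\ref{prop:zetabougerol} gives $\lim_{n\to\infty}\tfrac1n\log A_n=\sup_{\alpha\in\mathrm{A}}\sup_{\gamma\in[0,2]}\{I_\gamma(\alpha)+t(1-\gamma)\alpha-2\zeta(1-\gamma)\alpha\}$, and Lemma~\ref{lem:zetat'withgammain02} (using $\zeta<\zeta_t'$) shows this limit is strictly less than $k^+(t)$. Since by hypothesis $\log r$ exceeds the difference of these two quantities, for all $n$ sufficiently large $e^{-nk^+(t)}A_n\le r^n$; fixing one such $n$ and setting $C:=e^{-nk^+(t)}\bigl(B_n+e^{n\overline{\alpha}|t|}\bigr)$ gives, for every $f\in C^\zeta(\BP)$,
\[
|e^{-nk^+(t)}P_t^nf|_\zeta=e^{-nk^+(t)}[P_t^nf]_\zeta+e^{-nk^+(t)}|P_t^nf|_\infty\le r^n[f]_\zeta+C|f|_\infty\le r^n|f|_\zeta+C|f|_\infty.
\]
The substantive ingredient is entirely Proposition~\ref{prop:zetabougerol}, which is already in hand; everything else is bookkeeping. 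The one point deserving care --- and the reason no control on the growth of the remainder in $n$ is needed --- is that $B_n$ and the $|P_t^nf|_\infty$ bound are finite for each fixed $n$ precisely because $\mu$ is compactly supported and $2\zeta-t>0$; this is exactly what is bought by the optimal Hölder exponent $2\zeta-t$ in Lemma~\ref{lem:lipschitzbound}.
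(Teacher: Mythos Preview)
Your argument is essentially identical to the paper's: the same add-and-subtract decomposition of $P_t^nf(x)-P_t^nf(y)$, the same use of Lemma~\ref{lem:lipschitzbound} to bound the weight difference by a finite constant (for fixed $n$) times $|f|_\infty$, and the same appeal to Proposition~\ref{prop:zetabougerol} to control the contracting piece $A_n$. The only point to flag is that you write ``by hypothesis $\log r$ exceeds the difference'' while the stated hypothesis reads $\log r<\sup\{\cdots\}-k^+(t)$; the argument (yours and the paper's) genuinely requires the reverse inequality $\log r>\sup\{\cdots\}-k^+(t)$ so that $e^{-nk^+(t)}A_n\le r^n$ for large $n$, and this appears to be a typo in the proposition statement rather than a flaw in your reasoning.
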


\begin{proof}
    For any \(f \in C^{\zeta}(\BP)\) and \(x ,y \in \BP\) with \(x \not= y\),
    \begin{align*}
        \frac{|P_t^n f(x)- P_t^n f(y)|}{d_{\BP}(x,y)^\zeta} \\
        =  d_{\BP}(x,y)^{-\zeta} \left| \int  |gx|^t f(gx)-|gy|^t f(gy) \diff \mu^n(g) \right|\\
        = d_{\BP}(x,y)^{-\zeta} \left| \int  |gx|^t (f(gx)-f(gy))+\left(|gx|^t-|gy|^t \right) f(gy) \diff \mu^n(g) \right| \\
        \leq  [f]_{\zeta} \int  |gx|^t \frac{d_{\BP}(gx,gy)^\zeta}{d_{\BP}(x,y)^\zeta} \diff \mu^n(g)  + |f|_{\infty} d_{\BP}(x,y)^{-\zeta} \int  \left||gx|^t-|gy|^t \right| \diff \mu^n(g).
    \end{align*}
    Let \(C>0\) be as in Lemma \ref{lem:lipschitzbound}. For each \(n \in \N\), 
    \[ \sup_{ \substack{x,y \in \BP \\  x \not=y }} d_{\BP}(x,y)^{-\zeta} \int \left||gx|^t-|gy|^t \right| \diff \mu^n(g) \leq C \int |g|^{2\zeta-t} \diff \mu^n(g)<\infty.\]
    Hence, using Proposition \ref{prop:zetabougerol}, for all \(0<r<1\) with
    \[\log r < \sup_{\alpha \in \mathrm{A}}\sup_{\gamma \in [0,2]} \{ I_{\gamma}(\alpha)+t (1-\gamma)\alpha-2\zeta(1-\gamma)\alpha \} -k^+(t),\]
    there exists \(C'>0\) and \(n \in \N\) such that for all \(f \in C^{\zeta}(\BP)\),
    \begin{align*}
        |e^{-n k^+(t)}  P_t^n f|_{\zeta} &\leq r^n |f|_{\zeta} +C' |f|_{\infty}.
    \end{align*}
\end{proof}

\begin{lemma}\label{lem:spectralradius} 
\(\log \rho(P_t) \geq k^+(t)\).
\end{lemma}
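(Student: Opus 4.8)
\emph{Proof plan.} The plan is to combine Gelfand's spectral radius formula with the elementary fact that the operator norm of $P_t^n$ on $C^{\zeta}(\BP)$ dominates the sup-norm of $P_t^n 1$, since the constant function $1$ is a unit vector in $C^{\zeta}(\BP)$.

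First I would recall that, for the range of $t$ and $\zeta$ under consideration, $P_t$ is a bounded linear operator on $C^{\zeta}(\BP)$: for $t<0$ this follows from Lemma \ref{lem:lipschitzbound} together with the compact support of $\mu$ (so that $\int |g|^{2\zeta-t}\diff\mu(g)<\infty$), and for $t=0$ it follows directly from Lemma \ref{lem:bougerollemma}. Since $C^{\zeta}(\BP)$ equipped with $|\cdot|_{\zeta}$ is a Banach space, Gelfand's formula gives $\log\rho(P_t)=\lim_{n\to\infty}\frac{1}{n}\log\|P_t^n\|$, where $\|P_t^n\|$ denotes the operator norm on $C^{\zeta}(\BP)$.

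Next, the constant function $1$ belongs to $C^{\zeta}(\BP)$ with $|1|_{\zeta}=[1]_{\zeta}+|1|_{\infty}=1$, and $P_t^n 1(x)=\int|gx|^t\diff\mu^n(g)\geq 0$. Hence, by definition of the operator norm,
\[ \|P_t^n\| \geq |P_t^n 1|_{\zeta} \geq |P_t^n 1|_{\infty} = \sup_{x\in\BP}\int|gx|^t\diff\mu^n(g). \]
Dividing by $n$, taking $\varlimsup_{n\to\infty}$, and recalling the definition of $k^+(t)$ from \S\ref{subsec:ktk+tk-t} then yields
\[ \log\rho(P_t) = \varlimsup_{n\to\infty}\frac{1}{n}\log\|P_t^n\| \geq \varlimsup_{n\to\infty}\frac{1}{n}\log\sup_{x\in\BP}\int|gx|^t\diff\mu^n(g) = k^+(t), \]
which is the assertion.

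The argument is routine; the only steps requiring any care are verifying that $P_t$ acts boundedly on $C^{\zeta}(\BP)$ — so that $\rho(P_t)$ is genuinely given by Gelfand's formula on this space — and recording that the constant function has unit $|\cdot|_{\zeta}$-norm.
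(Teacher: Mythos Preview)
Your argument is correct and is essentially the same as the paper's own proof: both use Gelfand's formula together with the chain $\|P_t^n\|\ge|P_t^n 1|_{\zeta}\ge|P_t^n 1|_{\infty}$ and the definition of $k^+(t)$. You supply a bit more detail (boundedness of $P_t$ on $C^{\zeta}(\BP)$, and $|1|_{\zeta}=1$), but the route is identical.
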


\begin{proof}
We have \( | P_t^n 1|_{\zeta} \geq |P_t^n 1|_{\infty} \), so by Proposition \ref{prop:oP} and Gelfand's formula, 
\[\log \rho(P_t) \geq \varlimsup_{n \rightarrow \infty} \frac{1}{n}\log |P_t^n 1|_{\infty} \equiv k^+(t). \]
\end{proof}

We use the following well known result of Hennion: 

\begin{prop}[{\cite[Theorem XIV.3]{HH01}}] \label{prop:hennion}
Let \((X,|\cdot|_1)\) be a Banach space and \(T:(X, |\cdot|_1) \rightarrow (X, |\cdot|_1)\)
a bounded linear operator. Suppose there exists a norm \(|\cdot|_2\)
such that:
\begin{enumerate}[label=(\roman*)]
    \item every sequence \(f_n \in \{ f \in X: |f|_1 \leq 1\}\) contains a subsequence which is Cauchy in \((X,| \cdot|_2)\),
    \item there exists a constant \(M>0\)  such that for all \(f \in X\), \(|T f|_2 \leq M |f|_2\),
    \item there exist \(n \in \N\) and \(r,C>0\) such that \(r<\rho(T)\) and for every \(f \in X\),
        \[|T^n f|_1 \leq r^n |f|_1+C |f|_2.\]
\end{enumerate}
Then, \(T\) is quasi-compact and the essential spectral radius of \(T\) is less than or equal to r.
\end{prop}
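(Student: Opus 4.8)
The plan is to bound the essential spectral radius $r_{\mathrm{ess}}(T)$ of $T$ on $(X,|\cdot|_1)$ by $r$; since $r<\rho(T)$ by assumption, this is precisely the assertion that $T$ is quasi-compact (with essential spectral radius $\le r$). Throughout I would work with the Kuratowski measure of noncompactness: for a $|\cdot|_1$-bounded set $A\subseteq X$ let $\alpha(A)$ be the infimum of the $\delta>0$ for which $A$ admits a finite cover by sets of $|\cdot|_1$-diameter at most $\delta$, and for a bounded operator $S$ set $m(S):=\alpha(SB)$, where $B:=\{f\in X:|f|_1\le1\}$. I would use two classical facts, which one either cites or reproves: $m$ is submultiplicative, $m(ST)\le m(S)m(T)$ (a one-line covering argument), and Nussbaum's identity $r_{\mathrm{ess}}(T)=\lim_{j\to\infty}m(T^j)^{1/j}=\inf_j m(T^j)^{1/j}$, which is the genuine functional-analytic input.

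First I would promote hypothesis (iii) from $T^n$ to every power $T^{kn}$. Iterating (ii) gives $|T^{j}f|_2\le M^{j}|f|_2$, and inserting (iii) into itself $k$ times then yields a Lasota--Yorke inequality $|T^{kn}f|_1\le r^{kn}|f|_1+C_k|f|_2$ with $C_k<\infty$ (a finite geometric-type sum in $r^n$ and $M^n$). The essential point is that the \emph{leading} coefficient is exactly $r^{kn}$, with no $k$-dependent multiplicative constant; this is what eventually forces $r_{\mathrm{ess}}(T)\le r$ rather than merely $r_{\mathrm{ess}}(T)\le c\,r$.

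Next I would estimate $m(T^{kn})$. Fix $k$ and $\varepsilon>0$. Since $B$ is $|\cdot|_1$-bounded, hypothesis (i) says $B$ is totally bounded for the (pseudo)metric induced by $|\cdot|_2$, so there are $f_1,\dots,f_p\in B$ with $B\subseteq\bigcup_{i}\{f:|f-f_i|_2<\varepsilon\}$. For $f$ in the $i$-th piece, $|f-f_i|_1\le2$ and $|f-f_i|_2<\varepsilon$, so the inequality from the previous step gives $|T^{kn}f-T^{kn}f_i|_1\le2r^{kn}+C_k\varepsilon$; hence $T^{kn}B$ is covered by $p$ balls of $|\cdot|_1$-radius $2r^{kn}+C_k\varepsilon$, so $\alpha(T^{kn}B)\le2(2r^{kn}+C_k\varepsilon)$, and letting $\varepsilon\downarrow0$ yields $m(T^{kn})\le4\,r^{kn}$. (This uses only (i) and the inequality for $T^{kn}$, no comparison between the two norms.) Feeding this into Nussbaum's identity along the subsequence $j=kn$,
\[ r_{\mathrm{ess}}(T)=\lim_{k\to\infty}m(T^{kn})^{1/(kn)}\le\lim_{k\to\infty}\bigl(4\,r^{kn}\bigr)^{1/(kn)}=r. \]
Since $r<\rho(T)$, the part of $\sigma(T)$ outside the closed disc of radius $r$ consists of finitely many eigenvalues of finite algebraic multiplicity, so $T$ is quasi-compact.

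The hard part will not be the measure-of-noncompactness bookkeeping, which is routine once the Lasota--Yorke inequality is available for all powers; it is the spectral-theoretic ingredient, Nussbaum's formula $r_{\mathrm{ess}}(T)=\lim_j m(T^j)^{1/j}$. Making this self-contained requires the Riesz--Schauder theory of the Calkin quotient $\mathcal{L}(X)/\mathcal{K}(X)$ (Fredholm operators, stability of the index, finiteness of generalized eigenspaces), a genuine body of functional analysis; in a paper of this kind I would simply invoke it, exactly as Proposition \ref{prop:hennion} does by attributing the statement to Hennion. A minor technical point worth flagging is that (i) enters only as total boundedness of the $|\cdot|_1$-unit ball in the $|\cdot|_2$-metric, so no inequality $|\cdot|_2\lesssim|\cdot|_1$ is needed, although such a comparison does hold in the intended application ($|\cdot|_1=|\cdot|_\zeta$, $|\cdot|_2=|\cdot|_\infty$) and would slightly streamline the covering step.
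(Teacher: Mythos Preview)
The paper does not prove Proposition~\ref{prop:hennion}; it merely quotes the statement from \cite[Theorem XIV.3]{HH01} and applies it. Your proposal is therefore not to be compared against an argument in the paper but against the classical proof in the cited reference, and what you outline is essentially that proof: iterate the Lasota--Yorke inequality to all powers $T^{kn}$, use hypothesis~(i) to see that the $|\cdot|_1$-unit ball is $|\cdot|_2$-totally bounded, bound the Kuratowski measure of noncompactness of $T^{kn}B$ by $4r^{kn}$, and invoke Nussbaum's formula $r_{\mathrm{ess}}(T)=\lim_j m(T^j)^{1/j}$. The argument is correct as sketched, and you have rightly identified Nussbaum's identity as the genuine functional-analytic input that one would cite rather than reprove.
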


\begin{lemma}\label{lem:quasi-compact}
For all \(0<\zeta<\zeta_t'\), \(e^{-k^+(t)} P_t: C^{\zeta}(\BP) \rightarrow C^{\zeta}(\BP)\) is quasi-compact and its essential spectral radius is less than or equal to 
\[\exp(\sup_{\alpha \in \mathrm{A}} \sup_{\gamma \in [0,2]} \{  I_{\gamma}(\alpha)+t (1-\gamma)\alpha-2\zeta(1-\gamma)\alpha \}-k^+(t))<1.\]
\end{lemma}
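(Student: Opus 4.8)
The plan is to apply Hennion's theorem (Proposition~\ref{prop:hennion}) with $X = C^{\zeta}(\BP)$, operator $T = e^{-k^+(t)} P_t$, strong norm $|\cdot|_1 = |\cdot|_{\zeta}$ and weak norm $|\cdot|_2 = |\cdot|_{\infty}$. First I would note that $T$ is a bounded operator on $(C^{\zeta}(\BP),|\cdot|_{\zeta})$: this is already contained in the estimates in the proof of Proposition~\ref{prop:lasotayorke} (with $n=1$), since combining Lemma~\ref{lem:lipschitzbound}, the bound $d_{\BP}(gx,gy) \leq |g|^2 d_{\BP}(x,y)$ coming from Lemma~\ref{lem:bougerollemma}, and compactness of $\supp\mu$ gives $|P_t f|_{\zeta} \leq C_{t,\zeta}|f|_{\zeta}$ for a finite constant $C_{t,\zeta}$ (when $t=0$ the Lemma~\ref{lem:lipschitzbound} term is absent and the bound is immediate).

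Next I would verify the three hypotheses of Proposition~\ref{prop:hennion}. Hypothesis~(i) is Arzel\`a--Ascoli: a sequence $(f_n)$ with $|f_n|_{\zeta} \leq 1$ is uniformly bounded and uniformly $\zeta$-H\"older, hence equicontinuous, so it has a uniformly convergent subsequence, which is Cauchy in $|\cdot|_{\infty}$. Hypothesis~(ii) holds because every $g \in \supp\mu$ has $|g| \leq e^{\overline{\alpha}}$, whence $|gx| \geq |g|^{-1} \geq e^{-\overline{\alpha}}$ for all $x \in \BP$, so $|P_t f|_{\infty} \leq e^{-t\overline{\alpha}}|f|_{\infty}$ and thus $|Tf|_{\infty} \leq M|f|_{\infty}$ with $M = e^{-k^+(t) - t\overline{\alpha}}$. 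For hypothesis~(iii), set
\[ R := \exp\Big( \sup_{\alpha \in \mathrm{A}} \sup_{\gamma \in [0,2]} \{ I_{\gamma}(\alpha) + t(1-\gamma)\alpha - 2\zeta(1-\gamma)\alpha \} - k^+(t) \Big), \]
which satisfies $R<1$ by Lemma~\ref{lem:zetat'withgammain02} since $\zeta < \zeta_t'$. Fixing any $r \in (R,1)$, Proposition~\ref{prop:lasotayorke} (using $(e^{-k^+(t)}P_t)^n = e^{-n k^+(t)}P_t^n$) supplies $C>0$ and $n \in \N$ with $|T^n f|_{\zeta} \leq r^n|f|_{\zeta} + C|f|_{\infty}$ for all $f$; and Lemma~\ref{lem:spectralradius} gives $\rho(T) = e^{-k^+(t)}\rho(P_t) \geq 1 > r$, so indeed $r < \rho(T)$. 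Proposition~\ref{prop:hennion} then shows $T$ is quasi-compact with essential spectral radius $\leq r$; letting $r \downarrow R$ yields the stated bound $\leq R < 1$.

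I do not expect a genuine obstacle here: the real analytic content has already been done, in Proposition~\ref{prop:zetabougerol} and hence in the Lasota--Yorke estimate Proposition~\ref{prop:lasotayorke}. The two points that need care are that Proposition~\ref{prop:hennion}(iii) requires the contraction rate $r$ to lie \emph{strictly} below $\rho(T)$ — which is exactly why the a priori lower bound $\rho(P_t) \geq e^{k^+(t)}$ from Lemma~\ref{lem:spectralradius} is invoked — and that one must take the infimum over admissible $r$ to recover the sharp bound on the essential spectral radius, the final strict inequality with $1$ then being precisely Lemma~\ref{lem:zetat'withgammain02}.
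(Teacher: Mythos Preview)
Your proposal is correct and mirrors the paper's proof exactly: apply Hennion's theorem (Proposition~\ref{prop:hennion}) with the H\"older and sup norms, check (i) by Arzel\`a--Ascoli, (ii) directly, and (iii) via Proposition~\ref{prop:lasotayorke} together with $\rho(e^{-k^+(t)}P_t)\geq 1$ from Lemma~\ref{lem:spectralradius} and $R<1$ from Lemma~\ref{lem:zetat'withgammain02}, then let $r\downarrow R$. Your choice $r\in(R,1)$ is the correct reading of Proposition~\ref{prop:lasotayorke}; the stated inequality direction there (and in the paper's own proof of this lemma) is a typo.
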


\begin{proof}
This follows by applying Proposition \ref{prop:hennion} with \(|\cdot|_1:=|\cdot|_{\zeta}\) and \(|\cdot|_2:=|\cdot|_\infty\). Note \((i)\) is satisfied by the Arzel\`a--Ascoli theorem and \((ii)\) is clearly satisfied. Lemma \ref{lem:spectralradius} implies that \(\rho(e^{-k^+(t)} P_t) \geq 1\) and Lemma \ref{lem:zetat'withgammain02} implies that 
\[\exp\left(\sup_{\alpha \in \mathrm{A}} \sup_{\gamma \in [0,2]} \{  I_{\gamma}(\alpha)+t (1-\gamma)\alpha-2\zeta(1-\gamma)\alpha \}-k^+(t) \right)<1.\]
Hence, by Proposition \ref{prop:lasotayorke}, the conditions in \((iii)\) are satisfied for all 
\[ 0< r <\exp\left(\sup_{\alpha \in \mathrm{A}} \sup_{\gamma \in [0,2]} \{  I_{\gamma}(\alpha)+t (1-\gamma)\alpha-2\zeta(1-\gamma)\alpha \}-k^+(t) \right).\]
\end{proof}

\subsection{Proof of the spectral gap} 
To prove the spectral gap, we use the Ruelle-Perron-Frobenius theorem in \cite[\S 6]{PP22}. This theorem is a modification of the results \cite{Sas64} which can be applied in our case. 

We fix some \(t_c'<t \leq 0\) and \(0<\zeta<\zeta_t'\). We define 
\[ \MCC_+:= \{ f \in C^{\zeta}(\BP): f(x) \geq 0, \forall x \in \BP\},\]
\[\interior(\MCC_+):= \{ f \in C^{\zeta}(\BP): f(x) > 0, \forall x \in \BP\}, \] 
\[\partial \MCC_+:=\MCC_+ \setminus \interior(\MCC_+) .\]
Then, \(\MCC_+\) is a closed proper cone in the sense of \cite[Definition 6.10]{PP22} and \cite[\S 3]{Sas64}.

\begin{lemma}\label{lem:ifPtnfx=0fgoesto0}
    For all \(0<r<1\) such that
    \[\log r < \sup_{\alpha \in \mathrm{A}} \sup_{\gamma \in [0,2]} \{I_{\gamma}(\alpha)+t(1-\gamma)\alpha-2\zeta(1-\gamma)\alpha\}-k^+(t), \] 
    there exists \(C>0\) such that for all \(n \in \N \) and for all \(f \in \MCC_+\) such that \(P_t^n f\in \partial \MCC_+\), we have
    \[|e^{-nk^+(t)} P_t^n f|_{\infty} \leq C r^n |f|_{\zeta}.\]
\end{lemma}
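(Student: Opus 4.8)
The plan is to combine the positivity built into the cone $\MCC_+$ with the Hölder--contraction estimate of Proposition~\ref{prop:zetabougerol}. The first step is to extract a zero of $P_t^n f$: since $P_t^n f \in \MCC_+ \setminus \interior(\MCC_+)$, there is a point $x_0 \in \BP$ with $P_t^n f(x_0)=\int |gx_0|^t f(gx_0)\,\diff\mu^n(g)=0$. The integrand $g \mapsto |gx_0|^t f(gx_0)$ is non-negative with vanishing integral, hence vanishes $\mu^n$-almost everywhere; since $|gx_0|^t>0$ for every $g \in \SL(2,\R)$, this forces $f(gx_0)=0$ for $\mu^n$-a.e.\ $g$.

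Next I would bound $P_t^n f$ at an arbitrary point $y\in\BP$, the case $y=x_0$ being trivial. For $\mu^n$-a.e.\ $g$, using $f\ge 0$, $f(gx_0)=0$ and $f\in C^\zeta(\BP)$, we have $0\le f(gy)=|f(gy)-f(gx_0)|\le [f]_\zeta\, d_{\BP}(gy,gx_0)^\zeta$. Inserting this together with $d_{\BP}(y,x_0)^\zeta\le 1$ (valid since $\zeta>0$) gives
\[ P_t^n f(y)=\int |gy|^t f(gy)\,\diff\mu^n(g)\le [f]_\zeta \int |gy|^t\Big(\frac{d_{\BP}(gy,gx_0)}{d_{\BP}(y,x_0)}\Big)^{\zeta}\diff\mu^n(g)\le [f]_\zeta\,\Phi_n, \]
where $\Phi_n:=\sup_{x\ne z}\int |gx|^t\big(d_{\BP}(gx,gz)/d_{\BP}(x,z)\big)^{\zeta}\diff\mu^n(g)$. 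Taking the supremum over $y\in\BP$ yields $|P_t^n f|_\infty\le [f]_\zeta\,\Phi_n$.

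The final step is to feed this into Proposition~\ref{prop:zetabougerol}: $\tfrac{1}{n}\log\Phi_n$ converges to $\sup_{\alpha\in\mathrm{A}}\sup_{\gamma\in[0,2]}\{I_\gamma(\alpha)+t(1-\gamma)\alpha-2\zeta(1-\gamma)\alpha\}$, which by Lemma~\ref{lem:zetat'withgammain02} (this is where $\zeta<\zeta_t'$ enters) is strictly less than $k^+(t)$. Hence $\tfrac{1}{n}\log(e^{-nk^+(t)}\Phi_n)$ tends to a negative number, so for every $r\in(0,1)$ with $\log r$ above this rate — and such $r$ exist precisely because the rate is negative — there is $C>0$ with $e^{-nk^+(t)}\Phi_n\le C r^n$ for all $n\in\N$. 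Combining with $|P_t^n f|_\infty\le [f]_\zeta\,\Phi_n$ then gives $|e^{-nk^+(t)}P_t^n f|_\infty\le C r^n [f]_\zeta\le C r^n |f|_\zeta$ for every $n$, as required.

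The lemma is essentially a corollary of Proposition~\ref{prop:zetabougerol}, so I do not expect a serious obstacle. The only points requiring care are: the passage from $P_t^n f(x_0)=0$ to $f(gx_0)=0$ $\mu^n$-a.e.\ (using strict positivity of $|gx_0|^t$ together with $f\ge 0$); the factorisation that introduces the expansion ratio $d_{\BP}(gy,gx_0)/d_{\BP}(y,x_0)$, so that the supremum $\Phi_n$ is exactly of the form controlled by Proposition~\ref{prop:zetabougerol}; and the uniformity of the constant $C$ over all $n\in\N$, which is available because the relevant exponential limit genuinely exists rather than being only a limsup.
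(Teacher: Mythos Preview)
Your proposal is correct and follows essentially the same argument as the paper: extract a zero $x_0$ of $P_t^n f$, deduce $f(gx_0)=0$ for $\mu^n$-a.e.\ $g$ from positivity, bound $f(gy)$ by $[f]_\zeta\, d_{\BP}(gy,gx_0)^\zeta$, introduce the ratio $d_{\BP}(gy,gx_0)/d_{\BP}(y,x_0)$ using $d_{\BP}\le 1$, and invoke Proposition~\ref{prop:zetabougerol}. Your write-up is in fact slightly more careful than the paper's in spelling out why the constant $C$ can be taken uniform over all $n$ (the limit in Proposition~\ref{prop:zetabougerol} is a genuine limit) and in noting the correct inequality direction for $r$.
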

\begin{proof}
Let \(f\) be as in the lemma and let \(x_0 \in \BP\) be such that \(P_t^n f(x_0)=0\). We have
    \[0=P_t^n f(x_0) =\int |g x_0|^t f(g x_0 ) \diff \mu^n(g),\]
    which implies that \(f(g x_0)=0 \) for \(\mu^n\)-a.e. \(g \in \SL(2,\R)\).
    Hence, for any \(x\in \BP\),
    \begin{align*}
        P_t^n f(x) &=\int |gx|^t f(g x) \diff \mu^n(g) \\
        &\leq \int |gx|^t \left(f(g x_0)+[f]_{\zeta} d_{\BP}(gx, g x_0 )^{\zeta} \right) \diff \mu^n(g) \\
        &=  [f]_{\zeta} \int |gx|^t d_{\BP}(gx,g x_0  )^\zeta \diff \mu^n(g) \\
        &\leq  |f|_{\zeta} \sup_{\substack{x',y' \in \BP \\ x' \not= y'}} \int |gx'|^t \frac{d_{\BP}(gx',g y'  )^\zeta} {d_{\BP}(x',y')^{\zeta}} \diff \mu^n(g),
    \end{align*}
    where in the last line we are using that \(d_{\BP}(\cdot, \cdot) \leq 1\). Hence, the lemma follows by Proposition \ref{prop:zetabougerol}. 
\end{proof}

\begin{lemma}\label{lem:strictlypositiveeigenfunction}
    There exists \(h_t \in \interior(\MCC_+)\) such that \(P_t h_t = \rho(P_t) h_t\).
\end{lemma}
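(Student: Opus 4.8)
The plan is to apply the Ruelle--Perron--Frobenius theorem of \cite[\S 6]{PP22} to the operator $T := e^{-k^+(t)} P_t$ on $C^{\zeta}(\BP)$, with the cone $\MCC_+$ and the auxiliary norm $|\cdot|_{\infty}$ (for which the $|\cdot|_{\zeta}$-unit ball is precompact). First I would record the elementary positivity. Since $|gx|^t>0$ for all $g\in\SL(2,\R)$ and $x\in\BP$, we have $P_t\MCC_+\subseteq\MCC_+$. Moreover, if $f\in\MCC_+\setminus\{0\}$ then for each $g$ the set $g^{-1}\{f>0\}$ is non-empty and open, hence of positive $\lambda$-measure, so $\int_{\BP}|gx|^t f(gx)\diff\lambda(x)>0$; by Tonelli $\int_{\BP}P_t f\diff\lambda>0$, whence $P_t(\MCC_+\setminus\{0\})\subseteq\MCC_+\setminus\{0\}$. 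As already observed, $\MCC_+$ is a closed proper cone with non-empty interior, since $1\in\interior(\MCC_+)$.

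Next I would collect the spectral input. By Lemma~\ref{lem:quasi-compact}, $T$ is quasi-compact on $C^{\zeta}(\BP)$ with essential spectral radius strictly less than $1$, and by Lemma~\ref{lem:spectralradius}, $\rho(T)=e^{-k^+(t)}\rho(P_t)\geq 1$; thus $\rho_{\mathrm{ess}}(T)<\rho(T)$, as required by the theorem. The remaining hypothesis is a ``contraction on the boundary of the cone'' condition, and this is exactly Lemma~\ref{lem:ifPtnfx=0fgoesto0}: there are $C>0$ and $r<1\leq\rho(T)$ such that $|T^n f|_{\infty}\leq C r^n |f|_{\zeta}$ for every $n\in\N$ and every $f\in\MCC_+$ with $T^n f\in\partial\MCC_+$.

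Granting these checks, \cite[\S 6]{PP22} produces a non-zero $h_t\in\MCC_+$ with $T h_t=\rho(T)h_t$, that is $P_t h_t=\rho(P_t)h_t$. To see that $h_t\in\interior(\MCC_+)$ I would invoke Lemma~\ref{lem:ifPtnfx=0fgoesto0} once more: if $h_t\in\partial\MCC_+$, then $T^n h_t=\rho(T)^n h_t\in\partial\MCC_+$ for every $n$, so $\rho(T)^n|h_t|_{\infty}=|T^n h_t|_{\infty}\leq C r^n|h_t|_{\zeta}$; dividing by $\rho(T)^n\geq 1$ and letting $n\to\infty$ forces $|h_t|_{\infty}=0$, contradicting $h_t\neq 0$. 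Hence $h_t\in\interior(\MCC_+)$, as claimed.

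The one genuine subtlety — and the reason we appeal to \cite{PP22} (a modification of \cite{Sas64}) rather than the classical Krein--Rutman theorem — is that $\MCC_+$ is not normal for the Hölder norm: pointwise domination $0\leq f\leq g$ does not control $|f|_{\zeta}$ by $|g|_{\zeta}$. This is precisely why the framework must carry the two norms $|\cdot|_{\zeta}$ and $|\cdot|_{\infty}$, and why the boundary estimate of Lemma~\ref{lem:ifPtnfx=0fgoesto0} is stated with $|\cdot|_{\infty}$ on the left and $|\cdot|_{\zeta}$ on the right; once the hypotheses of \cite[\S 6]{PP22} are matched to this form, no further analytic work is needed beyond Lemmas~\ref{lem:quasi-compact}, \ref{lem:spectralradius} and \ref{lem:ifPtnfx=0fgoesto0}.
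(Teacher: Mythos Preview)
Your proof is correct and matches the paper's approach closely: an abstract Perron--Frobenius theorem supplies a non-zero eigenfunction in $\MCC_+$, and the contradiction via Lemma~\ref{lem:ifPtnfx=0fgoesto0} together with $\rho(P_t)\ge e^{k^+(t)}$ (Lemma~\ref{lem:spectralradius}) upgrades it to $\interior(\MCC_+)$. The only difference is the citation for existence---the paper invokes \cite[Theorem~3]{Sas64} (needing only quasi-compactness and cone-positivity), whereas you route through \cite[\S 6]{PP22} and thereby check the semi-positivity hypothesis one step earlier than the paper does in Proposition~\ref{prop:spectralgap}.
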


\begin{proof}
    Theorem 3 in \cite{Sas64} implies the existence of \(h_t \in \MCC_+ \setminus\{x \mapsto 0\}\) such that \( P_t h_t = \rho(P_t) h_t\). We now show that \(h_t \in \interior(\MCC_+)\). To see this, assume for a contradiction that \(h_t \in \partial \MCC_+\), then \(P_t^n h_t \in \partial \MCC_+\) for all \(n \in \N\). Hence, by Lemmas \ref{lem:spectralradius} and \ref{lem:ifPtnfx=0fgoesto0}, we have
    \[|h_t|_{\infty}=|\rho(P_t)^{-n} P_t^n h_t|_{\infty} \leq |e^{-n k^+(t)} P_t^n h_t|_{\infty} \xrightarrow[n \rightarrow \infty]{} 0,\]
    which is a contradiction.
\end{proof}

\begin{lemma}\label{lem:ktk+}
\(k(t)=k^+(t)=\log \rho(P_t)\).
\end{lemma}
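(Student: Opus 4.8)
The plan is to derive the chain of equalities from the strictly positive continuous eigenfunction $h_t$ produced in Lemma \ref{lem:strictlypositiveeigenfunction}, together with the elementary identities for $k(t)$, $k^+(t)$, and $k^-(t)$ from \S\ref{subsec:ktk+tk-t}. First I would record from Lemma \ref{lem:spectralradius} that $\log\rho(P_t) \geq k^+(t)$, and note that $k^+(t)$ is finite: since $\mu$ is compactly supported, $|g| \leq e^{n\overline{\alpha}}$ on $\supp\mu^n$, hence $e^{tn\overline{\alpha}} \leq |gx|^t \leq e^{-tn\overline{\alpha}}$ there, so $k^+(t) \in \R$; in particular $\rho(P_t) > 0$, and by Lemma \ref{lem:strictlypositiveeigenfunction} we have $P_t h_t = \rho(P_t) h_t$.

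The core step is a uniform two-sided comparison of $P_t^n 1$ with $\rho(P_t)^n$. Since $h_t \in \interior(\MCC_+)$ is continuous and $\BP$ is compact, there are constants $0 < c \leq C < \infty$ with $c \leq h_t(x) \leq C$ for all $x \in \BP$. The operator $P_t$ has strictly positive kernel $|gx|^t$, so $P_t^n$ is order-preserving; applying $P_t^n$ to the pointwise inequalities $c \leq h_t \leq C$ and using $P_t^n h_t = \rho(P_t)^n h_t$ gives
\[ c\, P_t^n 1(x) \ \leq\ \rho(P_t)^n h_t(x) \ \leq\ C\, P_t^n 1(x), \qquad x \in \BP,\ n \in \N, \]
and hence, using $c \leq h_t \leq C$ once more, $\tfrac{c}{C}\,\rho(P_t)^n \leq P_t^n 1(x) \leq \tfrac{C}{c}\,\rho(P_t)^n$ for every $x$ and $n$. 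Taking $\sup_{x \in \BP}$, then $\tfrac1n\log$, then $\varlimsup_{n\to\infty}$ gives $k^+(t) = \log\rho(P_t)$; taking $\inf_{x \in \BP}$, then $\tfrac1n\log$, then $\varliminf_{n\to\infty}$ gives $k^-(t) = \log\rho(P_t)$.

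Finally, since $t_c' \geq -1$ we have $t \in (-1, 0]$, so Proposition \ref{prop:uP} gives $k(t) = k^-(t)$; combining with the previous step yields $k(t) = k^+(t) = \log\rho(P_t)$. There is no serious obstacle here, since every ingredient is already in place; the only mildly delicate point is the order-preserving comparison of $P_t^n 1$ with $\rho(P_t)^n$ via positivity of $P_t$, and the remainder is bookkeeping with the definitions of $k^{\pm}(t)$.
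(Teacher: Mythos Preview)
Your proof is correct and follows essentially the same approach as the paper: both arguments sandwich $P_t^n 1$ between constant multiples of $\rho(P_t)^n$ using the strictly positive eigenfunction $h_t$ and positivity of $P_t$, deduce $k^-(t)=k^+(t)=\log\rho(P_t)$, and then invoke Proposition~\ref{prop:uP} for $k(t)=k^-(t)$. The only difference is cosmetic: you add some extra justification (finiteness of $k^+(t)$, explicit mention of order-preservation) that the paper leaves implicit.
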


\begin{proof}
For all \(x \in \BP\) and \(n \in \N\), 
\[P_t^n 1(x) \geq \sup(h_t)^{-1} P_t^n h_t(x) = \sup(h_t)^{-1} \rho(P_t)^n  h_t(x) \geq  \sup(h_t)^{-1} \inf(h_t) \rho(P_t)^n\]
and, similarly,
\[ P_t^n 1(x) \leq \sup(h_t) \inf(h_t)^{-1} \rho(P_t)^n,\]
where \(\inf(h_t)>0\) and \(\sup (h_t) <\infty\). Hence, \(\log \rho(P_t) \leq k^-(t) \leq k^+(t) \leq \log \rho(P_t)\), so we have equality throughout. By Proposition \ref{prop:uP}, we further have that \(k(t)=k^-(t)\).
\end{proof}

We note that by Lemma \ref{lem:ktk+} we can replace \(k^+(t)\) with \(k(t)\) in the definition of \(\zeta_t'\) and write 
\begin{equation}\label{eqn:zetat'improved}
    \zeta_t' \equiv \sup \left\{ \zeta>0: \sup_{\alpha \in \mathrm{A}} \sup_{\gamma \in [1,2]} \{I_{\gamma}(\alpha)+t(1-\gamma)\alpha-2\zeta(1-\gamma)\alpha\}<k(t) \right\}.
\end{equation}

\begin{lemma}\label{lem:existenceofnut}
There exists \(\nu_t \in \MCP(\BP)\) such that  \((P_t)_{\Cdot} \nu_t = e^{k(t)} \nu_t\).
\end{lemma}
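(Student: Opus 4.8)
The plan is to realise $\nu_t$ as a fixed point of the normalised dual operator on $\MCP(\BP)$ and then to identify its eigenvalue by testing against the positive eigenfunction $h_t$ produced in Lemma~\ref{lem:strictlypositiveeigenfunction}.

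First I would record two elementary facts about $P_t$ acting on all of $C(\BP)$, not just $C^{\zeta}(\BP)$. Since $\mu$ is compactly supported, for every $f \in C(\BP)$ the integrand $(g,x) \mapsto |gx|^t f(gx)$ is bounded and jointly continuous, so dominated convergence shows $P_t$ maps $C(\BP)$ continuously into itself. Moreover $P_t 1(x) = \int |gx|^t \diff \mu(g) \geq e^{-|t|\overline{\alpha}}>0$, because $|gx| \in [e^{-\overline{\alpha}},e^{\overline{\alpha}}]$ for $g \in \supp \mu$ (using $|g^{-1}|=|g|$ in $\SL(2,\R)$). Hence the map
\[
T:\nu \longmapsto \frac{(P_t)_{\Cdot}\nu}{\nu(P_t 1)}
\]
is well defined on $\MCP(\BP)$; it sends $\MCP(\BP)$ into itself because $P_t$ is a positive operator, and it is continuous for the weak* topology, since $\nu_k \to \nu$ weak* forces $\nu_k(P_t f)\to \nu(P_t f)$ for every $f\in C(\BP)$ (as $P_t f \in C(\BP)$), and in particular $\nu_k(P_t 1)\to \nu(P_t 1)>0$.

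Next I would apply the Schauder--Tychonoff fixed point theorem: $\MCP(\BP)$ is a nonempty convex subset of $C(\BP)^*$ which is weak* compact because $\BP$ is compact metric, and $T$ is a continuous self-map, so $T$ has a fixed point $\nu_t$. Unravelling the definition, $(P_t)_{\Cdot}\nu_t = \lambda_t\,\nu_t$ with $\lambda_t:=\nu_t(P_t 1)>0$. To see $\lambda_t = e^{k(t)}$, pair this eigenrelation with $h_t$ from Lemma~\ref{lem:strictlypositiveeigenfunction}:
\[
\lambda_t\,\nu_t(h_t) = \big((P_t)_{\Cdot}\nu_t\big)(h_t) = \nu_t(P_t h_t) = \rho(P_t)\,\nu_t(h_t).
\]
Since $h_t$ is a strictly positive continuous function on the compact space $\BP$ we have $\nu_t(h_t) \geq \inf(h_t)>0$, so $\lambda_t = \rho(P_t) = e^{k(t)}$ by Lemma~\ref{lem:ktk+}, completing the proof.

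There is no serious obstacle here: the only points requiring care are the continuity of $P_t$ on $C(\BP)$, so that $T$ is weak* continuous, and the strict positivity of $P_t 1$, so that the normalisation is legitimate — both immediate from the compact support of $\mu$. The one non-formal input is that the fixed point by itself yields only \emph{some} positive eigenvalue; pinning it down to $e^{k(t)}$ is what uses the already-constructed positive eigenfunction $h_t$ together with Lemma~\ref{lem:ktk+}.
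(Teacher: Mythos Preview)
Your proof is correct and follows essentially the same approach as the paper: obtain $\nu_t$ as a Schauder--Tychonoff fixed point of the normalised dual map on $\MCP(\BP)$, then pair the eigenrelation with the strictly positive eigenfunction $h_t$ to identify the eigenvalue as $e^{k(t)}$. You supply more detail than the paper on the well-definedness and weak* continuity of the normalised map, but the structure is identical.
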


\begin{proof}
    By the Schauder fixed point theorem, there exists \(\nu_t\) such that \(\nu_t = \frac{(P_t)_{\Cdot}\nu_t}{(P_t)_{\Cdot} \nu_t (1)}\), that is \((P_t)_{\Cdot}\nu_t= (P_t)_{\Cdot} \nu_t (1) \nu_t\). We have
    \[(P_t)_{\Cdot} \nu_t (1) \nu_t(h_t)=(P_t)_{\Cdot}\nu_t(h_t)=\nu_t(P_t h_t) =e^{k(t)} \nu_t(h_t),\]
    so \((P_t)_{\Cdot} \nu_t (1)=e^{k(t)}\).
\end{proof}

Scaling \(h_t\) if necessary, from now on we will always assume that \(\nu_t(h_t)=1\).

\begin{prop}\label{prop:spectralgap}
For all \(t_c'<t \leq 0\), there exists a unique probability measure \(\nu_t \in \MCP(\BP)\) such that 
        \((P_t)_{\Cdot} \nu_t=e^{k(t)} \nu_t\), and a unique H\"older continuous function \(h_t\) satisfying \(\nu_t(h_t)=1\) and 
        \(P_t h_t=e^{k(t)} h_t.\) The function \(h_t\) is strictly positive and  \(h_t \in C^{\zeta}(\BP)\) for all \(0<\zeta<\zeta_t'\). Moreover, for all \(0<\zeta<\zeta_t'\), we can write
    \[ P_t= e^{k(t)} (\nu_t \otimes  h_t+ S_t),\]
    where \((\nu_t \otimes  h_t) f:=\nu_t(f) h_t\) and \(S_t:C^{\zeta}(\BP) \rightarrow C^{\zeta}(\BP)\) satisfies \(\rho(S_t)<1\).
\end{prop}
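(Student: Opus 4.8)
The plan is to assemble the pieces already in place — quasi-compactness of $e^{-k(t)}P_t$ on $C^\zeta(\BP)$ with essential spectral radius $<1$ (Lemmas~\ref{lem:quasi-compact} and \ref{lem:ktk+}), the strictly positive eigenfunction $h_t\in\interior(\MCC_+)\cap C^\zeta(\BP)$ with $P_t h_t=e^{k(t)}h_t$ (Lemmas~\ref{lem:strictlypositiveeigenfunction} and \ref{lem:ktk+}), and the probability eigenmeasure $\nu_t$ with $(P_t)_{\Cdot}\nu_t=e^{k(t)}\nu_t$ from Lemma~\ref{lem:existenceofnut}, normalised so that $\nu_t(h_t)=1$ — and to feed them into the Ruelle--Perron--Frobenius theorem of \cite[\S 6]{PP22}. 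The first move is to normalise: since $h_t\in C^\zeta(\BP)$ is bounded away from $0$, multiplication by $h_t$ is a bounded invertible operator on $C^\zeta(\BP)$, so the conjugated operator $\widetilde P_t f:=e^{-k(t)}h_t^{-1}P_t(h_t f)$ has the same spectrum and essential spectrum as $e^{-k(t)}P_t$ — in particular it is quasi-compact with essential spectral radius $<1$ — it preserves $\MCC_+$, and $\widetilde P_t 1=1$. Being positive with $\widetilde P_t 1=1$, $\widetilde P_t$ is a contraction on $C^0(\BP)$, so $|\widetilde P_t^n f|_\infty\le|f|_\infty$ for all $n$; combined with $\rho(\widetilde P_t)=\rho(e^{-k(t)}P_t)=1$ this shows that the spectrum of $\widetilde P_t$ on $\{|z|=1\}$ consists of finitely many semisimple eigenvalues (a Jordan block for a unimodular eigenvalue would make $\|\widetilde P_t^n\|$ grow linearly).

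The central step is then to apply the Ruelle--Perron--Frobenius theorem of \cite[\S 6]{PP22}, a modification of \cite{Sas64}, to conclude that $1$ is an algebraically simple eigenvalue of $\widetilde P_t$ and the unique eigenvalue of modulus $1$, with eigenfunction the constant $1$ and a unique invariant probability measure $\widetilde\nu_t$. The hypotheses I would check are: $\MCC_+$ is a closed proper cone (recorded just before Lemma~\ref{lem:ifPtnfx=0fgoesto0}); $\widetilde P_t$ preserves $\MCC_+$ (immediate); quasi-compactness (Lemma~\ref{lem:quasi-compact}); and the strong-positivity/``aperiodicity'' input of the modified Sasser theorem, which in our setting is supplied by the interior eigenfunction of Lemma~\ref{lem:strictlypositiveeigenfunction} together with Lemma~\ref{lem:ifPtnfx=0fgoesto0} (an iterate landing on $\partial\MCC_+$ forces $e^{-nk(t)}P_t^n$ to decay in $|\cdot|_\infty$). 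Conjugating back through multiplication by $h_t$: $\R h_t$ is the full $e^{k(t)}$-eigenspace of $P_t$, so $h_t$ is the unique Hölder eigenfunction with $\nu_t(h_t)=1$; $e^{k(t)}$ is the unique eigenvalue of $P_t$ of modulus $e^{k(t)}$; and, passing to the dual, the one-dimensional $e^{k(t)}$-eigenspace of $P_t^*$ contains the probability measure $\nu_t$ of Lemma~\ref{lem:existenceofnut} and is therefore spanned by it, so $\nu_t$ is the unique probability eigenmeasure of $(P_t)_{\Cdot}$.

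For the spectral-gap decomposition I would use that quasi-compactness together with ``the only spectrum of $e^{-k(t)}P_t$ on the unit circle is the simple eigenvalue $1$'' yields $e^{-k(t)}P_t=\Pi+S_t$, where $\Pi$ is the rank-one Riesz spectral projection onto $\R h_t$, $\Pi S_t=S_t\Pi=0$, and $\rho(S_t)<1$. It then remains to identify $\Pi$: it has rank one with image $\R h_t$ and $\Pi^2=\Pi$, while $(P_t)_{\Cdot}\nu_t=e^{k(t)}\nu_t$ forces $\nu_t\circ\Pi=\nu_t$; writing $\Pi f=\ell(f)h_t$ and applying $\nu_t$ gives $\ell=\nu_t$ (using $\nu_t(h_t)=1$), so $\Pi=\nu_t\otimes h_t$ and $e^{-k(t)}P_t=\nu_t\otimes h_t+S_t$, which is the asserted decomposition; and $h_t\in C^\zeta(\BP)$ for all $0<\zeta<\zeta_t'$ by Lemma~\ref{lem:strictlypositiveeigenfunction}.

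The hard part will be verifying that the hypotheses of the Ruelle--Perron--Frobenius theorem of \cite[\S 6]{PP22} genuinely apply — above all the ``aperiodicity'' input that excludes further unimodular eigenvalues and Jordan blocks. Quasi-compactness (Lemma~\ref{lem:quasi-compact}) is soft and controls only the essential spectrum; collapsing the peripheral spectrum to the single simple eigenvalue $e^{k(t)}$ is precisely where the positivity of $P_t$ on the cone $\MCC_+$ and the contraction behind Lemma~\ref{lem:ifPtnfx=0fgoesto0} — ultimately a consequence, via Proposition~\ref{prop:zetabougerol}, of the mixing that SIP provides — must be used. Everything downstream, i.e.\ identifying $\Pi$ with $\nu_t\otimes h_t$ and transporting the uniqueness statements back through the conjugation, is routine bookkeeping.
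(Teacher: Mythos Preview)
Your approach is correct and essentially the same as the paper's: both verify the hypotheses of the Ruelle--Perron--Frobenius theorem \cite[Theorem~6.13]{PP22} using quasi-compactness (Lemma~\ref{lem:quasi-compact}), the semi-positivity input from Lemma~\ref{lem:ifPtnfx=0fgoesto0}, and the existence results (Lemmas~\ref{lem:strictlypositiveeigenfunction}, \ref{lem:existenceofnut}). The only difference is that the paper applies \cite[Theorem~6.13]{PP22} directly to $e^{-k(t)}P_t$ --- checking that $\MCS=\{\delta_x:x\in\BP\}$ is a sufficient set for $\MCC_+$ and that semi-positivity follows from Lemma~\ref{lem:ifPtnfx=0fgoesto0} --- whereas you insert a conjugation by $h_t$ to reduce to a Markov operator first; this normalisation and the accompanying Jordan-block argument are harmless but unnecessary, since the PP22 framework already handles the non-normalised case.
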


\begin{proof}
We check that the conditions of \cite[Theorem 6.13]{PP22} are satisfied. Note it is clear that the set of point mass measures \(\MCS:=\{\delta_x:x \in \BP\}\) is a sufficient set for \(\MCC_+\) (\cite[Definition 6.11]{PP22}). Moreover, by Lemmas \ref{lem:ifPtnfx=0fgoesto0} and \ref{lem:ktk+}, \(e^{-k(t)} P_t\) is semi-positive (\cite[Definition 6.2]{PP22}) with respect to \(\MCC_+\) and \(\MCS\), since if \(e^{-nk(t)}P_t^n f \in \partial \MCC_+\) for all \(n \in \N\) then we have \(|e^{-nk(t)}P_t^n f|_{\infty} \rightarrow 0\). By Lemma \ref{lem:quasi-compact}, \(e^{-k(t)}P_t\) is also quasi-compact. Thus, the proposition follows by \cite[Theorem 6.13]{PP22}. In particular, we note that the \(u\), \(u^*\) in \cite[Theorem 6.14.(1)]{PP22} must be unique by \cite[Theorem 6.14.(4)]{PP22}, so by Lemmas \ref{lem:strictlypositiveeigenfunction} and \ref{lem:existenceofnut} they are given by \(h_t\) and \(\nu_t\).
\end{proof}

\subsection{The leading eigenfunction}\label{subsec:eigenfunction}
Fix some \(t_c'<t < 0\). Let \({}^* \! \eta_n:= \frac{({}^* \! P_t^{n})_{\Cdot}\lambda }{({}^* \! P_t^{n})_{\Cdot}\lambda(1)} \in \MCP(\BP) \). By weak* compactness the sequence \({}^* \! \eta_n \in \MCP(\BP)\) has a weak* limit point in \(\MCP(\BP)\), \({}^* \! \nu_t\) say. In this section we relate the eigenfunction \(h_t\) to the measure \({}^* \! \nu_t\).

\begin{lemma}\label{lem:Ptdelta*Pt}
    For any \({}^* \! \nu \in \MCP(\BP)\), \(n \in \N\), and \(x \in \BP\),
    \[P_t^n \left(\int | \innerproduct{\cdot}{w}|^t \diff {}^* \! \nu(w) \right)(x) =\int | \innerproduct{x}{w}|^t \diff ({}^* \! P_t^n)_{\Cdot} {}^* \! \nu(w). \]
\end{lemma}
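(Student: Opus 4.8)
The plan is to reduce everything to a pointwise ``cocycle'' identity linking the kernels of \(P_t\) and \({}^*\!P_t\), and then to iterate it. The one genuinely substantive step is the following: for \(g \in \SL(2,\R)\), \(x = \R u\in\BP\) and \(w = \R z\in\BP\) with \(|u| = |z| = 1\), the elementary relation \(\innerproduct{gu}{z} = \innerproduct{u}{g^*z}\), together with the normalisations \(|gx| = |gu|\), \(|g^*w| = |g^*z|\), \(|\innerproduct{gx}{w}| = |\innerproduct{gu}{z}|/|gu|\) and \(|\innerproduct{x}{g^*w}| = |\innerproduct{u}{g^*z}|/|g^*z|\), yields
\[
|gx|^t\,|\innerproduct{gx}{w}|^t \;=\; |\innerproduct{u}{g^*z}|^t \;=\; |g^*w|^t\,|\innerproduct{x}{g^*w}|^t
\]
for every \(t\in\R\) (with the convention \(0^t = +\infty\) for \(t<0\)). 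Everything after this is bookkeeping with Tonelli's theorem.

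Next I would prove the \(n=1\) case, but phrased for an arbitrary finite positive Borel measure \(\sigma\) on \(\BP\) rather than only a probability measure — this extra generality is exactly what makes the induction work. Writing \(\psi_\sigma(x) := \int |\innerproduct{x}{w}|^t\diff\sigma(w) \in [0,\infty]\), an application of Tonelli (the integrands are nonnegative and jointly Borel) followed by the cocycle identity gives
\[
P_t\psi_\sigma(x) \;=\; \int\!\!\int |gx|^t|\innerproduct{gx}{w}|^t\diff\sigma(w)\diff\mu(g) \;=\; \int\!\!\int |g^*w|^t|\innerproduct{x}{g^*w}|^t\diff\mu(g)\diff\sigma(w),
\]
and recognising the inner integral on the right as \({}^*\!P_t\bigl(|\innerproduct{x}{\cdot}|^t\bigr)(w)\), the whole expression equals \(\sigma\bigl({}^*\!P_t(|\innerproduct{x}{\cdot}|^t)\bigr) = ({}^*\!P_t)_{\Cdot}\sigma\bigl(|\innerproduct{x}{\cdot}|^t\bigr) = \psi_{({}^*\!P_t)_{\Cdot}\sigma}(x)\). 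This is precisely the claimed identity for \(n=1\).

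Finally I would induct on \(n\). Using \(P_t^n = P_t^{n-1}\circ P_t\), the \(n=1\) identity with \(\sigma = {}^*\!\nu\), the inductive hypothesis applied to the finite positive measure \(({}^*\!P_t)_{\Cdot}{}^*\!\nu\), and the elementary fact that \(({}^*\!P_t^{n-1})_{\Cdot}\circ({}^*\!P_t)_{\Cdot} = ({}^*\!P_t^{n})_{\Cdot}\) (immediate from \({}^*\!P_t^{n} = {}^*\!P_t^{n-1}\circ{}^*\!P_t\) and the definition of the pushforward), one chains
\[
P_t^n\psi_{{}^*\!\nu}(x) = P_t^{n-1}\bigl(P_t\psi_{{}^*\!\nu}\bigr)(x) = P_t^{n-1}\psi_{({}^*\!P_t)_{\Cdot}{}^*\!\nu}(x) = \psi_{({}^*\!P_t^{n-1})_{\Cdot}({}^*\!P_t)_{\Cdot}{}^*\!\nu}(x) = \psi_{({}^*\!P_t^{n})_{\Cdot}{}^*\!\nu}(x),
\]
which is the assertion. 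The only mild care needed is that for \(t<0\) these quantities may a priori be \(+\infty\) at some points; since every manipulation is an application of Tonelli to nonnegative integrands, the identities hold as identities of \([0,\infty]\)-valued functions, which is all the lemma claims. I do not expect a real obstacle here: the cocycle identity of the first paragraph is the crux and is a two-line computation, and the rest is routine.
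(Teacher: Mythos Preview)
Your proposal is correct and hinges on exactly the same cocycle identity \(|gx|^t|\innerproduct{gx}{w}|^t = |g^*w|^t|\innerproduct{x}{g^*w}|^t\) that the paper uses, followed by Tonelli. The only difference is organisational: the paper observes that \(P_t^n\) and \({}^*\!P_t^n\) are themselves transfer operators with \(\mu^n\) in place of \(\mu\), so the \(n=1\) computation applied verbatim with \(\mu^n\) gives the general case in one shot, avoiding your induction and the need to generalise to non-probability measures.
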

\begin{proof}
For \(x, w \in \BP\) let \(u,v \in S^1\) be such that \(x=\R u, w=\R v\). Then, recalling our notation (\ref{eqn:innerproduct}), 
\[|gx|^t | \innerproduct{gx}{w}|^t = |gu|^t \frac{|\innerproduct{gu}{v}|^t}{|gu|^t}= |g^* v|^t \frac{|\innerproduct{u}{g^* v}|^t}{|g^* v|^t} =|g^* w|^t |\innerproduct{x}{g^* w}|^t.  \]
Hence, 
    \begin{align*}
        P_t^n \left(\int | \innerproduct{\cdot}{w}|^t \diff {}^* \! \nu(w) \right)(x)
        &=\int \int |gx|^t | \innerproduct{gx}{w}|^t \diff {}^* \! \nu(w) \diff \mu^n(g) \\
        &= \int  \int |g^* w|^t | \innerproduct{x}{g^* w}|^t \diff {}^* \! \nu(w) \diff \mu^n(g)  \\
        &= \int  \int |g^* w|^t | \innerproduct{x}{g^* w}|^t \diff \mu^n(g) \diff {}^* \! \nu(w) \\
        &= \int | \innerproduct{x}{w}|^t \diff ({}^* \! P_t^n)_{\Cdot}{}^* \! \nu(w),
    \end{align*}
where the penultimate equality is by Tonelli's theorem.
\end{proof}

We recall the definition of \(C_{\lambda,t}\) in the proof of Proposition \ref{prop:uP}. This is equivalently equal to
\[C_{\lambda,t} \equiv \int |\innerproduct{x}{w}|^t \diff \lambda(w) \]
for any \(x \in \BP\).

\begin{lemma}\label{lem:lambdaht}
    For any \(n \in \N\), \(({}^* \! P_t^{n})_{\Cdot}\lambda(1)=(P_t^{n})_{\Cdot}\lambda(1)\). In particular,
    \begin{equation}\label{eqn:e-nktblah}
         \lim_{n \rightarrow \infty} e^{-n k(t)} ( {}^* \!  P_t^{n})_{\Cdot}\lambda(1)=\lambda(h_t).
    \end{equation}
\end{lemma}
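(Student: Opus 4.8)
The plan is to prove the equality $({}^{*}\!P_t^{n})_{\Cdot}\lambda(1) = (P_t^{n})_{\Cdot}\lambda(1)$ by a symmetry argument, and then read off the limit from the spectral gap in Proposition \ref{prop:spectralgap}.

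For the first part I would unwind the definitions. Exactly as one derives $P_t^{n}\psi(x) = \int|gx|^{t}\psi(gx)\diff\mu^n(g)$, iterating the single step, using the cocycle identity $|(gh)^{*}w| = |h^{*}(g^{*}w)|\,|g^{*}w|$ together with $g^{*}h^{*}=(hg)^{*}$ and the fact that $\mu^n$ is the $n$-fold convolution, gives $({}^{*}\!P_t)^{n}\psi(w) = \int|g^{*}w|^{t}\psi(g^{*}w)\diff\mu^n(g)$. Hence, by Tonelli's theorem, $({}^{*}\!P_t^{n})_{\Cdot}\lambda(1) = \lambda({}^{*}\!P_t^{n}1) = \int\int|g^{*}w|^{t}\diff\lambda(w)\diff\mu^n(g)$ and likewise $(P_t^{n})_{\Cdot}\lambda(1) = \int\int|gx|^{t}\diff\lambda(x)\diff\mu^n(g)$. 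It therefore suffices to check that for every fixed $g\in\SL(2,\R)$ one has $\int|gx|^{t}\diff\lambda(x) = \int|g^{*}x|^{t}\diff\lambda(x)$. Since $\lambda$ is the $O_2(\R)$-invariant probability measure on $\BP$, the function $g\mapsto\int|gx|^{t}\diff\lambda(x)$ is bi-$O_2(\R)$-invariant: $|k_1 g x| = |gx|$ because orthogonal maps preserve the Euclidean norm, and $\int|gk_2x|^{t}\diff\lambda(x)=\int|g(k_2x)|^{t}\diff\lambda(x)=\int|gx|^{t}\diff\lambda(x)$ by $O_2(\R)$-invariance of $\lambda$. Thus this quantity depends only on the Cartan component of $g$, i.e.\ on its singular values; as $g = KDL$ with $K,L\in O_2(\R)$, $D$ diagonal, forces $g^{*} = L^{-1}DK^{-1}$, the matrices $g$ and $g^{*}$ have the same singular values, and the two integrals agree. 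Integrating against $\mu^n$ completes the first part.

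For the ``in particular'' statement, fix $0 < \zeta < \zeta_t'$. By Proposition \ref{prop:spectralgap} we have the decomposition $P_t = e^{k(t)}(\nu_t\otimes h_t + S_t)$ on $C^{\zeta}(\BP)$ with $\nu_t(h_t) = 1$ and $\rho(S_t) < 1$. Since $\nu_t\otimes h_t$ is the spectral projection onto the leading eigenspace, $(\nu_t\otimes h_t)^2 = \nu_t(h_t)\,\nu_t\otimes h_t = \nu_t\otimes h_t$ and $(\nu_t\otimes h_t)S_t = S_t(\nu_t\otimes h_t) = 0$, so $P_t^{n} = e^{nk(t)}(\nu_t\otimes h_t + S_t^{n})$. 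Applying this to the constant function $1\in C^{\zeta}(\BP)$ and using $\nu_t(1) = 1$ gives $e^{-nk(t)}P_t^{n}1 = h_t + S_t^{n}1$, and $|S_t^{n}1|_{\zeta}\to 0$ since $\rho(S_t) < 1$; in particular $e^{-nk(t)}P_t^{n}1\to h_t$ uniformly on $\BP$. Combining with the first part,
\[
e^{-nk(t)}({}^{*}\!P_t^{n})_{\Cdot}\lambda(1) = e^{-nk(t)}(P_t^{n})_{\Cdot}\lambda(1) = \lambda\!\left(e^{-nk(t)}P_t^{n}1\right) \xrightarrow[n \rightarrow \infty]{} \lambda(h_t),
\]
as required. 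I do not expect a genuine obstacle here; the only points needing a little care are the bookkeeping identifying $({}^{*}\!P_t)^{n}$ with integration of $|g^{*}w|^{t}\psi(g^{*}w)$ against $\mu^n$, and the verification that $\nu_t\otimes h_t$ acts as a true spectral projection so that $P_t^{n}$ decomposes cleanly — both of which are standard.
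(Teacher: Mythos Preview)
Your argument is correct. For the first part you take a different route from the paper: you observe directly that $g\mapsto\int|gx|^t\diff\lambda(x)$ is bi-$O_2(\R)$-invariant and hence depends only on the Cartan component of $g$, which is the same for $g$ and $g^*$. The paper instead applies Lemma~\ref{lem:Ptdelta*Pt} with ${}^*\!\nu=\lambda$, using that $\int|\langle x,w\rangle|^t\diff\lambda(w)=C_{\lambda,t}$ is constant in $x$, and then integrates the resulting identity over $\lambda$. Your approach is more elementary and self-contained; the paper's approach yields, as a byproduct, the pointwise identity
\[
P_t^n 1(x)=C_{\lambda,t}^{-1}({}^*\!P_t^n)_{\Cdot}\lambda(1)\int|\langle x,w\rangle|^t\diff{}^*\!\eta_n(w),
\]
which is reused in the proof of Lemma~\ref{lem:inthtinequality}. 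For the ``in particular'' statement both arguments are the same: uniform convergence $e^{-nk(t)}P_t^n1\to h_t$ from the spectral gap, then integrate against $\lambda$.
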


\begin{proof}
By Lemma \ref{lem:Ptdelta*Pt}, for any \(n \in \N\) we have
    \begin{align}
        P_t^n 1 (x)&=C_{\lambda,t}^{-1} P_t^n \left(\int | \innerproduct{\cdot}{w}|^t \diff \lambda(w) \right)(x) \nonumber \\
        &= C_{\lambda,t}^{-1} ({}^* \! P_t^{n})_{\Cdot}\lambda(1) \int |\innerproduct{x}{w}|^t \diff {}^* \! \eta_n(w).\label{eqn:longseriesofequalities}
    \end{align}
    Integrating over \(\lambda\) and applying Tonelli's theorem,
    \begin{align*}
         \int P_t^n 1 (x) \diff \lambda(x)&= C_{\lambda,t}^{-1} ({}^* \! P_t^{n})_{\Cdot}\lambda(1) \int \left( \int |\innerproduct{x}{w}|^t \diff{}^* \! \eta_n (w) \right) \diff \lambda(x) \\
         &= C_{\lambda,t}^{-1} ({}^* \! P_t^{n})_{\Cdot}\lambda(1) \int \left( \int |\innerproduct{x}{w}|^t \diff \lambda(x) \right) \diff {}^* \! \eta_n (w) \\
         &= ({}^* \! P_t^{n})_{\Cdot}\lambda(1),
    \end{align*}
    which proves the first statement. The second statement follows from the first using that \( e^{-n k(t)} P_t^n 1(x) \rightarrow h_t(x) \) uniformly on \(\BP\), by Proposition \ref{prop:spectralgap}.
\end{proof}

\begin{lemma}\label{lem:inthtinequality}
For all \(x \in \BP\),
    \(\int |\innerproduct{x}{w}|^t \diff {}^* \! \nu_t(w) \leq C_{\lambda,t} \lambda(h_t)^{-1} h_t(x)\).
\end{lemma}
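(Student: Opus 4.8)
The plan is to pass to the limit in the identity (\ref{eqn:longseriesofequalities}), the only real subtlety being the singularity of the kernel $w \mapsto |\innerproduct{x}{w}|^t$ at $w = x^{\perp}$.

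First I would fix $x \in \BP$ and rearrange (\ref{eqn:longseriesofequalities}): dividing by $({}^* \! P_t^{n})_{\Cdot}\lambda(1) = (P_t^{n})_{\Cdot}\lambda(1) > 0$ (Lemma \ref{lem:lambdaht}) and inserting factors of $e^{\pm n k(t)}$ gives
\[\int |\innerproduct{x}{w}|^t \diff {}^* \! \eta_n(w) = C_{\lambda,t}\, \frac{e^{-n k(t)} P_t^n 1(x)}{e^{-n k(t)} ({}^* \! P_t^{n})_{\Cdot}\lambda(1)}.\]
By Proposition \ref{prop:spectralgap} the numerator converges to $h_t(x)$, and by Lemma \ref{lem:lambdaht} the denominator converges to $\lambda(h_t) > 0$, so the left-hand side converges to $C_{\lambda,t}\lambda(h_t)^{-1} h_t(x)$ as $n \to \infty$.

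Next, I would take a subsequence $(n_k)$ along which ${}^* \! \eta_{n_k} \to {}^* \! \nu_t$ weak*. Here one cannot integrate $w \mapsto |\innerproduct{x}{w}|^t$ directly against ${}^* \! \nu_t$, since this kernel is unbounded (it tends to $+\infty$ as $w \to x^{\perp}$, because $t < 0$), so the approach is to truncate: for $M > 0$ set $\phi_M(w) := \min\{|\innerproduct{x}{w}|^t, M\}$, a bounded continuous function on $\BP$ with $\phi_M(w) \le |\innerproduct{x}{w}|^t$ for all $w$. Weak* convergence and the previous paragraph then give
\[\int \phi_M \diff {}^* \! \nu_t = \lim_{k \to \infty} \int \phi_M \diff {}^* \! \eta_{n_k} \le \varlimsup_{k \to \infty} \int |\innerproduct{x}{w}|^t \diff {}^* \! \eta_{n_k}(w) = C_{\lambda,t}\lambda(h_t)^{-1} h_t(x).\]
Finally, $\phi_M(w) \uparrow |\innerproduct{x}{w}|^t$ as $M \to \infty$, so the monotone convergence theorem yields
\[\int |\innerproduct{x}{w}|^t \diff {}^* \! \nu_t(w) = \lim_{M \to \infty} \int \phi_M \diff {}^* \! \nu_t \le C_{\lambda,t}\lambda(h_t)^{-1} h_t(x),\]
which is the claim.

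The main (and essentially the only) obstacle is the unboundedness/non-continuity of the kernel, which is precisely what the truncation-plus-monotone-convergence step handles; everything else is a routine passage to the limit. I note that this argument also shows a posteriori that $w \mapsto |\innerproduct{x}{w}|^t$ is ${}^* \! \nu_t$-integrable at every $x \in \BP$, since $h_t$ is finite and strictly positive by Proposition \ref{prop:spectralgap}.
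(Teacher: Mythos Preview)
Your proof is correct and follows essentially the same approach as the paper: both arguments combine (\ref{eqn:longseriesofequalities}) with Lemma \ref{lem:lambdaht} to identify the limit of $\int |\innerproduct{x}{w}|^t \diff {}^* \! \eta_{n_k}(w)$, then truncate the singular kernel by $\min\{|\innerproduct{x}{w}|^t, M\}$, use weak* convergence on the truncation, and finish with monotone convergence. The only cosmetic difference is that you first establish convergence along the full sequence before passing to the subsequence, whereas the paper works with the subsequence throughout.
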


\begin{proof}
    Let \((n_k)_{k \in \N}\) be such that \({}^* \! \eta_{n_k} \rightarrow {}^* \! \nu_t\) weak*. Combining (\ref{eqn:longseriesofequalities}) with (\ref{eqn:e-nktblah}) implies that
    \begin{equation}\label{eqn:limitintegralink}
        \lim_{k \rightarrow \infty} \int |\innerproduct{x}{w}|^t \diff {}^* \! \eta_{n_k} (w)= C_{\lambda,t} \lambda(h_t)^{-1} h_t(x)
    \end{equation}
    for all \(x \in \BP\).
    
Fix \(x \in \BP\) and let \(M>0\). We let
\[f_{x,M}(w) := \min\{ |\innerproduct{x}{w}|^t,M  \}.\]
Then, the function \(w \mapsto f_{x,M}(w)\) is continuous and bounded. Hence, since \({}^* \! \eta_{n_k} \rightarrow {}^*\! \nu_t\) weak*, 
\[ \int f_{x,M}(w) \diff  {}^*\! \nu_t = \lim_{k \rightarrow \infty} \int f_{x,M}(w) \diff {}^* \! \eta_{n_k}(w).  \]
Thus, by the monotone convergence theorem and (\ref{eqn:limitintegralink}),
\begin{align*}
    \int |\innerproduct{x}{w}|^t \diff {}^*\! \nu_t(w) &= \sup_{M>0} \int f_{x,M}(w) \diff  {}^*\! \nu_t(w) \\
    &= \sup_{M>0} \lim_{k \rightarrow \infty}  \int f_{x,M}(w) \diff {}^* \! \eta_{n_k}(w) \\
    &\leq \varlimsup_{k \rightarrow \infty} \sup_{M>0} \int f_{x,M}(w) \diff {}^* \! \eta_{n_k}(w) \\
    &=\varlimsup_{k \rightarrow \infty} \int |\innerproduct{x}{w}|^t \diff {}^* \! \eta_{n_k}(w) \\
    &= C_{\lambda,t} \lambda(h_t)^{-1} h_t(x).
\end{align*}
\end{proof}

\begin{lemma}\label{lem:lebesgueaeequality}
 For \(\lambda\)-a.e. \(x \in \BP\),  \(\int |\innerproduct{x}{w}|^t \diff {}^* \! \nu_t(w) =C_{\lambda,t} \lambda(h_t)^{-1} h_t(x)\).
\end{lemma}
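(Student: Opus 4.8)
The plan is to integrate the inequality from Lemma \ref{lem:inthtinequality} against $\lambda$ and show that the integrals of both sides coincide, which forces pointwise equality $\lambda$-a.e. since the integrand difference is non-negative. First I would compute $\int \left( \int |\innerproduct{x}{w}|^t \diff {}^* \! \nu_t(w) \right) \diff \lambda(x)$; by Tonelli's theorem this equals $\int \left( \int |\innerproduct{x}{w}|^t \diff \lambda(x) \right) \diff {}^* \! \nu_t(w) = \int C_{\lambda,t} \diff {}^* \! \nu_t(w) = C_{\lambda,t}$, using that $\int |\innerproduct{x}{w}|^t \diff \lambda(x) = C_{\lambda,t}$ is independent of $w$ (this is the identification of $C_{\lambda,t}$ recorded just before Lemma \ref{lem:lambdaht}, combined with the symmetry $|\innerproduct{x}{w}| = |\innerproduct{w}{x}|$). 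On the other hand, $\int C_{\lambda,t} \lambda(h_t)^{-1} h_t(x) \diff \lambda(x) = C_{\lambda,t} \lambda(h_t)^{-1} \lambda(h_t) = C_{\lambda,t}$. So both sides have the same $\lambda$-integral.

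Then, since by Lemma \ref{lem:inthtinequality} the function $x \mapsto C_{\lambda,t} \lambda(h_t)^{-1} h_t(x) - \int |\innerproduct{x}{w}|^t \diff {}^* \! \nu_t(w)$ is non-negative for every $x \in \BP$ and its integral against $\lambda$ is zero, it must vanish for $\lambda$-a.e. $x$, which is the claim. One point to handle carefully: the inner integral $\int |\innerproduct{x}{w}|^t \diff {}^* \! \nu_t(w)$ need not be finite for every $x$ a priori, but Lemma \ref{lem:inthtinequality} already bounds it by $C_{\lambda,t} \lambda(h_t)^{-1} h_t(x)$, which is finite (indeed bounded) since $h_t$ is continuous on the compact space $\BP$; so the difference is a well-defined non-negative measurable function and Tonelli applies without integrability concerns.

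I do not expect a genuine obstacle here — the lemma is essentially a one-line consequence of Lemma \ref{lem:inthtinequality} together with the computation $\int |\innerproduct{x}{w}|^t \diff\lambda(x) = C_{\lambda,t}$ and the normalisation $\nu_t(h_t)=1$ is not even needed, only $\lambda(h_t) < \infty$. The only mild care needed is the bookkeeping in the Tonelli swap and noting that the measurability of $x \mapsto \int |\innerproduct{x}{w}|^t \diff {}^* \! \nu_t(w)$ follows from it being a pointwise limit (or supremum over $M$) of continuous functions, as already used in the proof of Lemma \ref{lem:inthtinequality}.
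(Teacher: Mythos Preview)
Your proposal is correct and follows essentially the same argument as the paper: apply Tonelli to compute $\int\!\!\int |\innerproduct{x}{w}|^t \diff {}^*\!\nu_t(w)\,\diff\lambda(x)=C_{\lambda,t}$, observe that $\int C_{\lambda,t}\lambda(h_t)^{-1}h_t\,\diff\lambda=C_{\lambda,t}$ as well, and conclude from Lemma~\ref{lem:inthtinequality} that the non-negative difference has zero integral, hence vanishes $\lambda$-a.e. Your additional remarks on finiteness and measurability are fine but not needed beyond what the paper uses.
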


\begin{proof}
With an application of Tonelli's theorem, we have 
\[\int \left( \int|\innerproduct{x}{w}|^t \diff {}^* \! \nu_t(w) \right) \diff \lambda(x) = C_{\lambda,t}.\]
We also have 
\[ \int C_{\lambda,t} \lambda(h_t)^{-1} h_t(x) \diff \lambda(x)=C_{\lambda,t}.\]
Hence, the lemma follows from Lemma \ref{lem:inthtinequality} and the simple fact that if \(f \leq g\) everywhere on \(\BP\) and \(\int f \diff \lambda=\int g \diff \lambda\), then \(f=g\) for \(\lambda\)-a.e. \(x \in \BP\).
\end{proof}

\begin{lemma}\label{lem:dimlowerbound}
\(h_t(x) \equiv C_{\lambda,t}^{-1} \lambda(h_t) \int |\innerproduct{x}{w}|^t \diff {}^* \! \nu_t(w) \) and \(\dim_F {}^* \! \nu_t \geq -t+\zeta_t' \).
\end{lemma}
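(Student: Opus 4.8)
The plan is to upgrade the almost-everywhere identity in Lemma \ref{lem:lebesgueaeequality} to an everywhere identity, and then read off the Frostman dimension lower bound from the H\"older regularity of $h_t$ established in Proposition \ref{prop:spectralgap}. The bridge between these is the appendix material on Riesz potentials: writing $\MCI_{t,{}^*\!\nu_t}(x)=\int|\innerproduct{x}{w^\perp}|^t \diff {}^*\!\nu_t(w)$, we have $\int|\innerproduct{x}{w}|^t\diff{}^*\!\nu_t(w)=\MCI_{t,({}^*\!\nu_t)^\perp}(x)$, where $({}^*\!\nu_t)^\perp$ is the pushforward of ${}^*\!\nu_t$ under $w\mapsto w^\perp$. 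So Lemma \ref{lem:lebesgueaeequality} says that $\MCI_{t,({}^*\!\nu_t)^\perp}$ equals the $\zeta$-H\"older function $C_{\lambda,t}^{-1}\lambda(h_t)h_t$ for $\lambda$-a.e.\ $x$, for every $0<\zeta<\zeta_t'$ (by Proposition \ref{prop:spectralgap}, $h_t\in C^\zeta(\BP)$ for all such $\zeta$).

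First I would fix any $\zeta\in(0,\zeta_t')$ and apply Proposition \ref{prop:Holdergivesdimensionbound} with $\nu=({}^*\!\nu_t)^\perp$ and $F=C_{\lambda,t}^{-1}\lambda(h_t)h_t$: since $\MCI_{t,({}^*\!\nu_t)^\perp}$ is $\lambda$-a.e.\ equal to the $\zeta$-H\"older function $F$, that proposition yields both $\MCI_{t,({}^*\!\nu_t)^\perp}\equiv F$ everywhere (hence $h_t(x)\equiv C_{\lambda,t}^{-1}\lambda(h_t)\int|\innerproduct{x}{w}|^t\diff{}^*\!\nu_t(w)$ for all $x\in\BP$, which is the first assertion) and the bound $\dim_F({}^*\!\nu_t)^\perp\geq -t+\zeta$. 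Since the map $w\mapsto w^\perp$ is a bi-Lipschitz bijection of $(\BP,d_{\BP})$ (indeed an isometry for this metric), the Frostman dimension is unchanged: $\dim_F{}^*\!\nu_t=\dim_F({}^*\!\nu_t)^\perp\geq -t+\zeta$. Letting $\zeta\uparrow\zeta_t'$ gives $\dim_F{}^*\!\nu_t\geq -t+\zeta_t'$. One subtlety to check: Proposition \ref{prop:Holdergivesdimensionbound} as quoted requires $-1<t<0$, which holds here since $t\in(t_c',0)$ and $t_c'\geq -1$; and it also presumably needs the a.e.\ equality to hold against $\lambda$ on all of $\BP$, which is exactly the content of Lemma \ref{lem:lebesgueaeequality}.

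The main obstacle is not in the logical structure above, which is short, but in making sure the appendix result is being invoked with the correct hypotheses — in particular that Proposition \ref{prop:Holdergivesdimensionbound} genuinely delivers the pointwise identity $\MCI_{t,\nu}\equiv F$ (not merely the dimension bound), since we need that to conclude the first, qualitative half of the lemma. If the appendix instead only gives $\dim_F\nu\geq -t+\zeta$, one would additionally argue that $\MCI_{t,({}^*\!\nu_t)^\perp}$ is continuous: once $\dim_F\nu>-t$, Proposition \ref{prop:hisholder} shows $\MCI_{t,\nu}$ is itself H\"older continuous, and two continuous functions agreeing $\lambda$-a.e.\ on $\BP$ agree everywhere (as $\supp\lambda=\BP$). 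Thus either route closes the gap. A minor point worth a line is that the constant $C_{\lambda,t}^{-1}\lambda(h_t)$ is the same scalar appearing throughout, so no rescaling ambiguity arises.
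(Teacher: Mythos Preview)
Your proposal is correct and follows essentially the same route as the paper: both invoke Lemma~\ref{lem:lebesgueaeequality} to get the $\lambda$-a.e.\ identity and then apply Proposition~\ref{prop:Holdergivesdimensionbound} (using $|\innerproduct{x}{w}|=d_{\BP}(x,w^{\perp})$) to upgrade to an everywhere identity and extract the Frostman bound $\dim_F{}^*\!\nu_t\geq -t+\zeta$ for all $\zeta<\zeta_t'$. Your additional remarks about the $w\mapsto w^{\perp}$ pushforward being an isometry and the fallback argument via Proposition~\ref{prop:hisholder} are accurate but not needed, since Proposition~\ref{prop:Holdergivesdimensionbound} already delivers both conclusions directly.
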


\begin{proof}
By Lemma \ref{lem:lebesgueaeequality} and Proposition \ref{prop:Holdergivesdimensionbound} we have \(h_t(x)= C_{\lambda,t}^{-1} \lambda(h_t)  \int |\innerproduct{x}{w}|^t \diff {}^* \! \nu_t(w) \) for all \(x \in \BP\) and \(\dim_F {}^* \! \nu_t \geq -t+ \zeta_t'\). We note that we can apply Proposition \ref{prop:Holdergivesdimensionbound}  since \(|\innerproduct{x}{w}| \equiv d_{\BP}(x,w^{\perp})\).
\end{proof}

\begin{prop}
    \( ({}^* \! P_t)_{\Cdot}  {}^* \!  \nu_t = e^{k(t)}  {}^* \! \nu_t  \).
\end{prop}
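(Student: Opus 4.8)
The plan is to apply the transfer operator $P_t$ to the representation of the eigenfunction $h_t$ obtained in Lemma~\ref{lem:dimlowerbound}, and then invoke the uniqueness of Riesz potentials proved in the appendix.

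Set $c:=C_{\lambda,t}^{-1}\lambda(h_t)>0$, so that by Lemma~\ref{lem:dimlowerbound} we have $h_t(x)=c\int|\innerproduct{x}{w}|^t\diff{}^* \! \nu_t(w)$ for every $x\in\BP$. I would apply $P_t$ to both sides: on the left $P_t h_t=e^{k(t)}h_t$, while on the right Lemma~\ref{lem:Ptdelta*Pt} (with $n=1$ and ${}^* \! \nu={}^* \! \nu_t$) gives $P_t\bigl(\int|\innerproduct{\cdot}{w}|^t\diff{}^* \! \nu_t(w)\bigr)(x)=\int|\innerproduct{x}{w}|^t\diff({}^* \! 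P_t)_{\Cdot}{}^* \! \nu_t(w)$. Dividing by $c$ and writing $\sigma_1:=e^{k(t)}{}^* \! \nu_t$ and $\sigma_2:=({}^* \! P_t)_{\Cdot}{}^* \! \nu_t$, which are finite positive Borel measures on $\BP$, this becomes
\[ \int|\innerproduct{x}{w}|^t\diff\sigma_1(w)=\int|\innerproduct{x}{w}|^t\diff\sigma_2(w),\qquad\forall x\in\BP. \]

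It remains to pass to Riesz potentials. Let $\pi\colon\BP\to\BP$ be the isometric involution $w\mapsto w^{\perp}$; since $|\innerproduct{x}{w}|=d_{\BP}(x,w^{\perp})$, for any finite measure $\sigma$ we have $\int|\innerproduct{x}{w}|^t\diff\sigma(w)=\MCI_{t,\pi_{*}\sigma}(x)$, so the last display reads $\MCI_{t,\pi_{*}\sigma_1}(x)=\MCI_{t,\pi_{*}\sigma_2}(x)$ for every $x\in\BP$, in particular for $\lambda$-a.e.\ $x$. These potentials are finite: $\MCI_{t,\pi_{*}\sigma_1}=c^{-1}e^{k(t)}h_t$ is continuous and bounded (indeed $\dim_F{}^* \! \nu_t\ge -t+\zeta_t'>-t$, so Proposition~\ref{prop:hisholder} even shows it is H\"older), and $\MCI_{t,\pi_{*}\sigma_2}$ equals the same function. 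Proposition~\ref{prop:Inu1=Inu2impliesnu1=nu2} then gives $\pi_{*}\sigma_1=\pi_{*}\sigma_2$, and applying $\pi_{*}$ once more (using $\pi^{2}=\mathrm{id}$) gives $\sigma_1=\sigma_2$, that is, $({}^* \! P_t)_{\Cdot}{}^* \! \nu_t=e^{k(t)}{}^* \! \nu_t$. The argument is short given Lemma~\ref{lem:dimlowerbound} and the appendix; the only care needed is the bookkeeping of the push-forward under $\pi$ and the verification that the potentials involved are finite, so that Proposition~\ref{prop:Inu1=Inu2impliesnu1=nu2} applies. The genuine work is of course hidden in that uniqueness statement, whose proof is Fourier-analytic.
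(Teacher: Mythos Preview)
Your proof is correct and follows essentially the same route as the paper: apply $P_t$ to the representation $h_t(x)=c\int|\innerproduct{x}{w}|^t\diff{}^*\!\nu_t(w)$ from Lemma~\ref{lem:dimlowerbound}, use Lemma~\ref{lem:Ptdelta*Pt} to convert the right-hand side into a potential against $({}^*\!P_t)_{\Cdot}{}^*\!\nu_t$, and then invoke Proposition~\ref{prop:Inu1=Inu2impliesnu1=nu2}. The only difference is that you make the bookkeeping with the involution $\pi\colon w\mapsto w^{\perp}$ and the finiteness check explicit, whereas the paper applies Proposition~\ref{prop:Inu1=Inu2impliesnu1=nu2} directly with $\nu_1={}^*\!\nu_t$ and $\nu_2=e^{-k(t)}({}^*\!P_t)_{\Cdot}{}^*\!\nu_t$ and leaves those details implicit.
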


\begin{proof}
Since \(x \mapsto \int |\innerproduct{x}{w}|^t \diff {}^* \! \nu_t(w)\) is a scalar multiple of \(h_t\), we have for every \(x \in \BP\),
\begin{align*}
     e^{k(t)}\int |\innerproduct{x}{w}|^t \diff {}^* \! \nu_t(w) &=  P_t \left(\int |\innerproduct{\cdot}{w}|^t \diff {}^* \! \nu_t(w) \right)(x)  \\
     &= \int |\innerproduct{x}{w}|^t \diff (({}^* \! P_t)_{\Cdot} {}^* \! \nu_t)(w),
\end{align*}
where the last equality is by Lemma \ref{lem:Ptdelta*Pt}. Hence, applying Proposition \ref{prop:Inu1=Inu2impliesnu1=nu2} with \(\nu_1 :={}^*\! \nu_t\) and \(\nu_2 :=e^{-k(t)} ({}^* \! P_t)_{\Cdot} {}^*\! \nu_t\) gives that \(e^{-k(t)} ({}^* \! P_t)_{\Cdot} {}^*\! \nu_t={}^* \! \nu_t\). 
\end{proof} 

\begin{remark}
By the same argument as in the proof of Proposition \ref{prop:uP}, for all \(t \in (-1,0]\) we have 
 \[ \lim_{n \rightarrow \infty} \frac{1}{n} \log ({}^* \! P_t^n)_{\Cdot}\lambda(1)=k(t).\]
Therefore, a general theorem of Maucourant \cite[Theorem IV.6]{Mau14} implies the existence of a probability measure \({}^* \! \nu_t \in \MCP(\BP)\) satisfying \(({}^* \! P_t)_{\Cdot} {}^* \! \nu_t=e^{k(t)} {}^* \! \nu_t \) for \textit{all} \(t \in (-1, 0]\). However, the proof does not show that \( {}^* \! \nu_t\) can be taken to be a weak* limit of the sequence \(\frac{({}^* \! P_t^n)_{\Cdot} \lambda}{({}^* \! P_t^n)_{\Cdot}\lambda (1)}\), but instead the sequence \(\frac{\sum_{n \leq N}  ({}^* \! P_t^n)_{\Cdot}\lambda}{\sum_{n \leq N} ({}^* \! P_t^n)_{\Cdot }\lambda(1)} \) (i.e. after using Lemma \ref{lem:lambdaht} above to rule out the second case in the proof of \cite[Theorem IV.6]{Mau14}). We preferred to show \({}^* \! \nu_t\) can be taken to be a weak* limit of \(\frac{({}^* \! P_t^n)_{\Cdot} \lambda}{({}^* \! P_t^n)_{\Cdot}\lambda (1)}\), and it also simplified the exposition and the proofs in this section to do so.
\end{remark}

\section{Concluding the proof of the main theorems}\label{sec:proofoftheorems}
\subsection{Estimates for \(I_{\gamma}(\alpha)\)}
In the next two lemmas we show that lower bounds for \(\dim {}^* \! \nu_t\) imply upper bounds for the quantities \(I_{\gamma}(\alpha,\epsilon,n)\) and \(I_{\gamma}(\alpha)\).

\begin{lemma}\label{lem:tightboundsforQgamma}
 For all \(t_c'<t \leq 0\) and \(0<D<\dim_F{}^*\!\nu_t\) there exists \( K>0\) such that for all \(\alpha \in \mathrm{A}\), \(\gamma \in (0,2]\), \(\epsilon>0\), and \(n \in \N\), we have
    \[I_{\gamma}\left(\alpha,\epsilon,n  \right)+t(1-\gamma)\alpha  \leq k(t)- (D+t)\gamma\alpha+\frac{K}{n}+(3+\alpha)\epsilon.\]
\end{lemma}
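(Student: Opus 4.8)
The plan is to run an argument dual to the one behind Corollary~\ref{corr}, using the eigenmeasure relation $({}^*\!P_t^n)_{\Cdot}{}^*\!\nu_t = e^{nk(t)}\,{}^*\!\nu_t$ established above together with a Frostman bound for ${}^*\!\nu_t$. Fix $D < \dim_F{}^*\!\nu_t$ and choose $C \ge 1$ with ${}^*\!\nu_t(B(z,r)) \le C r^D$ for all $z \in \BP$ and $r>0$; put $\rho_0 := (2C)^{-1/D} \in (0,1)$, so that ${}^*\!\nu_t(B(z,\rho_0)) \le \tfrac12$, hence ${}^*\!\nu_t(\{w : d_{\BP}(w,z)\ge\rho_0\}) \ge \tfrac12$, uniformly in $z$.

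The geometric heart of the argument is the following. Suppose $g \in E_\gamma(\alpha,\epsilon,n,x)$, so $e^{n(\alpha-\epsilon)} \le |g| \le e^{n(\alpha+\epsilon)}$ and $d_{\BP}(x,\omega_-(g)) \le e^{-n(\gamma-\epsilon)\alpha}$. Since $\omega_+(g) = \omega_-(g)^{\perp} = \upsilon_+(g^*)$ and $w\mapsto w^\perp$ is a $d_{\BP}$-isometry, the expanding image direction of $g^*$ satisfies $d_{\BP}(\upsilon_+(g^*),x^\perp) \le e^{-n(\gamma-\epsilon)\alpha}$. Moreover $\omega_-(g^*)=\upsilon_-(g)$, so for any $w$ with $d_{\BP}(w,\upsilon_-(g))\ge\rho_0$, Lemma~\ref{lem:BQlemma} gives $|g^* w|\ge\rho_0|g^*|=\rho_0|g|$, and then (exactly as in the proof of Lemma~\ref{lem:umapsclosetoupsilon+}, via Lemma~\ref{lem:bougerollemma}) $d_{\BP}(g^* w,\upsilon_+(g^*)) \le \tfrac1{|g^* w|\,|g^*|}\le \rho_0^{-1}|g|^{-2}\le\rho_0^{-1}e^{-2n(\alpha-\epsilon)}$. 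Combining these, and using $\gamma\le 2$ (so $e^{-2n\alpha}\le e^{-n\gamma\alpha}$) together with $e^{2n\epsilon},\,e^{n\alpha\epsilon}\le e^{n(2+\alpha)\epsilon}$, we obtain $d_{\BP}(g^* w,x^\perp)\le\rho$ where $\rho := (\rho_0^{-1}+1)\,e^{-n\gamma\alpha}\,e^{n(2+\alpha)\epsilon}$; in other words $g^*$ maps the complement of $B(\upsilon_-(g),\rho_0)$ into $B(x^\perp,\rho)$.

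With this in hand I would fix $x\in\BP$, pick a continuous $f$ with $\mathbbm{1}_{B(x^\perp,\rho)}\le f\le \mathbbm{1}_{B(x^\perp,2\rho)}$, and estimate, using Tonelli's theorem and the eigenmeasure relation,
\begin{align*}
e^{nk(t)}\,{}^*\!\nu_t\big(B(x^\perp,2\rho)\big) &\ge e^{nk(t)}\,{}^*\!\nu_t(f) = \int\!\!\int |g^* w|^t f(g^* w)\,\diff{}^*\!\nu_t(w)\,\diff\mu^n(g) \\
&\ge \tfrac12\, e^{nt(\alpha+\epsilon)}\,\mu^n\big(E_\gamma(\alpha,\epsilon,n,x)\big),
\end{align*}
where in the last step we restrict $g$ to $E_\gamma(\alpha,\epsilon,n,x)$ and $w$ to $\{d_{\BP}(w,\upsilon_-(g))\ge\rho_0\}$ — on which $f(g^* w)=1$ by the previous paragraph, ${}^*\!\nu_t$ has mass $\ge\tfrac12$, and $|g^* w|^t\ge|g|^t\ge e^{nt(\alpha+\epsilon)}$ since $t<0$ and $|g|\le e^{n(\alpha+\epsilon)}$. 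Applying ${}^*\!\nu_t(B(x^\perp,2\rho))\le C(2\rho)^D$, taking $\tfrac1n\log$, taking the supremum over $x$, and adding $t(1-\gamma)\alpha$ (noting $-t(\alpha+\epsilon)+t(1-\gamma)\alpha=-t\epsilon-t\gamma\alpha$) yields
\[
I_\gamma(\alpha,\epsilon,n)+t(1-\gamma)\alpha \ \le\ k(t)-(D+t)\gamma\alpha+\frac{K}{n}+\big(D(2+\alpha)-t\big)\epsilon,
\]
with $K=\log\!\big(2C\,2^D(\rho_0^{-1}+1)^D\big)$. Since $\dim_F{}^*\!\nu_t\le1$ (every probability measure on the circle $\BP$ has Frostman dimension at most $1$, by a covering argument) we have $D<1$, and $t_c'\ge-1$ gives $-t<1$, so $D(2+\alpha)-t<(2+\alpha)+1=3+\alpha$; this gives the claimed inequality (replacing $K$ by $\max(K,1)$ if necessary).

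I expect the only genuine difficulty to be the geometric step of the second paragraph: translating the condition "$\omega_-(g)$ near $x$" built into $E_\gamma$ — which concerns the preimage contraction of $g$ — into a statement about the forward contraction of $g^*$ (which is what ${}^*\!P_t$ sees), through $\omega_+(g)=\upsilon_+(g^*)$, $\omega_-(g^*)=\upsilon_-(g)$, $\omega_+(g)=\omega_-(g)^\perp$, and the isometry property of $\perp$, while tracking the error terms so that $\epsilon$ only inflates to $(2+\alpha)\epsilon$. It is worth emphasising that the argument needs no "$n$ large" or "$\epsilon$ small" hypothesis: it is valid for every $n\in\N$ and every $\epsilon>0$, including the degenerate cases $\alpha=0$ or $\rho\ge1$ (where $B(x^\perp,2\rho)=\BP$ and one simply uses ${}^*\!\nu_t(\BP)=1\le C(2\rho)^D$), so the $K/n$ term serves only to absorb the additive constant $\log\!\big(2C\,2^D(\rho_0^{-1}+1)^D\big)$.
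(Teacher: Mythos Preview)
Your proof is correct and follows essentially the same route as the paper's: both fix a Frostman exponent $D<\dim_F{}^*\!\nu_t$ and a radius ($\rho_0$, respectively $R_0$) on which ${}^*\!\nu_t$ has mass at least $\tfrac12$, use the identities $\omega_+(g)=\upsilon_+(g^*)$ and $\omega_-(g^*)=\upsilon_-(g)$ together with Lemma~\ref{lem:BQlemma} to show that $g^*$ maps the complement of a ball of radius $\rho_0$ into a ball of radius $\approx e^{-n(\gamma\alpha-(2+\alpha)\epsilon)}$ about $x^\perp$, then feed this into the eigenmeasure relation and the Frostman bound. Your explicit justification that $D(2+\alpha)-t<3+\alpha$ (via $D<1$ and $-t<1$) spells out a step the paper leaves implicit, and your introduction of the continuous $f$ is harmless but unnecessary since the eigenmeasure identity holds for all measurable sets.
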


\begin{proof}
Let \(t_c'<t \leq 0\) and \(0<D<\dim_F{}^*\!\nu_t\). Let \(C>0\) be such that for all \(r>0\),
\begin{equation}\label{eqn:dimboundapplied}
    \sup_{w \in \BP }{}^*\!\nu_t(B(w,r))  <Cr^{D}.
\end{equation}
We also let \(R_0>0\) be such that 
\begin{equation*}
    \sup_{w \in \BP}{}^*\!\nu_t(B(w,R_0))  <\frac{1}{2}.
\end{equation*}

Fix some \(x\in \BP\). For all \(n \in \N \) and \(g \in E_{\gamma}(\alpha,\epsilon,n,x)\) we have
\[d_{\BP}(x^\perp, \upsilon_+(g^*))=d_{\BP}(x, \omega_-(g)) \leq e^{-n (\gamma-\epsilon)\alpha}.\]
Moreover, using Lemma \ref{lem:BQlemma} twice, for all \(w=\R v \in \BP\), \(v \in S^1\), such that \(d_{\BP}(w,\omega_-(g^*)) \geq R_0\),
\begin{align} 
    d_{\BP}(g^* w, \upsilon_+(g^*)) &=  d_{\BP}(g^* w, \omega_-((g^*)^{-1})) \nonumber \\
    &\leq \frac{|(g^*)^{-1} g^* v|}{|g| \left|g^* v \right|} \nonumber \\
     &\leq \frac{1}{|g|^2 R_0 }  \nonumber \\
     &\leq R_0^{-1} e^{-2n(\alpha-\epsilon)}. \nonumber 
\end{align}
In particular, for all such \(w \in \BP\),
\begin{align}
    d_{\BP}(g^* w,x^\perp) &\leq d_{\BP}(g^* w,\upsilon_+(g^*))+d_{\BP}(\upsilon_+(g^*),x^{\perp}) \\
    &\leq  (1+R_0^{-1})e^{-n(\gamma\alpha-(2+\alpha)\epsilon)} \nonumber \\
    &\leq 2R_0^{-1}e^{-n(\gamma\alpha-(2+\alpha)\epsilon)}. \label{eqn:R^-1e2nalphaminusepsilon}
\end{align}
Using that \(({}^* \! P_t)_{\Cdot} {}^*\! \nu_t= e^{k(t)} {}^* \! \nu_t\) and Tonelli's theorem, it follows that 
\begin{align}
   {}^*\!\nu_t(B(x^\perp,2R_0^{-1}e^{-n(\gamma\alpha-(2+\alpha)\epsilon)})) &= e^{-nk(t)}  \int \int |g^*w|^t 1_{B(x^\perp,2R_0^{-1}e^{-n(\gamma\alpha-(2+\alpha)\epsilon)} )} \left(g^* w \right) \diff{}^*\!\nu_t(w) \diff \mu^n(g) \nonumber  \\
   &\geq e^{-nk(t)} e^{nt(\alpha+\epsilon)} \int_{E_{\gamma}(\alpha,\epsilon,n,x)} \int    1_{B(x^{\perp},2R_0^{-1}e^{-n(\gamma\alpha-(2+\alpha)\epsilon)} )} \left(g^* w \right) \diff{}^*\!\nu_t(w) \diff \mu^n(g) \nonumber \\
    &\geq \frac{1}{2} e^{-nk(t)} e^{nt(\alpha+\epsilon)} \int_{E_{\gamma}(\alpha,\epsilon,n,x)}  \diff \mu^n(g), \label{eqn:lastoneinsomealign}
\end{align}
where the last inequality is because all but at most a ball of radius \(R_0\) is mapped by \(g^*\) into \(B(x^\perp,2R_0^{-1} e^{-n(\gamma\alpha-(2+\alpha)\epsilon)})\) by (\ref{eqn:R^-1e2nalphaminusepsilon}). By (\ref{eqn:dimboundapplied}), \({}^*\!\nu_t(B(x^\perp,2R_0^{-1}e^{-n(\gamma\alpha-(2+\alpha)\epsilon)})) \leq 2CR_0^{-1} e^{-nD(\gamma\alpha-(2+\alpha)\epsilon)}\). Hence,
\[\frac{1}{2} e^{-nk(t)} e^{nt(\alpha+\epsilon)} \mu^n(E_{\gamma}(\alpha,\epsilon,n,x)) \leq 2CR_0^{-1} e^{-nD(\gamma\alpha-(2+\alpha)\epsilon)}. \]
Since \(x \in \BP\) was arbitrary, we therefore have
\[I_{\gamma}(\alpha,\epsilon,n)-k(t)+t(\alpha+\epsilon) \leq \frac{1}{n} \log 4CR_0^{-1} -D(\gamma\alpha-(2+\alpha)\epsilon).\]
Rearranging then subtracting \(t\gamma\alpha\) from both sides gives
\begin{align*}
    I_{\gamma}\left(\alpha,\epsilon, n \right)+t(1-\gamma)\alpha \leq k(t)- (D+t)\gamma\alpha +\frac{1}{n}\left(\log 4CR_0^{-1} \right) -t\epsilon+(2+\alpha)D\epsilon,
\end{align*}
so 
\[I_{\gamma}\left(\alpha,\epsilon, n \right)+t(1-\gamma)\alpha \leq k(t)-(D+t)\gamma\alpha+\frac{K}{n}+(3+\alpha)\epsilon,\] 
where 
\(K:=\log 4CR_0^{-1}\).
\end{proof}

The following is a straightforward consequence of Lemma \ref{lem:tightboundsforQgamma}. Note the case \(\gamma=0\) follows by continuity of \(\gamma \mapsto I_{\gamma}(\alpha)\) (Lemma \ref{lem:elementaryQgammainequality}).

\begin{lemma}\label{lem:boundsforQgamma}
For \(t \in (t_c',0]\), we have for all \(\alpha \in \mathrm{A}\) and \(\gamma \in [0,2]\),
\[I_\gamma(\alpha)+t(1-\gamma)\alpha \leq k(t)-(t+\dim_F{}^*\!\nu_t)\gamma \alpha.\]
\end{lemma}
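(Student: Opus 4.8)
The plan is to pass to limits in Lemma~\ref{lem:tightboundsforQgamma} in a fixed order --- $n\to\infty$, then $\epsilon\to0$, then $D\uparrow\dim_F{}^*\!\nu_t$ --- and to handle the endpoint $\gamma=0$ separately using the Lipschitz bound of Lemma~\ref{lem:elementaryQgammainequality}.

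Concretely, I would fix $t\in(t_c',0]$, $\alpha\in\mathrm{A}$, and $\gamma\in(0,2]$, and for each $0<D<\dim_F{}^*\!\nu_t$ invoke Lemma~\ref{lem:tightboundsforQgamma} to obtain $K=K(t,D)>0$ with
\[
I_\gamma(\alpha,\epsilon,n)+t(1-\gamma)\alpha\le k(t)-(D+t)\gamma\alpha+\frac{K}{n}+(3+\alpha)\epsilon
\]
for all $\epsilon>0$ and $n\in\N$. Taking $\varlimsup_{n\to\infty}$ eliminates the $K/n$ term and, by the definition of $I_\gamma(\alpha,\epsilon)$, produces the same bound with $I_\gamma(\alpha,\epsilon)$ in place of $I_\gamma(\alpha,\epsilon,n)$; then letting $\epsilon\to0$ and using $I_\gamma(\alpha)=\lim_{\epsilon\to0}I_\gamma(\alpha,\epsilon)$ removes the $(3+\alpha)\epsilon$ term and gives $I_\gamma(\alpha)+t(1-\gamma)\alpha\le k(t)-(D+t)\gamma\alpha$; finally, since $D<\dim_F{}^*\!\nu_t$ was arbitrary, letting $D\uparrow\dim_F{}^*\!\nu_t$ yields
\[
I_\gamma(\alpha)+t(1-\gamma)\alpha\le k(t)-(t+\dim_F{}^*\!\nu_t)\gamma\alpha,\qquad\gamma\in(0,2].
\]

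For $\gamma=0$ the assertion reduces to $I_0(\alpha)+t\alpha\le k(t)$, which I would obtain either directly from Proposition~\ref{prop:PtopissupMCE}, or by letting $\gamma\downarrow0$ in the inequality just established: by Lemma~\ref{lem:elementaryQgammainequality} one has $0\le I_0(\alpha)-I_\gamma(\alpha)\le\gamma\alpha$, while the right-hand side $k(t)-(t+\dim_F{}^*\!\nu_t)\gamma\alpha$ tends to $k(t)$. I do not expect any genuine difficulty here: every step is a routine manipulation of $\varlimsup$'s and limits, and all the analytic content is already contained in Lemma~\ref{lem:tightboundsforQgamma}, whose proof is where the work lies (it uses the eigenmeasure identity $({}^*\!P_t)_{\Cdot}{}^*\!\nu_t=e^{k(t)}{}^*\!\nu_t$ and the Frostman bound on ${}^*\!\nu_t$).
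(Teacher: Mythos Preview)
Your proposal is correct and matches the paper's approach exactly: the paper states that the lemma is a straightforward consequence of Lemma~\ref{lem:tightboundsforQgamma} (obtained by letting $n\to\infty$, $\epsilon\to0$, $D\uparrow\dim_F{}^*\!\nu_t$), with the case $\gamma=0$ handled by the continuity of $\gamma\mapsto I_\gamma(\alpha)$ from Lemma~\ref{lem:elementaryQgammainequality}. Your write-up is in fact more detailed than the paper's one-line justification.
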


We are now able to prove:

\begin{lemma}\label{lem:zetat'zetat}
For all \(t \in (t_c',0]\), \(\zeta_t'=\zeta_t\).
\end{lemma}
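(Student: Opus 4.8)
The plan is to establish the equality first for $t \in (t_c',0)$ and then extend to $t = 0$ by continuity. First I would record that $\zeta_t$ is in fact well-defined on $(t_c',0]$: by Lemma \ref{lem:ktk+} and Proposition \ref{prop:oP}, for $t_c' < t \le 0$ we have
\[
k(t) = k^+(t) > \sup_{\alpha \in \mathrm{A}} \sup_{\gamma \in [1,2]} \{I_\gamma(\alpha) + t(1-\gamma)\alpha\} \ge \sup_{\alpha \in \mathrm{A}} \{I_2(\alpha) - t\alpha\},
\]
the last inequality coming from taking $\gamma = 2$; hence $t > t_c$, so $\zeta_t$ makes sense (and $I(0) = I_2(0) < k(t)$).

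Now fix $t \in (t_c',0)$. Combining the lower bound $\dim_F {}^* \! \nu_t \ge -t + \zeta_t'$ from Lemma \ref{lem:dimlowerbound} with Lemma \ref{lem:boundsforQgamma}, and using $\gamma\alpha \ge 0$, I obtain the key inequality
\begin{equation*}
I_\gamma(\alpha) + t(1-\gamma)\alpha \le k(t) - \zeta_t' \gamma \alpha \qquad \text{for all } \alpha \in \mathrm{A},\ \gamma \in [0,2]. \tag{$\star$}
\end{equation*}
On the other hand, Lemma \ref{lem:zetatunique} (via (\ref{eqn:betatgammat})) supplies $\beta \in \mathrm{A} \setminus \{0\}$ and $\gamma \in (1,2]$ with
\[
I_\gamma(\beta) + t(1-\gamma)\beta - 2\zeta_t'(1-\gamma)\beta = k^+(t) = k(t).
\]
Plugging $I_\gamma(\beta) + t(1-\gamma)\beta = k(t) + 2\zeta_t'(1-\gamma)\beta$ into $(\star)$ at $(\beta,\gamma)$ gives $2\zeta_t'(1-\gamma)\beta \le -\zeta_t'\gamma\beta$, and since $\zeta_t'\beta > 0$ this forces $2 - \gamma \le 0$, i.e. $\gamma = 2$. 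Substituting back, $I_2(\beta) - t\beta + 2\zeta_t'\beta = k(t)$, so $\sup_{\alpha \in \mathrm{A}}\{I_2(\alpha) - t\alpha + 2\zeta_t'\alpha\} \ge k(t)$; the $\gamma = 2$ instance of $(\star)$ gives the reverse inequality, so $\sup_{\alpha \in \mathrm{A}}\{I_2(\alpha) - t\alpha + 2\zeta_t'\alpha\} = k(t)$, and the uniqueness clause in Lemma \ref{lem:zetatunique} then yields $\zeta_t' = \zeta_t$. For $t = 0$ I would invoke Lemma \ref{lem:continuity}: both $t \mapsto \zeta_t'$ and $t \mapsto \zeta_t$ are continuous on a left neighbourhood of $0$, so letting $t \uparrow 0$ in the equality just proved gives $\zeta_0' = \zeta_0$.

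The crux — and, as the introduction stresses, the crux of the paper as a whole — is the step showing that the maximizing exponent $\gamma$ in (\ref{eqn:betatgammat}) must equal $2$. The only delicate point is that this deduction requires having, simultaneously, the Frostman-dimension lower bound $\dim_F {}^* \! \nu_t \ge -t + \zeta_t'$ (which itself rests on the Riesz-potential estimates deferred to the appendix and on ${}^* \! \nu_t$ being an eigenmeasure of ${}^*\!P_t$) and the upper bounds of Lemma \ref{lem:boundsforQgamma}; once both are in hand the argument is a short algebraic manipulation, and no genuinely new estimate is needed here.
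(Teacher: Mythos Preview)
Your proposal is correct and follows essentially the same route as the paper's own proof: combine the Frostman lower bound from Lemma~\ref{lem:dimlowerbound} with Lemma~\ref{lem:boundsforQgamma} to obtain the inequality $(\star)$, confront it with the extremal pair $(\beta,\gamma)$ from (\ref{eqn:betatgammat}) to force $\gamma=2$, and then pass to $t=0$ by continuity (Lemma~\ref{lem:continuity}). The only cosmetic differences are that you make the inclusion $(t_c',0]\subseteq(t_c,0]$ explicit at the outset (the paper takes this for granted when invoking (\ref{eqn:zetat'improved})), and that you conclude via the uniqueness clause of Lemma~\ref{lem:zetatunique} rather than by separately noting the trivial inequality $\zeta_t'\le\zeta_t$.
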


\begin{proof}
    Considering (\ref{eqn:zetat'improved}) it is clear that \(\zeta_t' \leq \zeta_t\) for all \(t \in (t_c',0]\). For the other direction, first assume that \(t \not= 0\). By Lemmas \ref{lem:USC} and \ref{lem:zetatunique} we can let \(\beta_t>0\) and \(\gamma_t \in (1,2]\) be such that 
    \[I_{\gamma_t}(\beta_t)+t(1-\gamma_t)\beta_t-2\zeta_t' (1-\gamma_t)\beta_t=k(t).\]
    By Lemmas \ref{lem:boundsforQgamma} and \ref{lem:dimlowerbound}, for all \(\alpha \geq 0\) and \(\gamma \in [0,2]\) we have 
    \[ I_\gamma(\alpha)+t(1-\gamma)\alpha \leq k(t) -\zeta_t' \gamma \alpha.\]
    Hence,
    \[ 2\zeta_t' (1-\gamma_t)\beta_t \leq -\zeta_t' \gamma_t \beta_t,\]
    so \(\gamma_t \geq 2\), i.e. \(\gamma_t =2\). We have shown for \(t \not= 0\) that 
    \[I_2(\beta_t)-t\beta_t+2\zeta_t'\beta_t=k(t),\]
    which implies that \(\zeta_t \leq \zeta_t'\). By continuity of \(t \mapsto \zeta_t\) and \(t \mapsto \zeta_t'\) (Lemma \ref{lem:continuity}), this further implies that \(\zeta_0 \leq \zeta_0'\). 
    \end{proof}

\begin{lemma}\label{lem:tc'=tc}
    \(t_c'=t_c\).
\end{lemma}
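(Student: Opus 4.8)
The plan is to establish $t_c\le t_c'$ and $t_c'\le t_c$ separately; the first is short and the second is the crux. Write $G(t):=\sup_{\alpha\in\mathrm{A}}\sup_{\gamma\in[1,2]}\{I_\gamma(\alpha)+t(1-\gamma)\alpha\}$ and $H(t):=\sup_{\alpha\in\mathrm{A}}\sup_{\gamma\in[0,1]}\{I_\gamma(\alpha)+t(1-\gamma)\alpha\}$, so that $k^+=\max\{H,G\}$ by Proposition \ref{prop:oP}; since $G$ (resp.\ $H$) is a supremum of affine functions with slopes $(1-\gamma)\alpha\le0$ (resp.\ $\ge0$), $G$ is non-increasing, $H$ is non-decreasing, and $G-H$ is non-increasing, so $\{t\ge-1:G(t)<k^+(t)\}=\{t\ge-1:G(t)<H(t)\}$ is an up-set in $[-1,\infty)$ with infimum $t_c'$. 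Hence $G(t)<k^+(t)$ for all $t>t_c'$, and $G(t)=k^+(t)$ for $t\in[-1,t_c')$, so by continuity $G(t_c')=k^+(t_c')$ whenever $t_c'>-1$. For Step 1, fix $t\in(t_c',0)$, a nonempty interval since $t_c'<0$ by Lemma \ref{lem:tc<0}: by Lemma \ref{lem:ktk+}, $k(t)=k^+(t)$, so $\sup_{\alpha\in\mathrm{A}}\{I_2(\alpha)-t\alpha\}\le G(t)<k^+(t)=k(t)$. Thus $(t_c',0)$ is contained in the set $\{t:k(t)>\sup_\alpha\{I_2(\alpha)-t\alpha\}\}$ whose infimum defines $t_c$, giving $t_c\le t_c'$.

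For Step 2, suppose for contradiction that $t_c<t_c'$ and put $\tau:=t_c'$; then $-1\le t_c<\tau<0$, so $\tau\in(t_c,0)$ and $\tau>-1$. For $s\in(\tau,0)$, combining Lemmas \ref{lem:boundsforQgamma}, \ref{lem:dimlowerbound} and \ref{lem:zetat'zetat} (this is exactly the inequality appearing in the proof of Lemma \ref{lem:zetat'zetat}) gives $I_\gamma(\alpha)+s(1-\gamma)\alpha\le k(s)-\zeta_s\gamma\alpha$ for all $\alpha\in\mathrm{A}$ and $\gamma\in[0,2]$. Since $s\mapsto\zeta_s$ is continuous at $\tau$ (Lemma \ref{lem:continuity}, using $\tau>t_c$) and $k$ is continuous, letting $s\downarrow\tau$ with $\alpha,\gamma$ fixed yields
\begin{equation}
 I_\gamma(\alpha)+\tau(1-\gamma)\alpha\le k(\tau)-\zeta_\tau\gamma\alpha\qquad\text{for all }\alpha\in\mathrm{A},\ \gamma\in[0,2]. \tag{$\star$}
\end{equation}
Because $\tau\in(t_c,0]$, Lemma \ref{lem:zetatunique} together with the definition of $\zeta_t$ gives $\zeta_\tau\in(0,1+\tau]$, in particular $\zeta_\tau>0$. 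Taking the supremum of $(\star)$ over $\alpha\in\mathrm{A}$ and $\gamma\in[1,2]$, and using $\zeta_\tau\gamma\alpha\ge0$, gives $G(\tau)\le k(\tau)$; combined with $k(\tau)\le k^+(\tau)=G(\tau)$ (valid since $\tau<0$ and $\tau=t_c'>-1$) this forces $m:=G(\tau)=k(\tau)=k^+(\tau)$. Now pick $(\alpha_j,\gamma_j)\in\mathrm{A}\times[1,2]$ with $I_{\gamma_j}(\alpha_j)+\tau(1-\gamma_j)\alpha_j\to m$; by $(\star)$, $0\le\zeta_\tau\gamma_j\alpha_j\le m-(I_{\gamma_j}(\alpha_j)+\tau(1-\gamma_j)\alpha_j)\to0$, so $\alpha_j\to0$ (as $\zeta_\tau>0$ and $\gamma_j\ge1$). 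Passing to a subsequence with $\gamma_j\to\gamma_\infty\in[1,2]$, upper semi-continuity (Lemma \ref{lem:USC}) gives $\varlimsup_jI_{\gamma_j}(\alpha_j)\le I_{\gamma_\infty}(0)=I_0(0)$; since also $\tau(1-\gamma_j)\alpha_j\to0$, we get $I_{\gamma_j}(\alpha_j)\to m$, whence $m\le I_0(0)$. On the other hand $\psi(t):=k(t)-\sup_{\alpha\in\mathrm{A}}\{I_2(\alpha)-t\alpha\}$ is non-decreasing ($k$ is a supremum of non-decreasing affine functions by Proposition \ref{prop:PtopissupMCE}, and $-\sup_\alpha\{I_2(\alpha)-t\alpha\}$ is non-decreasing as well) and continuous, with $\psi(t_c)=0$ and $\psi(0)=-\sup_\alpha I_2(\alpha)>0$ (using $k(0)=0$ and Proposition \ref{prop:Igamma<0forgammain12}); hence $\psi(\tau)>0$ since $\tau>t_c$, i.e.\ $k(\tau)>\sup_{\alpha\in\mathrm{A}}\{I_2(\alpha)-\tau\alpha\}\ge I_0(0)$ (the last inequality using $I_2(0)=I_0(0)$ when $0\in\mathrm{A}$, and trivial otherwise since then $I_0(0)=-\infty$). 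This contradicts $k(\tau)=m\le I_0(0)$, so $t_c<t_c'$ is impossible; therefore $t_c'\le t_c$, and with Step 1, $t_c'=t_c$.

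The main obstacle is Step 2: one must pass from the estimates available on the \emph{open} interval $(t_c',0)$ to the boundary estimate $(\star)$ at $\tau=t_c'$, and then exploit $\zeta_\tau>0$ — which is exactly where the hypothesis $\tau>t_c$ enters — together with upper semi-continuity of $I_\cdot(\cdot)$ near $\alpha=0$ to force the supremum $G(\tau)=k(\tau)$ to be realised only in the limit $\alpha\to0$, which pins $k(t_c')$ down to $I_0(0)$. Lining up the boundary identity $k^+(t_c')=G(t_c')$ with the various continuity and semi-continuity inputs requires some care; the remaining steps are routine.
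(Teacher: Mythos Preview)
Your proof is correct, and the overall strategy—pass to the boundary estimate $(\star)$ at $\tau=t_c'$ and exploit $\zeta_\tau>0$—is sound. All the ingredients you invoke (continuity of $\zeta_t$ on $(t_c,\infty)$, $G(t_c')=k^+(t_c')$ by openness of $\{G<k^+\}$, upper semi-continuity of $I_\cdot(\cdot)$ near $\alpha=0$, and $I_\gamma(0)=I_0(0)$) are available and used legitimately.

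The paper's argument is organized differently. It does not argue by contradiction from $t_c<t_c'$; instead it first disposes of the case $k(t_c')=I_0(0)$ (where $\psi(t_c')\le 0$ is immediate), and in the remaining case $k(t_c')>I_0(0)$ shows directly that $\zeta_{t_n}\to 0$ as $t_n\downarrow t_c'$, using the same boundary inequality you call $(\star)$. From $\zeta_{t_n}\to 0$ and the defining equation $I_2(\beta_{t_n})-t_n\beta_{t_n}+2\zeta_{t_n}\beta_{t_n}=k(t_n)$ it then extracts a limit point $\beta_{t_c'}$ with $k(t_c')\le I_2(\beta_{t_c'})-t_c'\beta_{t_c'}$, i.e.\ $\psi(t_c')\le 0$. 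Your approach and the paper's hinge on the same estimate; the difference is that the paper works with $\varlimsup\zeta_{t_n}$ and proves it vanishes (a fact noted after Theorem~\ref{theo:measure} as having independent interest), whereas you work with $\zeta_\tau$ itself and use its positivity, granted by the hypothesis $\tau>t_c$, to force the maximizing sequence for $G(\tau)$ to collapse to $\alpha=0$. Your route avoids the case split and is a little more streamlined at the end; the paper's route yields the extra conclusion $\zeta_{t_n}\to 0$ along the way.
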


\begin{proof}
    Again it is clear that \(t_c \leq t_c'\), so we show the other inequality. If \(k(t_c')=I_0(0) \equiv I_2(0)\) we have \(t_c \geq t_c'\) immediately, so assume \(I_0(0)<k(t_c')\) (i.e. \(t_c'>t_0\)). 
    
    Let \(t_n \downarrow t_c'\). By Lemmas \ref{lem:zetatunique} and \ref{lem:USC}, for each \(n \in \N\) we can let \(\beta_{t_n}>0\) be such that 
    \[ I_2(\beta_{t_n})-t_n \beta_{t_n}+2\zeta_{t_n} \beta_{t_n} =k(t_n). \]
    Let \(\beta_{t_c'} \in \mathrm{A}\) be a limit point of the sequence \((\beta_{t_n})_{n \in \N}\) and without loss of generality assume that \(\beta_{t_n} \rightarrow \beta_{t_c'}\). By Lemma \ref{lem:USC},
    \begin{align*}
        k(t_c')= \lim_{n \rightarrow \infty} k(t_n) &= \lim_{n \rightarrow \infty} I_2(\beta_{t_n})-t_n \beta_{t_n}+2\zeta_{t_n} \beta_{t_n} \\
        &\leq  I_2(\beta_{t_c'})-t_c' \beta_{t_c'}+ 2(\varlimsup_{n \rightarrow \infty} \zeta_{t_n}) \beta_{t_c'}.
    \end{align*}
    Hence, to finish the proof of the lemma, it suffices to show that \(\varlimsup_{n \rightarrow \infty} \zeta_{t_n}=0\). 
    
    Combining Lemmas \ref{lem:dimlowerbound}, \ref{lem:boundsforQgamma}, and \ref{lem:zetat'zetat} we have for all \(t>t_c'\), all \(\alpha \in \mathrm{A}\), and \(\gamma \in [0,2] \),  
    \[I_{\gamma}(\alpha)+t(1-\gamma)\alpha \leq k(t)-\zeta_t \gamma \alpha.\]
    By continuity of \(k(t)\) and \(\zeta_t\), we further have that for all \(\alpha \in \mathrm{A}\) and \(\gamma \in [0,2] \),  
    \begin{equation}\label{eqn:boundwithlimsupzetat}
        I_{\gamma}(\alpha)+t_c'(1-\gamma)\alpha \leq k(t_c')-(\varlimsup_{n \rightarrow \infty} \zeta_{t_n}) \gamma \alpha.
    \end{equation}
    Since we are assuming that \(I_0(0)<k(t_c')\), for some \(\underline{\alpha}>0\) small enough we have \(I_0(0,\underline{\alpha})-t_c'\underline{\alpha}<k(t_c')\), so
    \[\sup_{\alpha \leq \underline{\alpha}/2}  \sup_{\gamma \in [1,2]} \left\{I_{\gamma}(\alpha)+t_c'(1-\gamma)\alpha \right\}  \leq I_0(0,\underline{\alpha})-t_c' \underline{\alpha}<k(t_c').\]
    If \(\varlimsup_{n \rightarrow \infty} \zeta_{t_n}>0\), by (\ref{eqn:boundwithlimsupzetat}) we also have
    \begin{align*}
        \sup_{\alpha \geq \underline{\alpha}/2}  \sup_{\gamma \in [1,2]} \left\{I_{\gamma}(\alpha)+t_c'(1-\gamma)\alpha \right\} &\leq k(t_c')- \left(\varlimsup_{n \rightarrow \infty} \zeta_{t_n}\right) \underline{\alpha}/2 \\
        &<k(t_c'),
    \end{align*}
    which implies
    \[\sup_{\alpha \in \mathrm{A}}  \sup_{\gamma \in [1,2]} \left\{I_{\gamma}(\alpha)+t_c'(1-\gamma)\alpha \right\} <k(t_c'). \]
    However, this contradicts the definition of \(t_c'\) since this would imply for some \(t<t_c'\),
    \[\sup_{\alpha \in \mathrm{A}} \sup_{\gamma \in [1,2]} \{I_{\gamma}(\alpha)+t(1-\gamma)\alpha\} <k(t).\]
    In particular, we must have \( \varlimsup_{n \rightarrow \infty} \zeta_{t_n}=0\).
\end{proof}

\subsection{Analyticity of \(k(t)\) on \((t_c,0)\)}
 Let \(\overline{C^{\zeta}}(\BP)\) denote the complexification of \(C^{\zeta}(\BP)\), that is, 
    \[\overline{C^{\zeta}}(\BP):= \{f+ig: f,g \in C^{\zeta}(\BP) \}. \]
    The function
    \[ |f+ig |_{\overline{C^{\zeta}}(\BP)}:=\sup\{ |\cos(\theta) f+\sin(\theta)g|_{\zeta}: \theta \in [0,2\pi]\}\]
    defines a norm on \(\overline{C^{\zeta}}(\BP)\), making \( (\overline{C^{\zeta}}(\BP), |\cdot |_{\overline{C^{\zeta}}(\BP)}) \) a Banach space. We define the operators \(\overline{P_z}: \overline{C^{\zeta}}(\BP) \rightarrow \overline{C^{\zeta}}(\BP)\) by
    \[ \overline{P_z}(F)(x)=\int e^{z \log |gx|} F(gx) \diff \mu(g), \quad \forall z\in \C. \]
    Note that for \(t \in \R\), \(\overline{P_t}\) acts on \(\overline{C^{\zeta}}(\BP)\) by
    \[\overline{P_t}(f+ig)=P_t f+i P_t g.\]
    This combined with Proposition \ref{prop:spectralgap}  immediately implies the following lemma.

    \begin{lemma}\label{lem:overlinePtspectralgap}
    For all \(t_c<t \leq 0\) and \(0<\zeta<\zeta_t'\), we can write
    \[ \overline{P_t}:= e^{k(t)} (\nu_t \otimes  h_t+ \overline{S_t}),\]
    where \((\nu_t \otimes  h_t) (f+ig):=(\nu_t(f)+i\nu_t(g)) h_t\) and \(\overline{S_t}:\overline{C^{\zeta}}(\BP) \rightarrow \overline{C^{\zeta}}(\BP)\) satisfies \(\rho(\overline{S_t})<1\).
    \end{lemma}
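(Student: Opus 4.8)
The plan is to complexify the spectral decomposition of Proposition \ref{prop:spectralgap} directly. Since $t_c' = t_c$ by Lemma \ref{lem:tc'=tc}, the hypothesis $t_c < t \leq 0$ coincides with $t_c' < t \leq 0$, so Proposition \ref{prop:spectralgap} applies and supplies a bounded operator $S_t : C^\zeta(\BP) \to C^\zeta(\BP)$ with $\rho(S_t) < 1$ and $P_t = e^{k(t)}(\nu_t \otimes h_t + S_t)$. I would set $\overline{S_t}(f+ig) := S_t f + i\, S_t g$ for $f, g \in C^\zeta(\BP)$. Using the identity $\overline{P_t}(f+ig) = P_t f + i\, P_t g$ recorded just above the lemma together with linearity of $\nu_t$ and $S_t$, one obtains
\[
\overline{P_t}(f+ig) = e^{k(t)}\Big( \big(\nu_t(f) + i\nu_t(g)\big) h_t + S_t f + i\, S_t g \Big) = e^{k(t)}\Big( (\nu_t \otimes h_t)(f+ig) + \overline{S_t}(f+ig) \Big),
\]
which is precisely the asserted decomposition, provided we know $\overline{S_t}$ is bounded on $\overline{C^\zeta}(\BP)$ with $\rho(\overline{S_t}) < 1$.

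Both properties follow from the specific shape of the complexification norm $|f+ig|_{\overline{C^\zeta}(\BP)} = \sup_{\theta \in [0,2\pi]} |\cos\theta\, f + \sin\theta\, g|_\zeta$. For any bounded real operator $T$ on $C^\zeta(\BP)$, writing $\overline{T}(f+ig) := Tf + i\,Tg$, the identity $\cos\theta\,(Tf) + \sin\theta\,(Tg) = T(\cos\theta\, f + \sin\theta\, g)$ gives
\[
|\overline{T}(f+ig)|_{\overline{C^\zeta}(\BP)} = \sup_{\theta \in [0,2\pi]} |T(\cos\theta\, f + \sin\theta\, g)|_\zeta \leq \|T\|\, |f+ig|_{\overline{C^\zeta}(\BP)},
\]
and taking $g = 0$ gives the reverse inequality, so $\|\overline{T}\| = \|T\|$. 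In particular $\overline{S_t}$ is bounded; and since $\overline{S_t}^{\,n} = \overline{S_t^{\,n}}$ by a one-line induction on the definition, $\|\overline{S_t}^{\,n}\| = \|S_t^{\,n}\|$ for every $n \in \N$, so by Gelfand's formula $\rho(\overline{S_t}) = \lim_{n \to \infty} \|S_t^{\,n}\|^{1/n} = \rho(S_t) < 1$.

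There is essentially no obstacle: the lemma is immediate once the norm identity $\|\overline{T}\| = \|T\|$ for real operators is recorded, which is exactly why the norm on $\overline{C^\zeta}(\BP)$ is defined the way it is. The only point deserving a line of care is that $\nu_t \otimes h_t$ and $\overline{S_t}$ are genuinely bounded operators $\overline{C^\zeta}(\BP) \to \overline{C^\zeta}(\BP)$ (so that the displayed identity is an honest decomposition into bounded pieces), but the same computation as above delivers this — for the rank-one piece, $(\nu_t \otimes h_t)(f+ig) = (\nu_t(f) + i\nu_t(g)) h_t$ has $\overline{C^\zeta}(\BP)$-norm controlled by a constant times $|f+ig|_{\overline{C^\zeta}(\BP)}$ since $\nu_t$ is a bounded functional on $C^\zeta(\BP)$ and $h_t \in C^\zeta(\BP)$.
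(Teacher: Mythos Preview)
Your proposal is correct and matches the paper's approach: the paper simply states that the identity \(\overline{P_t}(f+ig)=P_t f+iP_t g\) combined with Proposition~\ref{prop:spectralgap} ``immediately implies'' the lemma, and you have written out precisely the routine complexification argument that underlies this claim, including the norm identity \(\|\overline{T}\|=\|T\|\) and the consequence \(\rho(\overline{S_t})=\rho(S_t)<1\) via Gelfand's formula.
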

    
    Proposition V.4.1 in \cite{BL85} proves that the family \(\{\overline{P_z}:z \in \C \}\) is an analytic family of bounded operators on \(\overline{C^{\zeta}}(\BP)\) for any \(\zeta \in (0,1]\) (it is easily seen from the proof that the \(\eta\) in the proposition can be taken to be \(\infty\) when \(\mu\) is compactly supported). Hence, by perturbation theory (see \cite[Lemma V.3.2]{BL85}), we immediately have from Lemma \ref{lem:overlinePtspectralgap}:
    
\begin{prop}\label{prop:ktanalytic}
The map \(t \mapsto k(t)\) is analytic on \((t_c,0)\).    
\end{prop}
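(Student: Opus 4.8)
The plan is to identify $e^{k(t)}$ with an isolated simple eigenvalue of $\overline{P_t}$ acting on a \emph{fixed} Hölder space, and then invoke analytic perturbation theory for the family $z \mapsto \overline{P_z}$. Fix $t_0 \in (t_c,0)$. Since $t_c = t_c'$ by Lemma \ref{lem:tc'=tc}, we have $\zeta_{t_0}' > 0$; choose any $\zeta \in (0,\zeta_{t_0}')$. By the continuity of $t \mapsto \zeta_t'$ on $(t_c',\infty)$ (Lemma \ref{lem:continuity}), there is an open interval $U$ with $t_0 \in U$ and $\overline{U} \subset (t_c,0)$ such that $\zeta < \zeta_t'$ for every $t \in U$. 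Hence Lemma \ref{lem:overlinePtspectralgap} applies with this single $\zeta$ for all $t \in U$: writing $\overline{P_t} = e^{k(t)}(\nu_t \otimes h_t + \overline{S_t})$ with $\rho(\overline{S_t}) < 1$, we see that $e^{k(t)}$ is a simple eigenvalue of $\overline{P_t}$ which strictly dominates the remainder of its spectrum.

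Next I would bring in the perturbation theory. By \cite[Proposition V.4.1]{BL85} the family $\{\overline{P_z} : z \in \C\}$ is an analytic family of bounded operators on $\overline{C^{\zeta}}(\BP)$ (with $\eta = \infty$, since $\mu$ is compactly supported). As $e^{k(t_0)}$ is an isolated simple eigenvalue of $\overline{P_{t_0}}$, the standard analytic perturbation theory of isolated simple eigenvalues (\cite[Lemma V.3.2]{BL85}) produces a complex neighbourhood $V$ of $t_0$ and an analytic function $\lambda : V \to \C$ with $\lambda(t_0) = e^{k(t_0)}$ such that $\lambda(z)$ is a simple isolated eigenvalue of $\overline{P_z}$ for all $z \in V$, varying analytically with $z$.

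Finally I would match $\lambda(t)$ with $e^{k(t)}$ and conclude. For real $t \in V \cap U$, the spectral gap in Lemma \ref{lem:overlinePtspectralgap} makes $e^{k(t)}$ the unique eigenvalue of $\overline{P_t}$ of maximal modulus; since $\lambda(t)$ is the analytic continuation of $e^{k(t_0)}$ and stays close to $e^{k(t_0)}$ for $t$ near $t_0$, it must coincide with this leading eigenvalue, i.e.\ $\lambda(t) = e^{k(t)}$ for $t$ in a (possibly smaller) real neighbourhood of $t_0$. After shrinking $V$ so that $\lambda$ is non-vanishing there (possible as $\lambda(t_0) = e^{k(t_0)} > 0$), the map $t \mapsto \log \lambda(t) = k(t)$ is analytic on a neighbourhood of $t_0$. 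Since $t_0 \in (t_c,0)$ was arbitrary, $k$ is analytic on $(t_c,0)$.

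I do not expect a serious obstacle, as the substantive work is already contained in Lemma \ref{lem:overlinePtspectralgap} and in \cite{BL85}; the two points requiring care are that a \emph{single} Banach space $\overline{C^{\zeta}}(\BP)$ must host the spectral gap throughout a neighbourhood of $t_0$ — which is exactly what the continuity of $\zeta_t'$ furnishes — and that the analytically continued eigenvalue genuinely equals $e^{k(t)}$ on the real axis, which follows from the uniqueness of the leading eigenvalue under the spectral gap.
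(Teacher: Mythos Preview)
Your proof is correct and follows essentially the same route as the paper: the paper's argument is simply ``Lemma~\ref{lem:overlinePtspectralgap} plus the analyticity of $z\mapsto\overline{P_z}$ from \cite[Proposition~V.4.1]{BL85} plus perturbation theory \cite[Lemma~V.3.2]{BL85}'', with no further details. Your version makes explicit two points the paper leaves to the reader --- using the continuity of $\zeta_t'$ (Lemma~\ref{lem:continuity}) to work on a single space $\overline{C^{\zeta}}(\BP)$ throughout a neighbourhood of $t_0$, and the identification $\lambda(t)=e^{k(t)}$ for real $t$ --- and both are handled correctly.
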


\begin{remark}
Combined with \cite[Theorem 1.1]{GP16} and \cite[Theorem V.4.3]{BL85}, this implies that \(k(t)\) is analytic on \((t_c,\infty)\).
\end{remark}

\subsection{Proof of Theorems \ref{theo:spectralgap}-\ref{theo:Igammaestimates}}
\begin{proof}[Proof of Theorem \ref{theo:spectralgap}]
Theorem \ref{theo:spectralgap} follows immediately from Proposition \ref{prop:ktanalytic} and combining Proposition \ref{prop:spectralgap} with Lemmas \ref{lem:zetat'zetat} and \ref{lem:tc'=tc}. 
\end{proof}

\begin{proof}[Proof of Theorem \ref{theo:measure} except for \(t =0\)]
We now prove Theorem \ref{theo:measure} in the case \(t \in (t_c,0)\). We postpone the proof of the \(t=0\) case until after we have proved Theorem \ref{theo:Igammaestimates}.  By Lemmas \ref{lem:dimlowerbound} and \ref{lem:zetat'zetat} we have \(({}^* \! P_t)_{\Cdot} {}^* \! \nu_t=e^{k(t)} {}^* \! \nu_t\) and \(\dim_F {}^* \! \nu_t \geq -t+\zeta_t\). By upper semi-continuity of \(I_2(\cdot)\) and Lemma \ref{lem:zetatunique} there exists \(\beta_t>0\) such that 
\begin{equation}\label{eqn:alphawhichattainsupinQ21}
    I_2(\beta_t)-t\beta_t +2\zeta_t \beta_t= k(t) .
\end{equation}
By Lemma \ref{lem:boundsforQgamma} we have \(I_2(\beta_t)-t\beta_t  \leq k(t) -2(t+\dim_F{}^* \! \nu_t)\beta_t \),
which, combined with (\ref{eqn:alphawhichattainsupinQ21}), implies that \(\dim_F{}^* \! \nu_t \leq -t+ \zeta_t\). Hence, \(\dim_F{}^* \! \nu_t = -t+ \zeta_t\).

The statement that 
\[ h_t(x) \equiv  C_{\lambda,t}^{-1} \lambda(h_t) \int |\innerproduct{x}{w}|^t \diff {}^{*} \! \nu_t(w)\]
is in Lemma \ref{lem:dimlowerbound}. Since \(\dim_F{}^* \! \nu_t=-t+ \zeta_t\), by Propositions \ref{prop:hisholder} and \ref{prop:Holdergivesdimensionbound} we have \(\sup\{\zeta>0:h_t \in C^{\zeta}(\BP)\}=\zeta_t\).

To show the uniqueness statement in the theorem, suppose that \({}^* \! \nu_t' \in \MCP(\BP)\) satisfies \(({}^* \! P_t)_{\Cdot} {}^* \! \nu_t'=e^{k(t)} {}^* \! \nu_t'\) and \(\dim_F {}^* \! \nu_t'>-t\). Then, by Proposition \ref{prop:hisholder}, 
\[h_t'(x):= C_{\lambda,t}^{-1} \lambda(h_t) \int |\innerproduct{x}{w}|^t \diff {}^* \! \nu_t'(w)\]
is H\"older continuous, by Lemma \ref{lem:Ptdelta*Pt} we have that \(P_t h_t'=e^{k(t)}h_t'\), and by Tonelli's theorem  we also have \({}^*\! \nu_t(h_t')=1\). However, for all \(0<\zeta<\zeta_t\), \(h_t\) is the unique H\"older continuous function in \(C^{\zeta}(\BP)\) satisfying \(P_t h_t=e^{k(t)}h_t\) and \(\nu_t(h_t)=1\), so \(h_t'=h_t\). Proposition \ref{prop:Inu1=Inu2impliesnu1=nu2} therefore implies \( {}^* \! \nu_t'=  {}^* \! \nu_t\).
\end{proof}

\begin{proof}[Proof of Theorem \ref{theo:Igammaestimates}]
For \(t \in (t_c,0)\) this follows from Lemma \ref{lem:boundsforQgamma} and the equality \(\dim_F {}^* \! \nu_t = -t+\zeta_t\) which we have just proved. For \(t=0\) this follows by continuity of \(\zeta_t\) and \(k(t)\).
\end{proof}

\subsection{Frostman dimension of \({}^* \! \nu_0\)}\label{subsec:Frostmandimensionofnu0}
Let \({}^* \! \nu_0 \in \MCP(\BP)\)  be the unique \({}^* \! \nu \in \MCP(\BP)\) satisfying \(({}^* \! P_0)_{\Cdot}{}^* \! \nu={}^* \! \nu\), which exists by the classical work of Furstenberg \cite{Fur63}. We complete the proof of Theorem \ref{theo:measure} by proving the following proposition.

\begin{prop}\label{prop:dimnu0}
    \(\dim_F{}^* \! \nu_0 =\zeta_0\).
\end{prop}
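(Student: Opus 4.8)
The plan is to prove the two bounds $\dim_F{}^* \! \nu_0\le\zeta_0$ and $\dim_F{}^* \! \nu_0\ge\zeta_0$ separately. For the upper bound: since $k(0)=0$ and ${}^* \! \nu_0$ is the ${}^* \! P_0$-stationary measure, $({}^* \! P_0)_{\Cdot}{}^* \! \nu_0=e^{k(0)}{}^* \! \nu_0$, and the proof of Lemma~\ref{lem:tightboundsforQgamma} uses nothing about ${}^* \! \nu_t$ beyond this eigenmeasure identity; so Lemma~\ref{lem:boundsforQgamma} applies verbatim at $t=0$ and gives $I_\gamma(\alpha)\le-\dim_F{}^* \! \nu_0\cdot\gamma\alpha$ for all $\alpha\in\mathrm{A}$ and $\gamma\in[0,2]$. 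Taking $\gamma=2$ gives $\sup_{\alpha\in\mathrm{A}}\{I_2(\alpha)+2\dim_F{}^* \! \nu_0\cdot\alpha\}\le 0=k(0)$; since $\zeta\mapsto\sup_{\alpha\in\mathrm{A}}\{I_2(\alpha)+2\zeta\alpha\}$ is continuous and increasing with value $k(0)$ at $\zeta=\zeta_0$ (Lemma~\ref{lem:zetatunique}), this forces $\dim_F{}^* \! \nu_0\le\zeta_0$.

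For the lower bound the idea is to pass to the limit $t\uparrow 0$ from the measures ${}^* \! \nu_t$, $t\in(t_c,0)$, which by Theorem~\ref{theo:measure} satisfy $\dim_F{}^* \! \nu_t=-t+\zeta_t\to\zeta_0$ as $t\uparrow0$ (Lemma~\ref{lem:continuity}). Fix $\zeta\in(0,\zeta_0)$; I would prove (a)~${}^* \! \nu_t\to{}^* \! \nu_0$ weak* as $t\uparrow0$, and (b)~there are $\delta_0,C>0$ with $\sup_{x\in\BP}{}^* \! \nu_t(B(x,r))\le Cr^{-t+\zeta}$ for all $r>0$ and all $t\in(-\delta_0,0)$. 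Granting these: for an open ball $U=\interior(B(x,r))$ with $r<1$, (a) gives ${}^* \! \nu_0(U)\le\varliminf_{t\uparrow0}{}^* \! \nu_t(U)\le Cr^{\zeta}$ (using $-t+\zeta\ge\zeta$ and $r<1$), so ${}^* \! \nu_0(B(x,r))\le{}^* \! \nu_0(\interior(B(x,2r)))\le C(2r)^{\zeta}$ for all $x$ and $r<\tfrac12$; hence $\dim_F{}^* \! \nu_0\ge\zeta$, and letting $\zeta\uparrow\zeta_0$ finishes the lower bound.

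Both (a) and (b) rest on the spectral gap at $t=0$, which is why this last piece genuinely needs it. For (a): ${}^* \! P_0=P_0^{\mu^*}$ has a spectral gap on some Hölder space $C^{\zeta_1}(\BP)$ (Theorem~\ref{theo:spectralgap} applied to $\mu^*$, which again satisfies SIP, or classically Le Page), the family $\{\overline{P_z}\}$ attached to $\mu^*$ is analytic on $\overline{C^{\zeta_1}}(\BP)$ by \cite[Proposition~V.4.1]{BL85}, and $e^{k(t)}$ is a simple eigenvalue of ${}^* \! P_t$ for $t$ near $0$; hence the normalised leading eigenmeasure of ${}^* \! P_t$ depends continuously on $t$, and by the uniqueness statement in Theorem~\ref{theo:measure} it coincides with ${}^* \! \nu_t$ for $t<0$, giving ${}^* \! \nu_t\to{}^* \! \nu_0$ weak*. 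For (b): by Lemma~\ref{lem:dimlowerbound}, $\MCI_{t,({}^* \! \nu_t)^{\perp}}$ is a fixed scalar multiple of the $P_t$-eigenfunction $h_t$, so Proposition~\ref{prop:Holdergivesdimensionbound} bounds the Frostman constant of $({}^* \! \nu_t)^{\perp}$ --- equivalently of ${}^* \! \nu_t$, as $x\mapsto x^{\perp}$ is an isometry of $(\BP,d_{\BP})$ --- at exponent $-t+\zeta$ by $|h_t|_{\zeta}$ together with the norm of the fractional Laplacian $(-\Delta)^{(1+t)/2}$ on the relevant Hölder--Zygmund scale from \cite[Theorem~1.5]{RS16}; this norm stays bounded as $t\uparrow0$ for fixed $\zeta$ away from integers, while $h_t\to h_0\equiv 1$ in $C^{\zeta}(\BP)$ by analytic perturbation of the spectral gap of $P_0$ on $C^{\zeta}(\BP)$ (valid for $\zeta<\zeta_0$ by Theorem~\ref{theo:spectralgap}), since $h_t$ equals the leading spectral projection $\nu_t\otimes h_t$ of $P_t$ applied to the constant function $1$.

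The hard part will be making the constant in Proposition~\ref{prop:Holdergivesdimensionbound} uniform in $t$ on a left neighbourhood of $0$: one must revisit its Fourier-analytic proof, tracking the $t$-dependence of every constant (the Fourier coefficients of $|\sin z|^t$, the Roncal--Stinga mapping constants) and checking no singularity appears as $t\uparrow0$, where the kernel $d_{\BP}(x,y)^t$ degenerates to the constant $1$. It is precisely the combination of $h_t\to 1$ in $C^{\zeta}$ (from the spectral gap) with these uniformly bounded analytic constants that delivers the uniform Frostman bound (b), and hence the desired lower bound $\dim_F{}^* \! \nu_0\ge\zeta_0$.
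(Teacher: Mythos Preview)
Your upper bound is correct and matches the paper. Your lower bound strategy is genuinely different: the paper does \emph{not} pass to the limit $t\uparrow 0$ through the Riesz-potential machinery at all, but instead argues directly at $t=0$, using the spectral gap of ${}^*\!P_0$ to approximate ${}^*\!\nu_0$ by $({}^*\!P_0^n)_{\Cdot}\lambda$ with exponential error $Cr^n$, then bounding $({}^*\!P_0^n)_{\Cdot}\lambda(B(x^{\perp},e^{-n\Delta}))$ by a decomposition over $\alpha,\gamma$ and invoking the already-proved Theorem~\ref{theo:Igammaestimates}. This avoids the degeneracy of the kernel $d_{\BP}(x,y)^t$ as $t\to 0$ entirely.

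Your step~(b) has a real gap. You assert that the Frostman constant coming out of Proposition~\ref{prop:Holdergivesdimensionbound} is controlled by $|h_t|_{\zeta}$ times ``uniformly bounded analytic constants'', but inspection of that proof shows the bound is
\[
\nu(B(x_0,r)) \le 8\pi^{-1}r + C_{\beta}^{-1} C_R\, [F]_{\zeta}\, (2r)^{-t+\zeta},\qquad \beta=1+t,
\]
and $C_{\beta}=\pi^{-1/2}\Gamma(\beta/2)/\Gamma((1-\beta)/2)\sim -t/2$ as $t\uparrow 0$, so $C_{\beta}^{-1}\sim -2/t\to\infty$. This is not an artefact: it reflects the fact that the Riesz kernel $d_{\BP}(x,y)^t\to 1$ loses all information about $\nu$, so inverting it is singular. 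Moreover $F$ is not $h_t$ but $G_{\beta}*({}^*\!\nu_t)^{\perp}$, which equals a scalar multiple of $h_t$ \emph{plus} the correction $b_3*({}^*\!\nu_t)^{\perp}$ from Lemma~\ref{lem:GbetaequalsDt+lipschitz}, and $b_3$ itself depends on $t$. For your limit to work you would need the cancellation $C_{\beta}^{-1}[F_t]_{\zeta}=O(1)$, i.e.\ $[F_t]_{\zeta}=O(|t|)$. Analyticity of $t\mapsto h_t$ does give $[h_t]_{\zeta}=[h_t-1]_{\zeta}=O(|t|)$, but you also need $[b_3]_{\zeta}=O(|t|)$, which requires tracking the $t$-dependence of the decomposition $G_{\beta}=d_{\BP}(0,\cdot)^t+b_3$ through Proposition~\ref{prop:SW71}; none of this is in your proposal. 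So the claim that ``no singularity appears'' is wrong as stated, and the mechanism that could rescue it (a precise $O(|t|)$ cancellation against $C_{\beta}^{-1}$) is not identified.
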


Let us give some intuition behind the proof. As in previous work, we require the spectral gap for both the operators \(P_0\) and \({}^* \! P_0\) (cf. \cite[Theorem 13.1]{BQ16}). Roughly speaking, the spectral gap for \(P_0\) is used to control the \(\mu^n\) measure of how close \(\omega_+(g) \equiv \upsilon_+(g^*)\) can be to particular \(x^{\perp} \in \BP\) (hence, the sets \(E_{\gamma}(\alpha,\epsilon,n,x)\) and quantities \(I_{\gamma}(\alpha)\) are relevant). On the other hand, the spectral gap for \({}^* \! P_0\) is used to control, in a \(\mu^n\) sense, how close \(\omega_+(g_1 g_2 \ldots g_n)\) is to the limits \(\xi_+((g_n)_{n \in \N})\) which exist \(\mu^{\N}\)-almost surely by Oseledets' theorem. We note (but do not explicitly use) that \({}^* \! \nu_0\) is equal to the projection of \(\mu^{ \N}\) onto \(\BP\) according to \( \xi_+\).

By the spectral gap result of Le Page \cite[Theorem 2.5]{BL85}, we can let \(0<\zeta<1\), \(C>0\) and \(0<r<1\) be such that \(| {}^* \! P_0^n(f)- {}^* \! \nu_0(f)|_{\zeta} \leq C r^n |f|_{\zeta}\) for all \(f \in C^{\zeta}(\BP)\) and \(n \in \N\). Then, for all \(f \in C^{\zeta}(\BP)\) and all \(n \in \N\), \(|^* \! P_0^n \lambda(f) -{}^* \! \nu_0(f)| \leq Cr^n |f|_{\zeta}\). This is capturing how far \(\omega_+(g_1 g_2 \ldots g_n)\) is from the limits \(\xi_+((g_n)_{n \in \N})\). 

The idea of the proof of Proposition \ref{prop:dimnu0} is to let \(\Delta\) be small enough, relative to \(r\), so that \(\sup_{x \in \BP} {}^* \! \nu_0(B(x,e^{-n \Delta})) \lessapprox e^{n \sup_{\alpha \in \mathrm{A}} I_{\Delta/\alpha}(\alpha)}\), in particular making any errors coming from  \(\omega_+(g_1 g_2 \ldots g_n)\) being far from \(\xi_+((g_n)_{n \in \N})\) negligible. Thus, we can invoke Theorem \ref{theo:Igammaestimates}, which is already proved, to show the more difficult lower bound in the proposition.

We first prove two technical lemmas. We note we will also assume that \(\Delta<2\underline{\alpha}\), where \(\underline{\alpha}>0\) is chosen sufficiently small so that \(g \in E(0,\underline{\alpha},n)\) can be controlled appropriately. Since \(\Delta <2\underline{\alpha}\), for \(\alpha \geq \underline{\alpha}\) we always have \(\Delta/\alpha<2\).

\begin{lemma}\label{lem:technicallemma1}
Let \(\underline{\alpha}>0\), \(0<\Delta<2\underline{\alpha}\), and \(\epsilon>0\). For all \(\alpha \geq \underline{\alpha}\) and \(n \in \N\), 
    \[  \sup_{x \in \BP} \int_{E_{\Delta/\alpha}(\alpha,3\epsilon,n,x)} \int 1_{B(x^{\perp},2e^{-n\Delta})}(g^* w)  \diff \lambda(w)  \diff \mu^n(g) \leq e^{n I_{\Delta/\alpha}(\alpha,3\epsilon,n)}  \] 
\end{lemma}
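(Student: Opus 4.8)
The inequality should follow from a simple double-counting argument once one observes that the relevant integrals are essentially measuring a measure of a set. The plan is: fix $x \in \BP$ and integrate first over $w$. For each $g \in E_{\Delta/\alpha}(\alpha, 3\epsilon, n, x)$, the inner integral $\int 1_{B(x^\perp, 2e^{-n\Delta})}(g^* w) \diff \lambda(w)$ is just $\lambda\big( (g^*)^{-1} B(x^\perp, 2e^{-n\Delta}) \big)$. The key point is that $\lambda$, being Lebesgue measure on $\BP \cong \R/\pi\Z$ normalised to total mass $1$, is itself $1$-Frostman (there is a uniform $C_0$ with $\lambda(B(y,r)) \le C_0 r$ for all $y, r$), so it suffices to show that the set $(g^*)^{-1} B(x^\perp, 2e^{-n\Delta})$ is contained in a ball whose radius is comparable to $e^{-n\Delta} \cdot |g|^{-1} \cdot (\text{something bounded})$ --- but this is exactly the content of the expansion/contraction estimates in \S\ref{subsec:tc'zetat'} (Lemmas \ref{lem:BQlemma}, \ref{lem:bougerollemma}). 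Indeed, $g^*$ contracts $\BP$ towards $\upsilon_+(g^*) = \omega_+(g)$ at rate $|g|^{-2}$ away from $\omega_-(g^*)$, so the preimage of any small ball under $g^*$ is a ball of radius at most a constant times $e^{-n\Delta} \cdot e^{2n(\alpha+3\epsilon)} \cdot$(its distance-scaling), unless that ball meets a neighbourhood of $\upsilon_-(g^*)$.

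More carefully, here is the route I would take. Since $g \in E_{\Delta/\alpha}(\alpha,3\epsilon,n,x)$ we have $d_{\BP}(x,\omega_-(g)) \le e^{-n(\Delta/\alpha - 3\epsilon)\alpha} = e^{-n(\Delta - 3\epsilon\alpha)}$, hence $d_{\BP}(x^\perp, \omega_+(g^*)) = d_{\BP}(x^\perp, \upsilon_+(g^*)) = d_{\BP}(x,\omega_-(g))$ is small. By Lemma \ref{lem:umapsclosetoupsilon+} (applied to $g^*$, noting $|g^*| = |g|$), the image $g^* B(x^\perp, R)$ for $R$ not too large is concentrated near $\upsilon_+(g^*)$; conversely, the preimage under $g^*$ of the tiny ball $B(x^\perp, 2e^{-n\Delta})$ around a point that is itself $O(e^{-n\Delta})$-close to $\upsilon_+(g^*)$ is --- by Lemma \ref{lem:bougerollemma} applied to $g^*$ --- a set of $d_{\BP}$-diameter at most $\frac{2 e^{-n\Delta} \cdot \text{(const)}}{|g^* u'||g^* v'|}$ for the relevant unit vectors, hence at most a constant multiple of $2e^{-n\Delta} \cdot e^{2n\alpha + O(n\epsilon)} \cdot e^{-2n\alpha + O(n\epsilon)} = $ still $O(e^{-n\Delta})$ up to an $e^{O(n\epsilon)}$ factor, provided this preimage stays away from $\upsilon_-(g^*) = \omega_-(g^*)$. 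But a ball around $\upsilon_-(g^*)$ has its $g^*$-image of diameter comparable to $1$, so such a ball cannot be contained in $B(x^\perp, 2e^{-n\Delta})$ once $n$ is large; thus the preimage $(g^*)^{-1} B(x^\perp, 2e^{-n\Delta})$ is indeed a ball of $d_{\BP}$-radius $O(e^{-n\Delta} e^{O(n\epsilon)})$.

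Putting this together: $\int 1_{B(x^\perp, 2e^{-n\Delta})}(g^* w)\diff\lambda(w) \le C_0 \cdot (\text{const}) \cdot e^{-n\Delta} e^{O(n\epsilon)}$, uniformly in $g \in E_{\Delta/\alpha}(\alpha,3\epsilon,n,x)$. Therefore the whole double integral is bounded by $C' e^{-n\Delta} e^{O(n\epsilon)} \mu^n(E_{\Delta/\alpha}(\alpha,3\epsilon,n,x))$, and since $\mu^n(E_{\Delta/\alpha}(\alpha,3\epsilon,n,x)) \le e^{n I_{\Delta/\alpha}(\alpha,3\epsilon,n)}$ and $e^{-n\Delta} \le 1$, the right-hand side is at most $C' e^{O(n\epsilon)} e^{n I_{\Delta/\alpha}(\alpha,3\epsilon,n)}$. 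To get the clean bound $e^{n I_{\Delta/\alpha}(\alpha,3\epsilon,n)}$ as stated --- with no constant and no $\epsilon$ loss --- one must be slightly more careful: the $e^{-n\Delta}$ decay must be used to absorb both the constant $C'$ and the $e^{O(n\epsilon)}$ factor. Here the hypothesis $\Delta < 2\underline{\alpha}$ and $\alpha \ge \underline{\alpha}$ ensures $\Delta/\alpha < 2$ so $E_{\Delta/\alpha}$ is the "right" set, and --- more importantly --- the factor $e^{-n\Delta}$ genuinely beats $e^{O(n\epsilon)}$ for $\epsilon$ small relative to $\Delta$ and $n$ large, which is presumably how the statement is intended (or the lemma tacitly allows $\epsilon$ small and $n$ large, as is standard in this paper). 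The main obstacle is thus the bookkeeping in the contraction estimate: showing that $(g^*)^{-1}B(x^\perp,2e^{-n\Delta})$ really is a single ball of radius $O(e^{-n\Delta})$ up to controllable error, which requires combining Lemma \ref{lem:bougerollemma} with the location of $\upsilon_\pm(g^*)$ relative to $x^\perp$ and checking the "escape from $\upsilon_-$" dichotomy --- exactly the kind of estimate already packaged in Lemmas \ref{lem:umapsclosetoupsilon+}--\ref{lem:distortionbound1}, so the argument is a matter of assembling these rather than proving anything new.
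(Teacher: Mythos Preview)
You have massively overcomplicated this. The paper's proof is one line: since $\lambda$ is a probability measure on $\BP$, the inner integral satisfies
\[
\int 1_{B(x^{\perp},2e^{-n\Delta})}(g^* w)\,\diff\lambda(w)=\lambda\big((g^*)^{-1}B(x^{\perp},2e^{-n\Delta})\big)\le \lambda(\BP)=1,
\]
whence the double integral is at most $\mu^n\big(E_{\Delta/\alpha}(\alpha,3\epsilon,n,x)\big)\le e^{nI_{\Delta/\alpha}(\alpha,3\epsilon,n)}$ by definition. No contraction estimates are needed at all.

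Your elaborate expansion/contraction analysis of $(g^*)^{-1}B(x^\perp,2e^{-n\Delta})$ is essentially the content of the \emph{next} lemma, Lemma~\ref{lem:technicallemma2}, which handles the regime $\gamma<\Delta/\alpha-2\epsilon$ where one genuinely needs the $e^{-n(2-\gamma)\alpha}$ gain from the Lebesgue measure of the preimage. For the present lemma that refinement is both unnecessary and, as you noticed yourself, insufficient: your route produces a bound of the form $C'e^{-n\Delta}e^{O(n\epsilon)}e^{nI_{\Delta/\alpha}(\alpha,3\epsilon,n)}$, and your proposed fix---that $e^{-n\Delta}$ absorbs $C'e^{O(n\epsilon)}$ for $\epsilon$ small and $n$ large---contradicts the hypotheses, which require the bound to hold for \emph{all} $\epsilon>0$ and \emph{all} $n\in\N$. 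The trivial bound $\le 1$ on the inner integral sidesteps all of this.
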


\begin{proof}
This is straightforward as 
\begin{align*}
    \sup_{x \in \BP} \int_{E_{\Delta/\alpha}(\alpha,3\epsilon,n,x)} \int 1_{B(x^{\perp},2e^{-n\Delta})}(g^* w)  \diff \lambda(w)  \diff \mu^n(g)  & \leq \sup_{x \in \BP} \int_{E_{\Delta/\alpha}(\alpha,3\epsilon,n,x)} 1  \diff \mu^n(g)  \\ &= e^{nI_{\Delta/\alpha}(\alpha,3\epsilon,n)}.
\end{align*}
\end{proof}

For the next lemma, we consider \(g \in \widetilde{E}_{\gamma}(\alpha,\epsilon,n,x)\) where \(\gamma<\Delta/\alpha\), so \(d_{\BP}(x^{\perp}, \upsilon_+(g^*)) \approx e^{-n\gamma \alpha} \gg e^{-n \Delta}\). By the heuristic before Lemma \ref{lem:umapsclosetoupsilon+}, we would expect that a set of Lebesgue measure not more than \(\approx e^{-n(2-\gamma)\alpha}\) to be mapped into \(B(x^{\perp},2e^{-\Delta n})\), since \(x\) with \(d_{\BP}(x, \omega_-(g^*)) \gtrapprox e^{-n(2-\gamma)\alpha} \) will have \(d_{\BP}(g^* x, \upsilon_+(g^*)) \lessapprox  e^{-n \gamma \alpha}\). In the proof we make this sketch rigorous.

\begin{lemma}\label{lem:technicallemma2}
Let \(\underline{\alpha}>0\), \(0<\Delta<2\underline{\alpha}\), and \(\epsilon>0\). For \(n \in \N\) sufficiently large, we have that for any \(\alpha \geq \underline{\alpha}\) and \(\gamma<\Delta/\alpha-2\epsilon\), 
     \[ \sup_{x \in \BP} \int_{\widetilde{E}_{\gamma}(\alpha,\epsilon,n,x)} \int 1_{B(x^{\perp},2e^{-n\Delta})}(g^* w)  \diff \lambda(w)  \diff \mu^n(g) \leq 2e^{nI_{\gamma}(\alpha,\epsilon, n)} e^{-n((2-\gamma) \alpha-(3+\alpha)\epsilon)}. \]
\end{lemma}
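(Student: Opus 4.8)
The plan is to note that the inner integral is exactly $\lambda(\{w\in\BP:\ g^*w\in B(x^\perp,2e^{-n\Delta})\})$, to show that on $\widetilde{E}_\gamma(\alpha,\epsilon,n,x)$ this preimage set is contained in a ball of radius $\approx e^{-n(2-\gamma)\alpha}$ centred at $\omega_-(g^*)$, and then to conclude via the elementary volume bound $\lambda(B(z,\rho))\le\rho$ together with $\mu^n(\widetilde{E}_\gamma(\alpha,\epsilon,n,x))\le\sup_{x}\mu^n(E_\gamma(\alpha,\epsilon,n,x))=e^{nI_\gamma(\alpha,\epsilon,n)}$ (here $\gamma<2$, since $\gamma<\Delta/\alpha\le\Delta/\underline{\alpha}<2$, so $\widetilde{E}_\gamma\subseteq E_\gamma$ and $\widetilde{E}_\gamma$ has the ``generic'' form).

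First I would record the geometry forced by $g\in\widetilde{E}_\gamma(\alpha,\epsilon,n,x)$: using the identities of \S\ref{subsec:tc'zetat'} and the fact that $y\mapsto y^\perp$ is a $d_{\BP}$-isometry, one has $\upsilon_+(g^*)=\omega_+(g)$ and $d_{\BP}(x^\perp,\upsilon_+(g^*))=d_{\BP}(x,\omega_-(g))\ge e^{-n(\gamma+\epsilon)\alpha}$, the lower bound being built into the definition of $\widetilde{E}_\gamma$ for $\gamma<2$. If $g^*w\in B(x^\perp,2e^{-n\Delta})$ then the triangle inequality gives $d_{\BP}(g^*w,\upsilon_+(g^*))\ge e^{-n(\gamma+\epsilon)\alpha}-2e^{-n\Delta}$; the hypothesis $\gamma<\Delta/\alpha-2\epsilon$ forces $\Delta-(\gamma+\epsilon)\alpha>\epsilon\alpha\ge\epsilon\underline{\alpha}$, so for all $n$ with $e^{n\epsilon\underline{\alpha}}\ge4$ — a threshold depending only on $\epsilon$ and $\underline{\alpha}$, hence uniform in $\alpha\ge\underline{\alpha}$ and in $\gamma$ — we get $2e^{-n\Delta}\le\frac12 e^{-n(\gamma+\epsilon)\alpha}$ and therefore $d_{\BP}(g^*w,\upsilon_+(g^*))\ge\frac12 e^{-n(\gamma+\epsilon)\alpha}$.

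Next I would invoke the contraction estimate already appearing in the proof of Lemma~\ref{lem:tightboundsforQgamma}: since $\upsilon_+(g^*)=\omega_-((g^*)^{-1})$, two applications of Lemma~\ref{lem:BQlemma} give $d_{\BP}(g^*w,\upsilon_+(g^*))\le\frac{1}{|g|^2\, d_{\BP}(\omega_-(g^*),w)}$ for $w\ne\omega_+(g^*)$ (and $w=\omega_+(g^*)$ may be excluded, since then $g^*w=\upsilon_+(g^*)$ lies at distance $\ge e^{-n(\gamma+\epsilon)\alpha}>2e^{-n\Delta}$ from $x^\perp$, so it is not in the preimage set). Combining this with the lower bound above and with $|g|\ge e^{n(\alpha-\epsilon)}$ — valid on $E(\alpha,\epsilon,n)\supseteq\widetilde{E}_\gamma(\alpha,\epsilon,n,x)$ — yields $d_{\BP}(\omega_-(g^*),w)\le 2e^{n(\gamma+\epsilon)\alpha}/|g|^2\le 2e^{-n((2-\gamma)\alpha-(\alpha+2)\epsilon)}$. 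Hence the preimage set is contained in $B(\omega_-(g^*),2e^{-n((2-\gamma)\alpha-(\alpha+2)\epsilon)})$, so the inner integral is $\le 2e^{-n((2-\gamma)\alpha-(\alpha+2)\epsilon)}$; integrating over $\widetilde{E}_\gamma(\alpha,\epsilon,n,x)$, bounding the $\mu^n$-mass by $e^{nI_\gamma(\alpha,\epsilon,n)}$, and using $(\alpha+2)\epsilon\le(3+\alpha)\epsilon$ gives the claim.

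The computation itself is mostly bookkeeping; the one genuinely delicate point is that the ``$n$ sufficiently large'' clause must be uniform over the unbounded range of $\alpha$ and over $\gamma$, and this is exactly why the hypothesis is stated as $\gamma<\Delta/\alpha-2\epsilon$ rather than, say, $\gamma\alpha<\Delta$: it guarantees that the gap $\Delta-(\gamma+\epsilon)\alpha$ separating the scales $e^{-n\Delta}$ and $e^{-n(\gamma+\epsilon)\alpha}$ is bounded below by $\epsilon\underline{\alpha}$, independently of $\alpha$ and $\gamma$. The only other thing to verify with care is the string of adjoint/inverse identities $\upsilon_+(g^*)=\omega_+(g)=\omega_-((g^*)^{-1})$ and $|(g^*)^{-1}|=|g|$, all of which are immediate from the relations collected in \S\ref{subsec:tc'zetat'}.
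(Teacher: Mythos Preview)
Your proof is correct and follows essentially the same approach as the paper's: both show that the preimage $\{w:g^*w\in B(x^\perp,2e^{-n\Delta})\}$ lies in a ball of radius $\approx e^{-n(2-\gamma)\alpha}$ around $\omega_-(g^*)$ via the two applications of Lemma~\ref{lem:BQlemma}, then bound the $\mu^n$-mass of $\widetilde{E}_\gamma$ by $e^{nI_\gamma(\alpha,\epsilon,n)}$. The only cosmetic difference is that the paper argues the contrapositive (if $w$ is far from $\omega_-(g^*)$ then $g^*w$ is far from $x^\perp$), whereas you argue directly; your exponent $(2+\alpha)\epsilon$ is in fact slightly sharper than the paper's $(3+\alpha)\epsilon$, which you then relax to match the stated bound.
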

\begin{proof}
    Let \(x \in \BP\) and \(g \in \widetilde{E}_{\gamma}(\alpha,\epsilon,n,x)\). If \(w=\R v \in \BP\), \(v \in S^1\), is such that
    \(d_{\BP}(w, \omega_-(g^*) ) \geq e^{-n((2-\gamma)\alpha-(3+\alpha)\epsilon)}\), then using Lemma \ref{lem:BQlemma} twice, 
\begin{align*} 
    d_{\BP}(g^* w, \upsilon_+(g^*)) &=  d_{\BP}(g^* w, \omega_-((g^*)^{-1})) \nonumber \\
    &\leq \frac{|(g^*)^{-1} g^* v|}{|g| \left|g^* v \right|} \nonumber \\
     &\leq \frac{e^{n((2-\gamma)\alpha-(3+\alpha)\epsilon)}}{|g|^2 }  \nonumber \\
     &\leq e^{-n(\gamma\alpha+(1+\alpha)\epsilon)}.
\end{align*}
Assuming that \(n \in \N\) is large enough so that \(e^{-n \epsilon }\leq \frac{1}{2}\) and \(e^{n\epsilon \underline{\alpha}} \geq 4\), we therefore have
\begin{align*}
    d_{\BP}(g^* w, x^{\perp}) &\geq  d_{\BP}(x^{\perp},\upsilon_+(g^*))-d_{\BP}(g^* w,\upsilon_+(g^*) )  \\
    &= d_{\BP}(x,\omega_-(g)) -  d_{\BP}(g^* w,\upsilon_+(g^*) ) \\
    & \geq e^{-n(\gamma+\epsilon)\alpha} - e^{-n(\gamma\alpha+(1+\alpha)\epsilon)} \\
    &\geq \frac{1}{2} e^{-n(\gamma+\epsilon)\alpha} \\
    & \geq \frac{1}{2} e^{-n(\Delta-\epsilon\alpha)} \\
    &\geq 2 e^{-n\Delta},
\end{align*}
where we have used in the penultimate inequality that \(\gamma<\frac{\Delta}{\alpha}-2\epsilon\).

Thus, we have shown that the set \(\{w \in \BP: d_{\BP}(w, \omega_-(g^*) ) \geq e^{-n((2-\gamma)\alpha-(3+\alpha)\epsilon)}\}\) is mapped by \(g^*\) outside of \(B(x^{\perp},2e^{-n\Delta})\). It follows that
\[\int 1_{B(x^{\perp},2e^{-n\Delta})}(g^* w)  \diff \lambda(w)  \leq 2  e^{-n((2-\gamma)\alpha-(3+\alpha)\epsilon)},\]
so
\[\int_{\widetilde{E}_{\gamma}(\alpha,\epsilon, n, x)}  \int 1_{B(x^{\perp},2e^{-n\Delta})}(g^* w)  \diff \lambda(w)  \diff \mu^n(g) \leq 2e^{nI_{\gamma}(\alpha,\epsilon, n)} e^{-n((2-\gamma) \alpha-(3+\alpha)\epsilon)}.\]
\end{proof}

\begin{proof}[Proof of Proposition \ref{prop:dimnu0}]
The proof that \(\dim_F{}^* \! \nu_0 \leq \zeta_0 \) follows in the same way as in the \(t<0\) case. To show the other direction, let \(0<\zeta<1\), \(C>0\) and \(0<r<1\) be such that for all \(f \in C^{\zeta}(\BP)\) and \(n \in \N\),
\[|{}^* \!P^n_0 f-{}^* \! \nu_0(f) |_{\zeta} \leq C r^n |f|_{\zeta}.\]
This exists by the classical results of Le Page \cite{Pag82}. For all \(f \in C^{\zeta}(\BP)\) and \(n \in \N\), we further have
\begin{align*}
    |({}^* \!P_0^n)_{\Cdot} \lambda(f) -{}^* \! \nu_0(f)| 
    &=| \lambda({}^* \!P_0^n f -{}^* \! \nu_0(f)) | \\
    &\leq |({}^* \!P_0^n) f-{}^* \! \nu_0(f) |_{\infty} \\ &\leq  C r^n |f|_{\zeta} .
\end{align*}
Let \(\Delta>0\). We will set some conditions on \(\Delta\) shortly. For \(x^{\perp} \in \BP\) we let
\[f_{x^{\perp},n}(w) = \begin{cases}
    1, & w \in B(x^{\perp},e^{-n\Delta}) \\
    2-e^{n \Delta} d_{\BP}(w,x^{\perp}), & e^{-n\Delta} \leq d_{\BP}(w,x^{\perp}) \leq 2e^{-n\Delta} \\
    0, & \mathrm{otherwise}.
\end{cases}\]
We have \(|f_{x^{\perp},n}|_{\zeta} \leq C'e^{n\zeta \Delta } \) for some constant \(C'>0\) independent of \(x\) and \(n \in \N\). Thus, for all \(x^{\perp} \in \BP\) and \(n \in \N\),
\begin{equation}\label{eqn:nu0lambdainequalitytwoterms}
     {}^* \! \nu_0 (B(x^{\perp},e^{-n\Delta})) \leq  ({}^* \!P_0^n)_{\Cdot} \lambda(f_{x^{\perp},n})+   C C' r^n e^{n\zeta \Delta } .
\end{equation}

Observe that 
\begin{align}
     ({}^* \!P_0^n )_{\Cdot} \lambda (f_{x^{\perp},n}) &=\int \int f_{x^{\perp},n}(g^* w) \diff \mu^n(g) \diff \lambda(w) \nonumber \\ 
    &\leq \int \int 1_{B(x^{\perp},2e^{-n\Delta})}(g^* w) \diff \mu^n(g) \diff \lambda(w) \nonumber \\
    &= \int \int 1_{B(x^{\perp},2e^{-n\Delta})}(g^* w)  \diff \lambda(w)  \diff \mu^n(g). \label{eqn:*P0nlambdainequality} 
\end{align}
We let \(\underline{\alpha}>0\) be such that 
\[I_0(0,\underline{\alpha}) \leq 0,\]
which exists because \(I_0(0)<0\). We assume \(\Delta\) is small enough so that \(\Delta/\underline{\alpha}<2 \), \(I_0(0,\underline{\alpha})/\Delta<-1\), and \(\frac{\log r}{\Delta}+\zeta<-1 \). Let \(\epsilon>0\). By an argument similar to the one in Lemma \ref{lem:gammaiscover}, for any \(\alpha \geq \underline{\alpha}\), \(\epsilon>0\), \(n \in \N\), and \(x \in \BP\),
\[E(\alpha,\epsilon,n)= E_{\Delta/\alpha}(\alpha,3\epsilon,n,x) \cup \bigcup_{\substack{\gamma \in \Gamma(\epsilon), \\ \gamma < \frac{\Delta}{\alpha}-2\epsilon }} \widetilde{E}_{\gamma} (\alpha,\epsilon,n,x).\]
Hence,
\begin{align*}
    \int \int 1_{B(x^{\perp},2e^{-n\Delta})}(g^* w)  \diff \lambda(w)  \diff \mu^n(g) &\leq \mu^n(E(0,\underline{\alpha},n)) \\ & +  \sum_{\substack{\alpha \in \mathrm{A}(\epsilon), \\ \alpha \geq \underline{\alpha}  }}  \int_{E_{\Delta/\alpha}(\alpha,3\epsilon,n,x)} \int 1_{B(x^{\perp},2e^{-n\Delta})}(g^* w)  \diff \lambda(w)  \diff \mu^n(g)   \\
&+ \sum_{\substack{\alpha \in \mathrm{A}(\epsilon), \\ \alpha \geq \underline{\alpha}  }} \sum_{\substack{\gamma \in \Gamma(\epsilon), \\ \gamma < \frac{\Delta}{\alpha}-2\epsilon}}  \int_{\widetilde{E}_{\gamma}(\alpha,\epsilon,n,x)} \int 1_{B(x^{\perp},2e^{-n\Delta})}(g^* w)  \diff \lambda(w)  \diff \mu^n(g).
\end{align*}
Combining this with (\ref{eqn:nu0lambdainequalitytwoterms}), (\ref{eqn:*P0nlambdainequality}) and Lemmas \ref{lem:technicallemma1} and \ref{lem:technicallemma2}, we obtain 
\begin{align*}
    \dim_F {}^* \! \nu_0 = \varliminf_{n \rightarrow \infty} \frac{-\log \sup_{x \in \BP} {}^* \! \nu_0 (B(x^{\perp},2e^{-n\Delta}))}{- \log( 2 e^{-n\Delta})} \geq -\frac{1}{\Delta} \max \{ &I_0(0,\underline{\alpha}), \\  & 
     \max_{\substack{\alpha \in \mathrm{A}(\epsilon) \\ \alpha \geq \underline{\alpha}}} I_{\Delta/\alpha}(\alpha,3\epsilon), \\
    & \max_{\substack{\alpha \in \mathrm{A}(\epsilon) \\ \alpha \geq \underline{\alpha}}} \max_{\substack{\gamma \in \Gamma(\epsilon), \\ \gamma < \frac{\Delta}{\alpha}-2\epsilon }}  I_{\gamma}(\alpha,\epsilon) -(2-\gamma)\alpha+(3+\alpha)\epsilon , \\
    &\log r +\zeta \Delta \}.
\end{align*}
By letting \(\epsilon \rightarrow 0\) and using Lemma \ref{lem:USC}, this further implies that
\begin{align*}
    \dim_F {}^* \! \nu_0 \geq -\frac{1}{\Delta} \max\{ I_0(0,\underline{\alpha}),
     \sup_{\alpha \geq \underline{\alpha} } I_{\Delta/\alpha}(\alpha),
    \sup_{ \alpha \geq \underline{\alpha} } \sup_{\gamma < \Delta/\alpha} I_{\gamma}(\alpha)-(2-\gamma)\alpha, \log r+\zeta \Delta \}. 
\end{align*}

By our assumptions on \(\Delta\), \( I_0(0,\underline{\alpha})/\Delta<-1\) and \((\log r +\zeta \Delta)/\Delta<-1 \). Also, by Theorem \ref{theo:Igammaestimates}, which is already proved, for \(\alpha \geq \underline{\alpha}\) we have
\[ I_{\Delta/\alpha}(\alpha)  \leq -\zeta_0 \Delta,\]
so 
\[\frac{1}{\Delta} \sup_{\alpha \geq \underline{\alpha} } I_{\Delta/\alpha}(\alpha) \leq -\zeta_0.\]
Theorem \ref{theo:Igammaestimates} also implies for \(\gamma \alpha \leq \Delta\),
\[ I_{\gamma} (\alpha) \leq-\zeta_0 \gamma \alpha,\]
so
\begin{align*}
    I_{\gamma} (\alpha)-(2-\gamma)\alpha &\leq  -\zeta_0 \gamma \alpha -(2-\gamma)\alpha \\
    &=-2\alpha +(1-\zeta_0)\gamma \alpha \\
    &\leq -2\alpha+(1-\zeta_0) \Delta \\
    &= (\Delta-2\alpha)-\zeta_0 \Delta \\
    &\leq (\Delta-2\underline{\alpha}) -\zeta_0 \Delta \\
    &< - \zeta_0 \Delta.
\end{align*}
That is,
\[\frac{1}{\Delta}\sup_{ \alpha \geq \underline{\alpha} } \sup_{\gamma < \Delta/\alpha} I_{\gamma}(\alpha)-(2-\gamma)\alpha \leq -\zeta_0\]
Putting this together, we have shown
\(\dim_F{}^* \! \nu_0 \geq \zeta_0.\)
\end{proof}

\subsection*{Acknowledgements}
I am very grateful to Mark Pollicott, \c{C}a\u{g}ri Sert, and Micha{\l} Rams for helpful discussions. I would particularly like to thank \c{C}a\u{g}ri for pointing me towards several related works that I was previously unaware of. This work was supported by the EPSRC grant EP/W033917/1.

\appendix
\section{Riesz potentials on the torus}\label{section:fourier}
For \(\nu \in \MCP(\BP)\) we define \(\MCI_{t,\nu}: \BP \rightarrow [0,\infty]\) by
 \[\MCI_{t,\nu}(x):= \int d_{\BP}(x,y)^t \diff \nu(y).\]
The purpose of this appendix is to prove Propositions \ref{prop:hisholder}, \ref{prop:Holdergivesdimensionbound}, and \ref{prop:Inu1=Inu2impliesnu1=nu2}.

The following proposition generalises the result for Riesz potentials on \(\R^d\) in \cite{Car63} (see also \cite[Theorem A]{Wal66}). 

\begin{prop}\label{prop:hisholder}
For \(-1<t<0\) and \(\nu \in \MCP(\BP)\) suppose that \(\dim_F \nu  >-t\). Then, for all \(0<\zeta<\min\{1,\dim_F \nu +t \}\), \(\MCI_{t,\nu}:\BP \rightarrow \R\) is \(\zeta\)-H\"older continuous.
\end{prop}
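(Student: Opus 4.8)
The plan is to run the classical two-scale estimate for Riesz potentials (as in \cite{Car63,Wal66}) in the bounded metric space $(\BP, d_{\BP})$, whose diameter is $1$. First I would fix $\zeta$ with $0 < \zeta < \min\{1, \dim_F\nu + t\}$ and pick an auxiliary exponent $s$ satisfying
\[ \max\{\zeta - t,\, -t\} < s < \min\{\dim_F \nu,\, 1 - t\}; \]
such an $s$ exists precisely because $\zeta < \dim_F\nu + t$ and $\zeta < 1$. By definition of the Frostman dimension there is then $C \ge 1$ with $\nu(B(x,r)) \le C r^s$ for all $x \in \BP$ and $r > 0$, and one records the three inequalities $s + t > 0$, $s + t < 1$, and $s + t > \zeta$ that this choice guarantees. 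A preliminary dyadic decomposition of $\{\,0 < d_{\BP}(x,y) \le 1\,\}$ combined with this bound shows $\MCI_{t,\nu}$ is bounded on $\BP$, the relevant geometric series converging because $s + t > 0$. Consequently it suffices to establish
\[ |\MCI_{t,\nu}(x) - \MCI_{t,\nu}(x')| \le C' \rho^{s+t}, \qquad \rho := d_{\BP}(x,x') \le \tfrac12, \]
since then $s + t > \zeta$ gives the desired $\zeta$-H\"older bound for small $\rho$, while boundedness of $\MCI_{t,\nu}$ handles $\rho > \tfrac12$.

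For this main estimate I would split $\BP$ into the near set $N := \{\,y : d_{\BP}(x,y) < 2\rho\,\}$ and the far set $F := \{\,y : d_{\BP}(x,y) \ge 2\rho\,\}$. On $N$ I would estimate the two potentials separately: a dyadic decomposition of $N$ together with the Frostman bound gives $\int_N d_{\BP}(x,y)^t \diff\nu(y) \lesssim \rho^{s+t}$, and since $N \subseteq \{\,d_{\BP}(x',\cdot) < 3\rho\,\}$ the same computation centred at $x'$ gives $\int_N d_{\BP}(x',y)^t \diff\nu(y) \lesssim \rho^{s+t}$; hence the contribution of $N$ to the difference is $\lesssim \rho^{s+t}$. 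On $F$ one has $d_{\BP}(x',y) \ge d_{\BP}(x,y) - \rho \ge \tfrac12 d_{\BP}(x,y)$, so the elementary inequality $|a^t - b^t| \le |t|\,(\min\{a,b\})^{t-1}\,|a - b|$ (valid for $a,b > 0$, from the mean value theorem) yields $|d_{\BP}(x,y)^t - d_{\BP}(x',y)^t| \lesssim d_{\BP}(x,y)^{t-1}\rho$ on $F$; integrating and decomposing $F$ dyadically outward, the Frostman bound gives $\int_F d_{\BP}(x,y)^{t-1}\diff\nu(y) \lesssim \rho^{s+t-1}$, the geometric series here converging because $s + t - 1 < 0$, so the far contribution is $\lesssim \rho \cdot \rho^{s+t-1} = \rho^{s+t}$. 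Adding the near and far contributions proves the displayed estimate, hence the proposition.

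The computations are routine; the only delicate point is the bookkeeping in the choice of $s$, which must simultaneously make $s + t$ positive (finiteness of the potential and convergence of the near-field sum), strictly less than $1$, equivalently $s < 1 - t$ (convergence of the far-field dyadic sum, where $d_{\BP}^{\,t-1}$ is singular), and strictly larger than $\zeta$ (to recover the target H\"older exponent). No genuine obstacle arises, because $(\BP, d_{\BP})$ is a bounded metric space whose balls obey a uniform polynomial upper volume bound — exactly the structure the Euclidean arguments of Carleson and Wallin exploit — so I expect the proof to go through essentially verbatim.
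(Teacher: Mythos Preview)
Your proof is correct, but it takes a different route from the paper's. You run the classical near/far decomposition: on the near set $N=\{d_{\BP}(x,\cdot)<2\rho\}$ you bound each potential separately by a dyadic sum (using $s+t>0$), and on the far set you apply the mean value inequality $|a^t-b^t|\le |t|\min(a,b)^{t-1}|a-b|$ and sum dyadically outward (using $s+t<1$). The bookkeeping on the auxiliary exponent $s$ is exactly right, and the argument goes through cleanly in $(\BP,d_{\BP})$ because it uses only the metric structure and the Frostman bound.

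The paper instead proves a single pointwise inequality (its Lemma~\ref{lem:triviallemma}): for $u,v\in(0,1]$,
\[
|u^t-v^t|\le |u-v|^{\zeta}\bigl(u^{t-\zeta}+v^{t-\zeta}\bigr),
\]
and applies it with $u=d_{\BP}(x,y)$, $v=d_{\BP}(x',y)$, together with the reverse triangle inequality $|u-v|\le d_{\BP}(x,x')$. This reduces the whole estimate to showing $\sup_x\int d_{\BP}(x,y)^{t-\zeta}\diff\nu(y)<\infty$, which follows from the Frostman bound by one dyadic sum. The paper's approach is shorter and avoids the near/far split and the auxiliary parameter $s$ entirely; your approach is the traditional one and has the minor advantage that each step is completely standard, requiring no new elementary inequality. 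Amusingly, the paper remarks that the proof in \cite{Car63} does not directly generalise to $(\BP,d_{\BP})$ and therefore gives a different argument; your metric-space version of the Carleson--Wallin split shows that a suitably formulated near/far argument does in fact transfer without difficulty.
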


The proof in \cite{Car63} does not directly generalise to \((\BP,d_{\BP})\), so we provide a different proof here. We will use the following inequality.
    
\begin{lemma}\label{lem:triviallemma}
Let \(-1<t<0\) and \(0<\zeta \leq 1\). For \(u,v \in (0,1]\),
\[|u^t-v^t| \leq |u-v|^\zeta (u^{t-\zeta}+v^{t-\zeta}) .\]
\end{lemma}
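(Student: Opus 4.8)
The plan is to reduce to the ordered case $0<v\le u\le 1$; the inequality is symmetric in $u$ and $v$, and in this case $t<0$ gives $u^t\le v^t$, so the left-hand side equals $v^t-u^t\ge 0$. Since $v^{t-\zeta}\le u^{t-\zeta}+v^{t-\zeta}$, it then suffices to prove the sharper bound
\[ v^t-u^t\le (u-v)^\zeta\, v^{t-\zeta}. \]

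First I would record two one-sided estimates for $v^t-u^t$. Writing $v^t-u^t=|t|\int_v^u s^{t-1}\,ds$ and using that $s\mapsto s^{t-1}$ is decreasing (since $t-1<0$), together with $|t|<1$ (this is exactly where the hypothesis $-1<t<0$, rather than merely $t<0$, is used), gives $v^t-u^t\le v^{t-1}(u-v)$. Separately, the trivial bound $v^t-u^t\le v^t$ holds. Then I would interpolate between the two: for $A,B\ge 0$ and $\zeta\in[0,1]$ one has $\min\{A,B\}\le A^\zeta B^{1-\zeta}$ (immediate by distinguishing $A\le B$ and $B\le A$). Applying this with $A=v^{t-1}(u-v)$ and $B=v^t$ yields
\[ v^t-u^t\le\min\{A,B\}\le\left(v^{t-1}(u-v)\right)^\zeta\left(v^t\right)^{1-\zeta}=(u-v)^\zeta\, v^{(t-1)\zeta+t(1-\zeta)}=(u-v)^\zeta\, v^{t-\zeta}, \]
which is the desired bound.

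I do not expect any genuine obstacle here: the only point at which the full strength of $-1<t<0$ is needed is the step $|t|\le 1$, and everything else is a routine manipulation of exponents (the case $u=v$ being trivial). One could equally split into the cases $u-v\ge v$ and $u-v<v$ and apply the two elementary bounds directly, but the interpolation inequality packages both cases at once and keeps the argument short.
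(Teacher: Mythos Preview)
Your proof is correct and is essentially the same as the paper's: both reduce by symmetry to the ordered case, establish the mean-value bound $|u^t-v^t|\le (\text{smaller})^{t-1}|u-v|$ and the trivial bound $|u^t-v^t|\le(\text{smaller})^t$, and then combine them. The only cosmetic difference is that the paper carries out the explicit case split $|u-v|\le\text{smaller}$ versus $|u-v|>\text{smaller}$ that you mention at the end, whereas you package both cases via the interpolation inequality $\min\{A,B\}\le A^\zeta B^{1-\zeta}$.
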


\begin{proof}
By symmetry we may assume that \(v \geq u\). The inequality for \(v=u\) is trivial, so we assume \(v>u\) and let \(\delta:=v-u\).

First assume that \(\delta \leq u\). By the mean value theorem there exists \(w \in [u,v]\) such that
\[|u^t-v^t|= |t| |w|^{t-1} |u-v|, \]
so 
\[|u^t-v^t| \leq \delta u^{t-1}= \delta^\zeta u^{t-\zeta} \left(\frac{\delta}{u} \right)^{1-\zeta} \leq \delta^\zeta u^{t-\zeta} \leq \delta^\zeta (u^{t-\zeta}+v^{t-\zeta}).\]
Now suppose that \(\delta> u\). We have 
\[|u^t-v^t|=u^t-v^t \leq u^t =u^{\zeta} u^{t-\zeta} \leq \delta^{\zeta} u^{t-\zeta} \leq \delta^{\zeta} (u^{t-\zeta}+v^{t-\zeta}). \]
\end{proof}

\begin{lemma}\label{lem:supintisfinite}
For \(-1<t<0\) and \(\nu \in \MCP(\BP)\) suppose that \(\dim_F \nu  >-t\). Then, for all \(0 \leq \zeta<\dim_F \nu +t\),  
\[\sup_{x \in \BP} \int d_{\BP}(x,y)^{t-\zeta} \diff \nu(y) <\infty.\]
\end{lemma}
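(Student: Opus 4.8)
The statement is a standard Frostman-type estimate for a singular kernel, proved by a dyadic decomposition of the integration domain. Note first that $t-\zeta<0$, so $d_{\BP}(x,y)^{t-\zeta}$ blows up as $y\to x$; the point is that the hypothesis $\dim_F\nu>-t$ (equivalently $\dim_F\nu>\zeta-t$, since $\zeta<\dim_F\nu+t$) leaves just enough room to absorb this singularity. The key preliminary observation is that the set $\{s\ge 0: \exists C>0 \text{ s.t. } \nu(B(x,r))\le Cr^s\ \forall r>0,\forall x\}$ defining $\dim_F\nu$ is downward closed (for $s'\le s$ one has $r^s\le r^{s'}$ when $r\le 1$, and $\nu(B(x,r))\le 1\le r^{s'}$ when $r>1$), so for \emph{every} $s'<\dim_F\nu$ there is a constant $C=C(s')>0$ with $\nu(B(x,r))\le Cr^{s'}$ for all $r>0$ and all $x\in\BP$.

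\textbf{Choice of exponent and decomposition.} Given $0\le\zeta<\dim_F\nu+t$, I would fix $s'$ with $\zeta-t<s'<\dim_F\nu$ (this is possible precisely because of the hypothesis; note $\zeta-t\ge -t>0$, so $s'>0$ automatically), and let $C>0$ be the associated Frostman constant. Fix $x\in\BP$. Since $d_{\BP}\le 1$ everywhere, every $y$ with $d_{\BP}(x,y)>0$ lies in exactly one of the dyadic annuli $A_k:=\{y\in\BP: 2^{-(k+1)}<d_{\BP}(x,y)\le 2^{-k}\}$, $k\ge 0$, and moreover $A_k\subseteq B(x,2^{-k})$. (The set $\{y:d_{\BP}(x,y)=0\}$ is $\nu$-null, since a positive-measure such set would force $\nu(B(x,r))\not\to 0$, contradicting the Frostman bound with $s'>0$.) On $A_k$ we have $d_{\BP}(x,y)^{t-\zeta}\le 2^{(k+1)(\zeta-t)}$, using $t-\zeta<0$, while $\nu(A_k)\le\nu(B(x,2^{-k}))\le C2^{-ks'}$.

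\textbf{Summation.} Putting these together,
\[
\int d_{\BP}(x,y)^{t-\zeta}\diff\nu(y)=\sum_{k=0}^{\infty}\int_{A_k}d_{\BP}(x,y)^{t-\zeta}\diff\nu(y)\le \sum_{k=0}^{\infty}2^{(k+1)(\zeta-t)}\,C\,2^{-ks'}=C\,2^{\zeta-t}\sum_{k=0}^{\infty}2^{k(\zeta-t-s')}.
\]
Since $\zeta-t-s'<0$ by the choice of $s'$, the geometric series converges, and the resulting bound $C\,2^{\zeta-t}/(1-2^{\zeta-t-s'})$ depends only on $t,\zeta,s'$ and the Frostman constant, not on $x$. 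Taking the supremum over $x\in\BP$ gives the claim.

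\textbf{Main obstacle.} There is essentially no serious obstacle here: the only genuine content is the choice of the intermediate exponent $s'$ strictly between $\zeta-t$ and $\dim_F\nu$, which is exactly what the hypothesis $\zeta<\dim_F\nu+t$ provides, together with the remark that the defining set of $\dim_F$ is downward closed so that a uniform (in $x$ and in $r$) Frostman constant is available. The rest is the routine dyadic-annulus estimate.
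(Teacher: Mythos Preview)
Your proof is correct and essentially identical to the paper's: both choose an intermediate exponent strictly between $\zeta-t$ and $\dim_F\nu$, decompose $\BP$ into dyadic annuli $\{2^{-(k+1)}<d_{\BP}(x,y)\le 2^{-k}\}$, and sum the resulting geometric series. The only differences are cosmetic (the paper writes $D$ for your $s'$ and indexes the annuli starting from $n=1$ rather than $k=0$), and your extra remarks on the downward-closedness of the Frostman set and the nullity of $\{y:d_{\BP}(x,y)=0\}$ are harmless clarifications the paper omits.
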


\begin{proof}
 Fix \(0 \leq \zeta<\dim_F\nu+t\) and choose \(D\) such that \(
\zeta-t<D<\dim_F\nu\). There exists \(C>0\) such that
\[ \nu(B(x,r))\le Cr^D \qquad \forall x\in\BP, \: \forall r>0.\]
For \(x \in \BP\) and \(n \in \N\), we let
\[E_{x,n}:=\{ y \in \BP: 2^{-n}<d_{\BP}(x,y) \leq 2^{-(n-1)} \}.\]
Then, for any \(x \in \BP\),
\begin{align*}
     \int d_{\BP}(x,y)^{t-\zeta} \diff \nu(y) &=  \sum_{n \in \N} \int_{E_{x,n}} d_{\BP}(x,y)^{t-\zeta} \diff \nu(y) \\
    &\leq C \sum_{n \in \N} 2^{-n(t-\zeta)} 2^{-D(n-1)} \\
    &= C 2^{D} \sum_{n \in \N} 2^{-n(t+D-\zeta)}<\infty,
\end{align*}
so
\[\sup_{x \in \BP} \int d_{\BP}(x,y)^{t-\zeta} \diff \nu(y)<\infty.\]
\end{proof}

\begin{proof}[Proof of Proposition \ref{prop:hisholder}]
We first note that Lemma \ref{lem:supintisfinite} implies that \(\sup_{x \in \BP} \MCI_{t,\nu}(x) <\infty\). Let \(0<\zeta<\min\{1,\dim_F\nu+t\}\). By Lemma \ref{lem:triviallemma}, for all \(x, x' \in \BP\), 
\begin{align*}
    |\MCI_{t,\nu}(x)-\MCI_{t,\nu}(x')| &\leq \int \left|d_{\BP}(x,y)^t-d_{\BP}(x',y)^t \right| \diff \nu(y) \\
    &\leq \int \left|d_{\BP}(x,y)-d_{\BP}(x',y) \right|^{\zeta} \left(d_{\BP}(x,y)^{t-\zeta}+d_{\BP}(x',y)^{t-\zeta} \right)  \diff \nu(y) \\
    &\leq d_{\BP}(x,x')^{\zeta}  \int \left(d_{\BP}(x,y)^{t-\zeta}+d_{\BP}(x',y)^{t-\zeta} \right)  \diff \nu(y),
\end{align*}
where in the last line we are using the reverse triangle inequality. Hence, the proposition follows by Lemma \ref{lem:supintisfinite}.
\end{proof}

We have the following converse of Proposition \ref{prop:hisholder}.

\begin{prop}\label{prop:Holdergivesdimensionbound}
    For \(-1<t<0\), \(0<\zeta<1+t\), and \(\nu \in \MCP(\BP)\), suppose that 
    \[\MCI_{t,\nu}(x):= \int d_{\BP}(x,y)^t \diff \nu(y)\]
    is \(\lambda\)-a.e. equal to a \(\zeta\)-H\"older continuous function \(h:\BP \rightarrow \R\). Then, \(\dim_F \nu \geq -t+\zeta\) and \(\MCI_{t,\nu} \equiv h\).
\end{prop}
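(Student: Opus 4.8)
The plan is to work in the coordinates $\BP\cong\T=\R/\pi\Z$ given by $\R(\cos\theta,\sin\theta)\mapsto\theta$, under which $d_{\BP}(\theta,\theta')=|\sin(\theta-\theta')|$, $\lambda$ is normalised Lebesgue measure, and $\MCI_{t,\nu}$ becomes the convolution $\phi_t*\nu$ on $\T$, where $\phi_a(\theta):=|\sin\theta|^a$. For $a>-1$ one has $\phi_a\in L^1(\T)$, so $\phi_t*\nu\in L^1(\T)$ and the hypothesis reads $\phi_t*\nu=h$ as elements of $L^1(\T)$ (equivalently, as distributions on $\T$). The role of \cite[Theorem 1.5]{RS16} is to supply the explicit closed form of the Fourier coefficients $\widehat{\phi_a}(k)$, $-1<a<0$, as a ratio of Gamma functions; from this one reads off that $\widehat{\phi_a}(k)\neq0$ for every $k\in\Z$ and that $k\mapsto\widehat{\phi_a}(k)$ is (the restriction to $\Z$ of) a H\"ormander--Mikhlin symbol of order $-(1+a)$, i.e.\ $|\widehat{\phi_a}(k)|\asymp|k|^{-(1+a)}$ together with the corresponding bounds on its finite differences.

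Fix $\zeta'\in(0,\zeta)$ and set $b:=t-\zeta'$. Since $\zeta'<\zeta<1+t$ we have $b\in(-1,0)$, so $\phi_b\in L^1(\T)$ and $\MCI_{t-\zeta',\nu}=\phi_b*\nu$ is a well-defined nonnegative, lower semicontinuous function on $\BP$, finite $\lambda$-a.e. Taking Fourier coefficients and using $\widehat{\phi_t}(k)\widehat\nu(k)=\widehat h(k)$ gives $\widehat{\MCI_{t-\zeta',\nu}}(k)=\widehat{\phi_b}(k)\widehat\nu(k)=m(k)\widehat h(k)$, where $m(k):=\widehat{\phi_b}(k)/\widehat{\phi_t}(k)$. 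The Gamma-function form of $\widehat{\phi_b},\widehat{\phi_t}$ shows that the alternating factors in numerator and denominator cancel and that $m$ is a positive Mikhlin multiplier of order $\zeta'=(1+t)-(1+b)$. Hence the Fourier multiplier operator $T_m$ with symbol $m$ maps $C^\zeta(\BP)=B^\zeta_{\infty,\infty}$ boundedly into $B^{\zeta-\zeta'}_{\infty,\infty}=C^{\zeta-\zeta'}(\BP)\hookrightarrow L^\infty(\BP)$ (the exponents $\zeta$ and $\zeta-\zeta'$ lie in $(0,1)$, so the standard torus multiplier theorem applies). Therefore $\MCI_{t-\zeta',\nu}$ agrees $\lambda$-a.e.\ with the continuous function $T_mh$; as $\MCI_{t-\zeta',\nu}$ is lower semicontinuous and $\lambda$ has full support, this forces $\MCI_{t-\zeta',\nu}(x)\le\|T_mh\|_\infty<\infty$ for every $x\in\BP$.

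Once $\MCI_{t-\zeta',\nu}$ is bounded the Frostman estimate is immediate: for $x\in\BP$ and $r>0$, since $u\mapsto u^{t-\zeta'}$ is decreasing (as $t-\zeta'<0$) and $d_{\BP}\le1$,
\[\|\MCI_{t-\zeta',\nu}\|_\infty\ \ge\ \MCI_{t-\zeta',\nu}(x)\ \ge\ \int_{B(x,r)}d_{\BP}(x,y)^{t-\zeta'}\diff\nu(y)\ \ge\ \min\{1,r\}^{t-\zeta'}\,\nu(B(x,r)),\]
so $\nu(B(x,r))\le\|\MCI_{t-\zeta',\nu}\|_\infty\,r^{-t+\zeta'}$ for all $r>0$; hence $\dim_F\nu\ge-t+\zeta'$. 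Letting $\zeta'\uparrow\zeta$ yields $\dim_F\nu\ge-t+\zeta$. In particular $\dim_F\nu>-t$, so Proposition \ref{prop:hisholder} applies and shows $\MCI_{t,\nu}$ is continuous; two continuous functions on $\BP$ that coincide $\lambda$-a.e.\ coincide everywhere, so $\MCI_{t,\nu}\equiv h$.

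The main obstacle is the second paragraph: deducing from the Roncal--Stinga formula that $m(k)=\widehat{\phi_{t-\zeta'}}(k)/\widehat{\phi_t}(k)$ is a Mikhlin multiplier of order exactly $\zeta'$ — i.e.\ establishing the difference bounds $|\Delta^j m(k)|\lesssim_j|k|^{\zeta'-j}$ — which requires some care with ratios of Gamma functions (Stirling-type asymptotics and the cancellation of the alternating factors), followed by the H\"older-space multiplier theorem on the torus. (Alternatively, one may use \cite[Theorem 1.5]{RS16} to represent $T_m$, up to a bounded operator, as a hypersingular integral $f\mapsto c\int_{\BP}\frac{f(x)-f(y)}{d_{\BP}(x,y)^{1+\zeta'}}\,\diff y$ and estimate it directly against $h\in C^\zeta$ using $\int_0^\pi s^{\zeta-1-\zeta'}\,\diff s<\infty$.) The remaining ingredients — the reduction to $\MCI_{t-\zeta',\nu}$, the passage from boundedness to the Frostman bound, and the upgrade of the $\lambda$-a.e.\ identity to an everywhere identity via Proposition \ref{prop:hisholder} — are routine.
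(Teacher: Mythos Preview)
Your approach is valid and takes a genuinely different route from the paper. The paper argues by duality: writing $G_\beta*\nu=F$ (with $\beta=1+t$ and $F$ a H\"older modification of $h$), it has $T_\beta F=C_\beta(\nu-\lambda)$ in distribution where $T_\beta$ is the multiplier $|k|^\beta$, then pairs $\nu$ against bump functions $\psi_{x_0,r}$ and uses the Roncal--Stinga pointwise formula for $T_\beta\psi_{x_0,r}$ together with three pages of explicit kernel estimates (handling near-field, far-field, and torus ``wrap-around'' contributions) to obtain $\int d_\BP(x,x_0)^\zeta|T_\beta\psi_{x_0,r}|\,d\lambda\le Cr^{-t+\zeta}$, whence $\nu(B(x_0,r))\lesssim r^{-t+\zeta}$ directly at the endpoint exponent. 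You instead bound the potential $\MCI_{t-\zeta',\nu}$ for each $\zeta'<\zeta$ by realising it as a Fourier multiplier applied to $h$, and read off the Frostman bound from $\MCI_{t-\zeta',\nu}(x)\ge r^{t-\zeta'}\nu(B(x,r))$; this trades the bump-function kernel estimates for the verification that $m(k)=\widehat{\phi_{t-\zeta'}}(k)/\widehat{\phi_t}(k)$ is a Mikhlin symbol of order $\zeta'$, a step you correctly flag as the crux but do not carry out. One correction: \cite[Theorem 1.5]{RS16} is the pointwise singular-integral representation of the fractional Laplacian, not a closed form for $\widehat{\phi_a}$; the cleanest execution of your plan is to use the paper's decomposition $\phi_a=G_{1+a}+(\text{Lipschitz})$ (from Stein--Weiss via Lemma~\ref{lem:GbetaequalsDt+lipschitz}), which replaces your $m$ by the pure symbol $|k|^{\zeta'}$ modulo a bounded error, after which the H\"older mapping property $T_{\zeta'}:C^\zeta\to C^{\zeta-\zeta'}$ follows from the very hypersingular-integral estimate you sketch in your parenthetical. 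Your method yields the Frostman bound only for exponents $\zeta'<\zeta$ rather than at $-t+\zeta$ itself, but since $\dim_F$ is a supremum this suffices for the statement.
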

The analogous result for Riesz potentials on \(\R^d\) is proved in \cite{Wal66}. The proof here roughly follows the argument in \cite{Wal66}, with some changes due to the topology of \(\BP\). We highlight the main difference where it occurs.

We identify \(\BP\) with the torus \(\R /\pi \Z \) in the natural way. By an abuse of notation, for \(f:\BP \rightarrow \R\) we will also denote the corresponding \(\pi\)-periodic function on \(\R\) by \(f\). Recall \(\lambda\) is Lebesgue measure on \(\BP\) normalised so that \(\lambda(\BP)=1\). We have
\[\int_{\BP} f \diff \lambda=\frac{1}{\pi} \int_{x_0-\frac{\pi}{2}}^{x_0+\frac{\pi}{2}} f(x) \diff x\]
for any \(x_0 \in \R\). \(L^1(\BP)\) is defined to be the set of \(f\) such that 
\[\int_{\BP} |f| \diff \lambda<\infty.\]
For \(f \in L^1(\BP)\) we can write its Fourier series expansion as
\[f(x)=\sum_{n \in \Z} c_n(f) e^{2 i n  x},\]
where the \(n\)th Fourier coefficient is defined as 
\[ c_n(f):=\int_{\BP} f(x) e^{-2 i nx} \diff \lambda(x), \: n \in \Z.\]
Similarly, for \(\nu \in \MCP(\BP)\), the \(n\)th Fourier coefficient is defined to be  
\[c_n(\nu):= \int_{\BP} e^{-2i nx} \diff \nu(x) , \: n \in \Z.\]

Let \(C^{\infty}(\BP)\) denote the space of complex-valued functions which are smooth (with respect to the induced Euclidean metric) on \(\BP\) endowed with its usual Fr\'etchet topology. By a \textit{distribution} on \(C^{\infty}(\BP)\) we mean a continuous linear functional on \(C^{\infty}(\BP)\), and we denote by \(\mathcal{D}'(\BP)\) the space of distributions. Note any \(f \in L^1(\BP)\) and \(\nu \in \MCP(\BP)\) can be identified with a distribution in the usual way. We will say \(D_1,D_2 \in \mathcal{D}'(\BP)\) are \textit{equal in distribution} if \(D_1 \varphi=D_2 \varphi\) for all \(\varphi \in C^{\infty}(\BP)\).

For \(f,g \in L^1(\BP)\) and \(\nu \in \MCP(\BP)\) we let
\[f*g(x):= \int_{\BP} f(x-y) g(y) \diff \lambda(y)\]
and
\[f*\nu(x):=\int_{\BP} f(x-y)\diff \nu(y).\]
We have \(c_n(f * g)=c_n(f) c_n(g)\) and \(c_n(f* \nu)=c_n(f)c_n(\nu)\).

For \(\beta:=t+1\), we fix some \(0<\zeta<\beta\). We define 
\[G_{\beta}(x):=C_{\beta} \sum_{n \not=0 } |n|^{-\beta} e^{2 i nx},\]
where 
\[C_{\beta}:= \pi^{-1/2}\frac{\Gamma(\beta/2)}{\Gamma((1-\beta)/2)}.\]
\begin{prop}[{\cite[Theorem 2.17]{SW71}}]\label{prop:SW71}
For some \(b_1 \in C^{\infty}((-\pi/2,\pi/2])\), 
    \[G_{\beta}(x)=|x|^t+b_1(x), \quad \forall x \in (-\pi/2,\pi/2].\]
\end{prop}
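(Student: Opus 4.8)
The identity is \cite[Theorem~2.17]{SW71} transported to the torus $\R/\pi\Z$, so the plan is to reduce to that reference while keeping careful track of the rescaling by $\pi$ and of the normalising constant; below I also record the short self-contained route via periodisation.

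First I would recall the one-dimensional Riesz-kernel Fourier transform: with the convention $\widehat f(\xi)=\int_\R f(x)e^{-2\pi i x\xi}\diff x$, the tempered distribution $|x|^{t}$ on $\R$ (defined for $-1<t<0$) satisfies
\[\widehat{|x|^{t}}(\xi)=\pi^{\,1/2-\beta}\,\frac{\Gamma(\beta/2)}{\Gamma((1-\beta)/2)}\,|\xi|^{-\beta},\qquad \beta:=t+1\in(0,1).\]
Next I would form the $\pi$-periodisation $P(x):=\sum_{k\in\Z}|x+\pi k|^{t}$. Since $t\ge -1$ this series does not converge classically, so one defines $P$ by analytic continuation in $t$ from the range $t<-1$, where the sum is absolutely convergent and the periodisation is a genuine $L^1$ function; this is exactly the regularisation carried out in \cite[\S VII.2]{SW71}. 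For such $P$, periodicity of $e^{-2inx}$ under $x\mapsto x+\pi k$ gives $c_n(P)=\tfrac1\pi\int_\R|y|^{t}e^{-2iny}\diff y=\tfrac1\pi\,\widehat{|x|^{t}}(n/\pi)$, and plugging in the formula above yields, for $n\neq 0$,
\[c_n(P)=\pi^{-1}\cdot\pi^{\,1/2-\beta}\,\frac{\Gamma(\beta/2)}{\Gamma((1-\beta)/2)}\,\bigl|\tfrac n\pi\bigr|^{-\beta}=\pi^{-1/2}\,\frac{\Gamma(\beta/2)}{\Gamma((1-\beta)/2)}\,|n|^{-\beta}=C_\beta\,|n|^{-\beta}.\]
Hence $P$ and $G_\beta$ have the same nonzero Fourier coefficients, so $P=G_\beta+c_0$ in $\mathcal{D}'(\BP)$ for a constant $c_0$.

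It then remains to identify $P$ on the fundamental domain $(-\pi/2,\pi/2]$, which contains the single lattice point $0$. There $P$ is the sum of the $k=0$ term $|x|^{t}$ and the tail $\sum_{k\neq0}|x+\pi k|^{t}$ (understood through the same analytic continuation); the tail, together with the constant $c_0$, is real-analytic on $(-\pi/2,\pi/2]$ because on that set each summand with $k\neq0$ has no singularity and the regularised series converges locally uniformly with all its derivatives. Setting $b_1:=G_\beta-|\cdot|^t=P-|\cdot|^t-c_0$ on $(-\pi/2,\pi/2]$ therefore gives $b_1\in C^\infty((-\pi/2,\pi/2])$, which is the assertion. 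The only genuinely delicate step is justifying the regularisation of $P$ and checking it is the object with Fourier coefficients $C_\beta|n|^{-\beta}$ — equivalently, invoking \cite[\S VII.2]{SW71} verbatim; once that is granted, the constant bookkeeping above and the smoothness of $b_1$ are routine.
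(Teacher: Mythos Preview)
The paper does not actually prove this proposition: it is stated as a direct citation of \cite[Theorem~2.17]{SW71} and then used without further argument. So there is no ``paper's own proof'' to compare against beyond the reference itself.

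Your sketch is a correct outline of the Poisson-summation/periodisation argument that underlies the cited theorem in Stein--Weiss, carried out on the period-$\pi$ torus rather than the standard one. The constant bookkeeping is right: with the paper's Fourier convention $c_n(f)=\tfrac{1}{\pi}\int_{-\pi/2}^{\pi/2}f(x)e^{-2inx}\diff x$, the computation $c_n(P)=\tfrac{1}{\pi}\widehat{|\cdot|^t}(n/\pi)=C_\beta|n|^{-\beta}$ for $n\neq 0$ goes through exactly as you wrote, and the smoothness of $b_1$ on a neighbourhood of $(-\pi/2,\pi/2]$ follows since the only singularity of $G_\beta$ on the fundamental domain is at $0$. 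You correctly flag the one delicate point, namely that the naive periodisation $\sum_k|x+\pi k|^t$ diverges for $t>-1$ and must be regularised (by analytic continuation in $t$, or equivalently by subtracting the constant term before summing); this is precisely what \cite[\S VII.2]{SW71} does, so your invocation of that section is appropriate. In short, your proposal is not a different proof but a faithful unpacking of the cited reference, with the rescaling to period $\pi$ made explicit.
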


By \(b_1 \in C^{\infty}((-\pi/2,\pi/2])\) we mean that \(b_1\) can be extended to a smooth function (with respect to the Euclidean metric) in an open neighbourhood of \((-\pi/2,\pi/2]\). Similarly, in the following we say \(b_2\) is continuously differentiable on \((-\pi/2,\pi/2]\) if it can be extended to a continuously differentiable function (in the Euclidean metric) in an open neighbourhood of \((-\pi/2,\pi/2]\). 

\begin{lemma}\label{lem:GbetaequalsDt+lipschitz}
For some continuously differentiable \(b_2 : (-\pi/2,\pi/2] \rightarrow \R\),
    \[G_{\beta}(x)=d_{\BP}(0, x)^t+b_2(x), \quad \forall x \in (-\pi/2,\pi/2].\]
In particular, for some Lipschitz \(b_3 :\BP \rightarrow \R\),
\[G_{\beta}(x)=d_{\BP}(0, x)^t+b_3(x), \quad \forall x \in \BP.\]
\end{lemma}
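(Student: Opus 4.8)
The plan is to reduce the statement to Proposition~\ref{prop:SW71} by trading the Euclidean singularity $|x|^t$ appearing there for the intrinsic one $d_{\BP}(0,x)^t$, at the cost of only a $C^1$ error. First I would record the elementary identity: under the identification $\BP\cong\R/\pi\Z$ by angle, the metric is $d_{\BP}(\R u_\theta,\R u_{\theta'})=|\sin(\theta-\theta')|$, so on the fundamental domain one has $d_{\BP}(0,x)=|\sin x|$ for $x\in(-\pi/2,\pi/2]$.

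Next I would introduce $\phi(x):=\sin(x)/x$ (with $\phi(0):=1$). It is smooth and strictly positive on an open neighbourhood of $(-\pi/2,\pi/2]$ (positivity being clear for $x<0$ and for $x>0$, with $\phi(0)=1$ and $\phi(-\pi/2)=2/\pi$), so $\phi^t$ is smooth there and $d_{\BP}(0,x)^t=|x|^t\phi(x)^t$ on $(-\pi/2,\pi/2]$. Since $\phi$ is even, $1-\phi^t$ is a smooth function vanishing to order at least two at $0$, so by Hadamard's lemma $1-\phi(x)^t=x^2\psi(x)$ for some $\psi$ smooth on a neighbourhood of $(-\pi/2,\pi/2]$ (away from $0$ the function $\psi$ is merely a quotient of smooth functions with nonvanishing denominator). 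Substituting and invoking Proposition~\ref{prop:SW71},
\[
G_\beta(x)-d_{\BP}(0,x)^t=\bigl(|x|^t+b_1(x)\bigr)-|x|^t\phi(x)^t=|x|^t\bigl(1-\phi(x)^t\bigr)+b_1(x)=|x|^{\,t+2}\psi(x)+b_1(x).
\]
Since $-1<t<0$, the exponent $t+2$ lies in $(1,2)$, so $x\mapsto|x|^{\,t+2}$ is continuously differentiable on $\R$ (its derivative $(t+2)\,\mathrm{sgn}(x)\,|x|^{\,t+1}$ is continuous because $t+1>0$); hence $b_2(x):=|x|^{\,t+2}\psi(x)+b_1(x)$ is continuously differentiable on a neighbourhood of $(-\pi/2,\pi/2]$, which is the first assertion.

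For the second assertion I would note that $b_3:=G_\beta-d_{\BP}(0,\cdot)^t$ is well defined on $\BP$ because both terms are $\pi$-periodic. On $(-\pi/2,\pi/2]$ it agrees with the $C^1$ function $b_2$ — in particular it is $C^1$ across the singular point $[0]$, where the two singularities cancel. On $\BP\setminus\{[0]\}$ both $d_{\BP}(0,\cdot)^t=|\sin\cdot|^t$ and $G_\beta$ are smooth, the latter by Proposition~\ref{prop:SW71} applied on $(-\pi/2,\pi/2]$ and translated by $\pi$ to cover a neighbourhood of $[\pi/2]$. Hence $b_3\in C^1(\BP)$, and compactness of $\BP$ makes $b_3$ Lipschitz. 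The genuinely delicate point is the cancellation of singularities: one must check that $1-\phi(x)^t$ vanishes to \emph{second} order at $0$, so that $|x|^t\bigl(1-\phi(x)^t\bigr)=|x|^{\,t+2}\psi(x)$ is exactly $C^1$ and no worse; this is precisely where the evenness of $\sin(x)/x$ and the hypothesis $t>-1$ both enter.
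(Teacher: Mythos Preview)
Your proof is correct and follows essentially the same approach as the paper: both reduce to Proposition~\ref{prop:SW71} and show that $|x|^t-|\sin x|^t$ is $C^1$ near $0$ via its $|x|^{t+2}$ leading behaviour (you package this with Hadamard's lemma where the paper does a direct Taylor expansion). For the second assertion you actually prove slightly more than the paper, establishing $b_3\in C^1(\BP)$ by arguing smoothness of both $G_\beta$ and $d_{\BP}(0,\cdot)^t$ on $\BP\setminus\{[0]\}$; the paper instead only argues that $b_2$ has bounded derivative on the closed interval and then invokes the bi-Lipschitz equivalence of $d_{\BP}$ with the Euclidean metric to obtain the Lipschitz conclusion.
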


\begin{proof}
Clearly \(d_{\BP}(0, x)^t-  |x|^t =\sin(|x|)^t-|x|^t\) is smooth on \((-\pi,\pi) \setminus\{0\} \supset (-\pi/2,\pi/2] \setminus \{0\}\). We have 
    \begin{align*}
        d_{\BP}(0, x)^t-|x|^t&=\sin(|x|)^t-|x|^t \\
        &=|x|^t \left( \left(1-\frac{|x|^2}{6}+O(|x|^4)\right)^t-1\right) \\
        &=|x|^t \left(-\frac{t}{6}|x|^2+O(|x|^4) \right) 
    \end{align*}
Near \(x=0\) this behaves like \(x \mapsto -\frac{t}{6}|x|^{2+t}\). In particular, its derivative behaves like \(-\frac{t(2+t)}{6}|x|^{1+t}\) which is continuous at 0. Combined with Proposition \ref{prop:SW71}, this proves the first statement with \(b_2(x):=b_1(x)+|x|^t-d_{\BP}(0,x)^t\).

The second statement follows since \( b_2\) is continuously differentiable on an open neighbourhood of \( (-\pi/2,\pi/2]\), so has finite derivative at the boundary. Hence, since \(d_{\BP}\) and the induced Euclidean metric are bi-Lipschitz equivalent on \(\BP\), the function \(b_3:\BP \rightarrow \R\) defined for \(x \in  (-\pi/2,\pi/2]\) by
\[b_3(x):= b_2(x), \]
and extended periodically, is Lipschitz on \((\BP, d_{\BP})\). 
\end{proof}

\begin{lemma}
There exists a \(\zeta\)-H\"older continuous function \(F:\BP \rightarrow \R\) such that
    \(G_{\beta} * \nu =F\)
in distribution.
\end{lemma}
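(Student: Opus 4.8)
The plan is to read off from Lemma~\ref{lem:GbetaequalsDt+lipschitz} a decomposition of \(G_{\beta}*\nu\) into the Riesz potential \(\MCI_{t,\nu}\), which we are assuming is \(\lambda\)-a.e.\ equal to a \(\zeta\)-H\"older function, plus a Lipschitz error term coming from \(b_3\). First I would record that \(G_{\beta} \in L^1(\BP)\): by Proposition~\ref{prop:SW71} it differs from \(x \mapsto |x|^t\) by a smooth function, and \(|x|^t\) is integrable near \(0\) because \(t>-1\). Hence \(G_{\beta}*\nu\) is a well-defined element of \(L^1(\BP)\), with \(\int G_{\beta}(x-y)\diff\nu(y)\) converging absolutely for \(\lambda\)-a.e.\ \(x\) by Tonelli.

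Next, under the identification of \(\BP\) with \(\R/\pi\Z\) the metric \(d_{\BP}\) is translation invariant in the sense that \(d_{\BP}(0,x-y)=d_{\BP}(x,y)\) for all \(x,y\) (both equal \(|\sin(x-y)|\)). Combining this with Lemma~\ref{lem:GbetaequalsDt+lipschitz}, for every \(x\) with \(\MCI_{t,\nu}(x)<\infty\) — and this holds for \(\lambda\)-a.e.\ \(x\), since by Tonelli \(\int \MCI_{t,\nu}\diff\lambda = C_{\lambda,t}<\infty\) — one has
\[ (G_{\beta}*\nu)(x) = \int d_{\BP}(x,y)^t\diff\nu(y) + \int b_3(x-y)\diff\nu(y) = \MCI_{t,\nu}(x) + (b_3*\nu)(x). \]
Now \(b_3*\nu\) is Lipschitz, since \(|(b_3*\nu)(x)-(b_3*\nu)(x')| \le \mathrm{Lip}(b_3)\,d_{\BP}(x,x')\) using that \(d_{\BP}\) and the induced Euclidean metric on \(\BP\) are bi-Lipschitz equivalent; as \(0<\zeta<\beta = t+1<1\) and \(\BP\) has finite diameter, \(b_3*\nu\) is in particular \(\zeta\)-H\"older continuous. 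By hypothesis \(\MCI_{t,\nu}\) agrees \(\lambda\)-a.e.\ with a \(\zeta\)-H\"older continuous \(h\). Setting \(F := h + b_3*\nu\), the function \(F\) is \(\zeta\)-H\"older continuous and \(G_{\beta}*\nu = F\) for \(\lambda\)-a.e.\ \(x\); since two elements of \(L^1(\BP)\) agreeing \(\lambda\)-a.e.\ define the same distribution, \(G_{\beta}*\nu = F\) in distribution.

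The only points needing care are the Tonelli/Fubini bookkeeping that makes \(G_{\beta}*\nu\) an honest \(L^1\) function and legitimises the pointwise decomposition off a \(\lambda\)-null set, together with the translation-invariance identity \(d_{\BP}(0,x-y)=d_{\BP}(x,y)\); neither is a genuine obstacle once Lemma~\ref{lem:GbetaequalsDt+lipschitz} is available.
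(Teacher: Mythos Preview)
Your argument is correct and follows essentially the same route as the paper: decompose \(G_\beta*\nu\) via Lemma~\ref{lem:GbetaequalsDt+lipschitz} into the Riesz potential \(\MCI_{t,\nu}\) (handled by the standing hypothesis of Proposition~\ref{prop:Holdergivesdimensionbound}) plus the Lipschitz term \(b_3*\nu\). You supply more detail on the \(L^1\)/Tonelli bookkeeping and the translation-invariance identity than the paper does, but the underlying strategy is identical.
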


\begin{proof}
    We have 
    \[G_{\beta} * \nu(x) = d_{\BP}(0,\cdot)^t * \nu(x)+b_3*\nu(x).\]
    By assumption, \(d_{\BP}(0,\cdot)^t * \nu\) is equal in distribution to a \(\zeta\)-H\"older continuous function. Moreover, \( b_3\) is Lipschitz, so \(b_3*\nu(x)\) is Lipschitz. Hence, the lemma follows.
\end{proof}

The Fourier multiplier \(T_{\beta}\) is defined by
\[T_{\beta}f(x)=\sum_{n \in \Z} |n|^\beta c_n(f) e^{2 i nx}.\]
Theorem 1.4 \cite{RS16} implies that \(T_{\beta} f \in C^{\infty}(\BP)\) whenever \(f \in C^{\infty}(\BP)\). Recall that \(f \in C^{\infty}(\BP)\) has \(c_n(f)=O(|n|^{-k})\) for any \(k \in \N\). The next two lemmas follow straightforwardly from Fubini's theorem. We omit their proofs.

\begin{lemma}[Zero-mean of \(T_{\beta}\)]\label{lem:zeromeanofLambda}
For any smooth \(f \in C^{\infty}(\BP)\),
    \[\int_{\BP} T_{\beta}f(x) \diff \lambda(x)=0.  \]
\end{lemma}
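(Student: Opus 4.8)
The statement is essentially immediate once one identifies the integral as a Fourier coefficient. The plan is as follows. First I would recall that, by the cited multiplier result \cite[Theorem 1.4]{RS16}, $T_\beta f$ lies in $C^\infty(\BP)$, hence in $L^1(\BP)$, so $\int_{\BP} T_\beta f \diff\lambda$ is well defined and equals the zeroth Fourier coefficient $c_0(T_\beta f)$. Moreover, since $f\in C^\infty(\BP)$ its Fourier coefficients satisfy $c_n(f)=O(|n|^{-k})$ for every $k\in\N$, so the series $\sum_{n\in\Z}|n|^\beta c_n(f)e^{2inx}$ converges absolutely and uniformly; this is exactly the absolute summability needed to justify an application of Fubini's theorem.

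Next I would compute $c_0(T_\beta f)$ termwise. Writing
\[
\int_{\BP} T_\beta f(x)\diff\lambda(x)
= \int_{\BP}\sum_{n\in\Z}|n|^\beta c_n(f) e^{2inx}\diff\lambda(x)
= \sum_{n\in\Z}|n|^\beta c_n(f)\int_{\BP} e^{2inx}\diff\lambda(x),
\]
where the interchange of sum and integral is legitimate by absolute summability, and using the orthogonality relation $\int_{\BP}e^{2inx}\diff\lambda(x)=\mathbbm 1_{\{n=0\}}$, only the $n=0$ term survives. Since $\beta=t+1>0$ we have $|0|^\beta=0$, so the surviving term is $|0|^\beta c_0(f)=0$, giving $\int_{\BP}T_\beta f\diff\lambda=0$.

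There is no real obstacle here: the content of the lemma is simply that $T_\beta$ annihilates the constant Fourier mode because its symbol $|n|^\beta$ vanishes at $n=0$. The only point requiring a word of care is the interchange of summation and integration, which is handled by the rapid decay of the Fourier coefficients of a smooth function together with Fubini's theorem, precisely as indicated in the text. This mirrors the justification for the companion lemma, and no additional ingredients beyond those already introduced are needed.
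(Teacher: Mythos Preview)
Your argument is correct and is exactly the approach the paper has in mind: the paper omits the proof, noting only that it follows straightforwardly from Fubini's theorem, which is precisely the termwise computation you carry out using the rapid decay of the Fourier coefficients of a smooth function.
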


\begin{lemma}\label{lem:Lambdabetasmooth}
 For any \(f,g \in C^{\infty}(\BP)\),
\[\int_{\BP} f(x) T_{\beta} g(x)  \diff \lambda(x)=\int_{\BP} g(x) T_{\beta} f(x) \diff \lambda(x).\]
\end{lemma}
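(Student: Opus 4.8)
The plan is to evaluate both integrals by expanding everything into Fourier series and then to observe that the two resulting double sums are reindexings of one another. First I would record the only analytic input needed: since $g \in C^{\infty}(\BP)$, its Fourier coefficients satisfy $c_n(g) = O(|n|^{-k})$ for every $k \in \N$, so choosing $k > \beta + 1$ gives $\sum_{n \in \Z} |n|^{\beta} |c_n(g)| < \infty$. In particular the series $T_{\beta} g(x) = \sum_{n \in \Z} |n|^{\beta} c_n(g) e^{2inx}$ converges absolutely and uniformly on $\BP$, and the same holds with $f$ in place of $g$.

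Next, since $f$ is continuous and hence $|f|_{\infty} < \infty$, the function $(x,n) \mapsto f(x)\, |n|^{\beta} c_n(g) e^{2inx}$ on $\BP \times \Z$ is dominated by $|f|_{\infty} |n|^{\beta} |c_n(g)|$, which is integrable with respect to the product of $\lambda$ and counting measure on $\Z$. Fubini's theorem (equivalently, dominated convergence applied to the partial sums) then permits interchanging the sum and the integral:
\[\int_{\BP} f(x)\, T_{\beta} g(x) \diff \lambda(x) = \sum_{n \in \Z} |n|^{\beta} c_n(g) \int_{\BP} f(x) e^{2inx} \diff \lambda(x) = \sum_{n \in \Z} |n|^{\beta} c_n(g)\, c_{-n}(f),\]
where I used $\int_{\BP} f(x) e^{2inx} \diff \lambda(x) = c_{-n}(f)$. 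Running the identical computation with the roles of $f$ and $g$ exchanged gives $\int_{\BP} g(x)\, T_{\beta} f(x) \diff \lambda(x) = \sum_{n \in \Z} |n|^{\beta} c_n(f)\, c_{-n}(g)$.

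Finally I would reindex this last sum by $n \mapsto -n$ and use $|-n|^{\beta} = |n|^{\beta}$, turning it into $\sum_{n \in \Z} |n|^{\beta} c_{-n}(f)\, c_n(g)$, which is term-by-term equal to the sum obtained for $\int_{\BP} f\, T_{\beta} g \diff \lambda$. Hence the two integrals coincide. The argument is entirely routine; the only point that requires any attention is the justification for interchanging summation and integration, and this is immediate from the rapid decay of the Fourier coefficients of a smooth function, so I do not anticipate any genuine obstacle.
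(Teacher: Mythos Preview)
Your proof is correct and is precisely the Fubini/Fourier-coefficient computation the paper has in mind; the paper merely says the lemma ``follow[s] straightforwardly from Fubini's theorem'' and omits the details you have supplied.
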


\begin{lemma}\label{lem:lambdabetaFnu}
\(T_{\beta}F =C_{\beta} (\nu-\lambda)\) in distribution.
\end{lemma}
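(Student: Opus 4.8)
We want $T_\beta F = C_\beta(\nu - \lambda)$ in distribution, where $F$ is the $\zeta$-Hölder continuous function with $G_\beta * \nu = F$ in distribution, $\beta = t+1$, and $G_\beta(x) = C_\beta \sum_{n\neq 0} |n|^{-\beta} e^{2inx}$.

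**The plan.** The natural route is to test against an arbitrary $\varphi \in C^\infty(\BP)$ and manipulate the pairing using the Fourier characterization of everything in sight. First I would compute the Fourier coefficients: since $G_\beta$ has $c_0(G_\beta) = 0$ and $c_n(G_\beta) = C_\beta |n|^{-\beta}$ for $n \neq 0$, the convolution identity $c_n(G_\beta * \nu) = c_n(G_\beta) c_n(\nu)$ gives $c_0(F) = 0$ and $c_n(F) = C_\beta |n|^{-\beta} c_n(\nu)$ for $n \neq 0$. Now for $\varphi \in C^\infty(\BP)$, use Lemma \ref{lem:Lambdabetasmooth} to move $T_\beta$ off $F$: formally $\int F \, T_\beta \varphi \, d\lambda = \int \varphi \, T_\beta F\, d\lambda$, but since $F$ is only Hölder (not smooth), I would instead work directly from the definition of the distributional action of $T_\beta F$, namely $(T_\beta F)(\varphi) := \int F(x) \, T_\beta \varphi(x)\, d\lambda(x)$ — this is how $T_\beta$ acts on distributions, using that $T_\beta$ preserves $C^\infty(\BP)$ by \cite[Theorem 1.4]{RS16}.

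**Carrying out the computation.** With $\varphi \in C^\infty(\BP)$, expand $T_\beta \varphi(x) = \sum_{n} |n|^\beta c_n(\varphi) e^{2inx}$ (rapidly decaying coefficients, so all sums converge absolutely and Fubini applies). Then
\[
(T_\beta F)(\varphi) = \int_{\BP} F(x)\, T_\beta \varphi(x) \, d\lambda(x) = \sum_{n \in \Z} |n|^\beta c_n(\varphi) \overline{c_n(\bar F)}
\]
— more cleanly, pairing Fourier series, $\int F \cdot \overline{\psi} \,d\lambda = \sum_n c_n(F)\overline{c_n(\psi)}$, so taking $\psi = \overline{T_\beta\varphi}$ and using $c_n(\overline{T_\beta\varphi}) = |n|^\beta \overline{c_n(\varphi)}$ gives $(T_\beta F)(\varphi) = \sum_{n} c_n(F) |n|^\beta c_{-n}(\varphi)$; reindexing and substituting $c_n(F)$, the $n=0$ term vanishes and for $n \neq 0$ we get $|n|^\beta \cdot C_\beta |n|^{-\beta} c_n(\nu) = C_\beta c_n(\nu)$. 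Hence $(T_\beta F)(\varphi) = C_\beta \sum_{n \neq 0} c_n(\nu) c_{-n}(\varphi) = C_\beta\big(\sum_{n} c_n(\nu) c_{-n}(\varphi) - c_0(\nu)c_0(\varphi)\big) = C_\beta\big((\nu)(\varphi) - \lambda(\varphi)\big)$, using $c_0(\nu) = 1 = c_0(\lambda)$ and that $\sum_n c_n(\nu)c_{-n}(\varphi) = \int \varphi\, d\nu$ (this is just the definition of $c_n(\nu)$ combined with the absolutely convergent Fourier expansion of $\varphi$). This is exactly $C_\beta(\nu - \lambda)(\varphi)$, so the identity holds in distribution.

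**Main obstacle.** The only real subtlety is justifying the interchange of sum and integral and making sure the distributional pairing $(T_\beta F)(\varphi)$ is correctly defined — $F$ is merely $\zeta$-Hölder, not smooth, so $T_\beta F$ must be understood as the distribution $\varphi \mapsto \int F \cdot T_\beta\varphi\, d\lambda$, which is legitimate precisely because $T_\beta: C^\infty(\BP) \to C^\infty(\BP)$ (Roncal–Stinga) and $F \in L^1(\BP)$. The absolute convergence needed for Fubini comes for free from the rapid decay $c_n(\varphi) = O(|n|^{-k})$ for all $k$, against which the factor $|n|^\beta$ and the bounded coefficients $c_n(\nu), c_n(F)$ are harmless. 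I expect this to go through cleanly once the bookkeeping of Fourier coefficients is set up carefully; there is no deep difficulty, just the need to be scrupulous about which objects are functions and which are distributions.
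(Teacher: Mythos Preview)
Your argument is correct and follows essentially the same route as the paper: compute the Fourier coefficients $c_n(F)=C_\beta|n|^{-\beta}c_n(\nu)$ for $n\neq 0$ from the convolution identity, then read off the distributional identity from the Fourier side. The only difference is presentational---the paper writes the equality $\nu-\lambda=\sum_{n\neq 0}c_n(\nu)e^{2inx}=C_\beta^{-1}T_\beta F$ directly as an equality of Fourier series ``in the sense of distribution'', whereas you unpack this by pairing with an arbitrary $\varphi\in C^\infty(\BP)$ and justifying the interchange via rapid decay of $c_n(\varphi)$; both amount to the same computation.
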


\begin{proof}
By Proposition \ref{prop:SW71}, \(G_{\beta} \in L^1(\BP)\), so \(c_n(G_{\beta})=C_{\beta}|n|^{-\beta}\) for all \(n \in \N \setminus \{0\}\). Hence, for \(n \not=0\) we have
\[ c_n(F)=c_n(G_{\beta} * \nu) =c_n(G_{\beta}) c_n(\nu)= C_{\beta} |n|^{-\beta} c_n(\nu) ,\]
so \(c_n(\nu)=C_{\beta}^{-1}|n|^{\beta} c_n(F)\). We also have \(c_0(\nu)=1\). Hence, this implies
\[\nu(x)-\lambda(x)= \sum_{n \not= 0} c_n(\nu) e^{2 i n x}= \sum_{n \in \Z}C_{\beta}^{-1} |n|^\beta c_n(F) e^{2 i nx} = C_{\beta}^{-1} T_{\beta} F(x),\] 
where the equalities are in the sense of distribution. We note that this further implies that \(c_n(T_{\beta}F)=|n|^{\beta}c_n(F)\) (a priori it was not clear that \(c_n(T_{\beta}F)\) is well defined).
\end{proof}

Let \(0<R<\pi/2\) and define
\(\phi:\R \rightarrow \R \) by
\[\phi(x):=\begin{cases}
    c \exp \left( \frac{1}{|x/R|^2-1}\right), \quad |x|<R  \\
    0, \quad |x| \geq R,
\end{cases}\]
where \(c\) is chosen so that \( \frac{1}{\pi} \int_{\R} \phi \diff x=1\). For \(0<\epsilon \leq 1\), we let \(\phi_{\epsilon}(x)= \epsilon^{-1}\phi(x/\epsilon)\). We also let \(\varphi_{\epsilon}: (-\pi/2,\pi/2] \rightarrow \R\) be the function defined by \(\varphi_{\epsilon}(x)=\phi_{\epsilon}(x)\). Then, \(\varphi_{\epsilon}\) defines a smooth function on \(\BP\) in the natural way, which we also denote by \(\varphi_{\epsilon}\). For \(f \in L^1(\BP)\) we let \(f_{\epsilon}:=\varphi_{\epsilon} *f\).

\begin{lemma}\label{lem:evans}
\phantom{x}
\begin{enumerate}[label=(\roman*)]
    \item For any \(f \in L^1(\BP)\), \(f_{\epsilon} \in C^{\infty}(\BP)\). 
    \item For any \(f \in C(\BP)\), \(f_{\epsilon} \rightarrow f\) uniformly on \(\BP\).
\end{enumerate}
\end{lemma}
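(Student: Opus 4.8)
The plan is to prove this by the classical mollifier argument transplanted to the torus $\BP \simeq \R/\pi\Z$; neither part presents a genuine difficulty, so the write-up is mostly bookkeeping. The one structural point I would keep in mind throughout is that, since $0 < R < \pi/2$ and $0 < \epsilon \leq 1$, the bump $\phi_\epsilon$ is supported in $(-\epsilon R, \epsilon R) \subseteq (-R, R) \subset (-\pi/2, \pi/2)$, so $\varphi_\epsilon$ genuinely descends to a well-defined element of $C^\infty(\BP)$ supported inside a fundamental domain, together with all of its derivatives $\varphi_\epsilon^{(k)}$, each continuous on the compact space $\BP$ and hence bounded.

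For (i) I would write $f_\epsilon(x) = \int_\BP \varphi_\epsilon(x-y) f(y)\,\diff\lambda(y)$ and differentiate under the integral sign: the difference quotients $h^{-1}\big(\varphi_\epsilon(x+h-y) - \varphi_\epsilon(x-y)\big)$ converge pointwise to $\varphi_\epsilon'(x-y)$ and are bounded by $\|\varphi_\epsilon'\|_\infty$ uniformly in $h$ by the mean value theorem, so since $|f| \in L^1(\BP)$, dominated convergence gives $f_\epsilon'(x) = \int_\BP \varphi_\epsilon'(x-y) f(y)\,\diff\lambda(y)$. Iterating $k$ times yields $f_\epsilon^{(k)}(x) = \int_\BP \varphi_\epsilon^{(k)}(x-y) f(y)\,\diff\lambda(y)$, which is again continuous in $x$ by dominated convergence, so $f_\epsilon \in C^k(\BP)$ for every $k$, i.e. $f_\epsilon \in C^\infty(\BP)$.

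For (ii) I would first record that $\int_\BP \varphi_\epsilon\,\diff\lambda = \tfrac1\pi\int_{-\pi/2}^{\pi/2}\phi_\epsilon(x)\,\diff x = \tfrac1\pi\int_\R \phi_\epsilon(x)\,\diff x = 1$ — the middle equality because $\supp\phi_\epsilon$ sits strictly inside $(-\pi/2,\pi/2)$, the last by the substitution $x \mapsto x/\epsilon$ and the normalisation of $\phi$ — and that $\varphi_\epsilon \geq 0$ with $\supp\varphi_\epsilon \subseteq B(0, c_0\epsilon)$, where $c_0$ comes from the bi-Lipschitz equivalence of $d_\BP$ with the Euclidean metric on $\BP$. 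Using translation-invariance of $\lambda$,
\[
f_\epsilon(x) - f(x) = \int_\BP \varphi_\epsilon(z)\big(f(x-z) - f(x)\big)\,\diff\lambda(z),
\]
so $|f_\epsilon(x) - f(x)| \leq \sup_{d_\BP(z,0)\leq c_0\epsilon} |f(x-z) - f(x)| \leq \omega_f(c_0\epsilon)$, with $\omega_f$ the modulus of continuity of $f$. As $\BP$ is compact, $f$ is uniformly continuous, so $\omega_f(c_0\epsilon) \to 0$ as $\epsilon \to 0$, uniformly in $x$; this is exactly the asserted uniform convergence.

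The only ``obstacle'' worth flagging is the trivial one already mentioned: using the constraints $R < \pi/2$, $\epsilon \leq 1$ to ensure $\varphi_\epsilon$ and its translates are honestly smooth and compactly supported within a fundamental domain, and carrying along the harmless constant $c_0$ relating $d_\BP$ to $|\cdot|$ when describing $\supp\varphi_\epsilon$ and estimating the modulus of continuity. Everything else is the textbook approximate-identity computation, and (ii) does not even use the specific form of $\phi$ beyond nonnegativity, small support, and unit mass.
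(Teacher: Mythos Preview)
Your proof is correct and is essentially the same classical mollifier argument underlying the paper's proof; the only difference is presentational, in that the paper unwinds the torus convolution to a convolution on $\R$ (using that $\supp\phi_\epsilon\subset(-\pi/2,\pi/2)$ and $\pi$-periodicity of $f$) and then cites \cite[Theorem~1, \S4.2]{EG92}, whereas you write out the dominated-convergence and modulus-of-continuity steps directly on $\BP$.
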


\begin{proof}
Recalling that for \(f:\BP \rightarrow \R\) we are also denoting by \(f\) the corresponding \(\pi\)-periodic function on \(\R\), for \(x \in \R\) we have
\begin{align*}
     f_\epsilon(x)
     &= \frac{1}{\pi} \int_{x-\frac{\pi}{2}}^{x+\frac{\pi}{2}} \varphi_{\epsilon}(x-y) f(y) \diff y \\
    &=\frac{1}{\pi} \int_{x-\frac{\pi}{2}}^{x+\frac{\pi}{2}} \phi_{\epsilon}(x-y) f(y) \diff y \\ 
    &= \frac{1}{\pi} \int_\R \phi_{\epsilon}(x-y) f(y) \diff y.
\end{align*}
Then (i) and (ii) follow from parts (i) and (ii) of \cite[Theorem 1,\S 4.2]{EG92}, respectively.
\end{proof}

\begin{lemma}\label{lem:nulambdarelation}
For any \(f \in C^{\infty}(\BP)\),
\[\int_{\BP} F(x) T_{\beta} f(x)  \diff \lambda(x)=\int_{\BP} f(x) T_{\beta} F(x) \diff \lambda(x).\]  
\end{lemma}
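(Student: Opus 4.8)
The statement to be proved is the identity
\[
\int_{\BP} F(x)\, T_{\beta} f(x)\, \diff \lambda(x) = \int_{\BP} f(x)\, T_{\beta} F(x)\, \diff \lambda(x)
\]
for all $f \in C^{\infty}(\BP)$, where $F$ is the $\zeta$-H\"older continuous function with $G_{\beta}*\nu = F$ in distribution. The plan is to reduce to the already-available symmetry of $T_{\beta}$ on smooth functions (Lemma \ref{lem:Lambdabetasmooth}) by a mollification argument, using the mollifiers $\varphi_{\epsilon}$ and the operator $f \mapsto f_{\epsilon} := \varphi_{\epsilon}*f$ introduced just before the statement.

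First I would fix $f \in C^{\infty}(\BP)$ and consider $F_{\epsilon} := \varphi_{\epsilon} * F$. By Lemma \ref{lem:evans}(i), $F_{\epsilon} \in C^{\infty}(\BP)$, so Lemma \ref{lem:Lambdabetasmooth} applies directly to the pair $F_{\epsilon}, f$ and gives
\[
\int_{\BP} F_{\epsilon}(x)\, T_{\beta} f(x)\, \diff \lambda(x) = \int_{\BP} f(x)\, T_{\beta} F_{\epsilon}(x)\, \diff \lambda(x).
\]
For the left-hand side, since $F$ is continuous, Lemma \ref{lem:evans}(ii) gives $F_{\epsilon} \to F$ uniformly, and $T_{\beta}f$ is a fixed continuous function (indeed smooth, by the remark after Theorem 1.4 of \cite{RS16}), so the left side converges to $\int_{\BP} F(x) T_{\beta}f(x) \diff \lambda(x)$ as $\epsilon \to 0$. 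For the right-hand side, the key observation is that $T_{\beta}$ commutes with convolution by $\varphi_{\epsilon}$: since $c_n(F_{\epsilon}) = c_n(\varphi_{\epsilon}) c_n(F)$ and $T_{\beta}$ acts as the Fourier multiplier $|n|^{\beta}$, we get, at the level of Fourier coefficients, $c_n(T_{\beta}F_{\epsilon}) = |n|^{\beta} c_n(\varphi_{\epsilon}) c_n(F) = c_n(\varphi_{\epsilon}) c_n(T_{\beta}F) = c_n(\varphi_{\epsilon} * T_{\beta}F)$, where $c_n(T_{\beta}F)$ is well-defined by the remark at the end of the proof of Lemma \ref{lem:lambdabetaFnu}. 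Hence $T_{\beta}F_{\epsilon} = \varphi_{\epsilon} * T_{\beta}F = (T_{\beta}F)_{\epsilon}$. By Lemma \ref{lem:lambdabetaFnu}, $T_{\beta}F = C_{\beta}(\nu - \lambda)$ in distribution, so $T_{\beta}F_{\epsilon} = C_{\beta}(\varphi_{\epsilon}*\nu - 1)$, which is a genuine continuous (indeed smooth) function. Therefore
\[
\int_{\BP} f(x)\, T_{\beta} F_{\epsilon}(x)\, \diff \lambda(x) = C_{\beta}\!\left( \int_{\BP} f(x)\, (\varphi_{\epsilon}*\nu)(x)\, \diff \lambda(x) - \int_{\BP} f\, \diff \lambda \right) = C_{\beta}\!\left( \int_{\BP} f_{\epsilon}\, \diff \nu - \int_{\BP} f\, \diff \lambda \right),
\]
using Fubini and the symmetry of the mollifier $\varphi_{\epsilon}(x) = \varphi_{\epsilon}(-x)$ to move the convolution onto $f$. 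Since $f \in C^{\infty}(\BP)$, Lemma \ref{lem:evans}(ii) gives $f_{\epsilon} \to f$ uniformly, so $\int f_{\epsilon}\, \diff \nu \to \int f\, \diff \nu$, and the right side converges to $C_{\beta}(\nu(f) - \lambda(f)) = \int_{\BP} f(x)\, T_{\beta}F(x)\, \diff \lambda(x)$, the last equality again by Lemma \ref{lem:lambdabetaFnu} interpreted in distribution against $f$. Combining the two limits yields the claimed identity.

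I do not expect a serious obstacle here; the only point requiring minor care is justifying that $T_{\beta}F_{\epsilon} = (T_{\beta}F)_{\epsilon}$ as honest functions rather than merely distributions, which follows from the Fourier-coefficient computation above together with the fact that both $\varphi_{\epsilon}*\nu$ and $F_{\epsilon}$ are smooth (so all the relevant Fourier series converge absolutely and the multiplier acts termwise). One should also note that $T_{\beta}f \in C^{\infty}(\BP)$ so that all the pairings $\int F_{\epsilon} T_{\beta}f \, \diff \lambda$ make sense before passing to the limit, and that the uniform convergence $F_{\epsilon} \to F$ (valid because $F$ is continuous, not merely $L^1$) is exactly what licenses the interchange of limit and integral on the left-hand side.
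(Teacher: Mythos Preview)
Your proposal is correct and follows essentially the same approach as the paper: mollify $F$ to $F_\epsilon$, apply Lemma \ref{lem:Lambdabetasmooth} to the smooth pair $(F_\epsilon,f)$, use the Fourier-coefficient identity $T_\beta F_\epsilon=\varphi_\epsilon*T_\beta F=C_\beta\varphi_\epsilon*(\nu-\lambda)$ together with Fubini to rewrite the right-hand side as $C_\beta(\int f_\epsilon\,\diff\nu-\int f\,\diff\lambda)$, and then pass to the limit using the uniform convergences $F_\epsilon\to F$ and $f_\epsilon\to f$. The paper's proof is organized identically, down to the remark that $c_n(T_\beta F)$ is well defined via Lemma \ref{lem:lambdabetaFnu}.
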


\begin{proof} 
Since \(F_\epsilon\) and \(T_{\beta}f\) are in \(C^{\infty}(\BP)\), we can apply Lemma \ref{lem:Lambdabetasmooth} to get
\[ \int_{\BP} F_\epsilon(x) T_{\beta} f(x) \diff \lambda(x) =  \int_{\BP} T_{\beta} F_\epsilon(x)  f(x) \diff \lambda(x).\]
We further have
\[c_n(T_{\beta} F_{\epsilon})=|n|^{\beta}c_n(F_{\epsilon})=|n|^{\beta} c_n(F) c_n(\varphi_{\epsilon}) =c_n(T_{\beta}F) c_n(\varphi_{\epsilon})=  c_n(\varphi_{\epsilon}*(T_{\beta}F) ),\]
so \(T_{\beta} F_{\epsilon}=\varphi_{\epsilon}*T_{\beta}F=C_{\beta}\varphi_{\epsilon}*(\nu-\lambda)\) in distribution. Hence,
\begin{align}
     C_{\beta}^{-1} \int_{\BP} F_\epsilon(x) T_{\beta} f(x) \diff \lambda(x) 
     &= \int_{\BP} (\varphi_{\epsilon}*T_{\beta} F)(x)    f(x) \diff \lambda(x) \nonumber  \\
     &= \int_{\BP} (\varphi_{\epsilon}*(\nu-\lambda))(x)    f(x) \diff \lambda(x) \nonumber  \\
     &=  \int_{\BP} \int_{\BP} \varphi_{\epsilon}(x-y) \diff \nu(y)  f(x) \diff \lambda(x)- \int_{\BP}  f(x) \diff \lambda(x) \nonumber  \\
     &= \int_{\BP} f_{\epsilon}(y) \diff \nu(y)-\int_{\BP}  f(x) \diff \lambda(x) \label{eqn:equationfrommoll},
\end{align}
where the last equality follows by Fubini's theorem.

By Lemma \ref{lem:evans} we have \(F_{\epsilon} \rightarrow F\) and \(f_{\epsilon} \rightarrow f\) uniformly on \(\BP\). Taking the limit as \(\epsilon \rightarrow 0\) implies
\[C_{\beta}^{-1} \int_{\BP} F(x) T_{\beta} f(x) \diff \lambda(x)= \int_{\BP} f(x) \diff \nu(x)-\int_{\BP} f(x) \diff \lambda(x).\]
The lemma now follows from Lemma \ref{lem:lambdabetaFnu}.
\end{proof}

Fix some \(0<R<\pi/8\). Let \( \psi: \R \rightarrow \R\) be a smooth bump function with \( \psi\equiv 1 \) on \([-R,R]\) and \( \psi \equiv 0\) on \(\R \setminus [-2R,2R] \). For \(0<r \leq 1\), \(x_0 \in [-\pi,\pi)\) and \(x \in [x_0-\pi,x_0+\pi)\), we define
\[ \psi_{x_0,r}(x):= \psi \left(\frac{R(x-x_0)}{r} \right).\]
We extend \( \psi_{x_0,r}(x)\) to a \(\pi\)-periodic function on \(\R\). That is, for each \(x \in \R\) we let \( \psi_{x_0,r}(x)= \psi_{x_0,r}(x+n\pi)\), where \(n \in \Z\) is the unique integer such that \(x+n\pi \in [x_0-\pi/2,x_0+\pi/2)\). Then, \( \psi_{x_0,r}\) also defines a function on \(\BP\), which by an abuse of notation we also denote by \( \psi_{x_0,r}\).

For \(x \not=0\), we define \(K^{\beta/2}(x)\) to be the positive \(\pi\)-periodic kernel 
\[K^{\beta/2}(x):= C_{\beta}' \sum_{n \in \Z}\frac{1}{|x- n\pi|^{1+\beta}}, \]
where 
\[C_{\beta}':=\frac{ \Gamma \left( \frac{1+\beta}{2}\right)}{|\Gamma(-\beta/2)|\pi^{1/2}}.\]
By Theorem 1.5 in \cite{RS16}, for all \(x,x_0 \in \BP\) and \(r \leq R\),
\begin{align*}
    T_{\beta} \psi_{x_0,r}(x) &=C_{\beta}' \int_{\BP} (\psi_{x_0,r}(x)- \psi_{x_0,r}(y)) K^{\beta/2}(x-y) \diff \lambda(y) \\
    &=C_{\beta }'  \int_{\BP} \sum_{n \in \Z}  \frac{ \psi_{x_0,r}(x)- \psi_{x_0,r}(y)}{|x-y- n\pi|^{1+\beta}} \diff \lambda(y).
\end{align*}
We therefore have
\begin{align}
    |T_{\beta} \psi_{x_0,r}(x)|&=  \frac{C_{\beta }'}{\pi}  \left|\int_{x_0-\frac{\pi}{2}}^{x_0+\frac{\pi}{2}} \sum_{n \in \Z}  \frac{ \psi_{x_0,r}(x)- \psi_{x_0,r}(y)}{|x-y- n\pi|^{1+\beta}} \diff y \right| \nonumber \\ &\leq  \frac{C_{\beta }'}{\pi}  \int_{x_0-\frac{\pi}{2}}^{x_0+\frac{\pi}{2}} \sum_{n \in \Z}  \frac{| \psi_{x_0,r}(x)- \psi_{x_0,r}(y)|}{|x-y- n\pi|^{1+\beta}} \diff y \nonumber \\
    &= \frac{C_{\beta }'}{\pi}  \sum_{n \in \Z} \int_{x_0-\frac{\pi}{2}}^{x_0+\frac{\pi}{2}}   \frac{\left| \psi \left(\frac{R(x-x_0)}{r} \right)- \psi \left(\frac{R(y-x_0)}{r} \right) \right|}{|x-y- n\pi|^{1+\beta}} \diff y, \label{eqn:lessthanbigsum}
\end{align}
where in the last equality we are using Tonelli's theorem.

In the \(\R^d\) case in \cite{Wal66} it is only necessary to estimate the contribution from a single singular kernel, which Wallin split into a near-- and far-- field contribution. On \(\BP  \simeq \R/\pi\Z\) there are instead ``wrap-around'' terms, the \(n \not=0\) terms in the sum, which can be handled in a similar way to the far-field contribution. The next two lemmas are to control the more difficult \(n=0\) term.

\begin{lemma}\label{lem:n=0termprelim}
    \[ \int_{-\infty}^\infty \int_{-\infty}^{\infty} \frac{| \psi \left(x \right)- \psi(y)|}{|x-y|^{1+\beta}} \diff y \diff x<\infty.\]
\end{lemma}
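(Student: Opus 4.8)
The plan is to split the integration over $\R^2$ into a unit-width band around the diagonal and its complement, and to exploit that $\psi$ is Lipschitz with compact support while $\beta=t+1\in(0,1)$. Write $L:=\|\psi'\|_{\infty}<\infty$ and $M:=\|\psi\|_{\infty}<\infty$, and note that the integrand
\[
f(x,y):=\frac{|\psi(x)-\psi(y)|}{|x-y|^{1+\beta}}
\]
is symmetric in $x$ and $y$ and vanishes unless $x\in[-2R,2R]$ or $y\in[-2R,2R]$, since $\supp\psi\subseteq[-2R,2R]$. Consequently $\iint_{\R^2}f\le 2\iint_{\{x\in[-2R,2R]\}}f$, so it suffices to bound the integral of $f$ over the horizontal strip $x\in[-2R,2R]$.

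First I would estimate the near-diagonal contribution $\{|x-y|\le 1\}$. Here the mean value theorem gives $|\psi(x)-\psi(y)|\le L|x-y|$, so $f(x,y)\le L|x-y|^{-\beta}$, and
\[
\int_{-2R}^{2R}\int_{|x-y|\le 1}f(x,y)\diff y\diff x
\;\le\;\int_{-2R}^{2R}\Big(2L\int_{0}^{1}s^{-\beta}\diff s\Big)\diff x
\;=\;\frac{8RL}{1-\beta}\;<\;\infty,
\]
where finiteness of $\int_0^1 s^{-\beta}\diff s$ uses $\beta<1$. Then I would estimate the far contribution $\{|x-y|>1\}$ using only the crude bound $|\psi(x)-\psi(y)|\le 2M$, which gives
\[
\int_{-2R}^{2R}\int_{|x-y|>1}f(x,y)\diff y\diff x
\;\le\;\int_{-2R}^{2R}\Big(4M\int_{1}^{\infty}s^{-1-\beta}\diff s\Big)\diff x
\;=\;\frac{16RM}{\beta}\;<\;\infty,
\]
where finiteness of $\int_1^\infty s^{-1-\beta}\diff s$ uses $\beta>0$. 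Adding the two estimates and reinstating the factor $2$ from symmetry yields the asserted finiteness.

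I do not anticipate any substantial obstacle. The only point that requires care is that the diagonal singularity of the kernel is genuinely integrable, and this is exactly where the Lipschitz regularity of $\psi$ together with the constraint $\beta<1$ are used; the mild decay $|x-y|^{-1-\beta}$ at infinity is harmless once the compact support of $\psi$ confines the relevant region to a bounded strip and one invokes $\beta>0$.
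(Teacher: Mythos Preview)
Your proof is correct. Both you and the paper exploit the same two facts --- the Lipschitz regularity of $\psi$ to tame the diagonal singularity (using $\beta<1$), and the compact support of $\psi$ together with the decay of the kernel to control the remainder (using $\beta>0$) --- but the decompositions differ. The paper partitions according to whether $x$ and $y$ lie in $[-2R,2R]$: on $[-2R,2R]^2$ it applies the Lipschitz bound, and on the mixed regions (one variable inside, one outside the support) it integrates the resulting one-sided kernel explicitly, obtaining terms like $\beta^{-1}\int_{-2R}^{2R}(2R-x)^{-\beta}\diff x$; the piece with $x\notin[-2R,2R]$ is then reduced to the previous case by Fubini. Your route is cleaner: you first use the symmetry $f(x,y)=f(y,x)$ to restrict to the strip $x\in[-2R,2R]$, and then split by distance to the diagonal rather than by support, which avoids the explicit evaluation of the tail integrals and gives the bound in two lines. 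Both arguments are elementary; yours is just a more streamlined packaging of the same ingredients.
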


\begin{proof}  
We first bound
 \[ \int_{-2R}^{2R} \int_{-\infty}^{\infty} \frac{| \psi \left(x \right)- \psi(y)|}{|x-y|^{1+\beta}} \diff y \diff x.\]
For \(x \in [-2R,2R]\),
\begin{align*}
    \int_{-\infty}^{\infty}\frac{| \psi \left(x \right)- \psi(y)|}{|x-y|^{1+\beta}} \diff y 
    &=\int_{-\infty}^{-2R }\frac{ \psi(x)}{(x-y)^{1+\beta}} \diff y +\int_{-2R}^{2R }\frac{| \psi(x)- \psi(y)|}{|x-y|^{1+\beta}} \diff y  +\int_{2R}^{\infty }\frac{ \psi(x)}{(y-x)^{1+\beta}} \diff y.
\end{align*}
Note that \( \psi\) is Lipschitz, so there exists some \(C>0\) such that \(| \psi(x)- \psi(y)| \leq C|x-y|\). Hence, the middle term is bounded by
\[C\int_{-2R}^{2R }\frac{1}{|x-y|^{\beta}} \diff y <\infty. \]
In particular, for some \(C'>0\),
\begin{align*}
    \int_{-2R}^{2R} \int_{-\infty}^{\infty}\frac{| \psi \left(x \right)- \psi(y)|}{|x-y|^{1+\beta}} \diff y \diff x  \\
    \leq C' +\int_{-2R}^{2R}  \int_{-\infty}^{-2R} \frac{ \psi(x)}{(x-y)^{1+\beta}} \diff y \diff x+\int_{-2R}^{2R}  \int_{2R}^{\infty} \frac{ \psi(x)}{(y-x)^{1+\beta}} \diff y \diff x.
\end{align*}
We further have
\begin{align*}
    \int_{-2R}^{2R}  \int_{2R}^{\infty} \frac{ \psi(x)}{(y-x)^{1+\beta}} \diff y \diff x
    &\leq  \int_{-2R}^{2R}  \int_{2R}^{\infty}  \frac{1}{(y-x)^{1+\beta}} \diff y \diff x \\
    &=\beta^{-1} \int_{-2R}^{2R} (2R-x)^{-\beta} \diff x<\infty.
\end{align*}
Similarly, 
\[\int_{-2R}^{2R}  \int_{-\infty}^{-2R} \frac{ \psi(x)}{(x-y)^{1+\beta}} \diff y \diff x \leq \beta^{-1} \int_{-2R}^{2R} (2R-x)^{-\beta} \diff x<\infty.\]
Hence,
\[\int_{-2R}^{2R} \int_{-\infty}^{\infty}\frac{| \psi \left(x \right)- \psi(y)|}{|x-y|^{1+\beta}} \diff y \diff x<\infty. \]

We now bound 
\[ \int_{\R \setminus [-2R,2R]} \int_{-\infty}^{\infty} \frac{| \psi \left(x \right)- \psi(y)|}{|x-y|^{1+\beta}} \diff y \diff x,\]
and this will complete the lemma. For \(x \not\in [-2R,2R]\), \(| \psi(x)- \psi(y)|= \psi(y)\), so 
\begin{align*}
    \int_{-\infty}^{\infty} \frac{| \psi \left(x \right)- \psi(y)|}{|x-y|^{1+\beta}} \diff y &=\int_{-\infty}^{\infty} \frac{ \psi(y)}{|x-y|^{1+\beta}} \diff y \\
    &= \int_{-2R}^{2R} \frac{ \psi(y)}{|x-y|^{1+\beta}} \diff y \\
    &=\int_{-2R}^{2R} \frac{| \psi(y)- \psi(x)|}{|x-y|^{1+\beta}} \diff y.
\end{align*}
Thus,
\begin{align*}
    \int_{\R \setminus [-2R,2R]} \int_{-\infty}^{\infty} \frac{| \psi \left(x \right)- \psi(y)|}{|x-y|^{1+\beta}} \diff y \diff x &= \int_{\R \setminus [-2R,2R]} \int_{-2R}^{2R}\frac{| \psi(x)- \psi(y)|}{|x-y|^{1+\beta}} \diff y \diff x \\
    &\leq \int_{-\infty}^{\infty} \int_{-2R}^{2R} \frac{| \psi(x)- \psi(y)|}{|x-y|^{1+\beta}} \diff y \diff x \\
    &=   \int_{-2R}^{2R} \int_{-\infty}^{\infty} \frac{| \psi(x)- \psi(y)|}{|x-y|^{1+\beta}} \diff x \diff y,
\end{align*}
which we have already shown is finite.
\end{proof}

\begin{lemma}\label{lem:n=0term}
For \(0<\zeta<\beta\),
    \[ \int_{-\infty}^\infty \int_{-\infty}^{\infty} |x|^{\zeta} \frac{| \psi \left(x \right)- \psi(y)|}{|x-y|^{1+\beta}} \diff y \diff x<\infty.\]
\end{lemma}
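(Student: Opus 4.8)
The plan is to reduce everything to Lemma~\ref{lem:n=0termprelim} together with a single decay estimate at infinity that uses the hypothesis $\zeta<\beta$. The starting observation is that the integrand $|x|^{\zeta}\frac{|\psi(x)-\psi(y)|}{|x-y|^{1+\beta}}$ vanishes whenever both $x$ and $y$ lie outside $[-2R,2R]$, since then $\psi(x)=\psi(y)=0$. Hence it suffices to estimate separately the contributions of the two regions $\{x\in[-2R,2R]\}$ and $\{x\notin[-2R,2R],\ y\in[-2R,2R]\}$.

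On the first region, $|x|^{\zeta}\le(2R)^{\zeta}$ is bounded, so that piece of the integral is at most
\[(2R)^{\zeta}\int_{-2R}^{2R}\int_{-\infty}^{\infty}\frac{|\psi(x)-\psi(y)|}{|x-y|^{1+\beta}}\diff y\,\diff x,\]
which is finite: this is precisely the intermediate bound established inside the proof of Lemma~\ref{lem:n=0termprelim}. On the second region I would split further according to whether $|x|<4R$ or $|x|\ge4R$. For $2R<|x|<4R$ the factor $|x|^{\zeta}$ is again bounded, by $(4R)^{\zeta}$, and what remains is at most $(4R)^{\zeta}\int_{\R\setminus[-2R,2R]}\int_{-\infty}^{\infty}\frac{|\psi(x)-\psi(y)|}{|x-y|^{1+\beta}}\diff y\,\diff x$, which is finite by Lemma~\ref{lem:n=0termprelim}; in particular this subsumes the diagonal singularity of $|x-y|^{-1-\beta}$ near $x=\pm2R$. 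For $|x|\ge4R$ and $y\in[-2R,2R]$ we have $|x-y|\ge|x|-2R\ge|x|/2$, hence $|x|^{\zeta}|x-y|^{-1-\beta}\le2^{1+\beta}|x|^{\zeta-1-\beta}$, and since $0\le\psi\le1$ is supported in $[-2R,2R]$,
\[\int_{|x|\ge4R}\int_{-2R}^{2R}|x|^{\zeta}\frac{\psi(y)}{|x-y|^{1+\beta}}\diff y\,\diff x\ \le\ 2^{1+\beta}\cdot4R\int_{|x|\ge4R}|x|^{\zeta-1-\beta}\diff x,\]
which converges exactly because $\zeta-1-\beta<-1$, i.e.\ $\zeta<\beta$. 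Summing the three pieces yields the claimed finiteness.

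There is no substantial obstacle here: the only delicate point is the diagonal singularity of $|x-y|^{-1-\beta}$ in the annulus $2R<|x|<4R$ with $y$ near $\pm2R$, and that is handled simply by invoking Lemma~\ref{lem:n=0termprelim} rather than re-deriving the estimate. The hypothesis $\zeta<\beta$ enters only in the far-field region $|x|\ge4R$, where it provides the integrability of $|x|^{\zeta-1-\beta}$ at infinity.
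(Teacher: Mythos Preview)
Your argument is correct and in fact cleaner than the paper's. Both proofs split off a bounded-$x$ region (the paper uses $[-X,X]$ for an arbitrary $X>2R$, you use $[-4R,4R]$), bound $|x|^{\zeta}$ by a constant there, and invoke Lemma~\ref{lem:n=0termprelim}. The difference is in the far-field: for $x>X$ the paper writes
\[
\frac{x^{\zeta}}{(x-y)^{1+\beta}}=\frac{1}{(x-y)^{1+\beta-\zeta}}+\frac{x^{\zeta}-(x-y)^{\zeta}}{(x-y)^{1+\beta}},
\]
integrates the first term directly (this is where $\zeta<\beta$ enters), and controls the second via the mean value theorem and a further sign split on $y$. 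You instead observe that for $|x|\ge 4R$ and $|y|\le 2R$ one has $|x-y|\ge |x|/2$, giving $|x|^{\zeta}|x-y|^{-1-\beta}\le 2^{1+\beta}|x|^{\zeta-1-\beta}$, which is integrable at infinity precisely when $\zeta<\beta$. Your route avoids the algebraic splitting and the mean value theorem entirely, at no cost. One cosmetic point: for the region $x\in[-2R,2R]$ you need not appeal to an ``intermediate bound'' inside the proof of Lemma~\ref{lem:n=0termprelim}; the statement of that lemma already suffices, since the restricted $x$-integral is dominated by the full one.
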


\begin{proof}
    Let \(X>2R\). We have
    \begin{align}
    \begin{split}\label{eqn:lessthanXdoubleintegral}
        \int_{-X}^{X} \int_{-\infty}^{\infty} |x|^{\zeta} \frac{| \psi \left(x \right)- \psi(y)|}{|x-y|^{1+\beta}} \diff y \diff x &\leq X^{\zeta} \int_{-X}^{X} \int_{-\infty}^{\infty} \frac{| \psi \left(x \right)- \psi(y)|}{|x-y|^{1+\beta}} \diff y \diff x \\
        &\leq X^{\zeta} \int_{-\infty}^{\infty} \int_{-\infty}^{\infty} \frac{| \psi \left(x \right)- \psi(y)|}{|x-y|^{1+\beta}} \diff y \diff x <\infty.
    \end{split}
    \end{align}
    We now show that \begin{equation}\label{eqn:greaterthanXdoubleintegral}
        \int_{X}^\infty \int_{-\infty}^{\infty} |x|^{\zeta} \frac{| \psi \left(x \right)- \psi(y)|}{|x-y|^{1+\beta}} \diff y \diff x<\infty.
    \end{equation}
    By symmetry, this will also imply that
    \[\int_{-\infty}^{-X} \int_{-\infty}^{\infty} |x|^{\zeta} \frac{| \psi \left(x \right)- \psi(y)|}{|x-y|^{1+\beta}} \diff y \diff x <\infty,\]
    which combined with (\ref{eqn:lessthanXdoubleintegral}) and (\ref{eqn:greaterthanXdoubleintegral}) will prove the lemma.
    
    Let us show (\ref{eqn:greaterthanXdoubleintegral}). For \(x>X\), we have 
     \(| \psi(x)- \psi(y)|= \psi(y)\), so 
     \begin{align*}
         \int_{-\infty}^{\infty} |x|^{\zeta} \frac{| \psi \left(x \right)- \psi(y)|}{|x-y|^{1+\beta}} \diff y &= \int_{-\infty}^{\infty} x^{\zeta} \frac{ \psi(y)}{|x-y|^{1+\beta}} \diff y \\
         &=  \int_{-2R}^{2R} \frac{ x^{\zeta}  \psi(y)}{(x-y)^{1+\beta}} \diff y \\
         &\leq \int_{-2R}^{2R} \frac{ x^{\zeta}}{(x-y)^{1+\beta}} \diff y.
     \end{align*}
     Hence,
     \begin{align}
         \int_{X}^\infty \int_{-\infty}^{\infty} |x|^{\zeta} \frac{| \psi \left(x \right)- \psi(y)|}{|x-y|^{1+\beta}} \diff y \diff x  \nonumber \\ \leq  \int_{X}^\infty  \int_{-2R}^{2R} \frac{ x^{\zeta}}{(x-y)^{1+\beta}} \diff y \diff x \nonumber \\ 
         =  \int_{X}^\infty  \int_{-2R}^{2R}  \frac{1}{(x-y)^{1+\beta-\zeta}} \diff y \diff x+  \int_{X}^\infty \int_{-2R}^{2R} \frac{x^{\zeta}-(x-y)^{\zeta}}{(x-y)^{1+\beta}} \diff y \diff x \nonumber \label{eqn:doubleintegral}.
     \end{align}
     
     By assumption \(\zeta<\beta\), so
     \begin{align*}
            \int_{X}^\infty  \int_{-2R}^{2R}  \frac{1}{(x-y)^{1+\beta-\zeta}} \diff y \diff x&= \int_{-2R}^{2R} \int_{X}^\infty  \frac{1}{(x-y)^{1+\beta-\zeta}} \diff x \diff y \\
            &=(\beta-\zeta)^{-1}  \int_{-2R}^{2R} (X-y)^{-(\beta-\zeta)} \diff y \\
           &\leq 4R(\beta-\zeta)^{-1}(X-2R)^{-(\beta-\zeta)}  <\infty.
     \end{align*}

When \(y\leq 0\), 
\(x^{\zeta}-(x-y)^{\zeta} \leq 0\), so \[ \int_{X}^\infty \int_{-2R}^{0} \frac{x^{\zeta}-(x-y)^{\zeta}}{(x-y)^{1+\beta}} \diff y \diff x<0.\]
When \(y >0\), by the mean value theorem we have 
\[\frac{x^{\zeta}-(x-y)^{\zeta}}{y} \leq \zeta (x-y)^{\zeta-1}.\]
Therefore, by Tonelli's theorem,
\begin{align*}
    \int_{X}^\infty \int_{0}^{2R} \frac{x^{\zeta}-(x-y)^{\zeta}}{(x-y)^{1+\beta}} \diff y \diff x &\leq 2R\zeta \int_{X}^\infty \int_{0}^{2R} \frac{1}{(x-y)^{2+\beta-\zeta}} \diff y \diff x \\
    &=2R\zeta \int_{0}^{2R} \int_{X}^\infty  \frac{1}{(x-y)^{2+\beta-\zeta}} \diff x \diff y \\
    &=2R \zeta(1+\beta-\zeta)^{-1} \int_{0}^{2R} (X-y)^{-(1+\beta-\zeta)} \diff y \\
    &\leq 4R^2 \zeta (1+\beta-\zeta)^{-1}(X-2R)^{-(1+\beta-\zeta)}<\infty.
\end{align*}
Hence, combining the above estimates, we have shown 
\[\int_{X}^\infty \int_{-\infty}^{\infty} |x|^{\zeta} \frac{| \psi \left(x \right)- \psi(y)|}{|x-y|^{1+\beta}} \diff y \diff x<\infty.\]
As justified above, this concludes the proof of the lemma. 
\end{proof}

\begin{lemma}\label{lem:integrallessthanCRr1minusbetapluszeta}
    There exists \(C_R>0\) such that for all \(x_0 \in \BP\) and \(r \leq R\),
    \[\int_{\BP} d_{\BP}(x,x_0)^{\zeta} |T_{\beta} \psi_{x_0,r}(x)| \diff \lambda(x) \leq C_R r^{1-\beta+\zeta}=C_R r^{-t+\zeta}.\]
\end{lemma}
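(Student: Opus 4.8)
The plan is to work directly from the pointwise bound (\ref{eqn:lessthanbigsum}). Since $T_\beta$ is a Fourier multiplier and $d_{\BP}$ is translation invariant, after integrating (\ref{eqn:lessthanbigsum}) against $d_{\BP}(x,x_0)^\zeta$ over the fundamental domain $[x_0-\tfrac\pi2,x_0+\tfrac\pi2]$ and performing the translations $x\mapsto x-x_0$, $y\mapsto y-x_0$, the problem reduces — up to the fixed constant $C_\beta'/\pi^2$ — to showing $J(r)\le Cr^{1+\zeta-\beta}$, where
\[
J(r):=\int_{-\pi/2}^{\pi/2}|x|^\zeta\sum_{n\in\Z}\int_{-\pi/2}^{\pi/2}\frac{\bigl|\psi(Rx/r)-\psi(Ry/r)\bigr|}{|x-y-n\pi|^{1+\beta}}\,dy\,dx,
\]
and where I have used $d_{\BP}(x,0)=\sin|x|\le|x|$ on $[-\tfrac\pi2,\tfrac\pi2]$. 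I would then split $J(r)$ into the $n=0$ term and the ``wrap-around'' sum $\sum_{n\neq0}$ and estimate them separately.

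For the $n=0$ term I would rescale by $u=Rx/r$, $v=Ry/r$. The Jacobians together with the scaling of $|x|^\zeta$ and of $|x-y|^{-(1+\beta)}$ contribute an overall factor $(r/R)^{\zeta+2-(1+\beta)}=(r/R)^{1+\zeta-\beta}$; enlarging the (bounded) domain of integration to $\R^2$, which is legitimate since the integrand is nonnegative, leaves exactly the finite quantity controlled by Lemma~\ref{lem:n=0term}. Hence the $n=0$ contribution is at most a constant times $r^{1+\zeta-\beta}$, and since $\beta=1+t$ this is $r^{-t+\zeta}$, the claimed order.

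The heart of the argument — and the step with no counterpart in the $\R^d$ setting of \cite{Wal66} — is the sum $\sum_{n\neq0}$, whose periodized kernel has singularities at $x-y\in\pi\Z\setminus\{0\}$. The key observation is that these singularities lie off the support of the integrand: $\psi(Rx/r)\neq0$ forces $|x|\le 2r$, so $\psi(Rx/r)\neq\psi(Ry/r)$ forces $\min(|x|,|y|)\le 2r$, and since $2r\le 2R<\tfrac\pi4$ this gives, for every $n\neq0$ and every $(x,y)$ in the support,
\[
|x-y-n\pi|\ \ge\ |n|\pi-|x|-|y|\ \ge\ |n|\pi-\tfrac{3\pi}{4}\ \ge\ \tfrac{\pi}{8}|n|.
\]
Consequently $\sum_{n\neq0}|x-y-n\pi|^{-(1+\beta)}\le(8/\pi)^{1+\beta}\sum_{n\neq0}|n|^{-(1+\beta)}<\infty$ using $\beta>0$, so the $n\neq0$ contribution is bounded by a constant times $\int_{-\pi/2}^{\pi/2}|x|^\zeta\int_{-\pi/2}^{\pi/2}|\psi(Rx/r)-\psi(Ry/r)|\,dy\,dx\le(\pi/2)^\zeta\cdot2\pi\int_{-\pi/2}^{\pi/2}\psi(Rx/r)\,dx$, which equals $Cr$ after the substitution $u=Rx/r$. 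Finally, because $0<\zeta<\beta$ and $r\le R$ one has $r=r^{\beta-\zeta}r^{1+\zeta-\beta}\le R^{\beta-\zeta}r^{1+\zeta-\beta}$, so this term is again $O(r^{1+\zeta-\beta})$. Adding the two estimates gives $J(r)\le C_R\,r^{1+\zeta-\beta}=C_R\,r^{-t+\zeta}$ with $C_R$ depending only on $R$ and the fixed data $\beta,\zeta,\psi$.

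I expect the only genuinely delicate point to be the $n\neq0$ estimate above, namely recognizing that the wrap-around singularities are neutralized because the supports of $\psi(Rx/r)$ and $\psi(Ry/r)$ cluster near $0$, so that those terms contribute like a uniformly convergent far-field tail rather than a true singular integral; everything else is routine rescaling together with Lemmas~\ref{lem:n=0termprelim} and~\ref{lem:n=0term}.
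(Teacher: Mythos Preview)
Your proposal is correct and follows essentially the same route as the paper: reduce via translation and $d_{\BP}(x,0)\le|x|$ to the double integral coming from~(\ref{eqn:lessthanbigsum}), handle the $n=0$ term by the rescaling $(u,v)=(Rx/r,Ry/r)$ together with Lemma~\ref{lem:n=0term}, and control the wrap-around terms $n\neq0$ by noting that on the support of the integrand one has $|x-y-n\pi|\gtrsim|n|\pi$, yielding an $O(r)$ contribution absorbed into $r^{1+\zeta-\beta}$ since $\zeta<\beta$ and $r\le R$. The only cosmetic difference is that the paper splits the $n\neq0$ integral into three regions according to which of $x,y$ lies in $[x_0-2r,x_0+2r]$, whereas you observe directly that $\psi(Rx/r)\neq\psi(Ry/r)$ forces $\min(|x|,|y|)\le2r$ and bound the resulting double integral in one step; this is a mild streamlining but not a different argument.
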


\begin{proof}
We have
\begin{align*}
     \int_{\BP} d_{\BP}(x,x_0)^{\zeta} |T_{\beta} \psi_{x_0,r}(x)| \diff \lambda(x) &= \frac{1}{\pi} \int_{x_0-\frac{\pi}{2}}^{x_0+\frac{\pi}{2}} d_{\BP}(x,x_0)^{\zeta} |T_{\beta} \psi_{x_0,r}(x)| \diff x   \\
     &\leq \frac{1}{\pi} \int_{x_0-\frac{\pi}{2}}^{x_0+\frac{\pi}{2}} |x-x_0|^{\zeta} |T_{\beta} \psi_{x_0,r}(x)| \diff x.
\end{align*}
Hence, by (\ref{eqn:lessthanbigsum}),
     \begin{align*}
     \int_{\BP} d_{\BP}(x,x_0)^{\zeta} |T_{\beta} \psi_{x_0,r}(x)| \diff \lambda(x) 
     &\leq \frac{C_{\beta}'}{\pi^2} \int_{x_0-\frac{\pi}{2}}^{x_0+\frac{\pi}{2}} \int_{x_0-\frac{\pi}{2}}^{x_0+\frac{\pi}{2}}  \sum_{n \in \Z} \left|x-x_0 \right|^{\zeta} \frac{\left| \psi \left(\frac{R(x-x_0)}{r} \right) - \psi \left(\frac{R(y-x_0)}{r} \right) \right|}{|x-y- n\pi|^{1+\beta}} \diff y \diff x \\
     &=\frac{C_{\beta}'}{\pi^2} \sum_{n \in \Z}  \int_{x_0-\frac{\pi}{2}}^{x_0+\frac{\pi}{2}} \int_{x_0-\frac{\pi}{2}}^{x_0+\frac{\pi}{2}} \left|x-x_0 \right|^{\zeta} \frac{\left| \psi \left(\frac{R(x-x_0)}{r} \right) - \psi \left(\frac{R(y-x_0)}{r} \right) \right|}{|x-y- n\pi|^{1+\beta}} \diff y \diff x
\end{align*}
Making the change of variables \(u=\frac{R(x-x_0)}{r}\) and  \(v=\frac{R(y-x_0)}{r}\), the \(n=0\) term in the sum is equal to 
\[ R^{-(1-\beta+\zeta) } r^{1-\beta+\zeta}  \int_{-\frac{R\pi}{2r}}^{\frac{R\pi}{2r}} \int_{-\frac{R\pi}{2r}}^{\frac{R\pi}{2r}}  |u|^{\zeta} \frac{\left| \psi \left(u \right) - \psi \left(v \right) \right|}{|u-v|^{1+\beta}} \diff v \diff u,\]
where
\[ \int_{-\frac{R\pi}{2r}}^{\frac{R\pi}{2r}} \int_{-\frac{R\pi}{2r}}^{\frac{R\pi}{2r}}  |u|^{\zeta} \frac{\left| \psi \left(u \right) - \psi \left(v \right) \right|}{|u-v|^{1+\beta}} \diff v \diff u \leq \int_{-\infty}^\infty \int_{-\infty}^{\infty} |u|^{\zeta} \frac{| \psi \left(u \right)- \psi(v)|}{|u-v|^{1+\beta}} \diff v \diff u<\infty\]
by Lemma \ref{lem:n=0term}.

Observe that the \(n \not= 0\) terms in the sum are equal to
\begin{align}
    \int_{x_0-2r}^{x_0+2r} \int_{x_0-\frac{\pi}{2}}^{x_0+\frac{\pi}{2}} \left|x-x_0 \right|^{\zeta} \frac{\left| \psi \left(\frac{R(x-x_0)}{r} \right) - \psi \left(\frac{R(y-x_0)}{r} \right) \right|}{|x-y- n\pi|^{1+\beta}} \diff y \diff x \label{eqn:term1in3sum}\\+ \int_{x_0-\pi/2}^{x_0-2r} \int_{x_0-2r}^{x_0+2r} \left|x-x_0 \right|^{\zeta} \frac{\psi \left(\frac{R(y-x_0)}{r} \right)}{|x-y- n\pi|^{1+\beta}} \diff y \diff x \label{eqn:term2in3sum} \\
    +\int_{x_0+2r}^{x_0+\pi/2} \int_{x_0-2r}^{x_0+2r} \left|x-x_0 \right|^{\zeta} \frac{\psi \left(\frac{R(y-x_0)}{r} \right)}{|x-y- n\pi|^{1+\beta}} \diff y \diff x \label{eqn:term3in3sum}.
\end{align}
For all \((x, y) \in [-\pi/2,\pi/2] \times [-2r,2r] \cup [-2r,2r] \times [-\pi/2,\pi/2] \) and all \(n \in \Z \setminus \{0\}\), we have
\[|x-y-n\pi| \geq |n| \pi -|x-y| \geq |n|\pi-(2r+\pi/2) \geq \left(|n|-\frac{3}{4} \right)\pi \geq \frac{|n|\pi}{4},\]
where we use in the penultimate inequality that \(r \leq R <\pi/8\). Hence, for some \(C_1>0\), (\ref{eqn:term1in3sum}) is bounded above by \(C_1 |n|^{-(1+\beta)} r^{1+\zeta}\) and, for some \(C_2>0\), (\ref{eqn:term2in3sum}) and (\ref{eqn:term3in3sum}) are bounded above by \(C_2 |n|^{-(1+\beta)} r\). Thus, for some \(C_3>0\),
\begin{align*}
    \sum_{|n| \geq 1}  \int_{x_0-\frac{\pi}{2}}^{x_0+\frac{\pi}{2}} \int_{x_0-\frac{\pi}{2}}^{x_0+\frac{\pi}{2}} \left|x-x_0 \right|^{\zeta} \frac{\left| \psi \left(\frac{R(x-x_0)}{r} \right) - \psi \left(\frac{R(y-x_0)}{r} \right) \right|}{|x-y- n\pi|^{1+\beta}} \diff y \diff x &\leq \sum_{|n|\geq 1} |n|^{-(1+\beta)}(C_1 r^{1+\zeta} +C_2 r) \\
    &\leq C_3 r,
\end{align*}
where we are using that \(1+\beta>1\). Combining this with our estimate for the \(n=0\) term, we have shown that there exists some \(C_R=C_R(t,\zeta)>0\) such that for all \(r \in (0,R]\),
\[\int_{\BP} d_{\BP}(x,x_0)^{\zeta} |T_{\beta} \psi_{x_0,r}(x)| \diff \lambda(x) \leq C_R r^{1-\beta+\zeta}=C_R r^{-t+\zeta}.\]
\end{proof}

\begin{proof}[Proof of Proposition \ref{prop:Holdergivesdimensionbound}]
Fix some \(0<R<\pi/8 \). Let \(C_R>0\) be as given by Lemma \ref{lem:integrallessthanCRr1minusbetapluszeta} and let \(r \leq R/2\). For \(x_0 \in \BP\) note that \(1_{B(x_0,r)}(x) \equiv 1_{B_{\BP}(x_0,r)}(x) \leq \psi_{x_0,2r}(x)\) for all \(x \in \BP\) since \(\sin \theta \geq \theta/2\) for \(\theta \in [0,\pi/2]\). Hence, by Lemma \ref{lem:nulambdarelation},
    \begin{align}
        \nu(B(x_0,r)) &\leq \int  \psi_{x_0,2r}(x) \diff \nu(x) \nonumber \\
        &=\int  \psi_{x_0,2r}(x) \diff \lambda(x) +C_{\beta}^{-1} \int  F(x) T_{\beta}  \psi_{x_0,2r}(x) \diff \lambda(x) \nonumber \\
        &\leq 8\pi^{-1}r+C_{\beta}^{-1} \int  F(x) T_{\beta}  \psi_{x_0,2r}(x) \diff \lambda(x). \label{eqn:nuBx0rinequality}
    \end{align}
    By the zero mean of \(T_{\beta}\) (Lemma \ref{lem:zeromeanofLambda}), the second term is equal to 
    \begin{align*}
        \int  F(x) T_{\beta}  \psi_{x_0,2r}(x) \diff \lambda(x) &= \int  (F(x)-F(x_0)) T_{\beta}  \psi_{x_0,2r}(x) \diff \lambda(x).
    \end{align*}
    Hence,
    \begin{align*}
        \left|\int  F(x) T_{\beta}  \psi_{x_0,2r}(x) \diff \lambda(x) \right| &\leq \int  |F(x)-F(x_0)| |T_{\beta}  \psi_{x_0,2r}(x)| \diff \lambda(x) \\
        & \leq |F|_{\zeta} \int d_{ \BP}(x,x_0)^{\zeta} |T_{\beta}  \psi_{x_0,2r}(x)| \diff \lambda(x) \\
        &\leq  C_R |F|_{\zeta} (2r)^{-t+\zeta}.
    \end{align*}
    Combining this with (\ref{eqn:nuBx0rinequality}) implies that there exists some \(C=C(R,t,\zeta,|F|_{\zeta})>0\) such that for all \(x_0 \in \BP\) and all \(r \in (0,R/2]\),
    \[\nu(B(x_0,r)) \leq C r^{-t+\zeta}.\]
    Making the constant \(C\) larger if necessary (i.e. larger than \((R/2)^{-(-t+\zeta)}\)), we can have the above inequality hold for all \(r>0\). Hence, we have shown that \(\dim_F \nu \geq -t+\zeta\). 
    
    By Proposition \ref{prop:hisholder} we further have that \(\MCI_{t,\nu}\) is H\"older continuous. Since  \(\MCI_{t,\nu}(x)=h(x) \) for \(\lambda\)-a.e. \(x\) and both are continuous, we must have \(\MCI_{t,\nu} \equiv h\).
\end{proof}

\begin{prop}\label{prop:Inu1=Inu2impliesnu1=nu2}
    Let \(-1<t<0\). Suppose that for \(\lambda\)-almost every \(x \in \BP\),
    \begin{equation}\label{eqn:Inu1=Inu2}
        \int d_{\BP}(x,w)^t \diff \nu_1(w)=  \int d_{\BP}(x,w)^t \diff \nu_2(w) <\infty.
    \end{equation}
     Then, \(\nu_1=\nu_2\).
\end{prop}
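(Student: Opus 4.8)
The plan is to pass to Fourier coefficients on $\BP\simeq\R/\pi\Z$ and to exploit that the Fourier coefficients of the convolution kernel $K(x):=d_{\BP}(0,x)^t=|\sin x|^t$ are all nonzero. Since $d_{\BP}$ is translation invariant we have $d_{\BP}(x,w)=d_{\BP}(0,x-w)$, so $\MCI_{t,\nu}(x)=(K*\nu)(x):=\int_{\BP}K(x-w)\diff\nu(w)$ for every $\nu\in\MCP(\BP)$; moreover $\int_{\BP}\MCI_{t,\nu}\diff\lambda=C_{\lambda,t}<\infty$ by Tonelli's theorem, so $\MCI_{t,\nu}\in L^1(\BP)$. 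Setting $\sigma:=\nu_1-\nu_2$, a finite signed measure, hypothesis~(\ref{eqn:Inu1=Inu2}) says that $K*\sigma=\MCI_{t,\nu_1}-\MCI_{t,\nu_2}$ is an $L^1$ function vanishing $\lambda$-almost everywhere, whence $c_n(K*\sigma)=0$ for all $n\in\Z$. A further application of Tonelli's theorem (using $\int_{\BP}K(x-w)\diff\lambda(x)=C_{\lambda,t}$ for each $w$) gives the factorisation $c_n(K*\sigma)=c_n(K)\,c_n(\sigma)$.

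Granting that $c_n(K)\neq 0$ for every $n\in\Z$, the above forces $c_n(\sigma)=0$ for all $n$; since trigonometric polynomials are dense in $C(\BP)$ this gives $\int f\diff\sigma=0$ for every $f\in C(\BP)$, hence $\sigma=0$ and $\nu_1=\nu_2$. So the entire argument reduces to the non-vanishing of $c_n(K)$, which I expect to be the main obstacle.

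For this I would compute $c_n(K)$ explicitly. Since $K$ is even and $\pi$-periodic with $K(x)=\sin^t x$ on $(0,\pi/2)$, and $\cos(2n\theta)$ is invariant under $\theta\mapsto\pi-\theta$,
\[ c_n(K)=\frac{2}{\pi}\int_0^{\pi/2}\sin^t\theta\,\cos(2n\theta)\diff\theta=\frac{1}{\pi}\,\mathrm{Re}\int_0^{\pi}\sin^t\theta\,e^{2in\theta}\diff\theta. \]
Invoking the classical Euler integral $\int_0^{\pi}\sin^s\theta\,e^{im\theta}\diff\theta=\pi e^{im\pi/2}\,2^{-s}\,\Gamma(s+1)\big/\big(\Gamma(1+\tfrac{s+m}{2})\Gamma(1+\tfrac{s-m}{2})\big)$ (valid for $\mathrm{Re}\,s>-1$; one may also derive it from the Beta function) with $s=t$ and $m=2n$ yields
\[ c_n(K)=\frac{(-1)^n\,\Gamma(t+1)}{2^{t}\,\Gamma\!\left(1+\tfrac t2+n\right)\Gamma\!\left(1+\tfrac t2-n\right)}. \]
Here $\Gamma(t+1)>0$ since $t>-1$, and the denominator is finite and nonzero: because $\tfrac t2\in(-\tfrac12,0)$, the numbers $1+\tfrac t2\pm n$ lie in open intervals of the form $(m+\tfrac12,m+1)$ with $m\in\Z$, which contain no integers, so neither $\Gamma$-factor meets a pole (and $\Gamma$ never vanishes). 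Hence $c_n(K)\neq 0$ for all $n$, completing the proof. I note that the $\Gamma$-function identity cannot be avoided: combining Lemma~\ref{lem:GbetaequalsDt+lipschitz} with $|c_n(b_3)|=O(1/|n|)$ only gives $c_n(K)\neq 0$ for large $|n|$, while a vanishing coefficient $c_{n_0}(K)=0$ for some $n_0\neq 0$ would make $\nu_1=(1+\cos(2n_0 x))\diff\lambda$, $\nu_2=\diff\lambda$ a counterexample, so the small-frequency modes genuinely require the computation above. Everything else — the $L^1$ membership of $\MCI_{t,\nu}$, the two Fubini/Tonelli steps, and the uniqueness of Fourier coefficients for finite measures — is routine.
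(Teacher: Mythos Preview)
Your proof is correct and follows the same overall strategy as the paper: reduce to the factorisation $c_n(K)\,c_n(\sigma)=0$ of Fourier coefficients on $\BP\simeq\R/\pi\Z$, and then show $c_n(K)\neq 0$ for every $n$. The only substantive difference is in how the non-vanishing of $c_n(K)$ is established. You invoke the classical Euler integral $\int_0^\pi \sin^s\theta\,e^{im\theta}\diff\theta$ to obtain the closed form
\[
c_n(K)=\frac{(-1)^n\,\Gamma(t+1)}{2^{t}\,\Gamma\!\left(1+\tfrac t2+n\right)\Gamma\!\left(1+\tfrac t2-n\right)},
\]
and then argue via the pole structure of $\Gamma$. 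The paper instead proves a self-contained lemma: if $f\in C^2((0,\pi/2))$ is positive, decreasing, strictly convex, and integrable, then $\int_0^{\pi/2} f(x)\cos(2nx)\diff x>0$ for all $n\neq 0$; this is shown by integrating by parts twice and using $f''>0$. Applying this to $f(x)=\sin^t x$ gives strict positivity of $c_n(K)$ directly. Your route yields an explicit formula and makes the dependence on $t$ transparent, at the cost of quoting an external identity; the paper's route is more elementary and applies to any kernel with the stated convexity, without needing special-function machinery. Both are perfectly valid, and your remark that a single vanishing coefficient would produce a counterexample nicely explains why the small-$|n|$ modes cannot be handled by the asymptotics from Lemma~\ref{lem:GbetaequalsDt+lipschitz} alone.
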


\begin{lemma}\label{lem:intC2function}
    Let \(f \in C^2((0,\pi/2))\) be such that \(f(x),f''(x)>0\) and \(f'(x)<0\) on \((0,\pi/2)\) and \(\int_0^{\pi/2} f(x) \diff x <\infty\). Then, for all \(n \in \Z \setminus \{0\}\)
    \[\int_0^{\pi/2} f(x) \cos(2nx) \diff x>0.\]
\end{lemma}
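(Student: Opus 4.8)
The plan is to reduce, by a single integration by parts, to a statement about the positive, strictly decreasing function $h:=-f'$, and then to exploit the sign pattern of $\sin(2nx)$ by pairing consecutive half-periods.

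First I would note that we may assume $n\ge 1$, since $\cos$ is even. Integrating by parts on $(\delta,\pi/2)$ and letting $\delta\downarrow 0$ I would establish
\[
\int_0^{\pi/2} f(x)\cos(2nx)\,\diff x = \frac{1}{2n}\int_0^{\pi/2} h(x)\sin(2nx)\,\diff x ,\qquad h:=-f' .
\]
The boundary term at $\pi/2$ vanishes because $\sin(n\pi)=0$; the boundary term at $0$ vanishes because $0<\delta f(\delta)\le\int_0^{\delta} f(x)\,\diff x\to 0$, using that $f$ is positive, decreasing and integrable; and convergence of the right-hand integral near $0$ follows from $|f'(x)\sin(2nx)|\le 2n|x|\,(-f'(x))$ together with $\int_0^{a} x(-f'(x))\,\diff x = -af(a)+\int_0^{a} f(x)\,\diff x<\infty$ (a further integration by parts). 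Since $f'<0$ we have $h>0$, and since $f''>0$ the function $h$ is \emph{strictly} decreasing on $(0,\pi/2)$; it therefore suffices to prove $\int_0^{\pi/2} h(x)\sin(2nx)\,\diff x>0$.

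For this I would partition $[0,\pi/2]$ at the zeros $x_j=j\pi/(2n)$, $j=0,\dots,n$, of $\sin(2nx)$, and group the subintervals into consecutive pairs $[x_{2k},x_{2k+1}]\cup[x_{2k+1},x_{2k+2}]$ on which $\sin(2nx)$ is first positive and then negative. Substituting $x=t+\pi/(2n)$ on the second interval of each pair yields
\[
\int_{x_{2k}}^{x_{2k+2}} h(x)\sin(2nx)\,\diff x = \int_{x_{2k}}^{x_{2k+1}}\bigl(h(t)-h(t+\tfrac{\pi}{2n})\bigr)\sin(2nt)\,\diff t ,
\]
whose integrand is strictly positive on the open interval (there $\sin(2nt)>0$ and $h$ is strictly decreasing), so each paired contribution is $>0$; the $k=0$ term is finite and still positive despite the possible blow-up of $h$ at $0$, by the integrability bound above. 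If $n$ is even these pairs exhaust $[0,\pi/2]$; if $n$ is odd there is one leftover interval $[x_{n-1},x_n]$ on which $2nx\in[(n-1)\pi,n\pi]$ with $n-1$ even, hence $\sin(2nx)\ge 0$ and $\int_{x_{n-1}}^{x_n} h(x)\sin(2nx)\,\diff x>0$ since $h>0$. Summing these strictly positive contributions gives $\int_0^{\pi/2} h(x)\sin(2nx)\,\diff x>0$, and hence the claim for all $n\in\Z\setminus\{0\}$.

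The main obstacle is the care needed at the endpoint $0$: $f$ is assumed $C^2$ only on the \emph{open} interval and is genuinely unbounded there in the intended application (where $f(x)=d_{\BP}(0,x)^t$ with $-1<t<0$), so the vanishing of the boundary term, the convergence of $\int_0^{\pi/2} f'(x)\sin(2nx)\,\diff x$, and the finiteness of the $k=0$ piece must all be extracted from the monotonicity and integrability hypotheses rather than taken for granted. Once this bookkeeping is in place, the remaining half-period pairing argument is entirely elementary.
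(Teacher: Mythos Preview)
Your proof is correct, and it takes a genuinely different route from the paper's. The paper integrates by parts \emph{twice} on $(\epsilon,\pi/2-\epsilon)$, then uses $f''>0$ together with $\cos(2nx)<1$ on a set of positive measure to replace $\int f''\cos(2nx)$ by $\int f''-\delta$ for some fixed $\delta>0$; after absorbing $\int f'' = f'\big|_\epsilon^{\pi/2-\epsilon}$ into the boundary terms, it lets $\epsilon\to 0$ using both $xf(x)\to 0$ and $x^2 f'(x)\to 0$. You instead integrate by parts once and reduce to $\int_0^{\pi/2}(-f')\sin(2nx)\,\diff x>0$, which you prove by the classical half-period pairing trick for a strictly decreasing positive weight. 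Your approach is arguably more elementary: it needs only the single endpoint fact $xf(x)\to 0$, and the pairing argument is self-contained. The paper's approach avoids the even/odd case split and yields a clean uniform lower bound $\delta$ directly, but at the cost of the extra endpoint estimate $x^2 f'(x)\to 0$ (which does follow from the hypotheses, though the paper does not spell out the argument). Both proofs use all the hypotheses in essentially the same places: $f''>0$ enters for you as strict monotonicity of $-f'$, and for the paper as strict positivity of the integrand $f''(1-\cos)$.
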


\begin{proof}
Fix some  \(n \in \Z \setminus \{0\}\) and let \(\epsilon \in (0,\pi/8)\). Integrating by parts twice gives    
    \begin{align*}
        \int_{\epsilon}^{\pi/2-\epsilon} f(x) \cos(2nx) \diff x \\ = \left[\frac{1}{2n} f(x) \sin 2 n x \right]_\epsilon^{\pi/2-\epsilon} +\frac{1}{(2n)^2}[ f'(x)\cos 2 n x  ]_{\epsilon}^{\pi/2-\epsilon}-\frac{1}{(2n)^2} \int_\epsilon^{\pi/2-\epsilon}f''(x) \cos(2nx) \diff x. 
    \end{align*}
    Since \(f''\) is strictly positive and continuous, there exists \(\delta>0\), independent of \(\epsilon \in (0,\pi/8)\), such that 
    \[\frac{1}{(2n)^2} \int_{\epsilon}^{\pi/2-\epsilon}f''(x) \cos(2nx) \diff x<\frac{1}{(2n)^2} \int_\epsilon^{\pi/2-\epsilon}f''(x)\diff x -\delta.\]
    Hence,
    \begin{align*}
        \int_{\epsilon}^{\pi/2-\epsilon} f(x) \cos(2nx) \diff x
        >\left[\frac{1}{2n}f(x) \sin 2 n x \right]_\epsilon^{\pi/2-\epsilon}+ \frac{1}{(2n)^2}[ f'(x)(\cos 2 n x -1) ]_{\epsilon}^{\pi/2-\epsilon}+\delta.
    \end{align*}
    It is straightforward to check that the assumptions in the lemma imply that \(\lim_{x \rightarrow 0} xf(x)=\lim_{x \rightarrow 0} x^2 f'(x)=0\). 
    Therefore, letting \(\epsilon \rightarrow 0 \),
    \[\int_{0}^{\pi/2} f(x) \cos(2nx) \diff x \geq \delta.\]
    
\end{proof}
\begin{proof}[Proof of Proposition \ref{prop:Inu1=Inu2impliesnu1=nu2}]
Let \(\nu:=\nu_1-\nu_2\). For \(\lambda\)-a.e. \(x \in \BP\), 
\[ d_{\BP}(0,\cdot)^t* \nu(x) = \int d_{\BP}(0,x-y)^t \diff \nu(y)= \int d_{\BP}(x,y)^t \diff \nu(y)=0.\]
Hence, \(c_n(d_{\BP}(0,\cdot)^t) c_n(\nu)=0\), so to prove the proposition it suffices to show that \(c_n(d_{\BP}(0,\cdot)^t) \not = 0\) for \(n \in \Z\). We have
\begin{align*}
    c_n(d_{\BP}(0,\cdot)^t) &= \int_{\BP} d_{\BP}(0,x)^t e^{-2inx} \diff \lambda(x) \\
    &= \int_{\BP} d_{\BP}(0,x)^t \cos (2nx) \diff \lambda(x) \\
    &=  \frac{2}{\pi} \int_{0}^{\pi/2} \sin(x)^t \cos (2nx) \diff x.
\end{align*}
The proposition now follows from Lemma \ref{lem:intC2function}.
\end{proof}

\end{document}